\documentclass[12pt]{amsart}
\usepackage{amssymb} 
\usepackage[T1]{fontenc}
\DeclareMathAlphabet{\mathantt}{OT1}{antt}{li}{it}
\DeclareMathAlphabet{\mathpzc}{OT1}{pzc}{m}{it}
\usepackage[normalem]{ulem} 
\usepackage{tikz-cd}
\usepackage{tikz}
\usepackage[colorlinks=true,linkcolor=blue,urlcolor=blue,citecolor=blue, pagebackref]{hyperref}
\usepackage{soul} 
\usepackage[margin=1.0in]{geometry}
\usepackage{verbatim, amscd,color} 
\usepackage{mathrsfs}
\usepackage{tikz-cd}

\usepackage{enumitem}

\usepackage{eucal}

\usepackage{courier}

\usepackage{url} %

\numberwithin{equation}{section}

\newcommand{\vv}{\vec}

\newcommand{\mss}{\mathscr}

\newcommand\starr{\bullet}

\newcommand{\rnc}{\renewcommand}

\newcommand\op{\operatorname}

\rnc{\c}{\mathcal}
\newcommand{\g}{\mathfrak g}

\newcommand\mfrfm{\mathfrak{R}^{\operatorname{FM}}}

\newcommand{\ord}{\operatorname{ord}}

\newcommand{\nc}{\newcommand}
\nc{\Z}{\mathbb{Z}}
\nc{\Q}{\mathbb{Q}}
\nc{\C}{\mathbb{C}}
\nc{\R}{\mathbb{R}}
\nc{\A}{\mathbb{A}}
\nc{\G}{\mathbb{G}}
\nc{\N}{\mathbb{N}}
\nc{\V}{\mathbb{V}}
\nc{\dc}{\Delta}

\newcommand{\ch}{\chi}

\nc{\ms}{\mathscr}
\nc{\mr}{\mathrm}
\nc{\mc}{\mathcal}
\nc{\mt}{\mathtt}
\nc{\msf}{\mathsf}
\nc{\mf}{\mathfrak}
\nc{\mb}{\mathbb}

\nc{\wtid}{\widetilde{\mathcal{D}}}
\nc{\wtic}{\widetilde{\mathcal{C}}}
\nc{\wh}{\widehat}
\nc{\ti}{\widetilde}

\nc{\rat}{\mt{rat}}

\nc{\la}{\lambda}

\nc{\msp}{\ms{P}}

\nc{\ve}{\varepsilon}
\nc{\vpi}{\varpi}

\nc{\sm}{\setminus}

\nc{\dbc}{\mr{D}^b_c}
\nc{\dpl}{\mr{D}^+}
\nc{\mrd}{\mr{D}}
\nc{\db}{\mr{D}^b}

\nc{\msm}{\ms{M}}

\nc{\msk}{\msf{k}}

\nc{\h}{\mbox{-}}

\nc{\bs}{\backslash}
\nc{\ov}{\overline}

\nc{\rank}{\mr{rank}}

\nc{\fil}{\mr{Fil}}

\nc{\Hom}{\operatorname{Hom}}

\nc{\w}{\widetilde}

\nc{\fm}{\mf{FM}}

\nc{\dq}{\Delta_q}
\nc{\nq}{\nabla_q}
\nc{\tq}{T_q}
\rnc{\lq}{L_q}

\newcommand{\FM}{{FM}}

\newcommand\DD{\delta}

\newcommand\Ext{\operatorname{Ext}}

\newcommand\tensor{\otimes}
\newcommand\ml{\mathcal{L}}

\newcommand{\Qk}{Q_\kappa}

\newcommand{\leto}[1]{\stackrel{#1}{\to}}

\rnc{\c}{\mathcal}

\nc{\tw}{\widetilde}

\nc{\uq}{\mathbf{U}_q}
\nc{\uqg}{\mathbf{U}_q(\mathfrak{g})}
\nc{\ua}{\mathbf{U}_A}
\nc{\Mod}{\mathbf{Mod}}
\rnc{\mf}{\mathfrak}
\nc{\mcl}{\mathcal}

\newcommand{\ExpM}{\operatorname{\mr{D}}}

\newtheorem{theorem}[equation]{Theorem}

\newtheorem{corollary}[equation]{Corollary}
\newtheorem{question}[equation]{Question}
\newtheorem{proposition}[equation]{Proposition}

\newtheorem{lemma}[equation]{Lemma}

\newtheorem{conj}[equation]{Conjecture}

\newcommand{\slg}{\mathfrak{sl}}

\newcommand{\Inv}{\operatorname{Inv}}

\newcommand{\Jop}{\mathcal J_\kappa}
\newcommand{\Sop}{\mathcal S}
\newcommand{\Fop}{\mathcal F_\kappa}
\newcommand{\Aop}{\mathcal A_\kappa}

\theoremstyle{remark}

\newtheorem{remark}[equation]{Remark}
\newtheorem{example}[equation]{\bf Example}

\theoremstyle{definition}

\newtheorem{defi}[equation]{Definition}
\newtheorem{ass}[equation]{Assumption}
\newtheorem{definition}[equation]{Definition}

\let\svthefootnote\thefootnote
\newcommand\blankfootnote[1]{%
  \let\thefootnote\relax\footnotetext{#1}%
  \let\thefootnote\svthefootnote%
}

\begin{document} 
\title{Weights of KZ motives and quantum groups}
 \author{P. Belkale and N. Fakhruddin } 
\maketitle

\blankfootnote{
P.~B.: Department of Mathematics, University of North Carolina, Chapel
Hill, NC 27599, USA.\\ 
 Email belkale@email.unc.edu
 
  N.~F.: Email naf@math.tifr.res.in
  \vspace{0.08 cm} }

\begin{abstract}
The KZ connection is a connection on the constant bundle associated to a set of $n$ finite dimensional irreducible representations of a simple Lie algebra $\mf{g}$ and a nonzero complex number $\kappa$, over the configuration space of $n$-distinct points on the affine line. Via the work of Schechtman–Varchenko and Looijenga, when $\kappa$ is rational, the associated local systems can be seen to be realisations of naturally defined motivic local systems which are called KZ motives.  

The Kazhdan--Lusztig equivalence gives an isomorphism of the local systems underlying these KZ motivic variations with certain pure braid group representations via the action in the category of the modules for the corresponding  Lusztig quantum group at roots of unity.  We formulate, and provide evidence for, conjectures relating the weight filtrations of KZ motives with the theory of quantum groups, especially properties of tilting modules and filtrations of Jantzen and Andersen.

Our methods include the use of a new deformation of the representation
ring which keeps track of the first moments of the weight filtration
as well as Schmid’s theorem on limiting mixed Hodge structures, and results about the
Hodge filtration obtained in our earlier joint work with
S.~Mukhopadhyay.

\end{abstract}

\maketitle

\tableofcontents

\section{Introduction}

Let $\mf{g}$ be a finite-dimensional simple Lie algebra over $\C$ and
for a dominant weight $\lambda \in P^+$ let $V_{\lambda}$ be the
corresponding irreducible representation.

Let $\vec{\lambda} = (\lambda_1,\dots, \lambda_n) \in (P^+)^n$ and let
$\kappa \in \C^{\times}$. The Knizhnik--Zamolodchikov (KZ) connection,
introduced in \cite{KZ}, is defined on the constant vector bundle on
$\mathcal{C}_n$, the configuration space $\mc{C}_n$ of $n$-distinct
points on the affine line, with fibre
$V(\vv{\lambda})=V_{\lambda_1}\tensor V_{\lambda_2}\tensor\dots\tensor
V_{\lambda_n}$. Using variables $z_1,z_2,\dots,z_n$ on $\mc{C}_n$, the
connection equations for flat solutions are \footnote{Here we note
  that Equation (1.5) in \cite{BFM}, and the first equation in
  \cite[Section 4]{BF-survey} are off by a sign. See Remark
  \ref{signkappa}.}:
\begin{equation}\label{e:KZ}
    \kappa\frac{\partial}{\partial z_i}f =-\biggl(\sum_{j\neq
    i}\frac{\Omega_{ij}}{z_i-z_j}\biggr)f.  
\end{equation}
Here $f$ is any local section of
$V(\vv{\lambda})\otimes\mathcal{O}_{\mathcal{C}_n}$, $\Omega_{ij}$ is
the normalised Casimir element acting on the $i$ and $j$ tensor
factors, and $\kappa\in\C^{\times}$.

Since its introduction by physicists, the monodromy of the KZ
connection has been extensively studied by mathematicians, leading in
particular to the definition of quantum groups by Drinfeld and Kohno
who considered the case of generic $\kappa$. A few years later Kazhdan
and Lusztig \cite{KL} extended these results to all nonrational
$\kappa \in \C^{\times}$.

There is a canonical decomposition
\begin{equation*}\label{decompose}
  V(\vec{\lambda})
  =\bigoplus_{\nu\in
    P^+} \op{Hom}_{\mf{g}}(V_{\nu}, V(\vec{\lambda}))\tensor V_{\nu} \, .
\end{equation*}
The multiplicity space $\op{Hom}_{\mf{g}}(V_{\nu}, V(\vec{\lambda}))$
is isomorphic to the space of coinvariants
$\mb{A}(\vec{\lambda},\nu^*)$  defined by
\begin{equation}\label{coinvariants}
  \mb{A}(\vec{\lambda},\nu^*)= (V(\vec{\lambda})\tensor
  V^*_{\nu})_{\mf{g}}=\frac{V(\vec{\lambda})\tensor V^*_{\nu}}{\mf{g}
    (V(\vec{\lambda})\tensor V^*_{\nu})}.
\end{equation}
and the KZ connection \eqref{e:KZ} induces a flat connection on the trivial
bundle on $\mc{C}_n$ with fibre $\mb{A}(\vec{\lambda},\nu^*)$, which we
also call the KZ connection.  Note that for a finite dimensional
representation $V$ of $\mf{g}$, the invariants $V^{\mf{g}}$ map
isomorphically to the coinvariants $V_{\mf{g}}$.

\bigskip

If $\kappa \in \Q^{\times}$ the KZ local systems have a motivic
structure: Using results of Schechtman--Varchenko and Looijenga
\cite{SV,L2,BBM}, the authors, together with S.~Mukhopadhyay
\cite{BFM}, constructed certain local systems of Nori motives
$\mc{KZ}_{\kappa}(\vec{\lambda},\nu^*)$ (with coefficients in the
cyclotomic field $\Q(\mu_N)$ for a suitable integer $N$) which have
the property that the Betti realisations over $\C$ are naturally
isomorphic to the dual of the local system given by the solutions of
the KZ equations on $(V(\vec{\lambda})\tensor V_{\nu}^*)_{\mf{g}}$.
Roughly speaking, $\mc{KZ}_{\kappa}(\vec{\lambda}, \nu^*)$ is a family
of motives associated to the cohomology with supports of families of
finite covers of affine spaces ramified along a certain finite set of
hyperplanes; for the precise definition see \cite[Section 3.2]{BFM} or
Section \ref{DProof}. For our present purposes, it is enough to
consider the Hodge realisations $KZ_{\kappa}(\vec{\lambda}, \nu^*)$ of
these motivic local systems, which we will call KZ variations. These
are admissible variations of mixed Hodge structure (VMHS) on
$\mc{C}_n$.

A VMHS $(\mb{V}, \mr{W}_{\bullet},\mr{F}^{\bullet})$ on a complex manifold
$X$ with coefficients in a subfield $k \subset \C$ consists of a
(finite dimensional) $k$-local $\mb{V}$ on $X$ together with an
increasing exhaustive filtration by $k$-local subsystems
$\mr{W}_{\bullet}$ on $\mb{V}$ and a decreasing exhaustive filtration
$\mr{F}^{\bullet}$ consisting of locally free subsheaves on
$\mb{V} \otimes_k \mc{O}_{X}$. These filtrations are required to
satisfy several axioms which we do not recall here, but we mention
that the induced filtrations on each fibre induce a mixed Hodge
structure on it. A VMHS has two basic numerical invariants: the Hodge
polynomial
\[
  \mr{P}(\mb{V}) = \sum_p \rank(\mr{F}^p/ \mr{F}^{p+1} ) t^p \in
  \Z[t,t^{-1}] 
\]
and the weight polynomial
\[
  \mr{Wt(\mb{V})} = \sum_i \rank(\mr{W}_i/\mr{W}_{i-1}) w^i \in
  \Z[w,w^{-1}] .
\]
(In general these are Laurent polynomials, but in our setting they
will be ordinary polynomials.)

In \cite{BFM} we proved that that the structure of the Hodge
polynomials of all KZ variations is governed by a certain commutative
$\Z[t]$-algebra $\mf{R}_{\kappa}(\mf{g})$, defined using the Hodge
polynomials for KZ motives when $n=2$. This is a $\Z[t]$-algebra which
is free as a $\Z[t]$-module with basis
$\{[\lambda]\}_{\lambda \in P^+}$ which is a deformation of the usual
representation ring $\mf{R}(\mf{g})$ of $\mf{g}$ in the sense that
setting $t=1$ in $\mf{R}_{\kappa}(\mf{g})$ recovers
$\mf{R}(\mf{g})$. For $\mf{g} = \mf{sl}_n$ we also gave an algorithm
to compute all products in $\mf{R}_{\kappa}(\mf{g})$.

The goal of the present article is to investigate the weight
filtration of KZ local systems and in particular prove properties of
their weight polynomials. There is no ring analogous to
$\mf{R}_{\kappa}(\mf{g})$ which determines all weight polynomials,
instead we will formulate general conjectures which relate the weight
filtration to the structure of tensor products of modules over quantum
groups.  Although we are still far from proving the full conjectures,
we prove several implications and special cases and also provide some
numerical evidence in support of these conjectures.

Aside from the representation theory of quantum groups, one of our key
methods is the use of another deformation $\mfrfm_{\kappa}(\mf{g})$ of
the representation ring $\mf{R}(\mf{g})$, which we call the first
moment weight deformation. As a module it is free over the dual number
ring $\Z[\epsilon]/(\epsilon^2)$ with basis
$\{[\lambda]\}_{\lambda \in P^+}$, and its algebra structure recovers
$\mf{R}(\mf{g})$ by setting $\epsilon = 0$. The structure constants of
this ring can be defined using the derivative at $1$ of the weight
polynomials of KZ motives when $n=2$ or the derivative at $1$ of the
Hodge polynomials of KZ motives when $n=2$ (for $\kappa$ and its
``conjugate''). The dual nature of this ring provides a basic link
which allows us to transfer results about Hodge polynomials (derived
from refining the basic results in \cite{BFM}) to results about the
weight polynomials.

\smallskip

We now proceed to explain our conjectures and results in more detail.

\subsection{The KZ connection and quantum groups}

For the rest of the introduction, in order to avoid technicalities, we
assume that $\mf{g} = \mf{sl}_m$ for some $m \geq 2$. The results that
we prove in this article apply only to such $\mf{g}$, but our
conjectures apply to all simple $\mf{g}$ and we formulate them as such
in the main body of this article.

From now on we also assume that $\kappa \in \Q_{>0}$.  This does not
cause any loss of generality as far as the monodromy of the KZ
connection is concerned as there is a simple relation between the
cases of $\kappa$ and $-\kappa$: since the Casimir operator is
self-dual, it follows from the definition of the dual connection that
the monodromy of the KZ connection for
$\vec{\lambda} =(\lambda_1,\dots,\lambda_n)$ and $\kappa$ is dual to
that for $(\vec{\lambda})^* := (\lambda_1^*,\dots,\lambda_n^*)$ with
$\kappa$ replaced by $-\kappa$.  Thus, for $\kappa \in \Q_{> 0}$ the
monodromy of the KZ connection on the dual of
$(V(\vec{\lambda})\tensor V_{\nu}^*)_{\mf{g}}$---this is the setting
of \cite{BFM}---is equal to the monodromy of the KZ connection with
parameter $-\kappa$ on the space
$(V(\vec{\lambda})\tensor V_{\nu}^*)_{\mf{g}}$. We also note that the
variations $KZ_{\kappa}(\vec{\lambda}, \nu^*)$ and
$KZ_{-\kappa}(\vec{\lambda}, \nu^*)$ are related by Verdier duality
\ref{signkappa} so there is no loss of generality in considering only
$KZ_{\kappa}(\vec{\lambda}, \nu^*)$ for positive $\kappa$.

In their series of articles \cite{KL}, Kazhdan and Lusztig introduced
a category of modules $\mss{O}_{-\kappa}$ for the affine Lie algebra
$\hat{\mf{g}}$ associated to $\mf{g}$. They also defined a braided
monoidal structure on this category and related it to the monodromy of
the KZ connections. Furthermore, they introduced a braided monoidal
structure on the category $\uqg$-$\Mod$ of finite-dimensional
representations (of type $1$) of the Lusztig quantum group $\uqg$ for
$\mf{g}$ and the root of unity $q = \exp(-i \pi/\kappa)$ and proved
that these categories of modules for $\hat{\mf{g}}$ and $\uqg$ are
equivalent as braided monoidal categories. In particular, it follows
from their results that the monodromy of the KZ connections is
determined purely algebraically from the braiding on either of these
categories. Since the weight filtration on the KZ variations is a
filtration by local systems one might hope that this can also be
determined purely algebraically from these categories and this is the
basic idea that motivated our conjectures.

Given the equivalence of categories, our conjectures could in
principle be formulated in terms of either category, but since some
results that we need, primarily concerning the Andersen filtration
\cite{and-filt} for tilting modules, have only been formulated and
proved in the quantum group setting, we have chosen to work
exclusively with $\uqg$-$\Mod$. However, we expect that working with
the Kazhdan--Lusztig category $\mss{O}_{-\kappa}$ will be essential
for proving the conjectures in full generality.

\subsection{Conjectures}

\subsubsection{A conjecture for simple Weyl modules}
Assume that $q$ is a complex root of unity such that the order of
$q^2$ is $l >1$.  The category $\uq$-$\Mod$ of finite dimensional
modules (of type 1) over $\uq$ (we suppress the $\mf{g}$ from now on
since it is fixed) has properties analogous to the category of finite
dimensional representations of $SL_m$ over a field of characteristic
$p$. In particular, for each dominant weight $\lambda$ there is a Weyl
module $\dq (\lambda)$ whose dimension is equal to that of the simple
$\mf{g}$-module with highest weight $\lambda$. In general,
$\dq(\lambda)$ need not be simple, but we recall that it is simple for
certain $\lambda$: If $l > m$ then this is the case for all $\lambda $
of level $\ell = l -m$, i.e., for those $\lambda$ which can be written
as a sum of at most $\ell$ fundamental weights. This is also the case
for $\lambda$ of the form $ l\lambda ' + (l-1)\rho$ for an arbitrary
dominant weights $\lambda'$, and $\rho$, as usual, being the half-sum
of the positive roots. In general, whenever $\dq(\lambda)$ is simple,
it is also a tilting module.

Now suppose $\vec{\lambda} = (\lambda_1,\lambda_2,\dots,\lambda_n)$ is
such that each $\dq(\lambda_i)$ is simple and therefore also
tilting. A fundamental property of tilting modules is that a tensor
product of tilting modules is also tilting, so
$ \dq(\vec{\lambda}) := \otimes_i \dq(\lambda_i)$ is a tilting
module. For any dominant weight $\nu$, the braiding on the category
$\uq$-$\Mod$ makes the vector space
$\Hom_{\uq}(\dq(\vec{\lambda}), \nq (\nu))$ (where $\nq (\nu)$ denotes
the dual Weyl module with highest weight $\nu$) into a representation
of the pure braid group on $n$-strands, i.e., $\pi_1(\mc{C}_n)$, and
the results of Kazhdan--Lusztig \cite{KL} imply that this
representation is the same as the monodromy representation of the KZ
connection on the dual of
$(V(\vec{\lambda})\tensor V_{\nu}^*)_{\mf{g}}$ when
$q = \exp(-i\pi/\kappa)$.

For an arbitrary tilting $\uq$-module $Q$ and a dominant weight $\nu$,
Andersen has defined \cite{and-filt} a decreasing, separated and
functorial filtration $\Hom_{\uq}(\dq(\nu),Q)^{\bullet} $ on
$\Hom_{\uq}(\dq(\nu),Q)$ with $\Hom_{\uq}(\dq(\nu),Q)^0$ =
$\Hom_{\uq}(\dq(\nu),Q)$.  Taking $Q$ to be $D(\dq(\vec{\lambda}))$,
where $D$ denotes the duality functor in $\uq$-$\Mod$ (which preserves
tilting modules), by duality we get an increasing
filtration\footnote{We index the dual filtration so that the layers
  for the same index $i$ are dual. So if $E^{\bullet}$ is a finite
  dimensional vector space with a decreasing filtration then we define
  an increasing filtration $E^*_{\bullet}$ on $E^*$ by
  $E^*_{i} = \ker(E^* \to (E^{i+1})^*)$.}
$\Hom_{\uq}(\dq(\vec{\lambda}), \nq (\nu))^{\bullet}$ on
$\Hom_{\uq}(\dq(\vec{\lambda}), \nq (\nu))$ by
$\pi_1(\mc{C}_n)$-subrepresentations (since the Andersen filtration is
functorial) which we call the dual Andersen filtration.

\smallskip

If $(V(\vec{\lambda})\tensor V_{\nu}^*)_{\mf{g}}\neq 0$ then
$(\sum_i \lambda_i) - \nu$ is a sum of a simple roots. We let
$\ExpM(\vec{\lambda},\nu^*)$ denote the non-negative integer
$s((\sum_i \lambda_i) - \nu)$, where $s:P \to \Q$ is the linear
function defined by $s(\alpha_i) =1 $ for each simple root $\alpha_i$.
We will sometimes call $\ExpM(\vec{\lambda},\nu^*)$ the ``expected
weight'' of $KZ_{\kappa}(\vec{\lambda}, \nu^*)$ and abbreviate it to
$\ExpM$ to simplify notation when its arguments the function are clear
from the context.

We then have:
\begin{conj}\label{c:tilt}
  Let $q = \exp(-i\pi/\kappa)$ and assume that $q^2$ has order
  $l > 1$.  For $\vec{\lambda}$ such that all $\dq(\lambda_i)$ are
  simple and an arbitrary dominant weight $\nu$, the weight filtration
  of $KZ_{\kappa}(\vec{\lambda}, \nu)$, when transported to
  $\Hom_{\uq}(\dq(\vec{\lambda}), \nq (\nu))$ via the Kazhdan--Lusztig
  equivalence, is equal to the dual Andersen filtration on
  $\Hom_{\uq}(\dq(\vec{\lambda}), \nq (\nu))^{\bullet}$ shifted by
  $\ExpM(\vec{\lambda},\nu^*)$.
\end{conj}
In particular, the conjecture implies that all weights of
$KZ_{\kappa}(\vec{\lambda}, \nu^*)$ are greater than or equal to
$\ExpM(\vec{\lambda},\nu^*)$. It is expected that the ranks of the
layers in the Andersen filtration can be computed in terms of
parabolic Kazhdan--Lusztig polynomials, so the conjecture leads to a
precise numerical prediction for these ranks.  We will prove several
results which provide convincing evidence for this conjecture, but
before describing these results we discuss a more general conjecture
for arbitrary $\vec{\lambda}$.

\subsubsection{A conjecture for arbitrary Weyl modules}

It is possible to extend the Andersen filtration to arbitrary
$\uq$-modules $M$, see Definition \ref{d:filt}, but substituting this
filtration in Conjecture \ref{c:tilt} for arbitrary $\vec{\lambda}$
does not give a correct description of the weight filtration. In fact,
it may happen that $\dq(\vec{\lambda})$ is a tilting module even when
some $\dq(\lambda_i)$ is not simple and in such cases the weight
filtration need not satisfy the condition that all weights are at
least $\ExpM(\vec{\lambda},\nu^*)$. In order to formulate a more
reasonable conjecture one must also consider a filtration on all the
$\dq(\lambda_i)$ which are not simple. We formulate the conjecture
axiomatically by giving the properties we require of this filtration,
see Assumption \ref{a:filtweyl}. It is expected that the radical and
Jantzen filtrations both satisfy this assumption (and are in fact
equal).

\smallskip

The precise statement of our extension of Conjecture \ref{c:tilt} is
somewhat complicated so we refer the reader to Conjecture
\ref{c:simple} in the main text for this, but we describe here the
basic constructions and then give an informal statement.

The main ingredients are:
\begin{itemize}
\item For each $\lambda_i$ we consider a suitable, e.g., radical,
  filtration $\dq(\lambda_i)^{\bullet}$ of $\dq(\lambda_i)$ and the
  tensor product of these filtrations $\dq(\vec{\lambda})^{\bullet}$
  on $\dq(\vec{\lambda})$.
\item We extend the Andersen filtration for tilting modules to a
  filtration on $\Hom_{\uq}(\dq(\nu), M)$ for $M$ an arbitrary
  $\uq$-module.  We apply this to the duals of the layers of the
  filtration $\dq(\vec{\lambda})^{\bullet}$ which are sums of tensor
  products of simple modules.
\end{itemize}

We then consider the $E_1$-spectral sequence computing
$\Ext^*_{\uq}(\dq(\vec{\lambda}), \nq(\nu))$ corresponding to the
filtration $\dq(\vec{\lambda})^{\bullet}$. The filtrations above
induce filtrations on
$\Hom_{\uq}(\mr{Gr}^j(\dq(\vec{\lambda})), \nq(\nu))$ for each
$j$. This is a term on the $E_1$-page of the spectral sequence and the
relevant $E_{\infty}$-term is a subspace of
$\Hom_{\uq}(\mr{Gr}^j(\dq(\vec{\lambda})),\nq(\nu))$, so it acquires
an induced filtration.  Our conjecture then roughly claims that these
induced filtrations determine the weight filtration on
$KZ_{\kappa}(\vec{\lambda}, \nu^*)$.

In general, we do not know how to compute explicitly the
$E_{\infty}$-terms of the spectral sequence so the conjecture does not
immediately lead to explicit formulae for the ranks of the layers of
the weight filtration. However, there are some cases where such
formulae can be obtained.
\begin{itemize}
\item If some $ \lambda_i = l \lambda_i' + (l-1)\rho$ for a dominant
  $\lambda_i'$ then $\dq(\lambda_i)$ is projective. All layers of the
  filtration $\dq(\vec{\lambda})^{\bullet}$ are then also projective,
  and the $\Ext$-spectral sequence degenerates at $E_1$. Furthermore,
  each layer of $\dq(\vec{\lambda})^{\bullet}$ is also tilting, so the
  generalised Andersen filtration equals the original Andersen
  filtration. It follows that the conjecture leads to an explicit
  prediction in terms of Kazhdan--Lusztig polynomials for the ranks of
  the layers of the weight filtration.
\item If $\mf{g} = \mf{sl}_2$ then the $\Ext$-spectral sequence
  degenerates at $E_2$ and the $E_2 = E_{\infty}$ terms can be
  computed explicitly as filtered modules. It follows that in this
  case we get explicit predictions for the ranks of the layers of the
  weight filtration of $KZ_{\kappa}(\vec{\lambda},\nu^*)$ for arbitrary
  $\vec{\lambda}$ and $\nu$.
\end{itemize}
We provide strong evidence for this conjecture for $\mf{sl}_2$ for
arbitrary $\vec{\lambda}$ and $\nu$ and also evidence for all
$\mf{sl}_m$ when some $\dq(\lambda_i)$ or $\dq(\nu)$ is projective (so
the $\Ext$ spectral sequence degenerates).

\subsection{Results}

\subsubsection{The first moment weight deformation ring}

We now describe the main results we prove in this article in support
of our conjectures as well as a basic result on the structure of the
ring $\fm_{\kappa}$, beginning with the latter.

We first recall the definition of $\mf{R}_{\kappa}(\mf{g})$. It is
defined in \cite[Section 1]{BFM} as the free $\mathbb Z[t]$-module
with basis $[\lambda]$ indexed by dominant integral weights and the
product given by
\begin{equation}\label{e:BFM-product}
[\lambda]\star_\kappa[\mu]
=\sum_{\nu\in P^+}P_{\kappa}^{\lambda,\mu,\nu^*}(t)[\nu],
\end{equation}
where
\[
P_\kappa^{\lambda,\mu,\nu^*}(t)
=P\bigl(KZ_{\kappa}((\lambda,\mu),\nu^*)\bigr)
\]
is the Hodge polynomial of $KZ_{\kappa}((\lambda,\mu),\nu^*)$. By
\cite[Theorem 1.4]{BFM}, $\mf{R}_{\kappa}(\mf{g})$ is a unital
commutative associative $\mathbb Z[t]$-algebra such that
specialization at $t=1$ gives the ordinary representation ring:
\begin{equation}\label{e:specialize}
P_\kappa^{\lambda,\mu,\nu^*}(1)=c_{\lambda\mu}^{\nu},
\end{equation}
where $c_{\lambda\mu}^{\nu}$ is the usual tensor product multiplicity.
\begin{definition}\label{d:fmw}
  For a VMHS $(\mb{V},\mr{W}_{\bullet},\mr{F}^{\bullet})$ let
  \[
    FM^{\mr{W}}(\mb{V}) = \sum_a a\cdot\rank(\mr{W}_a/\mr{W}_{a-1}) = \mr{Wt}(\mb{V})'(1)
    \in \Z.
  \]
 
  We call this integer the \emph{first moment} of the weight
  filtration.
\end{definition}
We then define a new ring as follows:
\begin{definition}\label{d:fmring}
For any $\vec{\lambda}$ and $\nu$ let
\[
  FM^\mr{W}_{\kappa}(\vec{\lambda},\nu^*) =
  FM^{\mr{W}}(KZ_{\kappa}(\vec{\lambda},\nu^*))
\]
and define the \emph{first moment weight deformation ring}
$\mfrfm_{\kappa}(\mf{g})$ to be the free
$\Z[\epsilon]/(\epsilon^2)$-module with basis $[\lambda]$ indexed by
dominant integral weights and product given by
\begin{equation}\label{e:fmring}
  [\lambda] \diamond_{\kappa}[\mu] 
  =\sum_\nu\Bigl(c_{\lambda\mu}^{\nu}
+\epsilon FM^{\mr{W}}_\kappa((\lambda,\mu),\nu^*)\Bigr)[\nu].
\end{equation}
\end{definition}
The associativity of this product is nontrivial, but follows from the
results of \cite{BFM}; see Section \ref{s:assoc} for the proof. It is clear
from the definition that setting $\epsilon = 0$ in
$\mfrfm_{\kappa}(\mf{g})$ recovers $\mf{R}(\mf{g})$.

\subsubsection{The general first-moment formula}

We extend the definition of the Littlewood--Richardson coefficients
$c_{\lambda,\mu}^{\nu}$ to allow one of the weights to be
non-dominant. For an arbitrary integral weight $\xi$, say that $\xi$
is \emph{dot-singular} if $\xi+\rho$ lies on a finite Weyl wall, i.e.,
if $ \langle\xi+\rho,\alpha^\vee\rangle=0$ for some root $\alpha$.  We
say that $\xi$ is \emph{dot-regular} if it is not dot-singular.
\begin{itemize}
\item Suppose $\mu$ and $\nu$ are dominant and $\xi$ is a possibly
  non-dominant weight. Then let
  \begin{equation*}
    {c}_{\xi,\mu}^{\nu}={c}_{\mu,\xi}^{\nu}
=
\begin{cases}
0,&\xi+\rho\text{ is dot-singular},\\
(-1)^{\ell(w)}c_{\eta,\mu}^{\nu},
 &\xi+\rho\text{ is dot-regular and }\eta=w\cdot\xi\in P^+.
\end{cases}
\end{equation*}
\item Suppose $\lambda$ and $\mu$ are dominant and $\xi$ is a possibly
  non-dominant weight. Then let
\begin{equation*}%
 {c}_{\lambda,\mu}^{\xi}
=
\begin{cases}
0,&\xi+\rho\text{ is dot-singular},\\
(-1)^{\ell(w)}c_{\lambda\mu}^{\eta},
 &\xi+\rho\text{ is dot-regular and }\eta=w\cdot\xi\in P^+.
\end{cases}
\end{equation*}
\end{itemize}

As we noted earlier, in \cite[Section 11]{BFM} an inductive algorithm
was given for computing all products in $\mf{R}_{\kappa}(\mf{g})$
(with $\mf{g} = \mf{sl}_m$). For the rings $\mfrfm_{\kappa}(\mf{g})$ we
give an explicit formula for the product of pairs of arbitrary basis
elements. For this recall that for dominant $\lambda,\mu,\nu$, ${\ExpM}((\lambda,\mu),\nu^*)$ is defined to be
$s(\lambda +\mu - \nu)$.
\begin{theorem}\label{t:G}
  Let $\mathfrak{g}=\mf{sl}_m$ and let $\kappa\ge2$ be an
  integer.\footnote{The assumption that $\kappa$ be an integer does
    not lead to any loss of generality, see Remark \ref{r:kappa}.}
  Then for dominant integral weights $\lambda,\mu,\nu$, we have
\begin{align}
FM^\mr{W}_\kappa((\lambda,\mu),\nu^*)
={}&{c_{\lambda\mu}^{\nu}\ExpM}((\lambda,\mu),\nu^*)\notag\\
&+\sum_{\alpha\in R^+}
 \sum_{\substack{r\ge1\\r\kappa<\langle\lambda+\rho,\alpha^\vee\rangle}}
  {c}_{\lambda-r\kappa\alpha,\mu}^{\nu}\notag\\
&+\sum_{\alpha\in R^+}
 \sum_{\substack{r\ge1\\r\kappa<\langle\mu+\rho,\alpha^\vee\rangle}}
  {c}_{\lambda,\mu-r\kappa\alpha}^{\nu}\notag\\
&-\sum_{\alpha\in R^+}
 \sum_{\substack{t\in\mathbb Z\\
 t<0\ \mathrm{or}\ t\kappa>\langle\nu+\rho,\alpha^\vee\rangle}}
  {c}_{\lambda,\mu}^{\nu-t\kappa\alpha}.
\label{e:G}
\end{align}
\end{theorem}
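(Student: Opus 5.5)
The plan is to exploit the dual nature of $\mfrfm_{\kappa}(\mf{g})$ announced in the introduction: the first moment of the weight filtration can be rewritten through derivatives at $t=1$ of Hodge polynomials for $\kappa$ and for its ``conjugate''. These derivatives are then computed from the results of \cite{BFM} on Hodge polynomials of KZ motives with $n=2$.

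\textbf{Step 1: from weights to Hodge numbers.} For a mixed Hodge structure with Hodge numbers $h^{p,q}$ we have $\sum_a a\,\mr{gr}^W_a=\sum_{p,q}(p+q)h^{p,q}$. Hence
\[
FM^{\mr W}=\mr{P}(\mb V)'(1)+\ov{\mr{P}}(\mb V)'(1),
\]
where $\ov{\mr P}$ is the Hodge polynomial for the conjugate filtration. The conjugate filtration of $KZ_\kappa$ with coefficients in $\Q(\mu_N)$ is the Hodge filtration of the Galois-conjugate variation, which is $KZ_{\kappa'}$ for a conjugate parameter $\kappa'$ (morally $-\kappa$ or $\kappa$ shifted by the eigenvalue $e\mapsto N-e$). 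One then relates $KZ_{\kappa'}$ back to $KZ_\kappa$ via Verdier duality (Remark \ref{signkappa}). The outcome of this step should be a formula of the form
\[
FM^{\mr W}_\kappa((\lambda,\mu),\nu^*)=\ExpM\cdot c^\nu_{\lambda\mu}+\bigl(\text{first-derivative corrections from } P_\kappa\text{ and } P_{\kappa'}\bigr).
\]
The term $\ExpM$ arises because the ambient cohomology sits in degree $\ExpM$: it is the cohomology of a cover of a space of dimension $\ExpM$, and each simple root contributes one integration variable.

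\textbf{Step 2: compute the derivative of the Hodge polynomials.} By \cite{BFM}, $P_\kappa^{\lambda,\mu,\nu^*}(t)$ is governed by the ring $\mf R_\kappa$. Its first-order term at $t=1$ should be expressible through a character-type identity. Concretely, I would refine the \cite{BFM} description, in which the $F$-graded pieces are given by coinvariants at level $\kappa$ (conformal blocks and their quotients), into a formula for $\sum_p p\,\rank\mr{gr}_F^p$.

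This derivative should be computed via Kac--Walton/Weyl-type alternating sums over the affine Weyl group at level $\kappa$. The first-order Taylor expansion of the affine alternating sum picks out exactly the reflections $s_{\alpha,r\kappa}$ applied to $\lambda$, to $\mu$, and to $\nu$.
\begin{itemize}
\item Reflections with $0<r\kappa<\langle\lambda+\rho,\alpha^\vee\rangle$ applied to $\lambda$ give the terms $c^\nu_{\lambda-r\kappa\alpha,\mu}$, and symmetrically for $\mu$.
\item Reflections applied to $\nu$ give the terms $c_{\lambda\mu}^{\nu-t\kappa\alpha}$ with the complementary range $t<0$ or $t\kappa>\langle\nu+\rho,\alpha^\vee\rangle$, entering with a minus sign. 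This sign reflects the duality between $\nu^*$ in the last slot and the first two slots.
\end{itemize}
The extended coefficients $c_{\xi,\mu}^\nu$, defined via the dot action, are exactly what such alternating sums produce, so I would verify the formula by writing both sides as Weyl-character identities.

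\textbf{Step 3: consistency checks and induction.} The formula should be checked against associativity of $\diamond_\kappa$ (Section \ref{s:assoc}) and against the trivial cases $\lambda=0$ and $\mu=0$. In those cases the variation is pure of weight $0=\ExpM$, and the correction terms must cancel. These checks may also be used to pin down any normalisation in Step 1.

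\textbf{Main obstacle.} I expect Step 2 to be the hardest. In \cite{BFM} only an inductive algorithm is available, not a closed formula for $P_\kappa$. To get around this, I would first derive a closed formula for the first moment of the $F$-filtration using Schmid's limiting mixed Hodge structure as $z_1\to z_2$ in $\mc C_2$ degenerations from $n=3$. This reduces the $n=2$ computation to counting eigenvalue contributions of the residue $\Omega_{12}/\kappa$, whose fractional parts are given by Casimir differences $(c(\lambda)+c(\mu)-c(\nu))/2\kappa$, and then to comparing these with the alternating sum above.
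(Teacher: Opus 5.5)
Your Step 1 is right in substance and is also where the paper starts. The conjugate variation is $KZ_{\kappa^\dagger}(\vec\lambda,\nu^*)$ with $\kappa^\dagger=\kappa/(\kappa-1)$, which gives \eqref{e:Wdef}. Verdier duality does not help here. The isomorphism $KZ_{-\kappa}((\lambda,\mu),\nu^*)\cong KZ_{\kappa}((\lambda,\mu),\nu^*)^*(-\ExpM)$ only yields $P^{\lambda,\mu,\nu^*}_{-\kappa}(t)=t^{\ExpM}P^{\lambda,\mu,\nu^*}_{\kappa}(t^{-1})$, hence the tautology $(P^{\lambda,\mu,\nu^*}_{\kappa})'(1)+(P^{\lambda,\mu,\nu^*}_{-\kappa})'(1)=\ExpM\,c^{\nu}_{\lambda\mu}$.

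The genuine gap is Step 2, which carries the entire content of the theorem. As you concede, there is no closed formula for $P^{\lambda,\mu,\nu^*}_\kappa(t)$. So saying that a first-order expansion of an affine alternating sum ``picks out'' the reflections $s_{\alpha,r\kappa}$ describes the shape of \eqref{e:G} rather than deriving it. Your fallback cannot fill this in. For $n=2$ the KZ connection acts on $(V_\lambda\otimes V_\mu\otimes V_\nu^*)_{\mf g}$ by the scalar $\Omega_{12}/\kappa$. The local monodromy therefore remembers a single number, essentially $(c(\nu)-c(\lambda)-c(\mu))/2\kappa$ mod $\Z$, and this does not determine Hodge numbers. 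For the Pieri triple $(\lambda,\varpi_k,\lambda+\sum_{i\in I}L_i)$ this number depends only on $(\lambda+\rho,\sum_{i\in I}L_i)$. The weight \eqref{WeightFormula}, by contrast, depends on how many of the individual $h_{ij}(\lambda)$, $(i,j)\in\Inv(I)$, are $\equiv0$ or $\equiv1\pmod\kappa$. Degenerating $n=3$ variations as $z_1\to z_2$ does preserve Hodge polynomials, but what it produces is the associativity of $\star_\kappa$. That is, it gives relations among the $n=2$ data, not their values.

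What is missing is the rigidity argument that makes associativity the engine of the proof rather than a consistency check. The paper's route goes as follows.
\begin{enumerate}
\item Since $\mf R_\kappa(\mf{sl}_m)$ is a polynomial ring over $\Z[t]$ on $[\varpi_1],\dots,[\varpi_{m-1}]$, the isomorphism $\Phi_\kappa$ shows that $FM^{\mr W}_\kappa$ is the Leibniz defect \eqref{e:W-B} of a linear operator $B=b_\kappa+b_{\kappa^\dagger}$ killing $1$ and the fundamental classes.
\item Writing $x_k=\chi_{\varpi_k}$, the coefficients of the defects $B(\chi_\lambda x_k)-B(\chi_\lambda)x_k$ are exactly the weights of $KZ_\kappa((\lambda,\varpi_k),\mu^*)$. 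Together with $B(1)=0$, these determine $B$ by induction on monomials.
\item \cite{BFM} gives these weights only for $k=1$. The paper proves the Pieri rule for all $k$ (Theorem \ref{t:rational-pieri}, via $T_1T_k=T_kT_1$, the identity \eqref{e:S} and a uniqueness lemma), with weight $|\Inv(I)|+N^{(\kappa)}_0-N^{(\kappa)}_1$.
\item It then checks that $\Fop=\Sop+\Jop$, with $\Jop$ the Jantzen-type sum \eqref{e:J}, has exactly these defects and kills the fundamental characters (Proposition \ref{p:minJ}). Hence $B_{\op{char}}=-\Fop+\partial_s$.
\item Finally, $\Sop$ gives $\ExpM\,c^\nu_{\lambda\mu}$, and $\Jop(\chi_\lambda)\chi_\mu$ and $\chi_\lambda\Jop(\chi_\mu)$ give the two $r$-sums. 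The term $-\Jop(\chi_\lambda\chi_\mu)$ becomes the $\nu$-sum over $t<0$ or $t\kappa>\langle\nu+\rho,\alpha^\vee\rangle$ via the adjointness $(\chi_\nu,\Jop\chi_\eta)=(\Aop\chi_\nu,\chi_\eta)$ of Proposition \ref{p:adjoint}.
\end{enumerate}
Your plan contains none of the three key ingredients: the reduction to fundamental Pieri products, the $k>1$ Pieri weights, and the adjointness that produces the complementary $\nu$-range.
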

This is proved in Section \ref{s:fm}. A similar formula for
$FM_{\kappa}^{\mr{W}}(\vec{\lambda},\nu^*)$ for arbitrary
$\vec{\lambda}$ is given in Lemma \ref{l:fmwn}.

For $\mf{g} = \mf{sl}_2$, since $KZ_{\kappa}((\lambda,\mu),\nu^*)$
has rank at most $1$---and for general $\mf{sl}_m$ whenever
$c_{\lambda,\mu}^{\nu} =1$---the above is equivalent to an explicit
formula for the weight. For $\mf{sl}_2$ we make the formula in the
theorem even more explicit in Proposition \ref{p:A1-floor}.

\subsubsection{Evidence for the quantum group conjectures}

Conjecture \ref{c:tilt} gives an explicit prediction for all the
weights of $KZ_{\kappa}(\vec{\lambda},\nu^*)$ when the
$\dq(\lambda_i)$ are simple so, in particular, it gives a prediction
for $FM^{\mr{W}}_{\kappa}(\vec{\lambda},\nu^*)$. As the first evidence
for this conjecture we prove
\begin{theorem}\label{t:fmpred}
  The prediction following from Conjecture \ref{c:tilt} for
  $FM^{\mr{W}}_{\kappa}(\vec{\lambda},\nu^*)$ is true for $\mf{g} = \mf{sl}_m$,
  $m \geq 2$.
\end{theorem}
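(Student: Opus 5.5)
Both sides of the claimed identity are additive in a way that reduces them to first moments of filtrations, so the plan is to compare two first-moment computations. On the weight side, the first moment $FM^{\mr{W}}_{\kappa}(\vec{\lambda},\nu^*)$ is computed for arbitrary $\vec\lambda$ by the $n$-point analogue of Theorem \ref{t:G} (Lemma \ref{l:fmwn}). That formula has the form $c\cdot\ExpM$ plus a signed sum of generalised Littlewood--Richardson multiplicities over reflected weights $\lambda_i-r\kappa\alpha$ and $\nu-t\kappa\alpha$. On the quantum side, the predicted first moment is $\ExpM\cdot\dim\Hom_{\uq}(\dq(\vec\lambda),\nq(\nu))$ plus the first moment of the dual Andersen filtration. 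By duality, the latter equals $\sum_{j\ge1}\dim\Hom_{\uq}(\dq(\nu),Q)^j$ with $Q=D(\dq(\vec\lambda))$. So the proof reduces to computing this Andersen sum and matching it term by term with Lemma \ref{l:fmwn}.

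For the Andersen sum I would use Andersen's sum formula for tilting modules. The sum $\sum_{j\ge1}\dim\Hom(\dq(\nu),Q)^j$ equals the $\nu$-coefficient of an alternating sum of Weyl characters. This sum is obtained from the formal character of $Q$ via the Jantzen-type sum formula. Concretely, the formula is a sum over $\alpha\in R^+$ and over $r$ with $rl$ (or the appropriate multiple of $\kappa$, since $q=\exp(-i\pi/\kappa)$ and $l$ is the order of $q^2$) compared against $\langle\nu+\rho,\alpha^\vee\rangle$. Each term is a multiplicity $(Q:\dq(s_{\alpha,r}\cdot\nu))$, taken with sign after moving to the dominant chamber by the dot action. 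Since $Q$ is tilting with $\Delta$-flag, these multiplicities are read off from $\op{ch}\dq(\vec\lambda)=\prod_i\op{ch}V_{\lambda_i}$, i.e.\ from ordinary tensor product multiplicities. This produces exactly terms of the form $c^{\nu-t\kappa\alpha}_{\vec\lambda}$ in the notation extended to non-dominant weights, matching the last line of \eqref{e:G}.

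The key remaining point is the role of the hypothesis that each $\dq(\lambda_i)$ is simple. In Lemma \ref{l:fmwn}, the terms with $\lambda_i-r\kappa\alpha$ and $r\kappa<\langle\lambda_i+\rho,\alpha^\vee\rangle$ must vanish, or cancel against part of the $\nu$-sum. Simplicity of $\dq(\lambda_i)$ corresponds, via the linkage principle and the Jantzen sum formula for $\dq(\lambda_i)$ itself, to the vanishing of the Jantzen sum $\sum_{\alpha,r}\pm\op{ch}\dq(s_{\alpha,r}\cdot\lambda_i)=0$. I would insert this character identity into the tensor product: multiplying by the other characters and taking the $\nu$-multiplicity shows that the $\lambda_i$-correction terms in Lemma \ref{l:fmwn} sum to zero. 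After that, the two formulas agree.

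The main obstacle I anticipate is bookkeeping. One must check that the ranges of $r$ and $t$ match, since Andersen's formula involves both $t<0$ and large $t$ relative to $\nu$. The normalisation $l$ versus $\kappa$ must be handled (Remark \ref{r:kappa}, with $\kappa$ an integer so that $l=\kappa$). One must also verify that the index shift by $\ExpM$ is consistent with the duality convention for the dual filtration. I would first verify the whole identity for $\mf{sl}_2$ with $n=2$ using Proposition \ref{p:A1-floor}, and then run the general argument.
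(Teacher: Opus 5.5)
Your proposal is correct and is essentially the paper's proof (Theorem \ref{t:many}). The ingredients are the same. On the KZ side you use Lemma \ref{l:fmwn}, with the terms $\Jop(\chi_{\lambda_i})\prod_{j\neq i}\chi_{\lambda_j}$ killed by the Jantzen sum formula, since simplicity of $\dq(\lambda_i)$ gives $\Jop(\chi_{\lambda_i})=0$. On the quantum side you use the Andersen--Kulkarni tilting sum formula, and then match the two signed sums over $\nu-t\kappa\alpha$. The one step you leave implicit is rewriting $[\chi_\nu]\Jop\bigl(\prod_i\chi_{\lambda_i}\bigr)$ as $\sum_{\alpha>0}\sum_{t<0\ \mathrm{or}\ t\kappa>\langle\nu+\rho,\alpha^\vee\rangle} c_{\vec\lambda}^{\,\nu-t\kappa\alpha}$. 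The paper makes this identification explicitly via the adjointness Proposition \ref{p:adjoint}, and it is the step that should replace your appeal to an ``$n$-point analogue of Theorem \ref{t:G}'', since Lemma \ref{l:fmwn} is stated only in terms of $\Jop$ of products.
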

This is proved in an explicit form in Theorem
\ref{t:many}. 

\smallskip
As further evidence, we prove
\begin{theorem}\label{t:evid}
  $ $
  \begin{enumerate}
  \item If $\mf{g} = \mf{sl}_2$ and $\vec{\lambda}$ is such that all
    $\dq(\lambda_i)$ are simple then the weights of
    $KZ_{\kappa}(\vec{\lambda},\nu^*)$ are in the range
    $[\ExpM(\vec{\lambda},\nu^*),\ExpM(\vec{\lambda},\nu^*) +1]$
  \item For general $\mf{sl}_m$ assume that $\vec{\lambda}$ is such
    that each $\lambda_i$ is of level $\ell$, i.e., a sum of at most
    $\ell$ fundamental weights. If
    $\ell + m > (\rho + \varpi_1, \rho + \varpi_1)/2$, then the
    weights of $KZ_{\kappa}(\vec{\lambda},\nu^*)$ are at least
    $\ExpM(\vec{\lambda},\nu^*)$.
  \end{enumerate}
\end{theorem}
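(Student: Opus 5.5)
The plan is to combine three inputs: the Hodge-theoretic information from \cite{BFM}, the first-moment formula of Theorem \ref{t:G} (together with its $n$-point analogue, Lemma \ref{l:fmwn}), and a degeneration argument based on Schmid's theorem on limiting mixed Hodge structures. The common thread is to bound the weights from below by $\ExpM$ and then use an exact count to pin down their distribution.

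For part (1), take $\mf{g}=\mf{sl}_2$ and all $\dq(\lambda_i)$ simple.

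1. Reduce to $n=2$ by letting the points collide successively. The limiting mixed Hodge structure at a normal crossing boundary point of a compactification of $\mc{C}_n$ is described by factorisation.
   - Its graded pieces are built from tensor products of $n=2$ KZ variations $KZ_\kappa((\lambda_i,\lambda_j),\mu^*)$ and lower-$n$ variations.
   - By Schmid's theorem, the weight filtration of the fibre is bounded by the limit filtration: the monodromy weight filtration is centred on the original weights.
   - In particular, the minimal weight of the limit is at most the minimal weight of the fibre. The first moment is preserved, since the limit MHS has the same weight polynomial for the $\mr{W}$ induced on the nearby fibre.
2. For $\mf{sl}_2$ each $n=2$ variation has rank $\le 1$. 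Its weight is read off from Theorem \ref{t:G}, made explicit in Proposition \ref{p:A1-floor}.
3. Check directly that when $\lambda,\mu$ have level $\le \ell$ (i.e. $\lambda,\mu\le l-2$) or are of the form $l\lambda'+(l-1)$, the correction terms in \eqref{e:G} contribute $0$ or $1$. This gives weight $\ExpM$ or $\ExpM+1$.
4. Propagate through the factorisation. The expected weights are additive under fusion: $\ExpM$ of the total equals $\ExpM$ of the pieces plus $\ExpM$ of the intermediate channel. The subtle point is that simplicity of the intermediate $\dq(\mu)$ is not preserved, since $\mu$ may leave the alcove.
5. To control the upper bound $\ExpM+1$, use instead the first-moment identity of Theorem \ref{t:many}. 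It gives $FM^{\mr W}=\ExpM\cdot\rank + (\text{number of weight-}(\ExpM+1)\text{ pieces predicted by Andersen})$. Combined with the lower bound from step 6 and an upper bound coming from the Hodge numbers being in $\{0,1\}$ shifts (from \cite{BFM}: the Hodge polynomial has degrees in a range of length one), this forces every weight into $[\ExpM,\ExpM+1]$.

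For part (2), the lower bound $\ge\ExpM$ should come from Hodge theory, not degeneration.

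6. The motive is a piece of $H^{\ExpM}_c$- or $H^{\ExpM}$-type cohomology of a cover of an $\ExpM$-dimensional affine space, and the Hodge filtration computations of \cite{BFM} identify the Hodge numbers. Weights of $H^{\ExpM}$ of a smooth variety are at least $\ExpM$ precisely when the relevant classes lie in the image from the smooth compactification, or dually when no $\mr{F}$-piece of small index survives.
7. Concretely, I would show that $\mr{F}^p$ with $p$ small vanishes unless $\rank \mr{Gr}^{\mr W}_{<\ExpM}=0$. The bound $\ell+m>(\rho+\varpi_1,\rho+\varpi_1)/2$ is exactly what makes the Casimir eigenvalue comparison in the BFM Hodge formula (involving $(\lambda+2\rho,\lambda)/2\kappa$-type exponents) land in the correct range for all $\nu$ occurring.
8. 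Deduce that the Hodge polynomial is concentrated in degrees forcing weights $\ge\ExpM$. Any weight $w<\ExpM$ part would contribute Hodge types $(p,w-p)$ with $p$ outside the range allowed by the BFM formula and its conjugate $\kappa\mapsto -\kappa$.

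The main obstacle is step 7: turning Hodge-number information into a weight bound requires using both $\kappa$ and its conjugate, as in the construction of $\mfrfm_\kappa$, to bound $p$ and $q$ simultaneously. I would first attempt this for $n=2$ and then extend to general $n$ via the degeneration of step 1.
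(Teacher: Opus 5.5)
Both parts of your plan have genuine gaps.

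\textbf{Part (2).} Steps 6--8 try to extract a weight bound from the Hodge polynomials of $KZ_\kappa$ and of its conjugate $KZ_{\kappa^\dagger}$. These polynomials record only the marginals $\sum_q h^{p,q}$ and $\sum_p h^{p,q}$. They therefore determine $FM^{\mr W}$, which is exactly \eqref{e:Wdef} and the reason $\mfrfm_\kappa$ exists, but they do not determine individual weights. For instance, a rank-two MHS of types $\{(0,0),(1,1)\}$ and one of types $\{(1,0),(0,1)\}$ have identical $\mr F$- and $\bar{\mr F}$-polynomials but weights $\{0,2\}$ versus $\{1,1\}$. The only bound available this way is $p+q\ge \min p+\min q$, and you give no reason why $\min p+\min q\ge\ExpM$.

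Step 6 is also misdirected. $H^k$ of a smooth variety always has weights $\ge k$. The real issue is that, by Proposition~\ref{p:kz}, the motive is the image of $H^{\ExpM}(V',q_!\mathcal{L}(\eta))\to H^{\ExpM}(V,j_!\mathcal{L}(\eta))$, and in general $V\supsetneq U$.

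The paper uses the hypothesis through residues, not Hodge numbers.
\begin{itemize}
\item For $\vec\lambda=\varpi_1^n$ and $\kappa>(\rho+\varpi_1,\rho+\varpi_1)/2$, the formulas of Section~\ref{residue} show that no residue on a dense boundary stratum is a positive integer. Hence $V=U$, and the motive embeds in $H^{\ExpM}(U,\mathcal{L}(\eta))$, which has weights $\ge\ExpM$ (Lemma~\ref{l:D}).
\item One then fuses $\varpi_1$-points into level-$\ell$ weights (Lemma~\ref{PieriMonodromy}). Since $\lambda_1$ has level $\ell$ and $\kappa=\ell+m$, the relevant Casimir differences are $<1$. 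So the local monodromy is semisimple and the limit keeps the weights.
\item The Pieri factors $KZ_\kappa((\lambda_1,\varpi_1),\mu^*)$ are pure of their expected weight (conformal blocks, or $\mathbb{Q}(0)$). Hence the lower bound passes to each remaining factor.
\end{itemize}

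\textbf{Part (1).} Your $n=2$ analysis is fine. Bounding the fibre's weight range by that of the limit (step 1) is also valid. The problem is that channels with non-simple $\dq(\mu)$ genuinely leave $[\ExpM,\ExpM+1]$, and you cannot always avoid them.

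Take $\kappa=3$, $\vec a=(2,2,2)$, $x=2$. All $\dq(a_i)$ are simple and $\ExpM=2$. Every pairwise collision has the channel $\mu=4$. By Proposition~\ref{p:A1-floor}, $KZ_3((4,2),2)$ has weight $1$, one less than expected, and $KZ_3((2,2),4)$ has weight $0$. So the limit contains weight $\ExpM-1$ and cannot yield the lower bound.

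Step 5 does not repair this, for four reasons.
\begin{itemize}
\item A lower bound together with $FM^{\mr W}=\ExpM\cdot\rank+k$ (Theorem~\ref{t:many}) does not exclude a weight $\ExpM+2$.
\item The claim that the Hodge polynomial has degrees in a range of length one is established neither in \cite{BFM} nor here.
\item Even if granted for both $\mr F$ and $\bar{\mr F}$, it would only give a window of length two.
\item Part (2) does not cover $a_i\in N_0$ with $a_i>\kappa-1$, since these are not of level $\ell$.
\end{itemize}

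The missing idea is an auxiliary \emph{pure} variation. For $x\notin N_0$, let $b$ be the least element of $N_0$ above $x$ and set $c=b-x$. Then $KZ_\kappa((\vec a,c),b)$ is pure of weight $\ExpM'$: this is Theorem~\ref{purity}(b), i.e.\ the lower bound plus Theorem~\ref{t:many}.

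Now send the $c$-point to infinity. The nearby cycles are filtered with pieces $KZ_\kappa(\vec a,y)\otimes KZ_\kappa((y,c),b)$, where $y=x+2j$ and $0\le j\le c$, and the two-point factors have explicitly known weights.
\begin{itemize}
\item The $y=x$ piece is a quotient of the limit, since it has the smallest Casimir value, so its weights are $\ge\ExpM'$.
\item By induction on $\ExpM$, the pieces with $j>0$ have weights in $[\ExpM'-1,\ExpM'+1]$.
\item Schmid's symmetry $\rank\mr{Gr}_{\ExpM'-k}=\rank\mr{Gr}_{\ExpM'+k}$ for the limit then forbids weights $\ge\ExpM'+2$ in the $y=x$ piece.
\item The same symmetry gives the reflection $w^+(\vec a,x)=w(\vec a,u(x))$ (Theorem~\ref{E22}).
\end{itemize}
Points carrying $a_i\in N_0$ beyond the first alcove are then handled by a further induction. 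It uses collisions with scalar monodromy and the first-moment purity argument.
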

See Theorems \ref{E22} and \ref{purity} which also contain other
results supporting Conjecture \ref{c:tilt}.

\smallskip

For Conjecture \ref{c:simple} our main evidence is: 
\begin{theorem}\label{t:simple}
  The prediction implied by Conjecture \ref{c:simple} for
  $FM^{\mr{W}}_{\kappa}(\vec{\lambda},\nu^*)$ is true for
  $\mf{g} = \mf{sl}_2$ and all $\vec{\lambda}$, $\nu$.
\end{theorem}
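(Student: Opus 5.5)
Since $FM^{\mr W}$ is additive in the rank data and the weight filtration of a VMHS on $\mc C_n$ is a filtration by local systems, the prediction of Conjecture \ref{c:simple} for the first moment reduces to a numerical identity. The plan is to compute both sides as explicit integers and compare them. On the Hodge side we use the $n$-point analogue of Theorem \ref{t:G} (Lemma \ref{l:fmwn}), specialised to $\mf{sl}_2$. For $\mf{sl}_2$ there is a single positive root $\alpha$, and the Littlewood--Richardson coefficients with one non-dominant argument collapse to $0$ or $\pm 1$ via the reflection $\xi\mapsto -\xi-2$. Hence $FM^{\mr W}_\kappa(\vec\lambda,\nu^*)$ becomes
\[
\ExpM\cdot \dim\mb A(\vec\lambda,\nu^*)
\]
plus signed counts of multiplicities of the form $\dim\mb A(\lambda_1,\dots,\lambda_i-2r\kappa,\dots,\nu^*)$ (reflected when necessary), minus the analogous terms in $\nu$. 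Since the ring $\mfrfm_\kappa(\mf{sl}_2)$ is associative and commutative (Section \ref{s:assoc}), we may reduce to iterated two-point products. There Proposition \ref{p:A1-floor} gives a closed floor-function formula.

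On the quantum group side, for $\mf{sl}_2$ with $q^2$ of order $l=\kappa$ (say), each $\dq(\lambda_i)$ has radical filtration of length at most $2$. For $\lambda_i=al+b$ with $0\le b\le l-2$ we have $\mr{Gr}^0=L_q(\lambda_i)$ and $\mr{Gr}^1=L_q(\lambda_i')$, where $\lambda_i'=(a-1)l+(l-2-b)$ is the linkage reflection, and the second layer is nonzero only when $a\ge1$. We form the tensor product filtration; its layers are sums of tensor products of simples. The $\Ext$-spectral sequence degenerates at $E_2$ (as stated in the introduction for $\mf{sl}_2$), with $E_2=E_\infty$ computable as filtered modules. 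From this the predicted first moment is
\[
\sum_j \bigl(\text{first moment of the generalised Andersen filtration on } E_\infty^{j}\bigr)+ j\cdot\dim E_\infty^j ,
\]
shifted by $\ExpM\cdot\dim$. Steinberg tensor product and Andersen's sum formula (the Jantzen-type sum formula for the Andersen filtration, which computes $\sum_i \dim \Hom(\dq(\nu),Q)^i$ via characters of Weyl modules of reflected weights) express each first moment as a signed sum of multiplicities of $\nu$ reflected by the affine Weyl group $\nu\mapsto -\nu-2+2t l$. This matches the last sum in \eqref{e:G}. Similarly, the $\mr{Gr}^1$ contributions from the radical layers should match the terms $c_{\lambda-r\kappa\alpha,\ldots}$, since $\lambda_i'$ and $\lambda_i-2r\kappa$ are related by the dot action and a sign.

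The key steps, in order, are as follows. (1) Write the Lemma \ref{l:fmwn} formula for $\mf{sl}_2$ explicitly. (2) Compute the predicted moment by replacing the $E_\infty$ contributions with an Euler characteristic. Because the first moment is additive and the differentials $d_1$ shift the filtration degree by exactly one while presumably respecting the Andersen filtrations strictly, the alternating sum over $E_1$ terms equals the sum over $E_\infty$ terms after accounting for the $\Ext^1$ terms. (3) Apply the sum formula to each $E_1$ term. (4) Match the terms by a bijection between the reflections $r$ and $t$ appearing in both.

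Step (2) is the main obstacle. Replacing $E_\infty$ by $E_1$ requires knowing that $\Ext^1$ contributions cancel in the moment computation, i.e.\ that the generalised Andersen filtration is strictly compatible with the spectral sequence differentials. If this fails, I would instead treat it by induction on $n$. Fix all but one $\lambda_i$ simple and use the explicit $\mf{sl}_2$ short exact sequence
\[
0\to L_q(\lambda')\to\dq(\lambda)\to L_q(\lambda)\to 0 .
\]
Then verify the prediction for each $\nu$ directly through a long exact sequence, where the connecting maps are computed in the Ext-quiver of the $\mf{sl}_2$ blocks, which is a path of length one between neighbours.
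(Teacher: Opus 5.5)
Your overall frame is right: the Hodge side from Lemma \ref{l:fmwn}, the quantum side from $E_2$-degeneration, then a comparison. But there is a genuine gap at exactly the step you flag. Step (2) rests on an unproved strict compatibility of generalised Andersen filtrations with the differentials. It would also require the generalised Andersen filtration on higher $\Ext$ groups of simple modules, which the conjecture never uses: it only filters $\Hom^0_{\uq}\subset E_1^{-c,c}=\Hom_{\uq}(\dq(\vec\lambda)^c/\dq(\vec\lambda)^{c+1},\nq(\nu))$.

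The missing idea, which makes step (2) unnecessary, is structural. By Lemma \ref{l:ab} each layer is $S_c\oplus P_c$, with $S_c$ a sum of non-projective simples and $P_c$ projective tilting. By Example \ref{e:simple} the filtration on $\Hom_{\uq}(S_c,\nq(\nu))$ is concentrated in degree $0$. Since $P_c$ splits off as a filtered summand (Lemma \ref{l:fil2}), $d_1$ vanishes on $\Hom_{\uq}(P_c,\nq(\nu))$. Hence every class of positive Andersen degree survives to $\Hom^0_{\uq}$, and the unshifted first moment on $\Hom^0_{\uq}$ equals that on all of $E_1^{-c,c}$. The only input needed from the spectral sequence is then $\dim\Hom^0_{\uq}$, which Theorem \ref{t:sl2deg} gives as $\sum_i(-1)^i\dim\Ext^i_{\uq}$ of the layer $c-i$; see \eqref{e:degform}.

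Your step (3) also fails as stated. The Andersen sum formula concerns tilting modules, while the $E_1$ terms involve $\Ext^i_{\uq}(L_q(\mu),\nq(\nu))$ for non-tilting simples. What is needed instead are the identities of Lemma \ref{l:js}:
\begin{itemize}
\item for simple $L_q(\mu)$ there is at most one $i$ with $\Ext^i_{\uq}(L_q(\mu),\nq(\nu))\neq 0$, and then $[\chi_\nu]\mathrm{ch}\,L_q(\mu)=(-1)^i$ and $[\chi_\nu]\Jop(\mathrm{ch}\,L_q(\mu))=(-1)^i\, i$ (the sign the proof of that lemma produces);
\item on tilting summands, $-[\chi_\nu]\Jop$ is exactly the Andersen first moment.
\end{itemize}
Write $\prod_i\chi_{\lambda_i}$ as the sum of the characters of the layers. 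Use Lemma \ref{l:sum} with the Jantzen sum formula \eqref{AKFormula} to rewrite $[\chi_\nu]\sum_i\Jop(\chi_{\lambda_i})\prod_{j\ne i}\chi_{\lambda_j}$ as $-\sum_c c\,[\chi_\nu]\mathrm{ch}(\dq(\vec\lambda)^c/\dq(\vec\lambda)^{c+1})$. The terms $(-1)^i i\dim\Ext^i$ then cancel against the reindexing $c\mapsto c-i$ in \eqref{e:degform}.

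This bookkeeping only works because the shift is $\ExpM-c$, not $\ExpM+c$ as in your term $+j\dim E_\infty^j$. For example, take $\kappa=3$, $\vec\lambda=(4,4)$, $\nu=4$. Theorem \ref{t:G} gives weight $\ExpM-1=1$, and the unique surviving class lies in $\Hom^0_{\uq}$ of the layer $c=1$, in Andersen degree $0$; your formula would give weight $\ExpM+1$.

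Three further points:
\begin{itemize}
\item No bijection between reflections (your step (4)) is needed, since that bookkeeping is already absorbed into $\Jop$ via Proposition \ref{p:adjoint}.
\item The reduction to two-point products via associativity does not help, because the quantum-side prediction does not factor through iterated two-point products.
\item The fallback induction is too unspecific to assess.
\end{itemize}
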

See Theorem \ref{t:sl2fm}. It follows from this that Conjecture
\ref{c:simple} is true for $\mf{g} = \mf{sl}_2$ and $n=2$.

\smallskip
We also prove that an analogous statement holds for all $\mf{sl}_m$
when one of the $\lambda_i$ or $\nu$ is such that the corresponding
Weyl module is projective, see Proposition \ref{p:N_0}.

\smallskip

Finally, in Section \ref{s:compute} we give a simple algorithm based
on Conjecture \ref{c:simple} for computing the ranks of the layers of
the weight filtration for $\mf{g} = \mf{sl}_2$ and arbitrary
$\vec{\lambda}$ and $\nu$. This relies on a degeneration result,
Theorem \ref{t:sl2deg}, for the $\Ext$-spectral sequence. We do not
know any non-conjectural method for computing these numbers for
general $\vec{\lambda}$ and $\nu$.

\subsection{Methods}

The proofs in the paper rely on several distinct sets of inputs. The
first is the basic factorisation result for KZ variations proved in
\cite{BFM} and some of its consequences related to the rings
$\mf{R}_{\kappa}(\mf{g})$. A refinement of a key result from
\cite{BFM} plays an important role here and may be of independent
interest, so we discuss this briefly here; for details see Theorem
\ref{t:rational-pieri}.

In the case $\mf{g} = \mf{sl}_m$, as noted earlier an inductive
algorithm for computing all products in $\mf{R}_{\kappa}(\mf{g})$ was
given in \cite{BFM}. It was based on an explicit formula for products
of the form $[\varpi_1]\star_{\kappa} [\lambda]$, where $\lambda$ is
any dominant weight. A key new ingredient in the proof of Theorem
\ref{t:G} is an explicit formula for all products of the form
$[\varpi_i]\star_{\kappa}[\lambda]$, $1 \leq i \leq m-1$, which then
leads to an explicit formula for
$[\varpi_i]\diamond_{\kappa}[\lambda]$. For $i=1$ this formula was
derived in \cite[Section 11.2]{BFM} by an explicit geometric
computation of the corresponding motives/MHS, but for $i > 1$ we
derive these using formal arguments based on the algorithm in
\cite[Section 11.3]{BFM}. Other inputs in the proof of Theorem
\ref{t:rational-pieri} come from constructions/formulas related to the
Jantzen and Andersen sum formulas in representation theory.

We also use various results from the representation theory of quantum
groups at roots of unity. In particular, we use heavily the theory of
tilting modules and the Andersen sum formula \cite{and_sum} (with
improvements by Andersen--Kulkarni \cite{AK}) plays a key role in the
proof of Theorem \ref{t:fmpred}. While tensor products of tilting
modules may be computed algorithmically, it is not known in general
how to compute the (generalised) Andersen filtration for tensor
products of simple modules. In the case of $\mf{sl}_2$ though this is
not difficult and the proof of the degeneration of the spectral
sequence of Theorem \ref{t:sl2deg}, a key ingredient in the proof of
Theorem \ref{t:simple}, relies on such explicit computations.

The proof of Theorem \ref{t:evid} depends more directly on the
geometry underlying the construction of the KZ variations. A basic
result, Lemma \ref{l:D}, is proved using this geometry and the general
results are proved by examining the local monodromy as some points in
$\mc{C}_n$ coalesce or approach infinity using results from
\cite{BFM}. Schmid's theorem on the limiting mixed Hodge structure \cite[Theorem 6.16 and Lemma 6.4]{schmid} is then used to 
derive properties of the weight filtration from properties of the
monodromy.

\subsection{Organisation of the article}
In Section \ref{s:conj} we formulate our main conjectures relating the
weight filtration and the representation theory of quantum
groups. Then in Section \ref{s:sl2} we specialise to the case of
$\mf{sl}_2$.  We prove Theorem \ref{t:sl2deg} and explain how the
conjectures lead to an explicit prediction for the ranks of the layers
of the weight filtration.

Section \ref{s:pieri} is devoted to the proof of Theorem
\ref{t:rational-pieri} which gives the Pieri formula in
$\mf{R}_{\kappa}(\mf{sl}_m)$ for all fundamental weights. This is then
used in Section \ref{s:fm} to prove Theorem \ref{t:G} which gives an
explicit formula for the $\diamond_{\kappa}$-product. We also make
this formula more explicit in the case of $\mf{sl}_2$; this is needed
for the proof of Theorem \ref{t:simple}.

Section \ref{s:wmi} is devoted to the proof of Theorem \ref{t:fmpred}
by deriving explicit formulas for both relevant quantities and showing
that they agree.

Section \ref{s:exp} is devoted to the proof of Theorem \ref{t:evid} as
well as some other related results.

\subsection{Notation}\label{s:notations}

\subsubsection{Lie theoretic notation}\label{s:lie}

As in the introduction, let $\mf{g}$ be a split finite dimensional
simple Lie algebra over a field $\msk\subseteq \C$ and let
$\mf{h}\subset \mf{g}$ be a Cartan subalgebra. Let
$R=R^+\cup R^{-} \subseteq\mf{h}^*$ be the set of roots and
$\Delta = \{\alpha_1,\alpha_2,\dots,\alpha_{r}\} \subseteq R^+$, with
$r = \rank(\mf{h})$, be the set of simple roots corresponding to a
fixed Cartan decomposition. We let $W$ denote the Weyl group.

Let $(\ ,\ )$ be the Killing form on
$\mf{h}^*$ normalised by $(\theta,\theta)=2$, where
$\theta\in \mf{h}^*$ is the highest root. We let $\{X_i\}$, be an
orthonormal basis of $\mf{g}$ with respect to the Killing form. Let
$h^{\vee}$ be the dual Coxeter number of $\mf{g}$; this is half the
scalar by which the normalised Casimir element
$\Omega= \sum_i X_i\tensor X_i\in U(\mf{g})\tensor U(\mf{g})$ acts on
the adjoint representation of $\mf{g}$ by its image under the
multiplication map on $U(\mf{g})$. Note that $h^{\vee}=m$ for
$\mf{g}=\mf{sl}_m$.

Let $P$ be the lattice of integral weights
and $P^+ \subset P$ be the set of dominant integral weights of
$\mf{g}$.  Let $\{\varpi_1,\varpi_2,\dots,\varpi_r\} \subset P^+$ be
the set of fundamental dominant weights. For $\lambda\in P^+$, let
$V_{\lambda}$ denote the corresponding irreducible representation of
$\mf{g}$. For $\lambda\in P^+$, let
$c(\lambda)=(\lambda,\lambda+2\rho)$ where $\rho$ is half the sum of
positive roots.  Let $s:P \to \Q$ be the linear function defined by
$s(\alpha_i) = 1$ for each simple root $\alpha$. For
$\vec{\lambda} \in P^{\, n}$ and $\nu \in P$ let
$\ExpM(\vec{\lambda},\nu^*) := s((\sum_i \lambda_i) - nu)$.

Let
$$m_{\mf{g}}=\frac{(\alpha_{\ell},\alpha_{\ell})}{(\alpha_s,\alpha_s)}, $$
where $\alpha_{\ell}$ is a long root and $\alpha_{s}$ a short root be
the lacing number of $\mf{g}$. Note that $m_{\mf{g}}\in\{1,2,3\}$ and
$m_{\mf{g}}=1$ if $\mf{g}$ is simply laced. Since
$(\alpha_{\ell},\alpha_{\ell})=2$, $m_{\mf{g}}\cdot(\alpha,\alpha)$ is
an even integer for all roots $\alpha$.

We denote by $\mf{R}(\mf{g})$, the representation ring of $\mf{g}$ with basis $[\lambda]$ with $\lambda\in P^+$. There is 
a character isomorphism 
\begin{equation}\label{chara}
\operatorname{Char}:\mf{R}(\mf{g})\leto{\sim} \mathbb{Z}[P]^W,\ \ \operatorname{Char}([\lambda])=\chi_{\lambda}.
\end{equation}

\subsection{Acknowledgements} 
P.~B.~would like to thank TIFR Mumbai, ISI Bengaluru, and IHES where
parts of this work were carried out for hospitality.  P.~B.~was
supported in part by NSF grant DMS - 2302288. We thank H.~H.~Andersen,
J.~Gruber and D.~Tubbenhauer for useful communication. P.~B.~thanks
R.~Rim\'anyi for some early help with AI.

\subsection{Statement on AI use}
This project began in 2023, and we began using AI (ChatGPT 5.6) in June of 2026. The Oberwolfach report  from 2025 \cite{Ober} records some of the early work.

The following were done before AI use:
\begin{enumerate}
\item[(a)] The formulation of  Conjectures \ref{c:tilting} and \ref{c:simple}.
\item[(b)] Construction and proof of associativity of the first moment weight deformation rings.
\item[(c)] The results of Section \ref{s:sl2} and a
  large part of Section \ref{s:exp} (e.g., Theorem \ref{E22}),
  though some of our original proofs in Section \ref{s:exp} were
  different.
\end{enumerate}
Our AI use began with an effort to explicitly determine structure
constants in the $\mf{sl}_2$ first moment weight deformation rings
from a recursion (this is implied by more general results in the paper now). Next we had the AI guess and prove an explicit
general type $\mr{A}$ Pieri rule, Theorem
\ref{t:rational-pieri} for $\mf{R}_{\kappa}(\mf{g})$
(\cite{BFM} had determined them implicitly). After this the AI could
guess and prove a general formula, Theorem \ref{t:G}, for the first moment product
$\diamond_{\kappa}$ in type $\mr{A}$ after connecting it to the
conjectural expectations given in the report \cite{Ober}. We have also
used AI for copy editing and literature search.

\section{The weight filtration and quantum groups}\label{s:conj}

In this section we formulate our conjectures relating the weight
filtration of KZ variation to the structure of tensor products of
modules over Lusztig's quantum groups at a root of unity. We first
consider the special case where all the dominant weights at finite
points correspond to simple Weyl modules. A key ingredient is the
Andersen filtration for tilting modules \cite{and-filt}. For general
dominant weights Weyl modules need not be simple and one must also
consider a filtration on these modules. The general conjecture uses
both the Andersen filtration, which we extend to arbitrary modules,
and filtrations of Weyl modules.

\subsection{More notation and some definitions}\label{s:mn}

Throughout this section we fix a finite-dimensional simple complex Lie
algebra $\mathfrak{g}$ with associated root system $\Phi$ in the
Euclidean space $E$. Also fix $\kappa \in \Q_{> 0}$ and let
$q = \exp(-i\pi/m_{\mf{g}}\kappa) \in \C^{\times}$, so $q$ is a root
of unity\footnote{If $\mf{g}$ is not simply laced then the analogue of
  the Kazhdan--Lusztig equivalence does not always hold with
  $q = \exp({-i\pi/\kappa})$ for all $\kappa$ but is expected to hold
  with $q = \exp({-i\pi/m_{\mf{g}}\kappa})$, see, e.g.,
  \cite{gaitsgory-kl}.}  and we let $l$ be the order of $q^2$. We
assume throughout that $l > 1$.

For background and the basic results on quantum groups that we will
use in this section as well as further references, we refer the reader
to \cite{AST} (but see also \cite{LTV}), whose notation we follow to a
large extent.

Let $\uq = \uqg$ be Lusztig's quantum group over $\Q(q)$ associated to
$\mf{g}$ (or $\Phi$) and specialised to $q$ and let $\uq$-$\Mod$ be
the category of finite dimensional $\uq$-modules\footnote{
  All the $\uq$-modules we consider will be objects of $\uq$-$\Mod$
  and when we say that such a module is projective/injective we mean
  that it is so in the abelian category $\uq$-$\Mod$.} of type 1. By
work of Lusztig, $\uq$-$\Mod$ has the structure of a braided monoidal
category with a duality functor that we denote by $D$. Tensor products
of $\uq$-modules will always be assumed to be over $\Q(q)$ which we
will not include in the notation.

Let $P \subset E$ denote the group of integral weights and
$P^+ \subset P$ the cone of dominant integral weights. For
$\lambda \in P^+$ we have associated $\uq$-modules:
\begin{enumerate}
\item the Weyl module $\dq(\lambda)$,
\item the dual Weyl module  $\nq (\lambda)$,
\item the simple module $\lq (\lambda)$, and
\item the tilting module $\tq (\lambda)$.
\end{enumerate}
These modules are related as follows:
\begin{enumerate}
\item  $\nq (\lambda) \cong D(\dq(\lambda))$.
\item $\lq (\lambda)$ is the socle of $\nq (\lambda)$ and the head of
  $\dq(\lambda)$.
\item $\dq(\lambda)$ is a submodule of $\tq (\lambda)$ and
  $\nq (\lambda)$ is a quotient module of $\tq (\lambda)$.
\item $\tq (\lambda)$ and $\lq (\lambda)$ are self-dual.
\end{enumerate}
We note that any simple object in $\uq$-$\Mod$ is of the form
$\lq (\lambda)$ for some $\lambda \in P^+$ and any indecomposable tilting
module is of the form $\tq (\lambda)$ for some $\lambda \in
P^+$. Furthermore, the tensor product of two tilting modules is also a
tilting module and a tilting module can be decomposed uniquely as a
sum of indecomposable tilting modules.

For any root $\alpha$ we let $s_{\alpha}$ be the
  corresponding reflection (acting on $E$) and let $W$ be the Weyl
  group, i.e.,~the group generated by all the $s_{\alpha}$. Let
  $d_{\alpha} = 1$ if $\alpha$ is a short root and
  $d_{\alpha} = m_{\mf{g}}$ if $\alpha$ is a long root. For
  $1 \leq i \leq r$, let $d_i = d_{\alpha_i}$ and let
  $l_i = l/(l,d_i)$.

  We let $\ti{W}$ be the affine Weyl group associated to $\Phi$ if
  $l_i = l$ for all $1 \leq i \leq r$ and the affine Weyl group of the
  dual root system if $l_i \neq l$ for some $i$. 
    Following \cite[Section 3]{and-linkage}, we let $W_l$ be the group
    generated by the affine reflections $s_{\alpha,m}$,
    $\alpha \in R$, $m \in \Z$, given by
  \[
    s_{\alpha,m}\cdot x = s_{\alpha}\cdot x  + m l_{\alpha} \alpha
  \]
  for all $x \in E$, where the action of $W$ is the dot-action,
  i.e., $s_{\alpha}\cdot x = s_{\alpha}(x + \rho) - \rho$. As an
  abstract group, $W_{l}$ is isomorphic to $\ti{W}$.

  Let $C\subset E$ be the dominant cone. It is a fundamental domain
  for the action of $W$ on $E$. Let $A^+ \subset C$ be the fundamental
  alcove; it is a fundamental domain for the action of $\ti{W}$ on
  $E$. Let $\mcl{A}$ be the set of all alcoves in $E$ and $\mcl{A}^+$
  the set of dominant alcoves, i.e., the ones contained in $C$. We let
  $\mcl{A}^{++}$ denote the set $\{\rho + A \mid A \in \mcl{A}^+\}$.

  Let $\rho_l = \sum_i (l_i-1)\varpi_i$ be the Steinberg weight. The
  corresponding Weyl module $\dq(\rho_l)$ is simple and both
  projective and injective  \cite{Negron}. More
  generally, the set
 \[
   N_0 = \{\rho_l + \sum_i b_i l_i \varpi_i \mid b \in \Z^+\}
 \]
 consists of weights $\lambda$ with the property that $\dq(\lambda)$
 is simple, injective and projective. This is proved
 in \cite{APW} under some additional hypotheses and follows in general
 from the results in \cite{Negron}.

\subsection{The conjecture for simple Weyl modules}

\subsubsection{The Andersen filtration for tilting modules}

For any tilting module $Q$ and $\lambda \in P^+$, Andersen defines in
\cite[Section 1.3]{and-filt} a decreasing and separated filtration
(which we denote by) $\Hom_{\uq}(\dq(\lambda),Q)^{\bullet}$ on
$\Hom_{\uq}(\dq(\lambda),Q)$. Since we will need the fact that the
Andersen filtration functorial in $Q$ (which is not explicitly stated
by Andersen) we briefly explain the construction. Note that most of
Andersen's article is written in the setting of algebraic groups, but
it is explained in \cite[Section 4]{and-filt} how the constructions
can be adapted to the quantum group setting.

We let $A$ be the localisation of $\mathbb{Q}[v]$ at the ideal
generated by $\phi_{q}$, the minimal polynomial of $q$.
The quantum groups and modules we have discussed above are defined
over the cyclotomic field $k = \Q(q) = A/(\phi_{q})$, but they can all
be lifted to $A$, i.e., we have a quantum group $\mathbf{U}_A$ and
$\mathbf{U}_A$-modules $\Delta_A(\lambda)$, $\nabla_A(\lambda)$ and
$T_A(\lambda)$ which reduce to the usual objects modulo $\phi_{q}$;
for the construction of the $T_A(\lambda)$ without any assumption on
$l$ the reader may consult \cite{fiebig}. (We recall that tilting
modules for $\mathbf{U}_A$ are defined, as usual, to be modules which
have both a Weyl and dual Weyl filtration.) Furthermore, the
classification of tilting $\uq$-modules implies that any such module
$Q$ has a unique lift to a tilting module $Q_A$ over $\mathbf{U}_A$.

\begin{lemma}\label{l:tiltlift}
  Any map of $\uq$-tilting modules $f: Q \to Q'$ lifts
  (noncanonically) to a map $f_A: Q_A \to Q'_A$ of
  $\ua$-tilting modules.
\end{lemma}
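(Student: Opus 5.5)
The key fact is the standard one from tilting theory over a local ring: for tilting modules $Q_A, Q'_A$ over $\ua$, the $A$-module $\Hom_{\ua}(Q_A, Q'_A)$ is free of finite rank and its formation commutes with base change. In particular, the reduction map
\[
\Hom_{\ua}(Q_A,Q'_A)\otimes_A k \longrightarrow \Hom_{\uq}(Q,Q')
\]
is an isomorphism. Given this, a map $f: Q\to Q'$ lifts by choosing any preimage of $f$ under the surjection $\Hom_{\ua}(Q_A,Q'_A)\to \Hom_{\uq}(Q,Q')$. The lift is not canonical, since it is only determined modulo $\phi_q \Hom_{\ua}(Q_A,Q'_A)$, which matches the word ``noncanonically'' in the statement.

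To prove the base change statement, I would reduce to the case where $Q_A$ has a Weyl filtration and $Q'_A$ has a dual Weyl filtration. The basic input is the pair of facts
\[
\Ext^1_{\ua}(\Delta_A(\lambda),\nabla_A(\mu))=0, \qquad \Hom_{\ua}(\Delta_A(\lambda),\nabla_A(\mu))\cong A^{\delta_{\lambda\mu}},
\]
both compatible with reduction modulo $\phi_q$. These are the integral forms of the Cline--Parshall--Scott / Andersen--Polo--Wen vanishing, and can be proved using Kempf vanishing over $A$ as in \cite{APW}, or by using the highest weight category structure over $A$ as in \cite{fiebig}. I would first filter $Q_A$ by Weyl modules and $Q'_A$ by dual Weyl modules. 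Then I would induct on the lengths of the two filtrations, using the long exact sequences together with the $\Ext^1$-vanishing. This shows that $\Hom_{\ua}(Q_A,Q'_A)$ is $A$-free of rank $\sum_\lambda (Q_A:\Delta_A(\lambda))\,(Q'_A:\nabla_A(\lambda))$. The same argument over $k$, using the reductions of the filtrations (the reductions of $\Delta_A(\lambda)$ and $\nabla_A(\lambda)$ are $\dq(\lambda)$ and $\nq(\lambda)$), gives the same dimension for $\Hom_{\uq}(Q,Q')$. The reduction map is injective, because a free module modulo $\phi_q$ embeds into the Hom of the reductions: an element of $\Hom_{\ua}(Q_A,Q'_A)$ reducing to zero has image in $\phi_q Q'_A$, and $Q'_A$ is torsion free. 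Injectivity and equal dimensions then give an isomorphism.

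An alternative way to see surjectivity, if one prefers to avoid the dimension count, is to apply $\Hom_{\ua}(Q_A,-)$ to $0\to Q'_A\xrightarrow{\phi_q} Q'_A\to Q'\to 0$. This gives
\[
\Hom(Q_A,Q'_A)\to \Hom_{\ua}(Q_A,Q')\to \Ext^1_{\ua}(Q_A,Q'_A),
\]
and the last group vanishes because of the $\Delta$/$\nabla$ filtrations. One then uses that $\Hom_{\ua}(Q_A,Q')=\Hom_{\uq}(Q,Q')$, since $Q'$ is killed by $\phi_q$.

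I expect the main obstacle to be justifying the integral $\Ext^1$-vanishing and the Weyl/dual Weyl filtrations of $T_A(\lambda)$ over $A$ for arbitrary $l$, including non-simply-laced $\mf g$ with $l_i\neq l$. I would cite \cite{fiebig} for the existence of $T_A(\lambda)$ together with these filtrations, and \cite{APW} for the vanishing over $A$. The rest of the argument is formal.
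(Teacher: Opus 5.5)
Your argument is correct. Its injectivity step is exactly the saturation statement the paper uses: a map reducing to zero has image in $\phi_q Q'_A$, so it is divisible by $\phi_q$ in $\Hom_{\ua}(Q_A,Q'_A)$ because $Q'_A$ is torsion free.

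Where you differ is in how you match the rank of $\Hom_{\ua}(Q_A,Q'_A)$ with $\dim_k\Hom_{\uq}(Q,Q')$. The paper never computes integral $\Ext$ groups. It views $\Hom_{\ua}(Q_A,Q'_A)$ as the intersection of $\Hom_{\mathbf{U}_K}(Q_K,Q'_K)$ and $\Hom_A(Q_A,Q'_A)$ inside $\Hom_K(Q_K,Q'_K)$. This makes it an $A$-lattice in the generic Hom space and a saturated submodule of $\Hom_A(Q_A,Q'_A)$. The paper then quotes \cite{fiebig} for $\dim_K\Hom_{\mathbf{U}_K}(Q_K,Q'_K)=\dim_k\Hom_{\uq}(Q,Q')$, with the right side given over $k$ by the Weyl/dual Weyl multiplicity formula \cite[(18)]{AST}. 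So the only homological input lives over the residue field, plus $A$-freeness of $Q_A,Q'_A$ and a comparison with the generic fibre. This is how the paper sidesteps what you flag as your main obstacle.

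Your route instead computes the rank directly over $A$ by inducting on $\Delta_A$- and $\nabla_A$-filtrations. That needs $\Ext^1_{\ua}(\Delta_A(\lambda),\nabla_A(\mu))=0$ in a suitable category of $\ua$-modules. For your second variant, that category must also contain the $\phi_q$-torsion module $Q'$, so that the long exact sequence for $0\to Q'_A\to Q'_A\to Q'\to 0$ is available. In exchange, you get freeness and base-change compatibility of $\Hom_{\ua}(Q_A,Q'_A)$ without passing through $K$, and the second variant gives surjectivity in one line.
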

\begin{proof}
  For any $\uq$-module $M$ with a Weyl filtration let
  $(M:\dq(\lambda))$ be the multiplicity of $\dq(\lambda)$ in a
  Weyl filtration of $M$ and define $(M:\nq (\lambda))$ similarly for
  modules with dual Weyl filtrations. It then follows from
  \cite[(18)]{AST} that
  \[
    \dim(\Hom_{\uq}(Q,Q')) = \sum_{\lambda \in
      P^+}(Q:\dq(\lambda))\cdot (Q':\nq (\lambda)) .
  \]
  Let $K$ be the fraction field of $A$ and we denote objects over $K$
  obtained by by base change from objects over $A$ by using the
  subscript $K$. It then follows from \cite{fiebig}Lemma 4.4] that
  \begin{equation}\label{e:equality}
    \dim(\Hom_{\mathbf{U}_K}(Q_K,Q'_K)) = \dim(\Hom_{\uq}(Q,Q')) .
  \end{equation}
  Both $Q_A$ and $Q_A'$ are finite free as $A$-modules, so
  $\Hom_{\mathbf{U}_A}(Q_A,Q'_A)$ is the intersection of
  $\Hom_{\mathbf{U}_K}(Q_K,Q'_K)$ and $\Hom_A(Q_K,Q'_K)$ in
  $\Hom_{K}(Q_K,Q'_K)$. Since $A$ is a DVR, it follows that
  $\Hom_{\mathbf{U}_A}(Q_A,Q'_A)$ is an $A$-lattice inside
  $\Hom_{\mathbf{U}_K}(Q_K,Q'_K)$ and it is a saturated submodule of
  $\Hom_A(Q_K,Q'_K)$. This, implies that the image of the reduction map
  $\Hom_{\mathbf{U}_A}(Q_A,Q'_A) \to \Hom_{\uq}(Q,Q')$ has dimension
  equal to the rank of $\Hom_{\mathbf{U}_K}(Q_K,Q'_K)$.  The
  equality \eqref{e:equality} then implies that this reduction map is
  surjective.
\end{proof}

For each $\nu \in P^+$ we fix a generator
$c_{\nu} \in
\Hom_{\mathbf{U}_A}(\Delta_A(\nu),\nabla_A(\nu)) \cong A$ and
for a $\uq$-tilting module $Q$ we set
\[
  F_{\nu}(Q) = \Hom_{\mathbf{U}_A}(\Delta_A(\nu),Q_A) .
\]
We have
\[
  \bar{F}_{\nu}(Q) := F_{\nu}(Q)\otimes_A k \simeq
  \Hom_{\uq}(\dq (\nu),Q) .
\]
We define a filtration $F_{\nu}(Q)^{\bullet}$ of
$F_{\nu}(Q)$
by setting
\[
  F_{\nu}(Q)^j = \{\beta \in F_{\nu}(Q) \ | \ \gamma \circ   \beta 
   \in \phi_{l}^jAc_{\nu} \mbox{ for all } \gamma \in
  \Hom_{\mathbf{U}_A}(Q_A,\nabla_A(\nu))\} 
\]
for $j \geq 0$.  We denote by $\bar{F}_{\nu}(Q)^j$ the subspace of
$\bar{F}_{\nu}(Q)$ given by the image of $F_{\nu}(Q)^j$ in
$\bar{F}_{\nu}(Q)$ under the reduction map.
\begin{lemma}\label{l:func}
  If $f:Q \to Q'$ is a map of $\uq$-tilting modules then
  $f(\bar{F}_{\nu}(Q)^j) \subset \bar{F}_{\nu}(Q')^j$ for all
  $j \geq 0$. In particular, any automorphism of $Q$ induces a
  canonical automorphism of $\bar{F}_{\nu}(Q)^{\bullet}$ as a filtered
  vector space.
\end{lemma}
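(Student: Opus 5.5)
The plan is to lift $f$ to $A$ and then check that composition with the lift preserves the defining condition of $F_{\nu}(\cdot)^j$. By Lemma \ref{l:tiltlift}, we choose a lift $f_A : Q_A \to Q'_A$ of $f$. Post-composition with $f_A$ gives an $A$-linear map
\[
  (f_A)_* : F_{\nu}(Q) = \Hom_{\ua}(\Delta_A(\nu),Q_A) \to F_{\nu}(Q') = \Hom_{\ua}(\Delta_A(\nu),Q'_A).
\]
Its reduction modulo $\phi_q$ is the map $\bar F_\nu(Q)\to \bar F_\nu(Q')$ induced by $f$, under the identification $\bar{F}_{\nu}(Q)\simeq \Hom_{\uq}(\dq(\nu),Q)$. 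For this we use that reduction of Homs is compatible with composition, which is clear since all the modules are free over $A$.

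The key step is to show that $(f_A)_*(F_\nu(Q)^j)\subset F_\nu(Q')^j$. Take $\beta\in F_\nu(Q)^j$ and an arbitrary $\gamma'\in \Hom_{\ua}(Q'_A,\nabla_A(\nu))$. Then $\gamma'\circ f_A\in \Hom_{\ua}(Q_A,\nabla_A(\nu))$, so by the definition of $F_\nu(Q)^j$ we get
\[
  \gamma'\circ(f_A\circ\beta)=(\gamma'\circ f_A)\circ\beta\in \phi_q^jAc_\nu .
\]
Hence $f_A\circ\beta\in F_\nu(Q')^j$. Reducing modulo $\phi_q$ gives $f(\bar F_\nu(Q)^j)\subset \bar F_\nu(Q')^j$. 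The filtered subspaces $\bar F_\nu(\cdot)^j$ are images of the $A$-level filtrations, and the induced map on reductions does not depend on the choice of $f_A$ because it equals the map induced by $f$. So the noncanonical choice of lift does no harm.

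For the statement about automorphisms, let $g$ be an automorphism of $Q$. Applying the inclusion above to $g$ and to $g^{-1}$ shows that $g$ induces a bijection on each $\bar F_\nu(Q)^j$, hence an automorphism of the filtered vector space $\bar F_\nu(Q)^\bullet$. This automorphism is canonical, being just $\Hom_{\uq}(\dq(\nu),g)$ restricted to each step.

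The only point needing care, and I expect no real obstacle there, is the compatibility of reduction modulo $\phi_q$ with composition and with the identification $\bar F_\nu(Q)\simeq\Hom_{\uq}(\dq(\nu),Q)$. This holds because $\Delta_A(\nu)$ and $Q_A$ are $A$-free, so Hom into $Q_A$ reduces correctly, as already implicit in the displayed isomorphism defining $\bar F_\nu(Q)$.
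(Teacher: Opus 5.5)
Your proof is correct and is essentially the paper's argument: lift $f$ via Lemma \ref{l:tiltlift}, note that $\gamma'\circ f_A$ is an admissible test map for $Q_A$ so that $f_A\circ\beta\in F_\nu(Q')^j$, and reduce modulo $\phi_q$; the automorphism statement then follows by applying this to $g$ and $g^{-1}$. One small remark: compatibility of reduction with composition is just functoriality of $-\otimes_A k$, whereas the identification $F_\nu(Q)\otimes_A k\simeq\Hom_{\uq}(\dq(\nu),Q)$ needs more than $A$-freeness (freeness gives injectivity, surjectivity needs Ext-vanishing or a dimension count as in Lemma \ref{l:tiltlift}), but the paper takes that isomorphism as given, so your argument is unaffected.
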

\begin{proof}
  Let $f_A:Q_A \to Q'_A$ be a lift of $f$ (which exists by Lemma
  \ref{l:tiltlift}) and let $\beta \in F_{\nu}(Q)^j$, so
  $ \gamma \circ \beta \in \phi_{q}^jAc_{\nu}$ for all
  $\gamma \in \Hom_{\mathbf{U}_A}(Q_A,\nabla_A(\nu))$. Now for any
  $\gamma' \in \Hom_{\mathbf{U}_A}(Q'_A,\nabla_A(\nu))$ we have
  $ \gamma \circ f_A \in \Hom_{\mathbf{U}_A}(Q_A,\nabla_A(\nu))$, so
  \[
    \gamma' \circ (f_A \circ \beta) = (\gamma' \circ f_A) \circ \beta
    \in \phi_q^jAc_{\nu} ,
  \]
  hence $f_A \circ \beta \in F_{\nu}(Q')^j$.
  
  The first part of the lemma follows by reducing this inclusion
  modulo $\phi_{q}$ and the statement about automorphisms is then
  obvious.
\end{proof}

\subsubsection{The Andersen filtration and Kazhdan--Lusztig
  polynomials}

The Andersen filtration gives rise to a polynomial
$f_{\nu}(Q) \in \mathbb{Z}[w]$ with non-negative coefficients which
encodes the dimensions of the layers of the filtration
\cite[1.5]{and-filt}. We now formulate a conjecture, essentially due
to Andersen, describing these polynomials when $Q = \tq (\lambda)$ for
$\lambda \in P^+$ in terms of parabolic Kazhdan--Lusztig polynomials
for the affine Weyl group $\ti{W}$. These are elements
$n_{A,B} \in \Z[w]$ with non-negative coefficients parametrised by
pairs $A,B$ of dominant alcoves. For the construction and properties
of these polynomials we refer the reader to \cite{soergel-kl}, whose
notation we follow\footnote{Soergel uses $v$ for the variable but we
  find it more convenient to use $w$.}. A basic property of these
polynomials is that $n_{A,A} = 1$ for any alcove $A$ and $n_{B,A} = 0$
if $B \not \leq A$, where the order is induced by the Bruhat order on
$\ti{W}$.

Recall that we have fixed an integer $l > 1$ and have defined the
group $W_{l}$ which acts on $E$ by the scaled dot-action.
 Let $\sigma_l: E \to E$ be the unique linear map such
  that $\sigma_l(\varpi_i) = l_i \varpi_i$ for $1 \leq i \leq r$.
  Given any alcove $A$ for the affine Weyl group $\ti{W}$ we let
  $A_{l}$ denote the subset $\sigma_l(A) - \rho \subset E$. Each such
  $A_{l}$ is a fundamental domain for the action of $W_{l}$ on $E$.
  To any $\lambda$ in $P^+$, we associate an alcove
  $A(\lambda) \in \mcl{A}^+$ as follows:

If $\lambda$ is an affine dot-regular weight, i.e., it is contained in
$A_{l}$ for some alcove $A$, we let $A(\lambda) = A$. If $\lambda$ is
not affine dot-regular we use a construction from \cite[p.~26]{AST}:
Consider the finite set of alcoves defined by
\[
  S = \{B \mid \lambda \in \ov{B_{l}} \} .
\]
The stabiliser of $\lambda$ in $W_{l}$ acts simply transitively
on $S$, so $S$ has a unique maximal element for the order induced by the
Bruhat order on $W^{aff}$ (see, e.g., \cite[Corollary
2.4.5]{bjorner-brenti})  and we let $A(\lambda)$ be this element.

We then have:
\begin{conj}\label{c:akl}
  For $\lambda \in P^+$ and $\nu \in P^+ \cap W_{l}\cdot\lambda$ we
  have
  \[
    f_{\nu}(\tq (\lambda)) = n_{A(\nu),A(\lambda)} .
  \]
\end{conj}
For affine dot-regular $\lambda$ this is due to Andersen and is a
consequence of the (quantum group analogue) of the conjecture in
\cite[Section 3.1]{and-filt}. The extension to all $\lambda$ is
motivated by \cite[Theorem 3.26]{AST}. As far as we are aware, this
conjecture is not known in general even for affine dot-regular
$\lambda$ and $\mf{g} = \mf{sl}_m$, but it is easy for
$\mf{sl}_2$. However, it follows from a theorem of Soergel
\cite[Section 8]{soergel-kma} that the equality holds after evaluating
both polynomials at $1$ if $l$ is sufficiently large.

\subsubsection{The conjecture for simple Weyl modules}

Recall that for
$\vec{\lambda} = (\lambda_1, \lambda_2,\dots,\lambda_n) \in (P^+)^n$
we let
\[
  \dq(\vec{\lambda}) = \dq(\lambda_1) \otimes \dq(\lambda_2) \otimes \dots
  \otimes \dq(\lambda_n) .
\]

The main goal of this section is to formulate some conjectures on the
weight filtration $\mr{W}_{\bullet}\, KZ_{\kappa}(\vec{\lambda},\nu^*)$ in
terms of the representation theory of quantum groups. We first do this
for $\vec{\lambda}$ such that $\dq(\lambda_i)$ is simple for each
$i$.  Note that in this case $\dq(\lambda_i) = \tq (\lambda_i)$ for
each $i$, so
$\dq(\vec{\lambda})$ is a tilting module.

\begin{conj}\label{c:tilting}
  If $\vec{\lambda}$ is such that $\dq(\lambda_i)$ is a simple module
  for each $i$, then the weight filtration on
  $KZ_{\kappa}(\vec{\lambda}, \nu^*)$ coincides, via the
  Kazhdan--Lusztig equivalence, up to a shift by
  $D := {\ExpM}(\vec{\lambda},\nu^*)$ with the dual Andersen filtration
  on $\Hom_{\uq}(\dq(\vec{\lambda}), \nq (\nu))$. In particular, all
  weights of $KZ_{\kappa}(\vec{\lambda}, \nu^*)$ are at least $\ExpM$
  and the weight polynomial is equal to
  $v^{\ExpM}f_{\nu}(\dq(\vec{\lambda}))$.
\end{conj}

We emphasize that there is no assumption on $\nu$ in the conjecture,
but the conjecture simplifies considerably in some special cases.

\begin{remark}\label{r:fink}
  One of the motivations for Conjecture \ref{c:tilting} comes from the
  work of Finkelberg \cite{Finkelberg} which, together with Conjecture
  \ref{c:tilting}, implies that if all the $\lambda_i$ and $\nu$ are
  of ``level $\ell\,$''---see, e.g., \cite{BF-survey} for what this
  means---then $\mr{W}_D(KZ_{\kappa}(\vec{\lambda}, \nu^*))$ is equal
  to the subsheaf of conformal blocks (which is determined
  representation theoretically). We note that it is already known, see
  \cite[Proposition 4.17]{BF-survey}, that the subsheaf of conformal
  blocks is contained in
  $\mr{W}_D(KZ_{\kappa}(\vec{\lambda},\nu^*))$. This implication will
  be proved unconditionally for $\mf{sl}_2$ in Corollary \ref{c:fink}.
\end{remark}

\begin{example}\label{ex:proj}
  $ $
  \begin{enumerate}
  \item If $\dq(\nu)$ is projective then it is also tilting and the
    only indecomposable tilting module with a nonzero map to it is
    $\dq(\nu)$ itself (since the map would split). It follows that in
    this case $f_{\nu}(\dq(\vec{\lambda}))$ is a constant, equal to
    $1$ if $\lambda = \nu$ and $0$ otherwise, so the conjecture
    predicts that $KZ_{\kappa}(\vec{\lambda},\nu^*)$ is pure of weight
    ${\ExpM}$.
  \item If some $\dq(\lambda_i)$ is projective, then so is
    $\dq(\vec{\lambda})$. If $\nq (\nu)$ is simple, then the only
    indecomposable projective with a nonzero map to it is its
    projective envelope. Thus, if both the above conditions hold then
    $f_{\nu}(\dq(\vec{\lambda}))$ is a monomial of degree $d_{\nu}$
    (independent of $\vec{\lambda}$) so
    $KZ_{\kappa}(\vec{\lambda},\nu^*)$ is predicted to be pure of
    weight ${\ExpM} + d_{\nu}$.
  \end{enumerate}
\end{example}

If we combine Conjectures \ref{c:tilting} and \ref{c:akl} we get the
following:
\begin{conj}\label{c:wkl}
  Let
  $\dq(\vec{\lambda}) \cong \oplus_{\mu \in P^+}
  \tq (\mu)^{m_{\vec{\lambda},\mu}}$. Then the ranks of the terms in the associated
  graded of the weight filtration on $KZ_{\kappa}(\vec{\lambda}, \nu^*)$
  are given by the coefficients of the polynomial
  \[
    w^{\ExpM} \sum_{\mu \in P^+} m_{\vec{\lambda},\mu}n_{A(\nu),A(\mu)} .
  \]
\end{conj}
The decomposition of any tilting module as a sum of indecomposable
tilting modules can be computed algorithmically using characters and
Kazhdan--Lusztig polynomials can also be computed algorithmically, so
this conjecture gives an algorithm for computing the ranks of the
associated graded of the weight filtration.

\smallskip

In order to get an upper bound on the weight filtration, we would
like to know the answer to the following question:
\begin{question}\label{q:kl}
  Is the degree of $n_{A,B}$ bounded above by a constant $d(\mf{g})$?
  Can $d(\mf{g})$ be taken to be $|\Phi^+|$?
\end{question}
By \cite[Lemma 4.21, Theorem 5.3 (1)]{soergel-kl}, the degree of
$n_{A,B}$ is bounded by $|\Phi^+|$ if $A \subset \rho + C$.  If
the question has a positive answer and Conjecture \ref{c:wkl} holds
then we have
$\mr{W}_a(KZ_{\kappa}(\vec{\lambda},\nu^*)) =
KZ_{\kappa}(\vec{\lambda},\nu^*)$ for $a \geq {\ExpM} +
d(\mf{g})$. For $\mf{sl}_2$ this holds with $d(\mf{sl}_2) = 1$, see
Theorem \ref{E22} (b).

\subsection{The conjecture for arbitrary Weyl modules}

\subsubsection{A filtration on $\Hom_{\uq}(M,\nq (\mu))$ for arbitrary $M$}

Let $M$ be an arbitrary object in $\uq$-$\Mod$. We will define a
decreasing filtration $\Hom_{\uq}(\dq (\nu),M)^{\bullet}$ on
$\Hom_{\uq}(\dq (\nu),M)$ which agrees with the Andersen filtration
when $M$ is tilting.

\begin{definition}\label{d:filt}
  Let $C_{\mr{min}}(M)$ be the minimal tilting complex of $M$
  constructed by Gruber \cite[Definition 3.9]{gruber}. It is a
  uniquely determined bounded complex of tilting modules
  quasi-isomorphic to $M$ viewed as a complex concentrated in degree
  zero. For any $\nu$, we have
  $H^i(\Hom_{\uq}(C_{\mr{min}}(M),\nq (\nu))) = \Ext^i(M,\nq (\nu))$
  since for any tilting module $T$ we have $\Ext^i(T,\nq (\nu)) = 0$
  for $i\neq 0$. By the functoriality of the Andersen filtration for
  tilting modules, the complex $\Hom_{\uq}(\dq(\nu),C_{\mr{min}}(M))$
  is a complex of filtered vector spaces and we define the Andersen
  filtration $\Ext^i(\dq (\nu),M)^{\bullet}$ on $\Ext^i(\dq (\nu),M)$
  to be the filtration induced on its cohomology.\footnote{Instead of
    $C_{\mr{min}}(M)$ we may actually consider any bounded complex of
    tilting modules quasi-isomorphic to $M$; this follows from
    \cite[Lemma 3.6]{gruber}.}
\end{definition}
This filtration is also functorial in $M$ since maps $M \to M'$
corresponds to homotopy classes of maps
$C_{\mr{min}}(M) \to C_{\mr{min}}(M')$ and homotopies between such
maps induce filtered homotopies on the $\Hom_{\uq}$-complexes.

If $M$ is a tilting module then $C_{\mr{min}}(M) \cong M$, so the
filtration defined above is equal to the original Andersen
filtration. The following example is less trivial.
\begin{example}\label{e:simple}
  Let $\mathfrak{g}$ be arbitrary and $M = \lq (\lambda)$, so
  $\Hom_{\uq}(\dq (\lambda),\lq (\lambda))$ is one dimensional and
  $\Hom_{\uq}(\dq (\lambda),\lq (\nu)) = 0$ for $\nu \neq \lambda$. By
  \cite[Proposition 4.3]{gruber}, for all $i \in \Z$ if $\tq (\mu)$ is
  a direct summand of $C_{\mr{min}}(\lq (\lambda))_i$, then
  $\mu \leq \lambda$ in the usual order on dominant weights. If
  $\mu < \lambda$, then $\Hom_{\uq}(\dq (\lambda), \tq (\mu)) = 0$ and
  we have $\Hom_{\uq}(\dq (\lambda),\tq (\lambda))^1 = 0$, so the
  filtration on
  $\Hom_{\uq}(\dq (\lambda),C_{\mr{min}}(\lq (\lambda)))$ satisfies
  $\Hom_{\uq}(\dq (\lambda),C_{\mr{min}}(\lq (\lambda)))^1= 0$. It
  then follows from the definition that
  $\Hom_{\uq}(\dq (\lambda),\lq (\lambda))^1 = 0$.
\end{example}

\subsubsection{Filtrations on Weyl modules}

In order to formulate a conjecture on the weight filtration of
$KZ_{\kappa}(\vec{\lambda},\nu^*)$ for general $\vec{\lambda}$ we will
have to consider filtrations on Weyl modules. There are various
natural choices that can be made, so we will state the conditions that
we want our filtrations to satisfy as an assumption and then discuss
examples.

\begin{ass}\label{a:filtweyl}
  Any Weyl module $\dq(\lambda)$ for $\lambda \in P^+$ has a finite
  decreasing filtration $\dq(\lambda)^{\bullet}$ with the following
  properties:
  \begin{enumerate}
  \item The layers of the filtration are semi-simple.
  \item For any $\mu \in P^+$ the multiplicity of $\lq (\mu)$ in any layer
    of the filtration is equal to a suitable coefficient of a
    (non-parabolic) Kazhdan--Lusztig polynomial associated to
    $W^{aff}$.
  \end{enumerate}
\end{ass}
For a precise description of the layers in terms of (inverse)
Kazhdan--Lusztig polynomials see \cite[Theorem 7.5]{parshall-scott},
where the existence of such a filtration is proved in types $A$ and
$D$ (where $l > 2$ for type $D_{2n+1}$).  If $\lambda$ is an
affine dot-regular
weight then the filtration of \cite{parshall-scott} is simply the
radical filtration; it is expected, but not known (as far as we are
aware), that this holds for arbitrary $\lambda$. One may hope that the
radical filtration satisfies Assumption \ref{a:filtweyl} for general
$\mf{g}$. One also hopes that the
Jantzen filtration, as defined, e.g., in \cite[Section 7.6]{and-fdr}, is
equal to the radical filtration. We will use this filtration in
Section \ref{s:jantzen}.
\begin{remark}\label{r:ll}
  The length of the radical filtration of any $M$ in $\uq$-$\Mod$ is
  bounded above by $2|\Phi^+| + 1$ by \cite[Corollary
  3.19]{andersen-kaneda}. For most Weyl modules $2|\Phi^+| + 1$ can be
  replaced with $|\Phi^+| + 1$ by \cite[Theorem
  3.17]{andersen-kaneda}. We do not know if this holds for all Weyl
  modules.
\end{remark}

\subsubsection{An $\Ext$ spectral sequence}

Our conjecture on the weight filtration of
$KZ_{\kappa}(\vec{\lambda},\nu^*)$ for general $\vec{\lambda}$ will be
formulated using the tensor product filtration on $\dq(\lambda)$
associated to any filtration satisfying Assumption \ref{a:filtweyl}.
For this we recall the following construction.

Let $R$ be any ring and $M$, $N$ two left $R$-modules. For
a finite filtration

\[
  0 = M_r \subset M_{r-1} \subset \dots M_1 \subset M_0 = M
\]
of $M$ by $R$-submodules, there is a standard $E_1$-spectral
sequence---constructed, for example, using an injective resolution of
$N$ and the induced filtered complex of $\Hom$s---with
\begin{equation}\label{e:ess}
  E_1^{p,q} = \Ext_R^{p+q}(M_{-p}/, M_{-p+1},N)
\end{equation}
converging to $\Ext^*_R(M,N)$.

Note that for all $p$, $E_{\infty}^{p,-p}$ is a subgroup of
$E_1^{p,-p} = \Hom_{\uq}(M_{-p}/M_{-p+1},N)$ which we denote by
$\Hom_{\uq}^0(M_{-p}/M_{-p+1},N)$.

We will use this spectral sequence for $R = \uq$, $M = \dq(\vec{\lambda})$ and
$N = \nq(\nu)$. Since $\Ext^i(\dq(\mu), \nq (\nu)) = 0$
for all $\mu,\nu \in P^+$ and $i \neq 0$ (\cite[Theorem 3.1 ]{AST}),
and $\dq(\vec{\lambda})$ has a filtration with each layer a
Weyl module \cite[Theorem 3.3]{paradowski}, it follows that
$\Ext^i(\dq(\vec{\lambda}),\nq(\nu)) = 0$ for $i > 0$. However,
for say the radical filtration on $\dq(\vec{\lambda})$, the spectral
sequence can be quite complicated and, in particular, need not
degenerate at $E_1$ even for $\mf{sl}_2$.

\subsubsection{A filtration on KZ for general weights}

We assume that a filtration satisfying Assumption \ref{a:filtweyl}
exists for $\mf{g}$ and we fix one such for each $\dq(\lambda)$.
Given an arbitrary
$\vec{\lambda} = (\lambda_1,\lambda_2,\dots,\lambda_n)$, we consider
the tensor product filtration $\dq(\vec{\lambda})^{\bullet}$ on
$\dq(\vec{\lambda})$ defined by
\begin{equation}\label{e:filt}
  \dq(\vec{\lambda})^i = \sum_{(i_i,_2,\dots,i_n), \sum_j i_j = i}
  \dq(\lambda_1)^{i_1} \otimes \dq(\lambda_2)^{i_2} \otimes \dots
  \otimes \dq(\lambda_n)^{i_n} .
\end{equation}
It follows that $\dq(\vec{\lambda})^i/\dq(\vec{\lambda})^{i+1}$
is a direct sum of tensor products of simple modules with known
multiplicities.

By taking the module $M$ in Definition \ref{d:filt} to be
$D(\dq(\vec{\lambda})^i/\dq(\vec{\lambda})^{i+1})$ and dualising the
resulting filtration, we obtain a canonical increasing filtration
$\Hom_{\uq}(\dq(\vec{\lambda})^i/\dq(\vec{\lambda})^{i+1},\nq(\nu))^{\bullet}$
on
$\Hom_{\uq}(\dq(\vec{\lambda})^i/\dq(\vec{\lambda})^{i+1},\nq(\nu))$
for each integer $i \geq 0$ and hence, by restriction, an increasing
filtration
$\Hom_{\uq}^0(\dq(\vec{\lambda})^i/\dq(\vec{\lambda})^{i+1},\nq(\nu))^{\bullet}$
on
$\Hom_{\uq}^0(\dq(\vec{\lambda})^i/\dq(\vec{\lambda})^{i+1},\nq(\nu))$
for each integer $i \geq 0$.

Assuming $\Hom_{\uq}(\dq(\vec{\lambda}),\nq (\nu))$ is nonzero, recall
that we have defined the integer $\ExpM =
\ExpM(\vec{\lambda},\nu^*)$. We then reindex our filtrations and
define candidate ``weight'' filtrations as follows:
\[
  \mr{W}'_r\,
  \Hom_{\uq}(\dq(\vec{\lambda})^i/\dq(\vec{\lambda})^{i+1},\nq (\nu))
  := \Hom_{\uq}(\dq(\vec{\lambda})^i/\dq(\vec{\lambda})^{i+1},\nq
  (\nu))_{r+{\ExpM} -i}
\]
and analogously for
$\Hom_{\uq}^0(\dq(\vec{\lambda})^i/\dq(\vec{\lambda})^{i+1},\nq(\nu))$.

\begin{conj}\label{c:simple}
  The weight filtration on
  $\mr{W}_{\bullet}\,KZ_{\kappa}(\vec{\lambda},\nu^*)$ for arbitrary
  $\vec{\lambda}$ and $\nu$ is compatible with the filtrations
  $\mr{W}_{\bullet}' \,
  \Hom_{\uq}(\dq(\vec{\lambda})^i/\dq(\vec{\lambda})^{i+1},\nq (\nu))$.
  More precisely, this means the following:

  Let $\mr{W}_{\bullet}\, \Hom_{\uq}(\dq(\vec{\lambda}),\nq (\nu))$ be
  the filtration on $\Hom_{\uq}(\dq(\vec{\lambda}),\nq (\nu))$
  obtained by transporting the Hodge-theoretic weight
  filtration $\mr{W}_{\bullet}\,KZ_{\kappa}(\vec{\lambda},\nu^*)$ to
  $\Hom_{\uq}(\dq(\vec{\lambda}),\nq (\nu))$ via the Kazhdan--Lusztig
  equivalence. For $i \geq 0$ this induces increasing filtrations on the subquotients
  $\Hom_{\uq}^0(\dq(\vec{\lambda})^i/\dq(\vec{\lambda})^{i+1},\nq(\nu))$
  of $\Hom_{\uq}(\dq(\vec{\lambda}),\nq (\nu))$ induced by the
  spectral sequence \eqref{e:ess} with $R = \uq$, $M =
  \dq(\vec{\lambda})$ with the filtration \eqref{e:filt}, and $N =
  \nq(\nu)$, which we also denote by $\mr{W}_{\bullet}$. Then
  the conjecture is that
  \begin{equation}\label{e:conj}
    \mr{W}'_r\,
    \Hom_{\uq}^0(\dq(\vec{\lambda})^i/\dq(\vec{\lambda})^{i+1},\nq (\nu))
    = \mr{W}_r\,
    (\dq(\vec{\lambda})^i/\dq(\vec{\lambda})^{i+1},\nq (\nu))
  \end{equation}
  for all $i$ and $r$.
\end{conj}
When $\mf{g} = \mf{sl}_2$ we provide strong evidence for this
conjecture (Theorem \ref{t:sl2fm}) and, in particular, prove it when
$n=2$ (Corollary \ref{c:sl22}).

The $\Ext$ spectral sequence does not always degenerate at
$E_1$---there are such examples even for $\mathfrak{sl}_2$ and two
finite points---so Conjecture \ref{c:simple} is not in general
sufficient to compute the ranks of the layers of the weight filtration
of $KZ$.  However, it does reduce the computation to a purely
algebraic, but perhaps non-trivial, computation in the category
$\uq$-$\Mod$. For $\mf{sl}_2$ we show in Section \ref{s:nsl2} that the
spectral sequence degenerates at $E_2$ and Conjecture \ref{c:simple}
then leads to explicit computable predictions for the ranks.
\begin{remark}\label{r:ell}
  $ $
  \begin{enumerate}
  \item If $\vec{\lambda} = (\lambda)$ and $\nu = \lambda$ then the
    truth of Conjecture \ref{c:simple} is implied by Example
    \ref{e:simple}.
  \item If all $\dq(\lambda_i)$ are simple then Conjecture
    \ref{c:simple} reduces to Conjecture \ref{c:tilting}.
  \item Conjecture \ref{c:simple} and Remark \ref{r:ll} lead us to
    expect that for any $\vec{\lambda}$,
    $\mr{W}_a KZ_{\kappa}(\vec{\lambda},\nu^*) = 0$ for
    $a < {\ExpM} - n|\Phi^+|$. A positive answer to Question
    \ref{q:kl} together with Conjecture \ref{c:simple} would imply
    that
    $\mr{W}_a KZ_{\kappa}(\vec{\lambda},\nu^*) =
    KZ_{\kappa}(\vec{\lambda},\nu^*)$ for $a \geq {\ExpM} + |\Phi^+|$.
  \end{enumerate}
\end{remark}

\begin{example}\label{e: GenQGN0}
  Suppose $\lambda_1 \in N_0$, so $\dq(\lambda_1)$ is simple as well
  as projective. Since the tensor product of a projective module with
  any other module in $\uq$-$\Mod$ is projective, this implies that
  $\dq(\vec{\lambda})^i/\dq(\vec{\lambda})^{i+1}$ is projective for
  all $i$ so there are no $\Ext^j$ terms in the spectral sequence for
  $j > 0$ and it degenerates at $E_1$. Since projective modules are
  also tilting, $\dq(\vec{\lambda})^i/\dq(\vec{\lambda})^{i+1}$ is
  also tilting, so the filtration on
  $\Hom_{\uq}(\dq(\vec{\lambda})^i/\dq(\vec{\lambda})^{i+1},\nq
  (\nu))$ is the dual Andersen filtration. Conjectures \ref{c:akl} and
  \ref{c:simple} jointly imply that all ranks of the layers in the
  weight filtration
  $\mr{W}_{\bullet}\,KZ_{\kappa}(\vec{\lambda},\nu^*)$ can be computed
  algorithmically in this case. A similar statement holds if
  $\nu \in N_0$.
\end{example}

The utility of Conjecture \ref{c:simple} for arbitrary $\vec{\lambda}$
and $\nu$ is limited to some extent by the fact that a tensor product
of simple modules might have a complicated structure, so computing the
corresponding Andersen filtration is a nontrivial problem. However,
there are no difficulties for $\mf{sl}_2$ and we discuss this case in
more detail in the next section.

\section{The case of $\mf{sl}_2$}\label{s:sl2}
In this section we assume that $\mf{g} = \mf{sl}_2$,
$q = \exp(-i \pi/\kappa)$, and $l$, the order of $q^2$, is $>1$.  We
will show how Conjecture \ref{c:simple} leads to explicit computable
predictions for the ranks of the layers of the weight filtration of
all KZ variations.  The key result here is Theorem \ref{t:sl2deg}
which proves the degeneration of the $\Ext$ spectral sequence at
$E_2$.

\subsection{Recall of some basic properties of $\uq(\mf{sl}_2)$-$\Mod$.}

We will identify $P$ with $\Z$ and $P^+$ with $\Z_{\geq 0}$ and use Latin
letters $a,b,\dots$ to denote weights. Any weight $a$ can be written
as $a = a'l + a''$ with $0 \leq a'' \leq l -1$. We define maps
$d, u: P^+ \to P^+$ by
\begin{equation*}
  d(a) =
  \begin{cases}
    a & \mbox{if }  a \leq l -1 \mbox{ or } a \in N_0, \\
    a'l - (2 + a'') &  \mbox{otherwise}.
  \end{cases}
\end{equation*}
and
\begin{equation*}
  u(a) =
  \begin{cases}
    a  & \mbox{if }  a \in N_0, \\
    (a'+ 2)l - (2 + a'') & \mbox{otherwise}.
  \end{cases}
\end{equation*}

A Weyl module $\dq(a)$ is simple iff $a \leq l - 2$ or $a \in
N_0$. If $\dq(a)$ is not simple then there is an exact sequence
\begin{equation}\label{e:weyl}
  0 \to \lq (d(a)) \to \dq(a) \to \lq (a) \to 0 .
\end{equation}
A tilting module $\tq (a)$ is simple also iff $a \leq l - 2$ or
$a \in N_0$. If $\tq (a)$ is not simple then there is an exact sequence
\begin{equation}\label{e:tilt}
  0  \to \dq(a) \to \tq (a) \to \dq(d(a)) \to 0 .
\end{equation}
See, e.g., \cite[Section 2]{AndersenTubbenhauer}, for the proofs of
these statements, from which the statements below can be deduced
easily.

We note that the Andersen filtration of a tilting module in the case
of $\mf{sl}_2$ is simple to describe. If $\tq (a) = \dq(a)$, i.e.,
$a \leq l -2$ or $a \in N_0$, then $f_{b}(\tq (a)) = 1$ if $a = b$
and is $0$ otherwise. If $\tq (a) \neq \dq(a)$ then $f_{a}(\tq (a)) = 1$,
$f_{d(a)}(\tq (a)) = w$ and $f_b(\tq (a)) = 0$ for all other $b$. Conjecture
\ref{c:akl} is trivial to check and Question \ref{q:kl} has a positive
answer with $d(\mf{sl_2})$ being $1$. Furthermore, it is trivial that
the radical filtration for a Weyl module satisfies Assumption
\ref{a:filtweyl} and its length is at most $2$.

\subsection{The $\Ext$ spectral sequence and the weight filtration
  explicit}\label{s:nsl2}

\subsubsection{Degeneration of the $\Ext$ spectral sequence}

 Suppose $\vec{a} = (a_1,a_2,\dots,a_n)$ is such that
$\dq(a_i)$ is simple, equivalently tilting, for all $i$, so we may
write
\[
  \bigotimes_i\dq(a_i) \cong \bigoplus_b \tq (b)^{m_{\vec{a},b}} .
\]
with $m_{\vec{a},b}$ being $0$ for all but finitely many $b$. Then for
any $b$, Conjecture \ref{c:tilting} predicts that the weight
${\ExpM}(\vec{a},b)+ 1$ layer of $KZ_{\kappa}(\vec{a},b)$ has rank
$m_{\vec{a},u(b)}$ and this is also the predicted rank of the weight
${\ExpM}(\vec{a},u(b))$ layer of $KZ_{\kappa}(\vec{a},u(b))$. We will
prove that this ``reflection'' property does indeed hold, and this
will be a key for proving that the weights of $KZ_{\kappa}(\vec{a},b)$
are in the range predicted by Conjecture \ref{c:tilting}.

\smallskip

As a direct consequence of Conjecture \ref{c:simple}  and
  the properties of the Andersen filtration for tilting and simple
  modules for $\mf{sl}_2$ we then get:
  \begin{conj}\label{c:sl2a}
  Let $\vec{a} = (a_1,a_2,\dots,a_n)$ and assume $\tq (a_i) =
  \dq(a_i)$ for $1 \leq i \leq n_0$. Then the possible nonzero
  weights of $KZ_{\kappa}(\vec{a},b)$ are contained in the interval
  $[\ExpM + n_0 - n, \ExpM +1]$, where $\ExpM = \ExpM(\vec{a},b)$.
\end{conj}
We will prove the special case of this when $n = n_0$ in Section
\ref{s:exp} and the lower bound in the general case (using very
different methods) in a work currently under preparation.

\smallskip
As noted earlier, the spectral sequence for
$\Ext_{\uq}^*(\dq(\vec{\lambda}),\nq (\nu))$ with respect to the
filtration $\dq(\vec{\lambda})^{\bullet}$ can in principle be
complicated and need not degenerate at $E_1$, but for $\mf{sl}_2$ we
have:
\begin{theorem}\label{t:sl2deg}
  For any $\vec{a}$ and $b$, the spectral sequence for
  $\Ext_{\uq}^*(\dq(\vec{a}),\nq (b))$ with respect to the filtration
  $\dq(\vec{a})^{\bullet}$ degenerates at $E_2$.
\end{theorem}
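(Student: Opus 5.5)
The plan is to exploit the very rigid structure of $\uq(\mf{sl}_2)$-$\Mod$ to put an extra grading on the $E_1$-page which is respected by the differentials only in a restricted way, and then to show that every $d_r$ with $r\ge 2$ is forced to vanish by degree or parity considerations.

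\textbf{Step 1: the $E_1$-page.} By \eqref{e:weyl}, each $\dq(a_j)^{\bullet}$ has length at most $2$, with $\dq(a_j)^1=\lq(d(a_j))$ when $\dq(a_j)$ is not simple. Consequently, by \eqref{e:filt}, we have
\[
\mr{Gr}^i\dq(\vec a)\cong\bigoplus_{|S|=i}\ \bigotimes_{j\in S}\lq(d(a_j))\otimes\bigotimes_{j\notin S}\lq(a_j),
\]
where $S$ runs over subsets of the indices with $\dq(a_j)$ non-simple. I will decompose each tensor product of simples using the quantum Steinberg tensor product theorem, $\lq(c)\cong\lq(c'')\otimes\lq(c')^{[l]}$ with $0\le c''\le l-1$. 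The small factors $\lq(c'')$ are tilting. The Frobenius twists multiply in the ordinary (semisimple) representation category of $\mf{sl}_2$, since the quantum Frobenius has image the classical enveloping algebra in characteristic $0$. Hence every $\mr{Gr}^i$ is a direct sum of modules of the form $T\otimes\lq(e)^{[l]}$ with $T$ tilting with small highest weights. I then compute $\Ext^k(T\otimes\lq(e)^{[l]},\nq(b))$ using the Gruber minimal tilting complex of Definition \ref{d:filt}: for $\mf{sl}_2$, $C_{\mr{min}}(\lq(c))$ is explicit, of the form $\tq(c)\to\tq(d(c))\to\cdots$ along the linkage chain $c>d(c)>d^2(c)>\cdots$.

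\textbf{Step 2: a parity grading.} Within a linkage class $W_l\cdot b$, the weights are totally ordered by the alcove length. I will show that $\Ext^k(\lq(c),\nq(b))\neq0$ only if $k\equiv\ell(c)-\ell(b)\pmod 2$. This is the sl$_2$ case of the Kazhdan--Lusztig parity vanishing, and here it is visible from the explicit complex $C_{\mr{min}}(\lq(c))$, each of whose successive terms moves one alcove down. Since $d(a_j)$ and $a_j$ differ by exactly one reflection, every constituent of $\mr{Gr}^i$ lying in the block of $b$ has parity $\equiv\epsilon+i$ for a constant $\epsilon$ depending only on $\vec a$ and $b$. Here one uses that tensoring with a simple preserves the parity bookkeeping, which I will check on the Steinberg decomposition. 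It follows that $E_1^{p,q}\neq0$ only if $q\equiv\epsilon\pmod 2$. The differential $d_r\colon E_r^{p,q}\to E_r^{p+r,q-r+1}$ changes $q$ by $1-r$, so $d_r=0$ for all even $r$, in particular for $r=2$.

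\textbf{Step 3: killing odd $r\ge3$.} This is the main obstacle. My first attempt will be a second, finer grading by the ``Frobenius degree'', i.e.\ the highest weight $e$ of the $\lq(e)^{[l]}$-factor measured by the level $a'$. I will try to show that $\Ext^k(\lq(c),\nq(b))$ is concentrated in $k\le 1$ for all constituents that actually occur, using that $\nq(b)$ is induced from the Borel and that the explicit minimal complexes of Step 1 have length at most $2$ once restricted to the summands with nonzero $\Hom$ into $\nq(b)$. If this holds, then $E_1^{p,q}$ vanishes outside two adjacent values of $p+q$. Combined with Step 2, this rules out any $d_r$ with $r\ge3$, because such a differential would have to connect terms whose filtration indices differ by at least $3$ while the total degree differs by $1$.

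If this length bound fails in general, the fallback is a weight argument. I would transport the filtration to a graded lift, namely the Koszul/graded version of the principal block of $\uq(\mf{sl}_2)$, which is the zigzag algebra. In that setting the radical filtration is the grading filtration and $\nq(b)$ is graded pure. The spectral sequence then becomes that of a graded module whose internal grading forces $d_r$ to vanish for $r\ge2$.
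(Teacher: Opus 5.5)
\textbf{Step 3 does not go through, and this is a genuine gap.} You rightly flag it as the crux. Parity, even when correct, only kills $d_r$ for even $r$, and nothing in the proposal kills $d_3,d_5,\dots$.

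Your proposed bound $\Ext^k(\lq(c),\nq(b))=0$ for $k\ge 2$ on the constituents that actually occur is false:
\begin{itemize}
\item Already for $n=1$ and $a=u^2(b)$ one has $E_1^{0,2}=\Ext^2(\lq(a),\nq(b))\neq 0$.
\item For $l=5$, $\vec a=(5,5)$, $b=0$ one finds $\mr{Gr}^0=\lq(0)\oplus\lq(10)$, $\mr{Gr}^1=\lq(8)^{\oplus 2}$ and $\mr{Gr}^2=\tq(6)\oplus\tq(4)\oplus\lq(0)$. Since $8=u(0)$ and $10=u^2(0)$, the $E_1$-page is nonzero in total degrees $0$, $1$ and $2$.
\end{itemize}
So there is no two-adjacent-degrees vanishing to exploit.

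The Koszul fallback presupposes what has to be proved. You would need a graded lift of all of $\uq$-$\Mod$, together with a graded lift of $\otimes$, under which $\dq(\vec a)^{\bullet}$ becomes the grading filtration (up to shifts on summands). The fact that the radical filtration of a single Weyl module is a grading filtration says nothing about the tensor product filtration, and that filtration is not the radical filtration of $\dq(\vec a)$.

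\textbf{Step 2 is incorrect as stated, though repairable.} Projective summands of the layers have composition factors of both parities; for instance $\tq(7)$ at $l=5$ has factors $\lq(7)$ and $\lq(1)$. The conclusion also fails. Take $l=5$, $\vec a=(3,3,7)$, $b=7$:
\begin{itemize}
\item the summand $\lq(7)$ of $\mr{Gr}^0$ gives $E_1^{0,0}\neq 0$;
\item $\mr{Gr}^1\supset\tq(6)\otimes\lq(1)\cong\tq(7)\oplus\tq(5)$ gives $E_1^{-1,1}\neq 0$.
\end{itemize}
These lie in rows of different parity.

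The fix is to first split off the projective summands of the layers as filtered direct summands (Lemma \ref{l:fil2}). For the remaining non-projective simple summands $\lq(c)$ of $\mr{Gr}^i$ one has $c'\equiv\sum_j a_j'-i\pmod 2$; this is the Frobenius-quotient parity behind Lemma \ref{l:u}. Since $c=u^k(b)$ forces $c'=b'+k$, the surviving $E_1$-terms lie in rows $q\equiv\sum_j a_j'-b'\pmod 2$. Again, this handles only even $r$.

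\textbf{What is missing for odd $r$} is an argument that $E_2$ itself is concentrated on $p+q=0$. The paper obtains this in two steps:
\begin{itemize}
\item Lemma \ref{l:veca2} constructs an auxiliary filtration $\fil^{\bullet}\dq(\vec a)$ whose layers are Weyl or projective modules carrying at most two-step induced filtrations. Since $\Ext^{>0}$ of such modules into $\nq(b)$ vanishes, each layer's spectral sequence has $E_2=E_\infty$ on the antidiagonal.
\item Lemma \ref{l:deg} then argues by induction. Semisimplicity of the layers makes the $E_1$-pages of $0\to\fil^{i+1}\to\fil^i\to\fil^i/\fil^{i+1}\to 0$ into short exact sequences of $d_1$-complexes. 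The resulting long exact sequence on $E_2$ propagates antidiagonal concentration, which kills every $d_r$ with $r\ge 2$.
\end{itemize}
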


Theorem \ref{t:sl2deg} will be proved in Section \ref{s:sl2deg}.

\subsubsection{}
We now analyse in detail the terms in the spectral sequence. To do
this we need to know the structure of the layers of the filtration
$\dq(\vec{a})^{\bullet}$. This follows from:
\begin{lemma}\label{l:ab}
  For any $a,b \geq 0$ $\lq (a) \otimes \lq (b)$ is a direct sum of simple
  and projective tilting modules.
\end{lemma}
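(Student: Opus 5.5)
We will use the quantum Steinberg tensor product theorem together with the Clebsch--Gordan rule for $\mf{sl}_2$. Write $a = a'l + a''$ and $b = b'l + b''$ with $0 \le a'', b'' \le l-1$. The quantum Steinberg (Lusztig) tensor product theorem gives
\[
\lq(a) \cong \lq(a'') \otimes \lq(a'l),
\]
and $\lq(a'l)$ is the Frobenius pullback of the simple $\mf{sl}_2$-module $V_{a'}$ along the quantum Frobenius map. Hence
\[
\lq(a)\otimes\lq(b) \cong \bigl(\lq(a'')\otimes\lq(b'')\bigr)\otimes \mr{Fr}^*(V_{a'}\otimes V_{b'}).
\]
Over $\C$, Clebsch--Gordan decomposes $V_{a'}\otimes V_{b'}$ as $\bigoplus_c V_c$, and $\mr{Fr}^*$ is exact, so the second factor is a direct sum of modules $\lq(cl)$. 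It therefore suffices to show that $\lq(a'')\otimes\lq(b'')\otimes\lq(cl)$ is a direct sum of simple modules and projective tilting modules.

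The key step is the restricted case: for $0 \le a'', b'' \le l-1$, we claim
\[
\lq(a'')\otimes\lq(b'') \cong \bigoplus_{e} \lq(e) \;\oplus\; \bigoplus_{f} \tq(f),
\]
where each $e \le l-2$ and each $f$ satisfies $l-1 \le f \le 2l-2$ with $\tq(f)$ projective. We would argue this as follows.
\begin{itemize}
\item If $a'' = l-1$ (or $b'' = l-1$), then $\lq(l-1) = \dq(l-1)$ is the Steinberg module, which is projective and simple. Its tensor product with any module is projective, hence a sum of projective tilting modules.
\item If $a'', b'' \le l-2$, then both factors are simple Weyl modules and therefore tilting, so their product is tilting. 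By the standard $\mf{sl}_2$ fusion computation (comparing characters with \eqref{e:tilt}), the summands are $\tq(e)$ with $e \le l-2$, which are simple, and $\tq(f)$ with $l \le f \le 2l-2$ (together with possibly $\tq(l-1)$). These are the indecomposable tilting modules $\tq(f)$ with $l-1 \le f \le 2l-2$, which are projective; this is the standard fact, for instance in \cite[Section 2]{AndersenTubbenhauer}, that $\tq(f)$ is projective exactly when $f \ge l-1$.
\end{itemize}
The highest weights involved are bounded by $a''+b'' \le 2l-2$, so no other tilting modules can occur.

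Finally we tensor with $\lq(cl)$:
\begin{itemize}
\item For a simple summand with $e \le l-2$, the Steinberg theorem gives $\lq(e)\otimes\lq(cl) \cong \lq(e+cl)$, which is simple.
\item For a projective summand, $\tq(f)\otimes\lq(cl)$ is again projective, since projectives form a tensor ideal in $\uq$-$\Mod$. Projective modules are tilting here, so it is a direct sum of projective tilting modules.
\end{itemize}
This proves the lemma.

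The main obstacle is justifying the classification step cleanly: that the indecomposable summands of $\tq(a'')\otimes\tq(b'')$ with highest weight at least $l-1$ are exactly the projective tilting modules. I would first try to cite the $\mf{sl}_2$ tilting theory directly. Failing that, I would verify it by character comparison combined with the fact that $\tq(f)$ for $l-1 \le f \le 2l-2$ is the projective cover of $\lq(2l-2-f)$. The input from quantum Frobenius (the Steinberg tensor product theorem and the identification $\lq(a'l) \cong \mr{Fr}^*V_{a'}$) is standard and would be cited.
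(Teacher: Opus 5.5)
Your proof is correct and follows essentially the same route as the paper. Both arguments split off the restricted parts via Lusztig's tensor product theorem, then decompose $L_q(a'')\otimes L_q(b'')$ into simple tiltings of highest weight $\le l-2$ plus projective tiltings, use Clebsch--Gordan for $L_q(a'l)\otimes L_q(b'l)$, and finish with a third application of the tensor product theorem together with the fact that projectives form a tensor ideal.
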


\begin{proof}
  Let $a = a'l + a''$ and $b = b'l + b''$.  By the Lusztig
  tensor product theorem \cite[Theorem 7.4]{LTT} we have
  $\lq (a) \cong \lq (a'l) \otimes \lq (a'')$ and similarly for $\lq (b)$. It
  follows that
  \[
    \lq (a) \otimes \lq (b) \cong (\lq (a'l) \otimes \lq (b'l)) \otimes
    (\lq (a'') \otimes \lq (b'')) .
  \]
  Now $\lq (a'')$ and $\lq (b'')$ are tilting modules since
  $a'', b'' < l$, therefore so is $\lq (a'') \otimes \lq (b'')$,
  and it can be written as a direct sum of simple (tilting) modules
  and projective tilting modules; the former are the ones with highest
  weight $< l -1$ and the latter are the other ones. Using Lusztig's
  theorem once again, we have
  \begin{equation}\label{e:a'b'}
    \lq (a'l) \otimes \lq (b'l) \cong \bigoplus_{|a'-b'| \leq c' \leq
      a' + b'} \lq (c'l) .
  \end{equation}
  Since the tensor product of a projective $\uq$-module with any
  $\uq$-module is projective, hence also tilting, the lemma follows
  from the above by a third application of Lusztig's theorem.
\end{proof}
The proof of the lemma leads to an explicit description of the
indecomposable summands, we will work this out below in Section
\ref{s:lab}.

\smallskip

It follows from Lemma \ref{l:ab} and induction that the tensor product of any
number of simple modules is a direct sum of simple modules and
projective tilting modules, therefore all the layers in the filtration
$\dq(\vec{a})^{\bullet}$ are sums of simple modules and projective
tilting modules. Combining this with the next lemma allows us to
compute all terms of the spectral sequence.
\begin{lemma}\label{l:exts}
  Let $b \geq 0$ be any integer.
  \begin{enumerate}
  \item If $b \in N_0$ then $\Hom_{\uq}(\lq (b),\nq (b))$ is of rank $1$ and
    this is the only possible nonvanishing $\Ext_{\uq}^i(\lq (a),\nq (b))$ for
    any $i,a \geq 0$.
  \item If $b \notin N_0$ then $\Ext_{\uq}^i(\lq (u^i(b)),\nq (b))$ is of rank
    $1$ for each $i \geq 0$ and these are the only possible
    nonvanishing $\Ext_{\uq}^i(\lq (a),\nq (b))$ for any $i,a \geq 0$.
  \end{enumerate}
  In particular, given any $a$ there are only finitely many $b$ such
  that $\Ext_{\uq}^i(\lq (a),\nq (b)) = 0$ for some $i$.
\end{lemma}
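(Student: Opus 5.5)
Since $\nq(b)$ has a good filtration, the plan is to compute $\Ext^i_{\uq}(\lq(a),\nq(b))$ by resolving the first variable by Weyl modules and using the vanishing $\Ext^{j}_{\uq}(\dq(c),\nq(b))=0$ for $j>0$, together with $\Hom_{\uq}(\dq(c),\nq(b))=\delta_{cb}\,\Q(q)$ (\cite[Theorem 3.1]{AST}). Concretely, for $a\in N_0$ or $a\le l-2$ we have $\lq(a)=\dq(a)$, so $\Ext^i(\lq(a),\nq(b))$ is $\Q(q)$ for $i=0$, $a=b$, and zero otherwise. For all other $a$ we use \eqref{e:weyl}, $0\to\lq(d(a))\to\dq(a)\to\lq(a)\to 0$. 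The associated long exact sequence, combined with the vanishing of higher Ext from $\dq(a)$, gives
\[
\Ext^{i+1}(\lq(a),\nq(b))\cong \Ext^{i}(\lq(d(a)),\nq(b)) \quad (i\ge 1),
\]
and an exact sequence $0\to \Hom(\lq(a),\nq(b))\to \Hom(\dq(a),\nq(b))\to \Hom(\lq(d(a)),\nq(b))\to \Ext^1(\lq(a),\nq(b))\to 0$. Here $\Hom(\lq(a),\nq(b))=\delta_{ab}$ because $\lq(b)$ is the socle of $\nq(b)$. Hence the first map is an isomorphism, so $\Ext^1(\lq(a),\nq(b))\cong\Hom(\lq(d(a)),\nq(b))=\delta_{d(a),b}$.

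Iterating, $\Ext^i(\lq(a),\nq(b))\cong \Ext^{i-1}(\lq(d(a)),\nq(b))\cong\cdots$. This descends along $a,d(a),d^2(a),\dots$ until reaching a weight $c$ with $\lq(c)=\dq(c)$, that is $c\le l-2$ or $c\in N_0$, at which point all higher Ext vanish. Note $d(a)<a$ when $a\notin N_0$ and $a\geq l-1$. When $a=l-1+a'l$ with $a''=l-1$ we have $a\in N_0$, so $d$ genuinely decreases and the descent terminates. We must also check that if $a''\ne l-1$ then $d(a)=a'l-2-a''$ is $\ge 0$, which holds since $a'\ge1$. The result is that $\Ext^i(\lq(a),\nq(b))$ is one-dimensional exactly when $b=d^i(a)$ and the chain has not terminated before step $i$, and is zero otherwise.

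The statement is then obtained by inverting $d$. We check directly from the formulas that for $b\notin N_0$, $d(u(b))=b$. If $b=b'l+b''$ with $b''\le l-2$, then $u(b)=(b'+2)l-(2+b'')=(b'+1)l+(l-2-b'')$, and applying $d$ gives $(b'+1)l-(2+l-2-b'')=b'l+b''=b$. Moreover $u(b)\notin N_0$, since its residue is $l-2-b''\ne l-1$, and $u(b)\ge l-1$, so $\lq(u(b))\neq\dq(u(b))$ and the chain continues. The values $b$ with $b''=l-1$ lie in $N_0$. Conversely, $d^i(a)=b$ forces $a=u^i(b)$, because $d$ is injective on non-$N_0$ weights $\ge l-1$ with image disjoint from $N_0$. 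Moreover, $N_0$ weights are never of the form $d(a)$ for $a$ in the relevant domain, since the residue of $d(a)$ is $l-2-a''\ne l-1$. This gives case (1): for $b\in N_0$ only $i=0$, $a=b$ contributes. It also gives case (2): $\Ext^i(\lq(u^i(b)),\nq(b))$ is one-dimensional and all other groups vanish.

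The main obstacle is the bookkeeping at the boundary, namely weights $a\le l-2$ versus $a=l-1$, and $a\in N_0$ with $a'=0$. Here we must verify that the recursion stops exactly where the formulas for $d$ and $u$ say it does. The final remark of the statement then follows from the finiteness of the chain $a,d(a),\dots$ (presumably it is intended as "$\neq 0$").
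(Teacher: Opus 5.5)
Your proposal is correct and takes essentially the paper's route: dimension shifting in the first variable along the sequences \eqref{e:weyl}, equivalently $0\to\lq(u^j(b))\to\dq(u^{j+1}(b))\to\lq(u^{j+1}(b))\to 0$, combined with $\Ext^{>0}_{\uq}(\dq(c),\nq(b))=0$. The only difference is that for $b\in N_0$ the paper uses injectivity of $\nq(b)$, whereas you get the vanishing from the fact that $N_0$ misses the image of $d$ on non-simple Weyl weights. You are also right that the final clause should read ``$\neq 0$''.
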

\begin{proof}
  Both statements for $\Hom_{\uq}(\lq (a),\nq (b))$ are clear. The vanishing
  of $\Ext_{\uq}^i(\lq (a),\nq (b))$ for $b \in N_0$ and $i >0$ is also clear
  if $b \in N_0$ since $\nq (b)$ is injective in this case.

  If $b \notin N_0$ the statement for $\Ext^i$ with $i > 0$ is reduced
  to the statement for $\Hom$ by induction on $i$ using the exact
  sequences
  \[
    0 \to \lq (u^j(a)) \to \dq(u^{j+1}(a)) \to \lq (u^{j+1}(a)) \to 0
  \]
  for all $j \geq 0$ and the fact that $\Ext_{\uq}^i(\dq(a),\nq (b)) =
  0$ for any $i > 0$ and $a \geq 0$.
\end{proof}

\subsubsection{Computing the weight filtration}\label{s:compute}
We now explain how, assuming Conjecture \ref{c:simple} and Theorem
\ref{t:sl2deg}, and using the explicit computations above, one may
compute all the ranks of the layers of the weight filtration of
$KZ_{\kappa}(\vec{a},b)$.

\smallskip

Firstly, all the $E_1^{r,s}$ in the spectral sequence can be computed
explicitly by Lemma \ref{l:exts} since the layers in the filtration
$\dq(\vec{a})^{\bullet}$ are direct sums of tilting modules and simple
modules (by Lemma \ref{l:ab} and induction on the number of
factors). Also, without assuming any conjecture, we know that
$E_{\infty}^{r,s}$ is zero whenever $r + s \neq 0$. The degeneration
at $E_2$ assumption then implies that $E_2^{r,s}$ is zero whenever
$r + s \neq 0$.

The differentials $d_1^{r,s}$ are maps
\begin{equation}\label{e:ss}
  \Ext_{\uq}^{r+s}(\dq(\vec{a})^{-r}/\dq(\vec{a})^{-r+1}, \nq (b)) \to
  \Ext_{\uq}^{r+s +1}(\dq(\vec{a})^{-r+1}/\dq(\vec{a})^{-r}, \nq (b))
  .
\end{equation}
The targets of the maps are zero for large $r+s$, so the degeneration
at $E_2$ and the knowledge of the ranks of all the $E_1^{r,s}$, lets
us inductively compute the ranks of all the $d_1^{r,s}$. In
particular, we can compute the ranks of all $d_1^{r,-r}$ and therefore
the ranks of the $E_2^{-r,r} = E_{\infty}^{-r,r} =
\Hom_{\uq}^0(\dq(\vec{a})^{r}/\dq(\vec{a})^{r+1}, \nq (b))$ for all
$r \geq 0$.

The dual Andersen filtration on
$\Hom_{\uq}(\dq(\vec{a})^{r}/\dq(\vec{a})^{r+1}, \nq (b))$ can be
computed using Example \ref{e:simple} and Lemma \ref{l:ab}.  However,
$\Hom_{\uq}^0(\dq(\vec{a})^{r}/\dq(\vec{a})^{r+1}, \nq (b))$ can be a
proper subspace of
$\Hom_{\uq}(\dq(\vec{a})^{r}/\dq(\vec{a})^{r+1}, \nq (b))$ and to
compute the ranks of the dual Andersen filtration restricted to
$\Hom_{\uq}^0(\dq(\vec{a})^{r}/\dq(\vec{a})^{r+1}, \nq (b))$ we need
to examine the structure of $\dq(\vec{a})^{r}/\dq(\vec{a})^{r+1}$. As
we have seen, this module is a sum $S_r \oplus P_r$ where $S_r$ is a
sum of (non-projective) simple modules $S_r$ and $P_r$ is a projective
module. Now projective modules in $\uq$-$\Mod$ are also injective in
$\uq$-$\Mod$, so this summand splits off in the filtration
$\dq(\vec{a})^{\bullet}$, which implies that $d_1^{-r,r}$ must vanish
on the subspace $\Hom_{\uq}(P_r, \nq (b))$ of
$\Hom_{\uq}(\dq(\vec{a})^{r}/\dq(\vec{a})^{r+1}, \nq (b))$. It follows
that the rank of the second layer of the dual Andersen filtration of
$\Hom_{\uq}(\dq(\vec{a})^{r}/\dq(\vec{a})^{r+1}, \nq (b))$ equals the
rank of its restriction to
$\Hom_{\uq}^0(\dq(\vec{a})^{r}/\dq(\vec{a})^{r+1}, \nq (b))$ and the
rank of the first layer decreases by the rank of $d_1^{-r,r}$. Since
we can compute the rank of $d_1^{-r,r}$, it follows that we can
compute the ranks of the LHS of the equality in Conjecture
\ref{c:simple}, and therefore also the RHS, if we assume that the
conjecture holds.

\subsubsection{}

In order to prove Theorem \ref{t:sl2deg} we will need to know the
structure of $\dq(\vec{a})$ as a sum of indecomposables and how
this structure relates to the tensor product filtration. For this we
will need several lemmas which we prove next.
\begin{lemma}\label{l:extweyl}
  Let $a,b \geq 0$. If $\Ext_{\uq}^1(\dq(a),\dq(b)) \neq 0$ then $b =
  u(a)$ and if $a \notin N_0$ then $\Ext_{\uq}^1(\dq(a),\dq(u(a)))$ has
  rank one.
\end{lemma}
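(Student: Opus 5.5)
The plan is to compute $\Ext^1_{\uq}(\dq(a),\dq(b))$ with the long exact sequence for $\Hom_{\uq}(\dq(a),-)$ applied to the tilting sequence \eqref{e:tilt}. The key input is that tilting modules have a dual Weyl filtration, so $\Ext^1_{\uq}(\dq(a),\tq(b))=0$ by \cite[Theorem 3.1]{AST}.

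First suppose $\tq(b)=\dq(b)$, i.e.\ $b\le l-2$ or $b\in N_0$. Then $\Ext^1_{\uq}(\dq(a),\dq(b))=\Ext^1_{\uq}(\dq(a),\tq(b))=0$. This is consistent with the statement: such $b$ has the form $u(a)$ only for $a=b\in N_0$, since $u(a)\ge l$ whenever $a\notin N_0$.

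Now suppose $\tq(b)\neq\dq(b)$. The sequence $0\to\dq(b)\to\tq(b)\to\dq(d(b))\to 0$ gives
\[
\Ext^1_{\uq}(\dq(a),\dq(b))\cong \op{coker}\bigl(\Hom_{\uq}(\dq(a),\tq(b))\to\Hom_{\uq}(\dq(a),\dq(d(b)))\bigr).
\]
We then determine when the target is nonzero. By \eqref{e:weyl}, $\dq(c)$ has head $\lq(c)$ and (if nonsimple) socle $\lq(d(c))$, and $\dq(a)$ has simple head $\lq(a)$. Hence $\Hom_{\uq}(\dq(a),\dq(c))\ne0$ forces $a=c$ or $a=d(c)$, and in each case it has rank one.

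Case $a=d(b)$. The formula $\dim\Hom_{\uq}(\dq(a),Q)=(Q:\nq(a))$ for tilting $Q$ gives $\dim\Hom_{\uq}(\dq(d(b)),\tq(b))=1$. This Hom space is spanned by the composite $\dq(d(b))\to\lq(d(b))\hookrightarrow\dq(b)\subset\tq(b)$, which dies in the quotient $\dq(d(b))$. So the map to $\Hom_{\uq}(\dq(d(b)),\dq(d(b)))\cong\Q(q)$ is zero and the cokernel has rank exactly one. (Equivalently, the identity of $\dq(d(b))$ does not lift because \eqref{e:tilt} is non-split, $\tq(b)$ being indecomposable.) A direct check from the definitions then shows $u(d(b))=b$ whenever $\tq(b)\ne\dq(b)$. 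Write $b=b'l+b''$ with $b'\ge1$ and $b''\ne l-1$; then $d(b)=(b'-1)l+(l-2-b'')$ and $u$ of this is $b'l+b''$. Conversely, $d(u(a))=a$ for $a\notin N_0$. Together these give both the necessity $b=u(a)$ in this case and the rank-one claim when $a\notin N_0$.

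Case $a=d(d(b))\ne d(b)$, with $\dq(d(b))$ nonsimple. This is the step I expect to be the main obstacle. The target Hom is rank one, spanned by $\dq(a)\to\lq(a)\hookrightarrow\dq(d(b))$, while $(\tq(b):\nq(a))=0$ makes the source Hom vanish. Taken at face value, this would give a nonzero $\Ext^1$ with $b\ne u(a)$.

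So I would first recheck this case carefully, in two ways:

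1. Recompute the same group through the other long exact sequence, from $0\to\lq(d(b))\to\dq(b)\to\lq(b)\to0$ together with Lemma \ref{l:exts}. Using $\Ext^1(\dq(a),-)=\Ext^1$ against simples, computed via a projective cover of $\dq(a)$, only $\lq(u(a))$ should contribute.
2. Test a small case such as $l=3$, $b=7$, $a=1$.

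If the discrepancy persists, the statement may need the extra hypothesis that $d(b)$ is not itself in the range where $\dq(d(b))$ is nonsimple. Otherwise I will have found the error, presumably in identifying the socle of $\dq(d(b))$ versus the image of $\Hom_{\uq}(\dq(a),\tq(b))$, and the argument closes with only the case $a=d(b)$ contributing.
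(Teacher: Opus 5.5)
Your proposal stops short of a proof, but the flaw is not in your reasoning. Your computation is correct in every case, including the one you flag. The statement as written is false, so there is no error to find and no way to close the argument. Take $a\notin N_0$ and $b=u^2(a)$, so that $b\ge 2l$, $d(b)=u(a)$ and $a=d^2(b)\neq d(b)$.

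\begin{itemize}
\item $\Hom_{\uq}(\dq(a),\tq(b))=0$, because the $\nq$-factors of $\tq(b)$ are $\nq(b)$ and $\nq(d(b))$.
\item $\Hom_{\uq}(\dq(a),\dq(d(b)))$ has rank one, because $\lq(a)$ is the socle of $\dq(d(b))$.
\item Your cokernel formula therefore gives $\Ext^1_{\uq}(\dq(a),\dq(u^2(a)))$ of rank one.
\end{itemize}

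Both of your rechecks confirm this rather than refute it. In $0\to\lq(d(b))\to\dq(b)\to\lq(b)\to0$, the simple $\lq(u(a))$ that contributes is exactly the socle $\lq(d(b))$. Since $\Hom_{\uq}(\dq(a),\lq(b))=0$, the rank-one group $\Ext^1_{\uq}(\dq(a),\lq(u(a)))$ injects into $\Ext^1_{\uq}(\dq(a),\dq(b))$. For $l=3$ this gives $\Ext^1_{\uq}(\dq(1),\dq(7))\neq0$, and likewise $\Ext^1_{\uq}(\dq(0),\dq(6))\neq0$. The latter is realised by the module $E$ obtained by pushing out the non-split sequence $0\to\lq(4)\to\nq(4)\to\lq(0)\to0$ along $\lq(4)=\mathrm{soc}\,\dq(6)\hookrightarrow\dq(6)$. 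It is non-split because $\Hom_{\uq}(\lq(0),\lq(6))=0$. What your argument actually proves is the corrected statement: $\Ext^1_{\uq}(\dq(a),\dq(b))\neq0$ if and only if $a\notin N_0$ and $b\in\{u(a),u^2(a)\}$, and then it has rank one. Your extra hypothesis ($\dq(d(b))$ simple, i.e.\ $b<2l$) would also make the original wording true.

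The paper works in the first variable instead. It uses the projective presentation $0\to\dq(u(a))\to\tq(u(a))\to\dq(a)\to0$, where $\tq(u(a))$ is the projective cover of $\lq(a)$. Then $\Ext^1_{\uq}(\dq(a),\dq(b))$ is the cokernel of the restriction map $\Hom_{\uq}(\tq(u(a)),\dq(b))\to\Hom_{\uq}(\dq(u(a)),\dq(b))$.

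\begin{itemize}
\item The case $b=u(a)$ is handled correctly.
\item For $u(a)=d(b)$, the paper claims surjectivity via a composite through a surjection $\nq(d(b))\twoheadrightarrow\lq(d(b))$. By the paper's own conventions, $\lq(d(b))$ is the socle of the non-simple module $\nq(d(b))$, whose head is $\lq(d^2(b))=\lq(a)$. So no such surjection exists.
\item In fact $\Hom_{\uq}(\tq(u(a)),\dq(u^2(a)))=0$, since $\lq(a)$ is not a composition factor of $\dq(u^2(a))$. The paper's sequence then also yields rank one.
\end{itemize}

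So the two routes agree once that step is corrected. Yours, which resolves the second variable by \eqref{e:tilt} and uses $\Ext^1_{\uq}(\dq(a),\tq(b))=0$, makes the extra case visible directly.

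Be aware that the error propagates. The proof of Lemma \ref{l:donkin}(1) uses this lemma to conclude that in a non-split $0\to\dq(a)\to N_j\to\dq(a_j)\to0$ the module $N_j$ is projective. The module $E$ above (for $l=3$) is a counterexample. It has simple socle $\lq(4)$, hence is indecomposable. It has a $\dq$-filtration, and it is neither a Weyl module nor projective.
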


\begin{proof}
  We will prove this by induction.  If $b = b''$ then
  $\dq(b) = \tq (b)$ is a tilting module, so
  $\Ext_{\uq}^1(\dq(a),\dq(b))= 0$ for all $a$.

  Assuming the statement for $b$ we will prove it for $u(b)$. There is
  an exact sequence
  \[
    0 \to \dq(u(a)) \to \tq (u(a)) \to \dq(a) \to 0
  \]
  and $\tq (u(a))$ is both projective and injective. Given any $b$ we
  therefore get an exact sequence
  \begin{multline*}
    \Hom_{\uq}(\dq(a),\dq(b)) \to \Hom_{\uq}(\tq (u(a)),\dq(b)) \to
    \Hom_{\uq}(\dq(u(a)),\dq(b)) \\
    \to \Ext_{\uq}^1(\dq(a), \dq(b)) \to 0
    .
  \end{multline*}
  Now $\Hom_{\uq}(\dq(u(a)),\dq(b)) \neq 0$ iff $u(a) = b$ or
  $u(a) = d(b)$ and in both cases the space has rank one.

  If $u(a) = b$ we see that $\Ext_{\uq}^1(\dq(a),\dq(u(a)))$ has rank
  at most one and the non-split extension above for $\tq (u(a))$ implies
  that the rank is equal to one.

  If $u(a) = d(b)$ then we have a sequence of maps
  \[
    \dq(u(a)) \to \tq (u(a)) \twoheadrightarrow \nq (u(a))=
    \nq (d(b)) \twoheadrightarrow \lq (d(b)) \hookrightarrow \dq(b)
  \]
whose composite is nonzero, so we see that
$\Ext_{\uq}^1(\dq(a),\dq(u^2(a))) = 0$.
\end{proof}

Recall that a filtration
\[
  0 = M_0 \subset M_1 \subset \dots \subset M_r = M
\]
of a $\uq$-module $M$ is called a $\dq$-filtration if $M_i/M_{i-1}$
is a Weyl module for $i \leq i \leq r$.
\begin{lemma}\label{l:donkin}
  $ $
  \begin{enumerate}
  \item If a module $M$ has a $\dq$-filtration then $M$ is a direct
    sum of projective modules and Weyl modules.
  \item A tensor product of Weyl modules is a direct sum of projective
  modules and Weyl modules.
\end{enumerate}
\end{lemma}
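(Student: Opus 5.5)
Part (1) will be proved by induction on the length $r$ of the $\dq$-filtration, using Lemma \ref{l:extweyl} and the projectivity/injectivity of the $\tq(u(a))$. Part (2) will then follow from (1), since a tensor product of Weyl modules has a $\dq$-filtration by \cite[Theorem 3.3]{paradowski}.

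For the inductive step of (1), write $0 \to M_{r-1} \to M \to \dq(a) \to 0$. By induction $M_{r-1} \cong P \oplus \bigoplus_j \dq(b_j)$ with $P$ projective. Projectives in $\uq$-$\Mod$ are injective, so the extension class decomposes over the summands and the $P$-component vanishes. Hence $M \cong P \oplus M'$, where $M'$ is the extension of $\dq(a)$ by $\bigoplus_j \dq(b_j)$ given by a class
\[
  \xi=(\xi_j)\in \bigoplus_j \Ext^1_{\uq}(\dq(a),\dq(b_j)).
\]
If $a\in N_0$, then $\dq(a)$ is projective, the sequence splits, and we are done. Otherwise, by Lemma \ref{l:extweyl}, $\xi_j\neq 0$ only when $b_j = u(a)$, and each such $\Ext^1$ is one-dimensional. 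So the class lies in $\Ext^1(\dq(a),\dq(u(a)))\otimes \C^{k}$, where $k$ counts the indices with $b_j=u(a)$.

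Next, an automorphism of $\dq(u(a))^{\oplus k}$, namely $GL_k$ acting on the multiplicity space, moves a nonzero vector $\xi$ to $(\xi_0,0,\dots,0)$, where $\xi_0$ is a generator. Since $GL_k$ acts through automorphisms of the submodule, the resulting extension is isomorphic to $M'$. The extension then splits off the Weyl modules $\dq(u(a))^{\oplus(k-1)}$ and $\dq(b_j)$ for $b_j\neq u(a)$. The remaining piece is the non-split extension $0\to\dq(u(a))\to E\to\dq(a)\to 0$. The exact sequence \eqref{e:tilt}, read with $u(a)$ in place of $a$ (note $d(u(a))=a$ for $a\notin N_0$), gives such a non-split extension with middle term $\tq(u(a))$. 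Since $\Ext^1$ is one-dimensional, $E\cong\tq(u(a))$, which is projective as used in the proof of Lemma \ref{l:extweyl}. This completes the induction.

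The main obstacle is the bookkeeping that justifies the change of basis. One must check that automorphisms of the sub-sum act on the extension class, so that isomorphic classes give isomorphic middle terms, and that the class decomposes compatibly with the direct sum decomposition. One must also confirm that $d(u(a))=a$ and that $\tq(u(a))$ is projective for $a \notin N_0$; this is the case $a\le l-2$ or general $a'$, and it follows from the explicit formulas for $d$ and $u$. Once these points are verified, both parts follow.
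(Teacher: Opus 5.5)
Your argument is correct. It differs from the paper's proof mainly in how the decomposition is organised.

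\textbf{The paper's route.} It first invokes Donkin's criterion \cite[Proposition 3.5]{AST} to reduce to an indecomposable $M$. It then peels off the \emph{bottom} Weyl submodule $M_1\cong\dq(a)$. A projective summand of $M/M_1$ would split off from $M$, so $M/M_1\cong\bigoplus_j\dq(a_j)$. Some pulled-back extension $0\to\dq(a)\to N_j\to\dq(a_j)\to 0$ must then be non-split. By Lemma \ref{l:extweyl} this forces $N_j\cong\tq(a)$, which is projective and injective, and indecomposability gives $M=N_j$.

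\textbf{Your route.} You peel off the \emph{top} quotient and never assume indecomposability. Injectivity of $P$ kills its component of the extension class. The one-dimensionality of $\Ext^1(\dq(a),\dq(u(a)))$ then lets you normalise the class in $\Ext^1(\dq(a),\dq(u(a))^{\oplus k})$ by an automorphism of the submodule, so that exactly one copy of $\tq(u(a))$ appears.

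\textbf{Comparison.} Your version avoids Donkin's criterion, at the cost of the $GL_k$ change of basis and the compatibility of extension classes with direct sums and with automorphisms of the submodule. You correctly identified this bookkeeping, and it is standard. In the paper, indecomposability does that bookkeeping for free. Both arguments rest on the same inputs: Lemma \ref{l:extweyl}, the projectivity and injectivity of $\tq(u(a))$, and \cite[Theorem 3.3]{paradowski} for part (2).

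The checks you flag are immediate. If $a\notin N_0$ then $a''\le l-2$, so $u(a)=(a'+1)l+(l-2-a'')\ge l$ and $u(a)\notin N_0$. Hence \eqref{e:tilt} applies to $u(a)$, $d(u(a))=a$, and $\tq(u(a))$ is projective because its highest weight is at least $l-1$.

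Two cosmetic points. The coefficient field is $\Q(q)$, not $\C$. The case where the whole class is zero is simply the split case.
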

\begin{proof}
  We first prove (1).  By Donkin's criterion for the existence of a
  $\dq$-filtration \cite[Proposition 3.5]{AST} we may and do assume
  that $M$ is indecomposable.  We now use induction on $r$.

  If $r = 1$ then $M$ is a Weyl module by definition of a
  $\dq$-filtration, so suppose $r > 1$. Then $M_1 \cong \dq(a)$
  for some $a$ and by induction $M/M_1$ is a direct sum of projective
  tilting modules and Weyl modules. If there is a nonzero summand
  which is a projective module, then the summand must split
  off from $M$. Since $M$ is indecomposable this is not possible. Thus
  $M/M_1 \cong \oplus_{j=1}^s \dq(a_j)$ is a sum of Weyl modules.
  This gives us submodules $N_j$ of $M$ fitting into exact sequences
  \[
    0 \to \dq(a) \to {N}_j \to \dq(a_j) \to 0 .
  \]
  Since $M$ is indecomposable, there exists $j$ such that this
  sequence doesn't split. Then $N_j$ is a projective (therefore also
  injective) module by Lemma \ref{l:extweyl} and then
  indecomposability implies $M=N_j$, so (1) is proved.

  Part (2) follows from (1) and the fact that the tensor product of
  Weyl modules has a $\dq$-filtration by the main theorem of
  \cite{paradowski} (but see \cite[Claim 3.10.1]{AST} for a simpler
  proof in type $A$).
\end{proof}

\begin{lemma}\label{l:socle}
  Let $M$ be a $\uq$-module which is a direct sum of simple modules,
  Weyl modules and indecomposable tilting modules. If $L$ is a simple
  submodule of $M$ then there is an indecomposable summand $M'$ of $M$
  with $L = \mr{soc}(M')$.
\end{lemma}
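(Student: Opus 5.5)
Write $M = \bigoplus_j M_j$ with each $M_j$ simple, a Weyl module $\dq(a)$, or an indecomposable tilting module $\tq(a)$ (here $\mf{g}=\mf{sl}_2$). The first step is to record that every such summand has \emph{simple socle}. For a simple module this is trivial. For a Weyl module, \eqref{e:weyl} shows $\dq(a)$ is either simple or uniserial of length two with socle $\lq(d(a))$. For a non-simple tilting module, $\tq(a)$ is projective and injective (as used in the proof of Lemma \ref{l:extweyl}), hence the injective hull of its socle. Since $\tq(a)$ is indecomposable injective, its socle is simple; concretely it is $\lq(d(a))$, using \eqref{e:tilt} and self-duality.

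Next, given a simple submodule $L \cong \lq(c) \subset M$, write the inclusion as $\iota=(\iota_j)_j$ with $\iota_j: L\to M_j$. Each nonzero $\iota_j$ is injective with image $\mr{soc}(M_j)$, since the socle is simple and $L$ is simple. Let $J$ be the set of $j$ with $\iota_j\neq 0$; it is nonempty. The goal is to find an automorphism $\phi$ of $M$ carrying $L$ onto $\mr{soc}(M_{j_0})$ for a single $j_0$, and then take $M' := \phi^{-1}(M_{j_0})$. This $M'$ is an indecomposable summand of $M$ with $\mr{soc}(M') = L$. (I read ``summand'' as a summand in some decomposition, so the choice of complement is free.)

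To construct $\phi$, choose $j_0\in J$ such that for every $j\in J$ there is a map $f_j: M_{j_0}\to M_j$ with $f_j\circ\iota_{j_0} = \iota_j$. Then
\[
\phi = \mathrm{id} + \sum_{j\in J\setminus\{j_0\}} f_j ,
\]
with each $f_j$ viewed as a map $M_{j_0}\to M_j\subset M$, is unipotent, hence an automorphism. It satisfies $\phi(\iota_{j_0}(L)) = \iota(L) = L$, so $\phi^{-1}(L)=\iota_{j_0}(L)$. Taking $M'=\phi(M_{j_0})$ gives $\mr{soc}(M') = \phi(\mr{soc} M_{j_0}) = L$.

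The main obstacle is choosing $j_0$ so that the extensions $f_j$ exist. The summands with socle $\lq(c)$ are among $\lq(c)$, $\dq(u(c))$ (whose socle is $\lq(d(u(c)))=\lq(c)$), and $\tq(u(c))$. I would order them by injectivity. If some $M_j$ with $j\in J$ is $\lq(c)$, take it as $j_0$: the inclusion of $\lq(c)$ into any summand with that socle is exactly the socle inclusion, so $f_j=\iota_j\circ\iota_{j_0}^{-1}$ works. Otherwise, if some $M_j$ is a Weyl module $\dq(u(c))$, take $j_0$ to be it, and extend maps from its socle to $\dq(u(c))$ and to $\tq(u(c))$. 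For the target $\tq(u(c))$ this follows from injectivity. For a target that is another copy of $\dq(u(c))$, use the identity up to scalar, since the socle inclusion is unique up to scalar. If only tilting summands occur, all copies are $\tq(u(c))$, and injectivity again gives the maps. Care is needed to check that these socle-matching claims hold for the specific $\mf{sl}_2$ modules via \eqref{e:weyl}, \eqref{e:tilt}, and the maps $u,d$. I would also check separately that for $a\in N_0$ everything is simple, in which case the first subcase applies.
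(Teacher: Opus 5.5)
Your proposal is correct and follows essentially the same route as the paper: reduce to the summands whose socle is $\lq(c)$ (namely $\lq(c)$, $\dq(u(c))$, $\tq(u(c))$), give priority to a simple summand, then to a Weyl module, then to a tilting module, and use that the socle inclusion of the chosen summand extends to the others via injectivity of $\tq(u(c))$ and one-dimensionality of $\Hom_{\uq}(\lq(c),\dq(u(c)))$. Your explicit unipotent automorphism $\phi=\mathrm{id}+\sum_j f_j$ simply spells out the ``split it off'' step that the paper leaves implicit.
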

\begin{proof}
  Let $L\cong \lq (a)$ for some $a \geq 0$. The only non-simple Weyl
  module with socle containing $\lq (a)$ is $\dq(u(a))$ and the only
  non-simple indecomposable tilting module with socle containing $\lq (a)$
  is $\tq (u(a))$ and in both these cases the socle is equal to $\lq (a)$.
  We may therefore assume that
  $M \cong \lq (a)^{m_1} \oplus \dq(u(a))^{m_2} \oplus \tq (u(a))^{m_3}$,
  for some $m_1, m_2, m_3 \geq 0$.

  If the projection of $L$ to one of the $\lq (a)$ summands is nonzero,
  then we can split if off and take $M' \cong \lq (a)$. If all such
  projections are zero, we may assume $m_1 = 0$. Any map from $\lq (a)$
  to $\dq(u(a))$ (resp. $\tq (u(a))$) extends (uniquely) to a map from
  $\dq(u(a))$ to $\dq(u(a))$ (resp. $\tq (u(a))$), so if the
  projection of $L$ to one of the $\dq(u(a))$ summands is nonzero
  we may take $M' \cong \dq(u(a))$. If all such maps are zero we
  may assume $m_2=0$, and any map $\lq (a) \to \tq (u(a))$ extends to a map
  from $\tq (u(a))$ to itself, so in this case we may take $M' \cong \tq (u(a))$.
\end{proof}

  \subsubsection{}

To analyse how the filtration $\dq(\vec{a})^{\bullet}$ relates to
the structure of $\dq(\vec{a})$ as given by Lemma \ref{l:donkin}
(2), we will need some elementary lemmas about splitting filtrations.
They apply in arbitrary abelian categories, so in this section we fix
an abelian category $\bf{A}$ and all objects and morphisms
will be in this category.

\begin{lemma}\label{l:fil1}
  Let $M$ be an object of $\bf{A}$ with a finite descending
  separated filtration $M^{\bullet}$. Let $N$ be a subobject of $N$
  and endow it with the induced filtration $N^{\bullet}$, and also
  endow the quotient $Q$ with the quotient filtration $Q^{\bullet}$, i.e., $Q^i :=
  (M^i + N)/N$.
  \begin{enumerate}
  \item If the quotient map $M \to Q$ has a section which is
    compatible with the filtration then
    $M^{\bullet} \cong N^{\bullet} \oplus Q^{\bullet}$ as filtered
    objects.
  \item If $Q^i = 0$ and $N^{i-1} = N$ then any section of the
    quotient map is compatible with the filtration.
  \end{enumerate}
\end{lemma}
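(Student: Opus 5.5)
Both parts are elementary statements about filtered objects in an abelian category, and I will prove them directly from the definitions. (I read ``a subobject of $N$'' in the statement as ``a subobject of $M$''.) Let $\pi: M \to Q$ be the quotient map and $\iota: N \to M$ the inclusion. A section $s: Q \to M$ is \emph{compatible with the filtration} if $s(Q^i) \subset M^i$ for all $i$.

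For (1), the section $s$ gives the usual splitting $M = \iota(N) \oplus s(Q)$, i.e.\ the map $\phi = \iota + s: N \oplus Q \to M$ is an isomorphism. Give $N \oplus Q$ the direct sum filtration $N^i \oplus Q^i$. It then suffices to show that $\phi(N^i \oplus Q^i) = M^i$ for every $i$.

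The inclusion $\subset$ is immediate: $N^i = N \cap M^i \subset M^i$, and $s(Q^i) \subset M^i$ by compatibility.

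For $\supset$, I will use the retraction $r = \mathrm{id}_M - s\pi$, which maps $M$ onto $N$. Then $\phi^{-1} = (r, \pi)$, so I need to show that $\pi(M^i) \subset Q^i$ and $r(M^i) \subset N^i$. The first holds by the definition $Q^i = (M^i + N)/N$. For the second, take $x \in M^i$; then $s\pi(x) \in s(Q^i) \subset M^i$, so $r(x) = x - s\pi(x)$ lies in $M^i \cap N = N^i$. In an abelian category this is phrased with subobjects and images rather than elements (or reduced to modules by Freyd--Mitchell), but the argument is unchanged.

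For (2), suppose $Q^i = 0$ and $N^{i-1} = N$, and let $s$ be any section. I must show $s(Q^j) \subset M^j$ for all $j$, and I will split into two cases.

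If $j \geq i$, then $Q^j \subset Q^i = 0$ because the filtration is decreasing, so the inclusion is trivial.

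If $j \leq i-1$, write any $y \in Q^j$ as $\pi(x)$ with $x \in M^j$, which is possible since $Q^j = (M^j+N)/N$. Then $\pi(s(y) - x) = 0$, so $s(y) - x \in N = N^{i-1} \subset N^j \subset M^j$. Hence $s(y) = x + (s(y)-x) \in M^j$, as required.

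Combining (2) with (1) gives the splitting whenever the hypotheses of (2) hold. I expect no real obstacle here; the only care needed is consistent indexing of the decreasing filtrations and translating the element-wise arguments into diagram language.
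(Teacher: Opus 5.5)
Your proof is correct and is essentially the paper's argument. For (1) the paper simply cites exactness of $0 \to N^i \to M^i \to Q^i \to 0$, which your retraction $r=\mathrm{id}_M - s\pi$ makes explicit; for (2) it uses the same observation, namely $s(Q^j)\subset M^j+N = M^j+N^{i-1} = M^j$ for $j<i$, together with the trivial case $j\ge i$.
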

\begin{proof}
  For (1), the section gives a map
  $N^{\bullet} \oplus Q^{\bullet} \to M^{\bullet}$ of filtered objects
  and we must show that for each $i$, $N^i \oplus Q^i$ maps onto
  $M^i$. But by definition of the induced and quotient filtrations, we
  have an exact sequence
  \[
    0 \to N^i \to M^i \to Q^i \to 0
  \]
  so this is clear.

  We now prove (2).  Let $\sigma: Q \to M$ be a section. Since
  $Q^i = 0$ it is clear that for any $j \geq i$ we have
  $\sigma(Q^j) \subset M^j$. For general $j$ we must have
  $\sigma(Q^j) \subset M^j + N = M^j + N^{i-1}$. If $j < i$ then we
  have $M^j + N^{i-1} = M^j$ so (2) follows.
\end{proof}

\begin{lemma}\label{l:fil2}
  Let $M$ be an object of $\bf{A}$ with a finite descending separated
  filtration $M^{\bullet}$. Suppose for some $i$ we have
  $M^{i-1}/M^{i} \cong P \oplus R$ where $P$ is a projective and
  injective object of $\bf{A}$. Let $P^{\bullet}$ be the filtration
  on $P$ such that $P^{i-1} = P$ and $P^{i} = 0$. Then there is a
  filtered object $\widetilde{M}^{\bullet}$ such that
  $M^{\bullet} \cong \widetilde{M}^{\bullet} \oplus P^{\bullet}$,
  $\widetilde{M}^{j-1}/\widetilde{M}^{j} \cong M^{j-1}/M^{j}$ for
  $j \neq i$ and $\widetilde{M}^{i-1}/\widetilde{M}^{i} \cong R$.
\end{lemma}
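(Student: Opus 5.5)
The plan is to split off a copy of $P$ from $M$ by building a filtered retraction and a filtered section, and then to apply Lemma \ref{l:fil1}(1).

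Let $\pi: M^{i-1}\to M^{i-1}/M^i \cong P\oplus R$ be the quotient map and let $p: M^{i-1}\to P$ be its composite with the projection onto $P$. Since $P$ is projective, $p$ has a section $s: P\to M^{i-1}$, so $p\circ s=\mathrm{id}_P$. Write $\iota: P\to M$ for $s$ followed by the inclusion $M^{i-1}\subset M$. Since $P$ is also injective, the map $p: M^{i-1}\to P$ extends to a map $r: M\to P$. Then $r\circ\iota = p\circ s=\mathrm{id}_P$, so $\iota$ is a split monomorphism and $M\cong \iota(P)\oplus \ker r$ in $\bf{A}$.

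Next I would check that both maps are filtered for $P^{\bullet}$. For $\iota$ this is immediate: $P^j=P$ for $j\le i-1$ is sent into $M^{i-1}\subset M^j$, and $P^j=0$ for $j\ge i$. For $r$, I need $r(M^j)\subset P^j$. For $j\ge i$ we have $r(M^j)=p(M^j)=0$, because $M^j\subset M^i=\ker\pi$. For $j\le i-1$ the condition $r(M^j)\subset P^j=P$ is vacuous.

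To conclude, take $N=\iota(P)$ with the induced filtration and $Q=\ker r$. I would apply Lemma \ref{l:fil1}(1) directly, or equivalently argue that the idempotent $e=\iota\circ r$ is a filtered endomorphism, so $M^j=e(M^j)\oplus(1-e)(M^j)$. The two pieces are then computed as follows.
\begin{enumerate}
\item For $j\le i-1$ we have $\iota(P)\subset M^{i-1}\subset M^j$, so $e(M^j)=\iota(P)$.
\item For $j\ge i$ we have $e(M^j)=\iota(p(M^j))=0$.
\end{enumerate}
Hence the induced filtration on $\iota(P)$ is exactly $P^{\bullet}$. Set $\widetilde M^{j}:=(1-e)M^j=M^j\cap\ker r$, so that $M^{\bullet}\cong \widetilde M^{\bullet}\oplus P^{\bullet}$ as filtered objects. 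The layers of $M^{\bullet}$ are then the direct sums of the layers of $\widetilde M^{\bullet}$ and of $P^{\bullet}$. The only nonzero layer of $P^{\bullet}$ is $P^{i-1}/P^i = P$, so $\widetilde M^{j-1}/\widetilde M^{j}\cong M^{j-1}/M^{j}$ for $j\ne i$. For $j=i$, the summand $P$ in $M^{i-1}/M^i\cong P\oplus R$ is precisely the image of $\iota(P)$, because $\pi\circ s$ is the inclusion of $P$. Hence the complement is $\widetilde M^{i-1}/\widetilde M^i\cong R$.

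The main obstacle, though a mild one, is the $j=i$ layer: one must make sure that the $P$ split off is the given summand of $M^{i-1}/M^i$. This is exactly why $s$ is chosen as a lift of the projection $p$ rather than an arbitrary map, so that $\pi\circ s$ is the inclusion of $P$.
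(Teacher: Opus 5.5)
Your proof is correct and is essentially the paper's argument. Projectivity of $P$ splits $M^{i-1}\to P$, and injectivity extends the resulting retraction to all of $M$. The paper does this in two steps: it first treats $M=M^{i-1}$ via Lemma \ref{l:fil1}(2), then chooses a section of $M\to M/P$ restricting to the given one on $M^{i-1}/P$. Your filtered idempotent $e=\iota\circ r$ does the same compatibility check in one step.

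One small correction at the $j=i$ layer. From $p\circ s=\mathrm{id}_P$ you only get $\pi\circ s=(\mathrm{id}_P,\phi)$ for some $\phi\colon P\to R$, not the inclusion. Nothing depends on this, because $\widetilde M^{i-1}=M^{i-1}\cap\ker r=\ker p=\pi^{-1}(R)$ and $\widetilde M^{i}=M^i$, so $\widetilde M^{i-1}/\widetilde M^{i}\cong R$ directly. Alternatively, obtain $s$ by lifting the inclusion $P\hookrightarrow M^{i-1}/M^i$ along $\pi$, which is what your closing remark actually needs.
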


\begin{proof}
  We first consider the case $M = M^{i-1}$, so we have maps
  $M \to M^{i-1}/M^{i} \to P$ whose composite we denote by
  $\pi:M \to P$. We let $\widetilde{M} = \ker(\pi)$ with the induced
  filtration $\widetilde{M}^{\bullet}$. Since $P$ is projective, $\pi$
  has a section. It follows from Lemma \ref{l:fil1} that the section
  is compatible with the filtration, so
  $M^{\bullet} \cong \widetilde{M}^{\bullet} \oplus P^{\bullet}$. It
  is also clear that the filtered object $\widetilde{M}^{\bullet}$ has
  the desired properties.

  Now we prove the general case. We may apply the previous case to
  $N := M^{i-1} \subset M$ with its induced filtration $N^{\bullet}$,
  so we get
  $N^{\bullet} \cong \widetilde{N}^{\bullet} \oplus P^{\bullet}$.
  Using such an isomorphism, we may view $P^{\bullet}$ as a subobject
  of $M^{\bullet}$ with the induced filtration, and the quotient map
  $M \to M/P$ has a section $\sigma$ since $P$ is
  injective. Furthermore, the injectivity of $P$ also implies that any
  map $N/P \to P$ extends to a map $M/P \to P$. This implies that we
  may choose $\sigma$ such that its restriction to $N/P$ is equal to
  the original splitting of the map $N \to N/P$. Such a splitting is
  compatible with the filtrations so letting $\widetilde{M}^{\bullet}$
  be $M/P$ with its induced filtration we get an isomorphism of
  $M^{\bullet} \cong \widetilde{M}^{\bullet} \oplus P^{\bullet}$.
\end{proof}

\subsubsection{}\label{s:lab}

We now return to analysing the filtered object
$\dq(\vec{a})^{\bullet}$.
\begin{lemma}\label{l:st}
  Let $a = a'l $. Then for $b$ such that
  $l -1 \leq b < 2l -1$ we have
  $\lq (a) \otimes \tq (b) = \tq (a+b)$.
\end{lemma}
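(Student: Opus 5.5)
The plan is to show that $\lq(a)\otimes\tq(b)$ is an indecomposable tilting module whose highest weight is $a+b$. By the classification of indecomposable tilting modules, it is then $\tq(a+b)$.

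\emph{Tilting.} For $l-1\le b<2l-1$, the module $\tq(b)$ is projective. If $b=l-1$, then $\tq(b)=\dq(l-1)=\dq(\rho_l)$ is the Steinberg module, which is projective. If $l\le b\le 2l-2$, then $\tq(b)$ is the projective envelope $\tq(u(d(b)))$ of $\lq(d(b))$; here $d(b)=2l-2-b\le l-2$ and $u(d(b))=b$. Since the tensor product of a projective module with any module is projective, $\lq(a)\otimes\tq(b)$ is projective, and hence tilting. Its highest weight is $a+b$, and that weight occurs with multiplicity one. So $\tq(a+b)$ is a direct summand, and the real task is to show that there are no other summands.

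\emph{Character count.} Write $a=a'l$. By Lusztig's tensor product theorem, $\lq(a'l)$ has character $\chi(a')(z^l)$; that is, its weights are $l\cdot(a'-2k)$ for $0\le k\le a'$, each with multiplicity one. Write $b=l-1+c$ with $0\le c\le l-1$. The character of $\tq(b)$ is $\chi_b$ when $c=0$, and $\chi_b+\chi_{2l-2-b}$ otherwise. The product of these characters decomposes, through the Weyl character formula, as a sum of Weyl characters $\chi_{b+l(a'-2k)}$, together with the $d$-partners $\chi_{2l-2-b+l(a'-2k)}$, where negative indices are reflected by the dot action. I then regroup these terms into pairs $\chi_{e}+\chi_{d(e)}$ with $e\in\{a+b,\,a+b-2l,\dots\}$. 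Each pair is the character of $\tq(e)$, because $\tq(e)$ for $e$ in the range $[(a'+1)l-1,(a'+2)l-2]$ shifted down by multiples of $2l$ again has character $\chi_e+\chi_{d(e)}$.

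\emph{Indecomposability.} The character count by itself only identifies the module up to how these characters group into tilting summands, so I would use a more conceptual argument. By the Steinberg-type tensor product decomposition, $\tq((a'+1)l+c-1)\cong \lq(a'l)\otimes\tq(l-1+c)$ whenever $\tq(l-1+c)$ has the right structure. This is the standard Donkin/Andersen result that $\tq((l-1)+c+a'l)\cong\tq(l-1+c)\otimes \lq(a'l)$ for $0\le c\le l-1$, and for $\mf{sl}_2$ it can be checked directly. Concretely, one shows that $\Hom_{\uq}(\lq(a)\otimes\tq(b),\lq(a)\otimes\tq(b))$ has dimension $2$ when $c>0$ and $1$ when $c=0$, which equals $\dim\operatorname{End}(\tq(a+b))$. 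For this computation I use adjunction, $\Hom(\tq(b),\lq(a)^{*}\otimes\lq(a)\otimes\tq(b))$. By the Clebsch--Gordan rule for the Frobenius part \eqref{e:a'b'}, $\lq(a)\otimes\lq(a)\cong\bigoplus_{k}\lq(2kl)$. Each term $\Hom(\tq(b),\lq(2kl)\otimes\tq(b))$ with $k\ge1$ vanishes for weight reasons: $\lq(2kl)\otimes\tq(b)$ has composition factors whose highest weights lie in the linkage class of $b$ but outside the block part reachable by $\tq(b)$. Only the $k=0$ term survives, and it contributes $\operatorname{End}(\tq(b))$.

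The main obstacle is this vanishing of $\Hom(\tq(b),\lq(2kl)\otimes\tq(b))$ for $k\ge1$, which needs care with the composition factors. If it becomes messy, I would instead compare characters directly. Any extra tilting summand $\tq(e)$ with $e<a+b$ would force the multiplicity of $\chi_{d(a+b)}$ and of the other Weyl characters to exceed what the product character gives. This is a finite check using the explicit characters above.
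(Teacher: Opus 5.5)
\paragraph{Verdict.} Your outline is the paper's: $\tq(b)$ is projective, so $\lq(a)\otimes\tq(b)$ is tilting and $\tq(a+b)$ splits off, and then one rules out other summands. But that last step, which is the whole content, is not established, and the computation you offer for it is wrong.

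\paragraph{The character count.} You treat each pair $\chi(b+l(a'-2k))+\chi(2l-2-b+l(a'-2k))$ as the character of some tilting module $\tq(e)$. Dot-reflection of a negative index introduces a sign, $\chi(x)=-\chi(-x-2)$, and these reflected terms cancel every pair except the top one. For instance, with $l=3$, $a=6$, $b=3$,
\[
(\chi_3+\chi_1)(e^{6}+1+e^{-6})=\chi_9+\chi_7+(\chi_3+\chi_1)-(\chi_1+\chi_3)=\chi_9+\chi_7 .
\]
In general $\op{ch}(\lq(a)\otimes\tq(b))=\chi_{a+b}+\chi_{d(a+b)}=\op{ch}\,\tq(a+b)$. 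If your regrouping were right, then, since a tilting module is determined by its character, $\lq(a)\otimes\tq(b)$ would be a sum of several $\tq(e)$, contradicting the lemma.

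You are also wrong that characters cannot decide the question. Once $\tq(a+b)$ is known to be a summand, the correct character identity forces the complement to have zero character, hence to vanish. So the corrected telescoping computation is already a complete proof. Your fallback (``a finite check using the explicit characters above'') rests on the incorrect characters.

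\paragraph{The endomorphism route.} This is valid in outline: a nonzero complement $N$ would add at least $\dim\operatorname{End}(N)\ge 1$. Its key step is the vanishing of $\Hom_{\uq}(\tq(b),\lq(2kl)\otimes\tq(b))$ for $k\ge 1$. You only assert this through an imprecise ``block part reachable by $\tq(b)$'' remark, and your appeal to the Donkin/Andersen tensor product theorem simply cites the statement being proved.

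A precise argument runs as follows. For $b\ge l$, $\tq(b)$ is the projective cover of $\lq(d(b))$, so $\dim\Hom_{\uq}(\tq(b),M)=[M:\lq(d(b))]$. By Lusztig's theorem, the composition factors of $\lq(2kl)\otimes\tq(b)$ are $\lq(2kl+b)$, $\lq(2kl+b-2l)$ and two copies of $\lq(2kl+d(b))$. All of these have highest weight $\ge l>d(b)$, so the Hom space vanishes. The case $b=l-1$ is similar.

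This is exactly the Lusztig count the paper applies directly, so the detour through $\operatorname{End}$ buys nothing. The paper simply observes that $\lq(a)\otimes\tq(b)$ has four composition factors, the same number as $\tq(a+b)$: $\lq(a)\otimes\lq(b)$ contributes two, and $\lq(a)\otimes\lq(d(b))$ is simple and occurs twice. For $b=l-1$, Lusztig's theorem gives $\lq(a)\otimes\lq(l-1)=\lq(a+l-1)$ at once.
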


\begin{proof}
  If $a = 0$ the lemma is obvious so we assume $a>0$.

  The condition on $b$ implies that $\tq (b)$ is projective, so
  $\lq (a) \otimes \tq (b)$ is also a projective module, hence it is a
  direct sum of tilting modules with highest weight $\geq l
  -1$. It is then clear that $\tq (a + b)$ is a direct summand, so we
  need to show that there are no other summands.

  If $b = l -1$ then $\tq (b) = \lq (b)$ and the lemma follows
  immediately from Lusztig's tensor product theorem \cite[Theorem
  7.4]{LTT}. If not, the simple constituents of $\tq (b)$ are
  $\lq (b)$ and two copies of $\lq (d(b))$.  It then follows from
  Lusztig's tensor product theorem that $\lq (a) \otimes \tq (b)$ has
  four simple constituents since $\lq (a) \otimes \lq (b)$ has two
  simple components and $\lq (a) \otimes \lq (d(b))$ is simple. The
  number of simple constituents of $\tq (a'l + b)$ is also four, so
  the lemma is proved.
\end{proof}

We now make explicit the decomposition of $\lq (a) \otimes \lq (b)$ in Lemma
\ref{l:ab}. For this we need to consider two cases:
\begin{itemize}
\item
If $a'' + b'' \geq l$, then $\lq (a'') \otimes \lq (b'') \cong T \oplus L$
with \[
  T = \tq (a'' + b'') \oplus \tq (a'' +b'' -2) \oplus \dots \oplus \tq (p) ,
\]
\[
  L = \lq (2l - 2 -a'' - b'' -2) \oplus \lq (2l - 2 -a'' - b'' -4) \oplus
  \dots \oplus \lq (|a'' - b''|) ,
\]
and $p \in \{l,l -1\}$ has the same parity as $a'' + b''$.
\item
If $a'' + b'' < l$ then
\[
  \lq (a'') \otimes \lq (b'') \cong \lq (a'' + b'') \oplus \lq (a'' + b'' -2) \oplus \dots \oplus
  \lq (|a'' -b''|) .
\]
\end{itemize}
Using \eqref{e:a'b'}, the proof of Lemma \ref{l:ab} gives
\begin{align}
  \lq (a) \otimes \lq (b) &=  (\lq (a'') \otimes \lq (b'')) \otimes (\lq (a'l)
  \otimes \lq (b'l)) \nonumber \\
 &\cong (T \oplus L) \otimes \Bigl (\bigoplus_{|a'-b'| \leq c' \leq
      a' + b'} \lq (c'l) \Bigr ) . \label{l:tl}
\end{align}
We can now apply Lemma \ref{l:st} and Lusztig's theorem to write
$\lq (a) \otimes \lq (b)$ as a sum of explicit indecomposable modules.

\bigskip

The lemma below is proved by elementary arithmetic computations using
the description of $\lq (a) \otimes \lq (b)$ given above. The detailed
proof is given in Appendix \ref{a:A}.

\begin{lemma}\label{l:u}
  Suppose $\vec{a} = (a_1,a_2)$. If $\lq (c)$, $c \notin N_0$, is any
  simple summand of $\dq(\vec{a})^2/ \dq(\vec{a})^3$ then
  $\lq (u(c))$ is not a summand of
  $\dq(\vec{a})/ \dq(\vec{a})^1$.
\end{lemma}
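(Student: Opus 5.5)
We first make the layers of the filtration explicit. For $\vec{a}=(a_1,a_2)$ the radical filtration on each $\dq(a_i)$ has length at most $2$, by \eqref{e:weyl}. Hence $\dq(\vec{a})^2/\dq(\vec{a})^3$ is nonzero only when both $\dq(a_1)$ and $\dq(a_2)$ are non-simple, and in that case it equals $\lq(d(a_1))\otimes\lq(d(a_2))$. The top layer is $\dq(\vec{a})/\dq(\vec{a})^1=\lq(a_1)\otimes\lq(a_2)$. So we may assume $a_i\notin N_0$ and $a_i\geq l-1$ for $i=1,2$. Write $a_i=a_i'l+a_i''$ with $a_i'\geq 1$ and $a_i''\neq l-1$ (the latter because $a_i\notin N_0$). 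Then $d(a_i)=(a_i'-1)l+(l-2-a_i'')$.

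Next we list the simple summands of each layer using the decomposition \eqref{l:tl} together with Lemma \ref{l:st}. The simple, non-projective summands of $\lq(x)\otimes\lq(y)$ are exactly the modules $\lq(c'l+e)$ where:
\begin{itemize}
\item $\lq(e)$ runs over the summands of $L$ (when $x''+y''\geq l$), or over all $\lq(e)$ with $e\equiv x''+y''\pmod 2$ and $|x''-y''|\leq e\leq \min(x''+y'',\,2l-4-x''-y'')$, with $e\neq l-1$ (when $x''+y''<l$);
\item $c'$ runs over the range $|x'-y'|\leq c'\leq x'+y'$.
\end{itemize}
Every summand coming from $T$ is projective, so it never contributes a simple non-$N_0$ summand. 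Since $u(c)=(c'+2)l-(2+c'')$ for $c=c'l+c''$ with $c''\neq l-1$, the claim becomes a concrete statement. Take $c=c'l+e$ from the lower layer, that is, with $(x,y)=(d(a_1),d(a_2))$. We must show that $u(c)=(c'+1)l+(l-2-e)$ does not occur among the simple summands $\lq(\tilde c'l+\tilde e)$ of the top layer, where $(x,y)=(a_1,a_2)$. Comparing residues mod $l$ forces $\tilde e=l-2-e$ and $\tilde c'=c'+1$.

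The argument is then a short case analysis on $s=a_1''+a_2''$.
\begin{itemize}
\item The new digits are $\bar a_i=l-2-a_i''$, with $\bar a_1+\bar a_2=2l-4-s$ and $|\bar a_1-\bar a_2|=|a_1''-a_2''|$.
\item For the lower layer, $e$ satisfies $|a_1''-a_2''|\leq e$ and $e\leq\min(2l-4-s,\,s)$ (in either case), with $e\equiv s\pmod 2$.
\item For the top layer, the required $\tilde e=l-2-e$ must satisfy $\tilde e\leq\min(s,2l-4-s)$ and $\tilde e\geq|a_1''-a_2''|$.
\item For the outer digits, $c'$ lies in $[|a_1'-a_2'|,\,a_1'+a_2'-2]$, while the top layer allows $\tilde c'\in[|a_1'-a_2'|,\,a_1'+a_2']$; so $\tilde c'=c'+1$ is always admissible.
\end{itemize}
The obstruction must therefore come from the inner digits. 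The key constraint is parity. Both $e$ and $\tilde e$ must be congruent to $s$ mod $2$, and $\tilde e=l-2-e$ has parity $e+l$. So when $l$ is odd the two parity conditions contradict each other, and we are done.

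When $l$ is even, the parity argument gives nothing and we must use the inequalities instead. One approach is to show that $e$ and $l-2-e$ cannot both lie in the admissible window $[|a_1''-a_2''|,\,\min(s,2l-4-s)]$, which is symmetric about neither point in general. This is the step I expect to be the main obstacle, and we plan to handle it as follows.
\begin{itemize}
\item First try to exploit the boundary cases $e=l-1$ and $\tilde e=l-1$: these are excluded, since such summands are projective, i.e. lie in $N_0$ or in $T$.
\item Then redo the computation with the exact endpoints $2l-2-s-2$ from the description of $L$.
\item If the window argument still fails, we instead recheck the relation between $\tilde c'$ and $c'$. The carry in $u$, coming from $c''=l-2-e$ versus $l-1$, may shift $\tilde c'$ to $c'+2$. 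That value lies outside the range $[|a_1'-a_2'|,\,a_1'+a_2']$ only at the upper extreme, so we would analyse the top end of the range carefully.
\end{itemize}
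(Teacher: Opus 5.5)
There is a genuine gap, and it is in the step where you dismiss the outer digits. You take the summands of $L_q(x'l)\otimes L_q(y'l)$ to be all $L_q(c'l)$ with $|x'-y'|\le c'\le x'+y'$. But by Lusztig's tensor product theorem $L_q(c'l)$ is the Frobenius pullback of the classical $\mathfrak{sl}_2$-module of highest weight $c'$. So the classical Clebsch--Gordan rule applies, and only $c'\equiv x'+y'\pmod 2$ occurs. This is the set $E(a,b)$ in the appendix; \eqref{e:a'b'} as printed omits the step $2$.

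For the lower layer $x'+y'=a_1'+a_2'-2$, and for the top layer it is $a_1'+a_2'$. Hence every simple summand of either layer has outer digit $\equiv a_1'+a_2'\pmod 2$. You correctly note that $u(c)=(c'+1)l+(l-2-e)$ with $0\le l-2-e\le l-2$. This is already the canonical $l$-adic expansion, so there is no carry and your last fallback ($\tilde c'=c'+2$) cannot occur. Its outer digit $c'+1$ therefore has the wrong parity, and $L_q(u(c))$ cannot occur in the top layer. This single observation proves the lemma for every $l$, and it is exactly the paper's argument (Lemma \ref{l:u2}); the inner digits play no role.

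Your assertion that ``$\tilde c'=c'+1$ is always admissible'' is false. Without the outer-digit parity, the statement you reduce to is actually false for even $l$, so no refinement of the window argument can close that case.

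Take $l=4$ and $a_1=a_2=5$. Then $d(a_i)=1$, and:
\begin{itemize}
\item the lower layer is $L_q(1)\otimes L_q(1)=L_q(2)\oplus L_q(0)$;
\item the top layer is $L_q(5)\otimes L_q(5)=L_q(10)\oplus L_q(8)\oplus L_q(2)\oplus L_q(0)$;
\item $u(0)=6=1\cdot 4+2$ and $u(2)=4=1\cdot 4+0$.
\end{itemize}
The inner digits $e=0$ and $\tilde e=l-2-e=2$, and likewise $e=2$ and $\tilde e=0$, all lie in your admissible window with the correct parity. What keeps $L_q(6)$ and $L_q(4)$ out of the top layer is only that their outer digit $1$ is odd, whereas the top layer's outer digits are $\{0,2\}$. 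With your parity-free range $0\le\tilde c'\le 2$, both would appear and the lemma would fail.

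Your inner-digit parity argument for odd $l$ is correct. It is not enough, however, because here $l=\kappa$ may be even, and the outer-digit argument handles both parities uniformly.
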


\subsubsection{}

The following technical lemma is the key to the proof of Theorem
\ref{t:sl2deg}.
\begin{lemma}\label{l:veca2}
  There is a finite separated decreasing filtration
  $\fil^{\bullet}\dq(\vec{a})$ of $\dq(\vec{a})$ such that each layer
  of the filtration is a projective module or a Weyl module and the
  filtration induced on each layer of $\fil^{\bullet}\dq(\vec{a})$
  from the filtration $\dq(\vec{a})^{\bullet}$ has length at most
  two. Furthermore, if a layer of $\fil^{\bullet}\dq(\vec{a})$ is a
  Weyl module then the filtration induced on it by
  $\dq(\vec{a})^{\bullet}$ has simple layers and if a layer of
  $\fil^{\bullet}\dq(\vec{a})$ is a projective module then the
  filtration induced on it by $\dq(\vec{a})^{\bullet}$ is one-step.
\end{lemma}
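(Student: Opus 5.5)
We argue by induction on $n$, the number of factors in $\vec{a}$. For $n=1$, $\dq(a_1)$ is a Weyl module. Its radical filtration $\dq(a_1)^{\bullet}$ has length at most two and simple layers: these are $\lq(a_1)$ and $\lq(d(a_1))$ by \eqref{e:weyl}. So we take $\fil^{\bullet}$ to be the one-step filtration. For the inductive step we write $\dq(\vec{a}) = \dq(\vec{a}')\otimes\dq(a_n)$ with $\vec{a}'=(a_1,\dots,a_{n-1})$. The tensor product filtration on $\dq(\vec{a})$ is the tensor product of the filtration on $\dq(\vec{a}')$ with the (length $\le 2$) radical filtration on $\dq(a_n)$.

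By induction we have $\fil^{\bullet}\dq(\vec{a}')$ with layers $G$ that are either projective with one-step induced filtration, or Weyl modules $\dq(c)$ whose induced filtration has simple layers $\lq(c)$ and possibly $\lq(d(c))$. Tensoring with $\dq(a_n)$, the filtration $\fil^{\bullet}\dq(\vec{a}')\otimes\dq(a_n)$ is a filtration of $\dq(\vec{a})$ by submodules compatible with $\dq(\vec{a})^{\bullet}$. Tensor products are exact, so the induced filtration on each layer $G\otimes\dq(a_n)$ is the tensor product of the induced filtrations.

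\emph{Case 1: $G$ is projective.} Then $G\otimes\dq(a_n)$ is projective and its induced filtration is $G\otimes\dq(a_n)^{\bullet}$, of length $\le 2$ with projective layers $G\otimes\lq(a_n)$ and $G\otimes\lq(d(a_n))$. Since projectives are injective in $\uq$-$\Mod$, Lemma \ref{l:fil2} splits off the bottom layer as a filtered direct summand. The object therefore decomposes as a filtered direct sum of two projectives with one-step filtrations, and we refine $\fil$ by these two pieces.

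\emph{Case 2: $G=\dq(c)$.} Then $G\otimes\dq(a_n)=\dq(c)\otimes\dq(a_n)$ carries the tensor product filtration of two length $\le 2$ filtrations with simple layers. This is exactly the two-factor situation $\vec{a}=(c,a_n)$. So the whole argument reduces to the case $n=2$, provided the construction there produces a refinement $\fil$ that is again by filtered subobjects compatible with $\dq(\vec{a})^{\bullet}$. Splicing the refinements into $\fil^{\bullet}\dq(\vec{a}')\otimes\dq(a_n)$ then gives the required filtration.

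For $n=2$, $M=\dq(a_1)\otimes\dq(a_2)$ has a filtration of length $\le 3$ with layers $\lq(a_1)\otimes\lq(a_2)$, then $\lq(d(a_1))\otimes\lq(a_2)\oplus\lq(a_1)\otimes\lq(d(a_2))$, then $\lq(d(a_1))\otimes\lq(d(a_2))$. By \eqref{l:tl}, Lemma \ref{l:st} and Lemma \ref{l:ab}, each layer is an explicit sum of simples and projective tilting modules. We first use Lemma \ref{l:fil2} repeatedly to split off every projective summand of every layer as a one-step filtered direct summand. What remains is a filtered module $\widetilde M$ with semisimple, non-projective layers. By Lemma \ref{l:donkin}, $\widetilde M$ is a direct sum of Weyl modules; it is a summand of a module with a $\dq$-filtration, so it has one too by Donkin's criterion, and the projective summands have already been removed.

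It remains to show that $\widetilde M^{\bullet}$ is a filtered direct sum of Weyl modules, each with length $\le 2$ induced filtration. A non-simple Weyl summand $\dq(u(c))$ has socle $\lq(c)$ and head $\lq(u(c))$. We use Lemma \ref{l:socle} to match each simple socle constituent in a deeper layer with a summand. Lemma \ref{l:u} is the key input here: a simple $\lq(c)$ in the third layer cannot pair with $\lq(u(c))$ in the first layer. Hence each non-simple Weyl summand straddles two \emph{adjacent} layers, and its induced filtration has exactly two simple layers. Simple Weyl summands sit in a single layer. Lemma \ref{l:fil1}(2) then gives filtered splittings one summand at a time, working from the bottom layer up.

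The main obstacle is making this matching rigorous. We must show that the summand decomposition can be chosen compatibly with the filtration, i.e. that no Weyl summand is "spread" across layers via a nontrivial extension with its socle in layer $2$ and its head in layer $0$. Lemma \ref{l:u} is exactly what rules this out.
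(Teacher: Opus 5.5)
Your reduction to $n=2$ is the same as the paper's, and so is your first move there: splitting off the projective summands of each layer via Lemma \ref{l:fil2}. The next step fails, however. It is not true that the remaining filtered module $\widetilde M$ is a direct sum of Weyl modules, because Lemma \ref{l:fil2} removes projective summands of the \emph{layers}, not projective direct summands of $\widetilde M$ itself. A non-simple indecomposable projective $\tq(u(c))$ has Loewy length three, with head and socle $\lq(c)$. It can be a summand of $\widetilde M$ whose three Loewy layers lie in the three different layers of $\widetilde M^{\bullet}$, each piece being semisimple and non-projective.

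Here is a concrete instance. Take $l=5$ and $\vec a=(8,8)$, so $d(8)=0$ and $u(0)=8$.
\begin{itemize}
\item The layers of $\dq(8)\otimes\dq(8)$ are $\lq(8)\otimes\lq(8)\cong\tq(16)\oplus\tq(14)\oplus\tq(6)\oplus\tq(4)\oplus\lq(10)\oplus\lq(0)$, then $\lq(8)^{\oplus 2}$, then $\lq(0)$.
\item Hence $\widetilde M$ has character $\chi_{10}+\chi_8+\chi_0$.
\item Since the dual of $\dq(8)$ is $\nq(8)$, we get $\dim\Hom_{\uq}(\lq(0),\dq(8)\otimes\dq(8))=\dim\Hom_{\uq}(\nq(8),\dq(8))=1$.
\item This excludes $\dq(10)\oplus\dq(8)\oplus\dq(0)$, which would give $2$, and forces $\widetilde M\cong\dq(10)\oplus\tq(8)$.
\end{itemize}
Here $\tq(8)$ has its head $\lq(0)$ in layer $0$, $\lq(8)$ in layer $1$, and socle $\lq(0)=\widetilde M^2$. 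Its induced filtration therefore has three steps. So $\widetilde M^{\bullet}$ is not a filtered direct sum of Weyl modules; by Krull--Schmidt, $\widetilde M$ is not a sum of Weyl modules at all. Nor can $\tq(8)$ serve as a projective layer of $\fil$. Lemma \ref{l:u} does not help: it only excludes \emph{Weyl} summands with socle in layer $2$ and head in layer $0$.

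What you are missing is a way to handle these straddling projectives. The paper cuts them by their Weyl flag instead of splitting them off.
\begin{itemize}
\item Since $\widetilde M/\widetilde M^2$ has Loewy length at most $2$, any non-simple projective summand $T$ satisfies $\mathrm{soc}(T)=\mathrm{rad}^2(T)\subset\widetilde M^2$.
\item Using Lemma \ref{l:socle}, the paper writes $\widetilde M=M'\oplus M''$ with $\widetilde M^2=\mathrm{soc}(M')$. The Loewy-length argument then shows that $M''$ contains only Weyl modules.
\item For $T=\tq(u(c))$ in $M'$, \eqref{e:tilt} gives $\dq(u(c))\subset\mathrm{rad}(T)\subset\widetilde M^1$, with $T/\dq(u(c))\cong\dq(c)$.
\item Lemma \ref{l:u} puts the Weyl summands of $M'$ inside $\widetilde M^1$.
\end{itemize}
This produces a sum of Weyl modules $M'_{\dq}$ with $\widetilde M^2\subset M'_{\dq}\subset\widetilde M^1$ and with $\widetilde M/M'_{\dq}$ also a sum of Weyl modules. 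The flag $\widetilde M\supset M'_{\dq}\supset 0$, refined summand by summand, is the required $\fil$. It is a genuine flag rather than a filtered direct-sum decomposition, which is why your target statement cannot be proved as you formulated it.
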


\begin{proof}
  We first consider the case $\vec{a} = (a_1,a_2)$, the case $\vec{a}
  = (a)$ being obvious.
  
  By Lemma \ref{l:ab} each layer of the filtered module
  $\dq(\vec{a})^{\bullet}$ is a direct sum $Q \oplus S$ where $Q$
  is a projective module and $S$ is a sum of non-projective simple
  modules. By Lemma \ref{l:fil2}, since projectives are also injective
  in $\uq$-$\Mod$, each such $Q$ can be split off and we get an
  isomorphism
  \[
    \dq(\vec{a})^{\bullet} \cong P^{\bullet} \oplus M^{\bullet}
  \]
  where $P^{\bullet}$ is a sum of projective modules with one-step
  filtration and the layers of $M^{\bullet}$ have no projective
  summands.

  By Lemma \ref{l:donkin} (2), $M$ is a direct sum of indecomposable
  projective modules and Weyl modules. By the definition of the tensor
  product filtration on $\dq(\vec{a})$, we have $M^3 = 0$ and $M^2$ is
  semisimple. By Lemma \ref{l:socle} and induction on the number of
  simple summands of $M^2$, we see that we may write
  $M = M' \oplus M''$ with both $M'$ and $M''$ sums of Weyl modules
  and indecomposable projective modules and with $M^2$ equal to the
  socle of $M'$. Since $M^2 \subset M'$, the layers of the filtration
  of $M$ are semisimple, and a non-simple projective module has Loewy
  length three (this is an easy special case of \cite[Theorem
  3.17]{andersen-kaneda}), it follows that
  $M''$ is in fact a sum of (non-simple) Weyl modules. Furthermore, it
  follows from Lemma \ref{l:u} that if $N \subset M'$ is a Weyl module
  then $N \subset M^1$.
  
  By the structure of tilting modules \eqref{e:tilt} it follows that
  $M'$ contains a submodule $M'_{\dq}$ which is a direct sum of Weyl
  modules, is contained in $M^1$, and $M'/M'_{\dq}$ is a sum of Weyl
  modules. We define a new filtered module $N^{\bullet}$ by setting
  $N = M$, $N^0 = M^0 = M$, $N^1 = M'_{\dq}$ and $N^2 = \{0\}$.  It
  has the property that each layer is a sum of Weyl modules,
  $M^2 \subset N^1$ and $N^1 \subset M^1$. It follows that we may
  define a filtration $\fil^{\bullet} \,\dq(\vec{a})$ of the desired
  type by refining (so each layer is a single Weyl module or a
  projective module) and reindexing appropriately the filtrations
  $P^{\bullet}$ and $N^{\bullet}$ and then taking the direct sum.

  Now suppose $n \geq 3$ and let $\vec{a'} =
  (a_1,a_2,\dots,a_{n-1})$. We define the filtration
  $\fil^{\bullet} \,'$ on
  $\dq(\vec{a}) = \dq(\vec{a'}) \otimes \dq(a_n)$ to be the tensor
  product of the filtration $\fil^{\bullet}\, \dq(\vec{a'})$ and the
  trivial one-step filtration on $\dq(a)$, i.e., the filtration
  $\fil^{\bullet} \,\dq(a)$. The filtration
  $\fil^{\bullet} \, \dq(\vec{a'})$ will be constructed as a
  refinement and reindexing of $\fil^{\bullet}\, ' \dq(\vec{a})$, so it
  suffices to describe it on each layer of this filtration.

  If a layer of $\fil^{\bullet} \,\dq(\vec{a'})$ is a projective
  module $P$ with the filtration induced on it by
  $\dq(\vec{a'})^{\bullet}$ being a one-step filtration then we define
  the filtration on $P \otimes \dq(a)$ to be the tensor product of the
  filtration on $P$ and $\dq(a)^{\bullet}$. Since the tensor product
  of a projective module with any module is projective, it follows
  that the layers of this filtration are projective modules and the
  filtration on them induced by the filtration
  $\dq(\vec{a})^{\bullet}$ is one-step.

  If a layer of $\fil^{\bullet} \,\dq(\vec{a'})$ is a Weyl module
  $\dq(b)$ with a two step filtration (with simple layers) induced
  by $\dq(\vec{a})^{\bullet}$, then we refine
  $\fil'_{\bullet} (\dq(\vec{a}))$ by a filtration of the form
  $\fil^{\bullet} (\dq(b) \otimes \dq(a_n))$ as constructed above
  in the case $n=2$. The layers of this filtration are again of the
  desired form, so we have constructed the desired filtration
  $\fil^{\bullet} \,(\dq(\vec{a}))$.
\end{proof}

\begin{lemma}\label{l:deg}
  Let $M^{\bullet}$ be a $\uq$-module with a finite separated
  decreasing filtration with semisimple layers. If $M$ has another
  finite separated decreasing filtration $\fil^{\bullet} (M)$ whose
  layers are Weyl modules or projective modules and the filtration
  induced by $M^{\bullet}$ on each layer of $\fil^{\bullet} (M)$ is
  (at most) two-step, then the $\Ext$ spectral sequence for
  $\Ext_{\uq}^*(M,\nq (b))$ corresponding to the filtration
  $M^{\bullet}$ degenerates at $E_2$.
\end{lemma}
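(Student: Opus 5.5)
We need to show that the $E_1$-spectral sequence for $\Ext^*_{\uq}(M,\nq(b))$ attached to $M^{\bullet}$ degenerates at $E_2$. The plan is to compare it with the spectral sequences of the layers of $\fil^{\bullet}(M)$, where everything can be computed by hand, and to use functoriality of the spectral sequence of a filtered module.

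First we treat the case where $M$ is a single layer of $\fil^{\bullet}(M)$ with its induced filtration. If $M=P$ is projective, it is also injective, and every layer of the (at most two-step) induced filtration is a direct summand of $P$, hence projective. So all $E_1^{p,q}$ with $p+q\neq 0$ vanish, and the sequence degenerates at $E_1$. If $M=\dq(c)$ is a Weyl module with a two-step filtration, there are only two nonzero columns. The only possibly nonzero differential after $E_1$ would be $d_2$, and $d_2$ lands two columns away, so it vanishes. Hence degeneration at $E_2$ is automatic for a filtration with at most two steps. The same argument applies to any module whose filtration has length at most two.

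For the general case I will induct on the length of $\fil^{\bullet}(M)$. Write $0\to F\to M\to Q\to 0$, where $F$ is the last nonzero piece of $\fil^{\bullet}(M)$ (a single Weyl or projective layer) and $Q$ carries the remaining layers. Both $F$ and $Q$ get the induced and quotient filtrations from $M^{\bullet}$, and $Q$ satisfies the hypotheses, so its spectral sequence degenerates at $E_2$ by induction. The key structural input I will aim for is a filtered short exact sequence that is strict, meaning $0\to F^i\to M^i\to Q^i\to 0$ is exact for each $i$. This holds by the definition of induced and quotient filtrations, so the graded pieces give short exact sequences $0\to\mr{Gr}^iF\to\mr{Gr}^iM\to\mr{Gr}^iQ\to 0$. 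These produce long exact sequences of $E_1$-terms compatible with $d_1$. I then want to use the vanishing $\Ext^{>0}(F,\nq(b))=\Ext^{>0}(Q,\nq(b))=0$, which holds because $F$ and $Q$ have $\dq$-filtrations. This vanishing forces the abutments of all three spectral sequences to be concentrated in total degree $0$. Combined with degeneration at $E_2$ for $F$ and $Q$, it means the $E_2$-pages of $F$ and $Q$ vanish off the antidiagonal $p+q=0$. I then need to deduce the same for $M$: the long exact sequence in $E_2$-terms, obtained from the short exact sequence of $E_1$-complexes (rows in each total degree), would show that $E_2(M)$ also vanishes off the antidiagonal. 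Once $E_2(M)$ is concentrated on a single antidiagonal, all higher differentials $d_r$ ($r\ge 2$) change total degree by one and so vanish, giving degeneration.

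I expect the main obstacle to be the middle step, namely justifying that the $E_1$-pages form a short exact sequence of complexes. This needs $0\to\Ext^*(\mr{Gr}Q,\nq(b))\to\Ext^*(\mr{Gr}M,\nq(b))\to\Ext^*(\mr{Gr}F,\nq(b))\to 0$ to be short exact, not merely long exact. My first attempt will be to arrange the choice of $F$ so that $\mr{Gr}^iF$ is a direct summand of $\mr{Gr}^iM$: the layers are semisimple, and projective summands split by injectivity, as in Lemma \ref{l:fil2}. If strictness of the $E_1$-sequence cannot be arranged, the fallback is to use the connecting maps and check directly, via Lemma \ref{l:exts} and the rank-one $\Ext$ computations, that $E_2$-classes of $M$ off the antidiagonal would survive to $E_\infty$, which would contradict the vanishing of $\Ext^{>0}(M,\nq(b))$.
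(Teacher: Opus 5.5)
Your argument is essentially the paper's: the paper does a descending induction on $i$ using $0\to\fil^{i+1}(M)\to\fil^i(M)\to\fil^i(M)/\fil^{i+1}(M)\to 0$, inducting on the submodule rather than on the quotient as you do, handles the single layer by the same two-column observation, and concludes by the same diagram chase on the long exact sequence of $E_2$-terms combined with the vanishing of $E_\infty$ off the antidiagonal. The step you flag as the main obstacle is immediate and needs no special choice of $F$ and no fallback: each $\mr{Gr}^iM$ is semisimple, so $0\to\mr{Gr}^iF\to\mr{Gr}^iM\to\mr{Gr}^iQ\to 0$ splits for any $F$, and this is exactly how the paper gets short exactness on $E_1$ (your side remark that the layers of a two-step filtration on a projective layer are direct summands does not follow from injectivity alone, but it is not needed, since the two-column argument already covers that case).
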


\begin{proof}
  We will prove that the $\Ext$ spectral sequence for
  $\Ext_{\uq}^*(\fil^i(M),\nq (b))$ degenerates at $E_2$ for all $i$
  by descending induction on $i$.  Since $\Ext_{\uq}^j(N,\nq (b)) = 0$
  for $j >0$ if $N$ is a Weyl module or a projective module we note
  that $E_{\infty}^{r,s} = 0$ for $p \neq -q$ for all these spectral
  sequences.

  For $i \gg 0$, $\fil^i(M) = \{0\}$ is zero so the statement holds. For
  general $i$, we have an exact sequence
  \begin{equation}\label{e:fil}
    0 \to \fil^{i+1}(M) \to \fil^i(M) \to \fil^i(M)/\fil^{i+1}(M) \to 0 .
  \end{equation}
  By functoriality of the $\Ext$ spectral sequence, these maps give
  rise to maps of the spectral sequences for the filtration induced by
  $M^{\bullet}$ on each term. We claim that the maps on
  the $E_1$ terms form exact sequences.

  To see this we note that the layers of the filtration $M^{\bullet}$
  are semisimple, so this also holds for the layers of all the induced
  filtrations. Therefore, the maps on the layers in \eqref{e:fil} form
  an exact sequence of semisimple modules, so the sequences must all
  split, proving the claim.

  Now by induction we know that spectral sequence for $\fil^{i+1}(M)$
  degenerates at $E_2$ and this also holds for the spectral sequence
  for $\fil^{i}(M)/\fil^{i+1}(M)$ since the filtration on the layers
  is (at most) two-step, so we must have $E_2^{r,s} = 0$ for
  $r \neq -s$ for both these spectral sequences. A simple diagram
  chase, using the exactness at $E_1$, then implies that
  $E_2^{r,s} = 0$ also for the spectral sequence associated to
  $\fil^{i}(M)$, so it must clearly also degenerate at $E_2$.
\end{proof}

\subsubsection{}\label{s:sl2deg}

\begin{proof}[Proof of Theorem \ref{t:sl2deg}]
  The proof is now easy from the foregoing lemmas: by Lemma
  \ref{l:veca2} $\dq(\vec{a})$ has a filtration of the type needed to
  apply Lemma \ref{l:deg}, so the degeneration at $E_2$ follows.
\end{proof}

\begin{remark}
  It would be very interesting to know whether the analogue of Theorem
  \ref{t:sl2deg} holds for other Lie algebras. If this is true then
  one could hope for an algorithm for computing the ranks of the
  weight filtration. But this would require, in particular, computing
  the ranks of the Andersen filtration for the layers of the
  filtration $\dq(\vec{\lambda})^{\bullet}$, but for general $\mf{g}$
  we do not at present know how to do this.
\end{remark}

\section{Pieri rules for $\mf{sl}_m$ for all fundamental weights}\label{s:pieri}

In standard notation, the lattice of integral weights for $\mf{sl}_m$
is generated by $L_1,\dots,L_m$, with the relation $\sum L_i=0$.  The
simple roots are $\alpha_i=L_i-L_{i+1}$ for $1\leq i<m$ and the
fundamental dominant weights are $\varpi_k = \sum_{1\leq i\leq k}L_i$,
$1 \leq k \leq m$. The Cartan subalgebra of $\mf{gl}_m$ is freely
generated by $\epsilon_1,\dots,\epsilon_m$ with dual elements
$L_1,\dots,L_m$.  The Cartan subalgebra of $\mf{sl}_m$ is the subspace
given by the vanishing of $\sum L_i$.

\smallskip

Let $\lambda$ be a dominant integral weight for $\mf{sl}_m$. Then 
$V_{\lambda}\tensor V_{\vpi_k}$ is a multiplicity-free sum of all
$V_{\mu}$ such that $\mu=\lambda+\sum_{i\in I}L_i$ for some
$I\subset\{1,\dots,m\}$ with $|I|=k$, provided that $\mu$ is dominant.
We are interested in the Hodge polynomial of
${KZ}_{\kappa}((\lambda,\vpi_k),\mu^*)$,  where $\kappa = K/d$ is a positive rational number with
$(K,d)=1$. We assume $K>1$, since the case $K=1$ was considered in
\cite[Section 11.4]{BFM}; in that case all the relevant Hodge
polynomials are equal to the constant polynomial $1$.

For $k=1$, an explicit formula for the Hodge polynomial was given in
\cite[Theorem 1.11]{BFM}, and it was shown there that the $k=1$ case determines the
other Pieri Hodge polynomials. In Theorem~\ref{t:rational-pieri} we
give an explicit formula for arbitrary $k$. The corresponding weight
formula \eqref{WeightFormula} will play a key role in the proof of
Theorem~\ref{t:G}.

The formula involves the Hodge polynomials of Aomoto motives. For
$a\geq0$, let $[a;\kappa]$ denote the rank one Aomoto motive as in
\cite[Definition 3.13]{BFM}. We introduce the following notation for its Hodge
polynomial, with a shift.
\begin{defi}
For $H\geq1$, let
\begin{equation*}
  Q_\kappa(H)= \mr{P}([H-1;\kappa]).
\end{equation*}
\end{defi}

For any integer $x$, let $[x]_K\in\{0,\dots,K-1\}$ denote its
remainder modulo $K$, and put $\delta=[d]_K$. A direct computation
using \cite[Lemma 11.5]{BFM} gives that for all $H \geq 1$, 
\begin{equation}\label{Qdef}
Q_\kappa(H)=
\begin{cases}
1,&1\leq [\delta H]_K\leq \delta,\\
t,&\text{otherwise.}
\end{cases}
\end{equation}
We then use \eqref{Qdef} to define $Q_\kappa(H)$ for all integers $H$.

\subsection{The Pieri formula}\label{s:defN}

Write $\lambda=(a_1,\ldots,a_m)=\sum a_iL_i$, with
$a_1\geq\cdots\geq a_m$. For $I\subset\{1,\ldots,m\}$, call $I$
\emph{admissible} if $\lambda+\sum_{i\in I}L_i$ is weakly decreasing,
and hence defines a dominant weight for $\mf{sl}_m$. Define
$\Inv(I)=\{(i,j):i<j,\ i\notin I,\ j\in I\}$.

For $i<j$, let $\alpha_{ij}=L_i-L_j$ and put
\begin{equation}\label{hdef}
h_{ij}(\lambda)=\langle\lambda+\rho,\alpha_{ij}^{\vee}\rangle
=a_i-a_j+j-i.
\end{equation}
For $r\in\Z/K\Z$, put
$N_r^{(K)}(\lambda,I)=
\#\{(i,j)\in\Inv(I):h_{ij}(\lambda)\equiv r\pmod K\}$.

\begin{theorem}\label{t:rational-pieri}
Let $I$ be admissible, $|I|=k$, and
$\mu=\lambda+\sum_{i\in I}L_i$. Then the Hodge polynomial of
${KZ}_{\kappa}((\lambda,\vpi_k),\mu^*)$ is
\begin{equation}\label{PieriFormula}
\prod_{(i,j)\in\Inv(I)}
Q_\kappa\!\left(\langle\lambda+\rho,\alpha_{ij}^{\vee}\rangle\right).
\end{equation}
Moreover, ${KZ}_{\kappa}((\lambda,\vpi_k),\mu^*)$ is one dimensional of weight
\begin{equation}\label{WeightFormula}
|\Inv(I)|+N_0^{(K)}(\lambda,I)-N_1^{(K)}(\lambda,I).
\end{equation}
Note that by a simple calculation,
$|\Inv(I)|=\ExpM((\lambda,\vpi_k),\mu^*)$.
\end{theorem}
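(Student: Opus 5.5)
We will prove the Hodge formula \eqref{PieriFormula} by induction on $k$, using only the algebra structure of $\mf{R}_{\kappa}(\mf{sl}_m)$ from \cite{BFM} (commutativity, associativity, specialisation at $t=1$) and the $k=1$ case \cite[Theorem 1.11]{BFM}. The starting point is the identity in $\mf{R}(\mf{sl}_m)$
\[
[\vpi_1]\cdot[\vpi_{k}] = [\vpi_{k+1}] + [\vpi_1+\vpi_k],
\]
which should deform in $\mf{R}_\kappa$ to $[\vpi_1]\star_\kappa[\vpi_k] = c_k(t)[\vpi_{k+1}] + c'_k(t)[\vpi_1+\vpi_k]$. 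The coefficients $c_k, c_k'$ are themselves Pieri coefficients with small $\lambda$, so they are known inductively. Associativity then gives
\[
([\lambda]\star_\kappa[\vpi_1])\star_\kappa[\vpi_k]=[\lambda]\star_\kappa(c_k[\vpi_{k+1}]+c_k'[\vpi_1+\vpi_k]).
\]
Iterating the $k=1$ rule expresses $[\lambda]\star_\kappa[\vpi_1+\vpi_k]$ as $([\lambda]\star_\kappa[\vpi_1])\star_\kappa[\vpi_k]$ minus the $[\vpi_{k+1}]$ term. Comparing coefficients of $[\mu]$ with $\mu=\lambda+\sum_{i\in I}L_i$, $|I|=k+1$, produces a linear recursion: the unknown $P^{\lambda,\vpi_{k+1},\mu^*}_\kappa$ times $c_k(t)$ equals a sum, over pairs (one box added by $\vpi_1$, $k$ boxes by $\vpi_k$), of products of already-known Pieri polynomials, minus a correction term. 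This is essentially the algorithm of \cite[Section 11.3]{BFM} made explicit.

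The core step is to verify that the product formula \eqref{PieriFormula} satisfies this recursion. We will substitute $\prod_{\Inv(I)}Q_\kappa(h_{ij}(\lambda))$ everywhere and reduce to an identity among the $Q_\kappa$. For a decomposition $I=I'\sqcup\{j_0\}$, the inversion set of $I$ relative to $\lambda$ splits into the inversions of $I'$ relative to $\lambda$, plus those of $\{j_0\}$ relative to $\lambda+\sum_{I'}L_i$. In the second piece the hook values $h_{ij}$ shift by $\pm1$ where rows of $I'$ are involved. The identity should then follow by grouping the $k+1$ choices of $j_0$ and using the periodicity of $Q_\kappa$ modulo $K$ together with \eqref{Qdef}. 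Inadmissible intermediate choices must be handled with care; terms where $\lambda+\sum_{I'}L_i$ fails to be dominant should cancel via the $[\vpi_1+\vpi_k]$ correction, mirroring the usual proof of Pieri via Jacobi--Trudi. We expect this combinatorial verification to be the \textbf{main obstacle}. We would first check it for $k=1\to2$ and small $m$ to pin down the shape of the cancellation, then formalise it as a telescoping over the position of $j_0$ among the rows of $I$.

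Given \eqref{PieriFormula}, one-dimensionality is immediate from $c_{\lambda\vpi_k}^\mu=1$. For the weight, we use that $KZ_\kappa$ and the conjugate parameter are related by duality (Remark~\ref{signkappa}), so the weight of a rank-one variation equals $p+\bar p$ where $t^p$ and $t^{\bar p}$ are its Hodge polynomials at $\kappa$ and at the conjugate $\kappa'=K/d'$ with $dd'\equiv-1$ (or a similar Galois conjugate). Equivalently, each rank-one factor $[h-1;\kappa]$ has weight $1$, $0$, or $2$ according to its Hodge types under the two embeddings. Reading off from \eqref{Qdef}, a factor contributes weight $1$ generically, weight $2$ when $h\equiv0\pmod K$, and weight $0$ when $h\equiv1\pmod K$; this is the case where $[h-1;\kappa]$ is trivial, i.e. $\delta(h-1)\equiv0$. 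Summing gives \eqref{WeightFormula}. Finally, $|\Inv(I)|=s(\lambda+\vpi_k-\mu)$ follows by writing $\vpi_k-\sum_{I}L_i$ as a sum of roots $\alpha_{ij}$ over inversions.
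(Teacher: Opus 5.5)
\textbf{The recursion is circular.} Look at the coefficient of $[\mu]$ in
\[
([\lambda]\star_\kappa[\vpi_1])\star_\kappa[\vpi_k]=c_k\,[\lambda]\star_\kappa[\vpi_{k+1}]+c_k'\,[\lambda]\star_\kappa[\vpi_1+\vpi_k].
\]
It contains two unknowns: $P_\kappa^{\lambda,\vpi_{k+1},\mu^*}$, and the $[\mu]$-coefficient of $[\lambda]\star_\kappa[\vpi_1+\vpi_k]$.

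Your way of eliminating the second unknown is to write $[\lambda]\star_\kappa[\vpi_1+\vpi_k]$ as $([\lambda]\star_\kappa[\vpi_1])\star_\kappa[\vpi_k]$ minus the $[\vpi_{k+1}]$ term. That is the same identity rearranged, so substituting it back gives $0=0$. The $\vpi_1$- and $\vpi_k$-Pieri rules only control multiplication by elements of $\Z[t][[\vpi_1],[\vpi_k]]$. That subalgebra contains the combination $c_k[\vpi_{k+1}]+c_k'[\vpi_1+\vpi_k]$, but not either summand separately.

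Nor does the second unknown vanish in general. At $t=1$ it equals $c^{\mu}_{\lambda,\vpi_1+\vpi_k}$. For a vertical strip $\mu/\lambda$ this is the number of maximal column-connected pieces of the strip minus one. For example, $V_{\vpi_1}\otimes V_{2\vpi_1}\supset V_{\vpi_1+\vpi_2}$. Only when the strip is a single column segment does this Hodge polynomial vanish (by positivity), and only then does your equation determine $P_\kappa^{\lambda,\vpi_{k+1},\mu^*}$. So there is no well-posed recursion for \eqref{PieriFormula} to satisfy, and the ``main obstacle'' you isolate is not the real one.

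\textbf{What is missing, and how the paper proceeds.} You need a relation involving only Pieri products. The paper uses the commutation of multiplication by $[\vpi_1]$ and by $[\vpi_k]$ for the \emph{same} $k$.
\begin{enumerate}
\item It defines $T_k$ by \eqref{PieriFormula} and proves $T_1\circ T_k=T_k\circ T_1$ (Proposition~\ref{switch}).
\item Lemma~\ref{unique} shows that such an operator is unique, given Pieri-type support, compatibility with end-row reductions and opposite-end symmetry, and commutation with $T_1$. Compare $[\nu]$-coefficients in $(U_k\circ U_1)[\lambda']=(U_1\circ U_k)[\lambda']$ with $\lambda'=\lambda-L_i$. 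The sought coefficient of $U_k[\lambda]$ appears multiplied by a power of $t$, and every other term is known by induction on rank, $|\lambda|$, and the position of the strip.
\item No induction on $k$ is used.
\end{enumerate}
The combinatorial heart is the identity \eqref{e:S} for additive families $A_{pq}$ (Theorem~\ref{Qidentity}). It is proved by counting the residues $[\delta x_r]_K$ in adjacent cyclic intervals and telescoping twice. The proof of Proposition~\ref{switch} also needs the pairwise cancellation $\mr{L}_r=\mr{R}_{r+1}$ of inadmissible histories, and a direct check when one row receives two boxes.

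\textbf{Smaller points on the weight part.} Your weight deduction is essentially the paper's, with two corrections.
\begin{itemize}
\item The relevant operation is complex conjugation, $d\mapsto -d\pmod K$ (the parameter $\kappa^\dagger$). It is not the Verdier duality of Remark~\ref{signkappa}, and not $dd'\equiv-1$. For $K=5$, $d=2$, the latter gives $d'=2$ instead of $3$, and hence the wrong weights.
\item $|\Inv(I)|=\ExpM$ holds because both sides equal $\sum_{j\in I}j-\binom{k+1}{2}$. It is not a decomposition of $\vpi_k-\sum_I L_i$ into the roots $\alpha_{ij}$.
\end{itemize}
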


\begin{proof}[Proof of the weight assertion from \eqref{PieriFormula}]
  At the complex conjugate embedding, $d$ is replaced by $K-d$ modulo
  $K$.  By a simple computation,
\begin{equation}\label{Qcdef}
Q_\kappa^c(H)=
\begin{cases}
t,&0\leq [\delta H]_K\leq\delta-1,\\
1,&\text{otherwise}.
\end{cases}
\end{equation}
Put $H=h_{ij}(\lambda)$. By \eqref{Qdef} and \eqref{Qcdef}, the
contribution of $(i,j)$ to the weight is $2$ if $[\delta H]_K=0$, is
$0$ if $[\delta H]_K=\delta$, and is $1$ otherwise. Since
$(\delta,K)=1$, these cases are respectively $H\equiv0\pmod K$,
$H\equiv1\pmod K$, and $H\not\equiv0,1\pmod K$. Summing over
$\Inv(I)$ gives \eqref{WeightFormula}.
\end{proof}

\subsection{The universal $Q_\kappa$ identity}
Let $\kappa=K/d,K>1,d,\delta$ be as above.
\begin{theorem}\label{Qidentity} 
Let integers $A_{pq}$, $1\leq p<q\leq N$, satisfy
\begin{equation*}%
A_{pr}=A_{pq}+A_{qr}\qquad (p<q<r).
\end{equation*}
Then,
\begin{equation}\label{e:S}
\sum_{r=1}^N
\prod_{p<r}Q_\kappa(A_{pr}+1)
\prod_{q>r}Q_\kappa(A_{rq})
=
\sum_{r=1}^N
\prod_{p<r}Q_\kappa(A_{pr})
\prod_{q>r}Q_\kappa(A_{rq}+1).
\end{equation}
\end{theorem}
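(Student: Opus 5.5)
The plan is to turn \eqref{e:S} into a combinatorial statement about points on the cyclic group $\Z/K\Z$ and prove it by a matching argument.

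\textbf{Step 1 (potential).} The cocycle condition $A_{pr}=A_{pq}+A_{qr}$ says that $A$ comes from a potential. Put $x_1=0$ and $x_q=A_{1q}$ for $q>1$; then $A_{pq}=x_q-x_p$ for all $p<q$. Set $y_r=[\delta x_r]_K\in\Z/K\Z$.

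\textbf{Step 2 (arc description of $Q_\kappa$).} By \eqref{Qdef}, $Q_\kappa(x_q-x_r)=1$ exactly when $y_q\in(y_r,y_r+\delta]$, a half-open arc of length $\delta$ on the circle $\Z/K\Z$. Since $\delta(H+1)=\delta H+\delta$, we get $Q_\kappa(x_q-x_r+1)=1$ exactly when $y_q\in(y_r-\delta,y_r]$. Otherwise the value is $t$.

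\textbf{Step 3 (exponents).} Both sides of \eqref{e:S} are sums of monomials $t^{e_r}$, resp.\ $t^{f_r}$, over $r=1,\dots,N$. Here $e_r$ counts the $p<r$ with $y_r\notin(y_p-\delta,y_p]$ together with the $q>r$ with $y_q\notin(y_r,y_r+\delta]$. The count $f_r$ is defined the same way with the two arc conditions interchanged. Rewriting each condition in the form ``$y_r-y_p\bmod K$ lies in a fixed set'' gives
\[
e_r=\#\{p<r:\ y_r-y_p\notin[-\delta+1,0]\}+\#\{q>r:\ y_q-y_r\notin[1,\delta]\},
\]
\[
f_r=\#\{p<r:\ y_r-y_p\notin[1,\delta]\}+\#\{q>r:\ y_q-y_r\notin[-\delta+1,0]\},
\]
with residues read in $\Z/K\Z$. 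The claim becomes the multiset equality $\{e_r\}=\{f_r\}$.

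\textbf{Step 4 (induction on $N$).} I would argue by induction on $N$. Deleting index $N$ changes every $e_r$ and $f_r$ with $r<N$ by an indicator depending only on $y_N-y_r$ (or on $y_r-y_N$, after the same rewriting). It also removes the terms $e_N$ and $f_N$.

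To handle the new point, I would pass to the difference
\[
D(t)=\sum_r t^{e_r}-\sum_r t^{f_r}.
\]
The aim is to find a bijection $\sigma$ with $e_r=f_{\sigma(r)}$. The candidate is a cyclic shift, following the circular order of the points $y_r$: moving to the next point clockwise relative to the arcs of length $\delta$. Ties $y_p=y_q$ should be broken by the index order, and the half-open ends of the arcs should be exactly what makes this tie-breaking consistent with the asymmetry between $p<r$ and $q>r$.

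\textbf{Main obstacle.} Making this matching precise when several $y_r$ coincide or sit exactly at distance $\delta$ apart, where the arc endpoints matter. What I would try first is small cases by hand: $N=2$ (immediate, since $Q_\kappa(H+1)$ and $Q_\kappa(H)$ swap roles between the two sides) and $N=3$. From these I would read off the correct $\sigma$, and then verify the inductive step by the indicator bookkeeping for the added point $N$.

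If the bijection proves too fiddly, a fallback is to substitute $Q_\kappa=1+(t-1)\chi$, with $\chi$ the indicator of the arc, and compare coefficients of $(t-1)^j$. That reduces the claim to counting, on each side, the pairs (index $r$, $j$-subset of arc failures at $r$), and these counts can be compared by grouping the subsets by their circular configuration.
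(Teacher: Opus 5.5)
Your Steps 1--3 are a correct reformulation. They are the paper's opening moves, with the mirrored potential $x_q=A_{1q}$ in place of $x_p=A_{pN}$. But the proposal stops exactly where the content of the theorem begins: the multiset equality $\{e_r\}=\{f_r\}$ is never proved, and the mechanism you suggest does not work.

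Take $\delta=1$, $K\ge 3$, $N=3$, $A_{12}=0$, $A_{13}=A_{23}=1$, so $(y_1,y_2,y_3)=(0,0,1)$. Your formulas give $(e_1,e_2,e_3)=(1,0,2)$ and $(f_1,f_2,f_3)=(1,2,0)$. The only matching is therefore $\sigma=(2\,3)$, which fixes $1$, so no cyclic shift of the three points works, however the tie $y_1=y_2$ is broken. For $\delta=1$ a factor equals $1$ only at a coincidence or at distance exactly $\delta$. So the cases you deferred are not a technicality; they are the whole theorem.

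The induction on $N$ has the same defect. Adjoining the point $N$ leaves $e_r$ unchanged exactly for $y_r\in[y_N-\delta,y_N-1]$, but leaves $f_r$ unchanged exactly for $y_r\in[y_N,y_N+\delta-1]$. A bare multiset equality for $N-1$ points therefore cannot be propagated without a finer invariant, which you do not supply. The fallback is only a restatement: the coefficients of $(t-1)^j$ are $\sum_r\binom{e_r}{j}$ and $\sum_r\binom{f_r}{j}$. Equality for $j=1$ is a trivial pairwise count, and for $j\ge 2$ it is as hard as the original.

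The missing idea is a per-residue telescoping argument. For $c\in\mathbb{Z}/K\mathbb{Z}$ and $0\le j\le N$ put
\[
G_c(j)=\#\{p\le j:\ y_p\in[c,c+\delta-1]\}+\#\{q>j:\ y_q\in[c+1,c+\delta]\}.
\]
This is a cut after position $j$: points on the left are tested against one arc, and points on the right against the arc shifted by one. Your formulas give:
\begin{itemize}
\item $N-1-e_r=G_c(r-1)$ when $y_r=c$;
\item $N-1-f_r=G_c(r)$ when $y_r=c+\delta$;
\item $G_c(r)-G_c(r-1)$ is $+1$, $-1$ or $0$ according as $y_r=c$, $y_r=c+\delta$, or neither.
\end{itemize}
Telescoping $u^{G_c(j)}$ in $j$ with $u=t^{-1}$ yields
\[
(u-1)\Bigl(\sum_{y_r=c}u^{N-1-e_r}-\sum_{y_r=c+\delta}u^{N-1-f_r}\Bigr)=u^{G_c(N)}-u^{G_c(0)}.
\]
Since $G_c(0)=G_{c+1}(N)$, the right-hand sides cancel on summing over $c$, while each $r$ appears exactly once in each of the two sums on the left. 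This is the paper's proof in mirrored coordinates, and it treats coincidences and arc endpoints uniformly. Any matching extracted from it pairs steps of the paths $j\mapsto G_c(j)$ that cross the same level, sometimes across different residues $c$. That is why no simple cyclic shift should be expected to work.
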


\begin{proof}
Set $x_N=0$
and $x_p=A_{pN}$ for $p<N$. Additivity gives $A_{pq}=x_p-x_q$.
Put $z_r=[\delta x_r]_K\in\Z/K\Z$; the $z_r$ may be regarded as
points on a residue circle, and $[\delta A_{pq}]_K=z_p-z_q$.

Let
\begin{equation}\label{e:LR}
\mr{L}_r=\prod_{p<r}\Qk(A_{pr}+1)\prod_{q>r}\Qk(A_{rq}),
\qquad
\mr{R}_r=\prod_{p<r}\Qk(A_{pr})\prod_{q>r}\Qk(A_{rq}+1).
\end{equation}
Let $a_r$ and $b_r$ be the numbers of factors equal to $1$ in $\mr{L}_r$
and $\mr{R}_r$. Then $\mr{L}_r=t^{N-1-a_r}$ and $\mr{R}_r=t^{N-1-b_r}$.
With $u=t^{-1}$, it is enough to prove
$\sum_r u^{a_r}=\sum_r u^{b_r}$.

For $c\in\Z/K\Z$, let $I_c=\{c-\delta+1,\ldots,c\}$ be the cyclic
interval of $\delta$ residues ending at $c$. For $0\le j\le N$, place
a cut after the first $j$ indices, and define
\begin{equation*}%
H_c(j)=\#\{p\le j:z_p\in I_c\}
       \ + \ \#\{q>j:z_q\in I_{c-1}\}.
\end{equation*}
Thus points to the left of the cut are counted in $I_c$, and points to
the right are counted in the neighboring interval $I_{c-1}$. 

If $z_r=c$, the definition of $\Qk$ shows that the factors equal to
$1$ in $\mr{L}_r$ are precisely those counted by $H_c(r-1)$; hence
$a_r=H_c(r-1)$. Similarly, if $z_r=c-\delta$, the factors equal to
$1$ in $\mr{R}_r$ are precisely those counted by $H_c(r)$; hence
$b_r=H_c(r)$.

As the cut passes $z_r$, this point moves from the right-hand count to
the left-hand count. The intervals $I_c$ and $I_{c-1}$ differ only at
their endpoints, so $H_c$ rises by $1$ when $z_r=c$, falls by $1$
when $z_r=c-\delta$, and otherwise does not change.

 Put
$D_r(c)=u^{H_c(r)}-u^{H_c(r-1)}$. Then  $\sum_{r=1}^N D_r(c)=u^{H_c(N)}-u^{H_c(0)}$. 
Let us look only at terms $D_r(c)$ which are non-zero. Note that
\[
D_r(c)\neq 0 \quad\Longleftrightarrow\quad H_c(r)\neq H_c(r-1)
\quad\Longleftrightarrow\quad
z_r=c\ \text{or}\ z_r=c-\delta.
\]

If $z_r=c$, then   $D_r(c)=u^{a_r+1}-u^{a_r}=(u-1)u^{a_r}$, and if 
$z_r=c-\delta$, then  $D_r(c)=u^{b_r}-u^{b_r+1}=-(u-1)u^{b_r}$. Therefore the same telescoping sum can be written as
\begin{equation}\label{e:telescoping}
(u-1)\left(\sum_{z_r=c}u^{a_r}-\sum_{z_r=c-\delta}u^{b_r}\right)
=u^{H_c(N)}-u^{H_c(0)}.
\end{equation}

Now $H_c(N)$ counts the points in $I_c$, while $H_c(0)$ counts the
points in $I_{c-1}$; in other words, $H_c(0)=H_{c-1}(N)$. Summing
\eqref{e:telescoping} over $c$ makes the right-hand
side telescope to zero and hence
$(u-1)(\sum_r u^{a_r}-\sum_r u^{b_r})=0$ which gives \eqref{e:S}.

\end{proof}

\subsection{Proof of Theorem \ref{t:rational-pieri}}

Define
\begin{equation}\label{Cdef}
C_\kappa(\lambda,I)=
\prod_{(i,j)\in\Inv(I)}Q_\kappa(h_{ij}(\lambda))
\end{equation}
and define $\Z[t]$ linear operators $T_k$ on
$\mf{R}_{\kappa}(\mf{sl}_m)$ by
\begin{equation}\label{Tkdef}
T_k[\lambda]=
\sum_{\substack{|I|=k\\ I\text{ admissible}}}
C_\kappa(\lambda,I)
\left[\lambda+\sum_{i\in I}L_i\right].
\end{equation}

\begin{proposition}\label{switch}
For $1\leq k\leq m-1$, $T_1\circ T_k=T_k\circ T_1$.
\end{proposition}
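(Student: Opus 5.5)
We will compare the coefficients of $T_1T_k[\lambda]$ and $T_kT_1[\lambda]$ at a fixed basis element $[\lambda+\sum_{i\in J}L_i+L_s]$. Write such a target as $\lambda+\sum_{i\in S}L_i$, where $S$ is a multiset obtained from a $k$-set $J$ by adding one index $s$. If $s\notin J$, then $S$ is a set of size $k+1$. If $s\in J$, then $S$ has one doubled index.

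\textbf{Expanding both compositions.} In $T_1T_k$ we first add $L_i$ for $i\in J$ and then $L_s$. In $T_kT_1$ we first add $L_s$ and then the $k$-set $J$. So the coefficient in $T_1T_k[\lambda]$ is
\[
\sum_{s} C_\kappa(\lambda,J)\,C_\kappa\bigl(\lambda+\textstyle\sum_J L_i,\{s\}\bigr),
\]
summed over the decompositions $S=J+\{s\}$ with both steps admissible. The $T_kT_1$ side is analogous, with the order of the two steps reversed. Every coefficient is a product of factors $Q_\kappa(h_{pq}(\cdot))$ over inversion pairs. Adding $L_i$ shifts each $h_{pq}$ by $\pm1$ or $0$: it adds $1$ if $p=i$ and subtracts $1$ if $q=i$.

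\textbf{Reducing to Theorem \ref{Qidentity}.} First we treat the pairs $(p,q)$ that do not involve the moving index $s$. Their factors are the same in the two orders, up to bookkeeping, and also the same across the different choices of $s$ within a block. This lets us factor out a common product. What remains is a sum over the choice $r$ of which index is the "single" one. The resulting terms look like $\prod_{p<r}Q_\kappa(A_{pr}+\epsilon)\prod_{q>r}Q_\kappa(A_{rq}+\epsilon')$, where $A_{pq}=h_{pq}(\lambda)$ is restricted to a suitable set of indices. The $A_{pq}$ are additive, $A_{pr}=A_{pq}+A_{qr}$, since $h_{pq}=(a_p-p)-(a_q-q)$.

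The shift by $+1$ on the left versus on the right corresponds to whether $L_s$ was added before or after the other indices. That is precisely the asymmetry between the two sides of \eqref{e:S}, so Theorem \ref{Qidentity} gives equality.

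\textbf{Organising the cases.} Since the inversion sets only involve pairs $i\notin I$, $j\in I$, the relevant indices are best organised by the blocks of equal entries $a_i$ of $\lambda$, because admissibility forces choices within a block to be initial segments.

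\begin{itemize}
\item \emph{Case $s\notin J$.} The summation runs over which element of the $(k+1)$-set $S$ is added in the $T_1$ step. The $N$ in Theorem \ref{Qidentity} is then the number of candidate indices; inadmissible terms must be checked to vanish or to cancel.
\item \emph{Case $s\in J$.} The doubled index contributes the factor $Q_\kappa$ of a shifted value. Here there may be only one term on each side, and equality should follow directly.
\end{itemize}

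\textbf{Main obstacle.} I expect the main difficulty to be this bookkeeping: identifying exactly which $A_{pq}$ and which index range play the role of $1,\dots,N$, and verifying that admissibility constraints do not break the correspondence. In particular, terms where the intermediate weight fails to be dominant must be shown to be absent on both sides simultaneously.

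I would first work out $k=1$ (where commutativity is trivial) and $k=2$ explicitly to pin down the indexing, then generalise.
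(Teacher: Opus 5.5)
Your route is the paper's. You fix the target and strip off the factor $C'_S=\prod_{i\notin S,\,j\in S,\,i<j}Q_\kappa(h_{ij}(\lambda))$, which is common to \emph{all} $r$, not just within a block. You then apply Theorem~\ref{Qidentity} with $A_{pq}=h_{s_p,s_q}(\lambda)$ over all of $S=\{s_1<\dots<s_{k+1}\}$, and treat the doubled row by direct comparison. However, the step you defer is a genuine gap, and your tentative description of it is not right.

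Identity \eqref{e:S} sums over all $r$, whereas the coefficients of $T_1T_k[\lambda]$ and $T_kT_1[\lambda]$ contain only histories whose intermediate weight is dominant. The second step is automatically admissible, since it ends at the dominant weight $\nu$. Every value of $Q_\kappa$ is $1$ or $t$, so the extra terms cannot vanish. Nor are they in general inadmissible on both sides for the same index $r$.

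By dominance of $\nu$, the history $\mathrm{L}_r$ ($T_k$ adds $S\setminus\{s_r\}$ first) is inadmissible iff $s_{r+1}=s_r+1$ and $a_{s_r}=a_{s_{r+1}}$. The history $\mathrm{R}_r$ ($L_{s_r}$ added first) is inadmissible iff $s_{r-1}=s_r-1$ and $a_{s_{r-1}}=a_{s_r}$.

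The missing idea is the shifted pairing: $\mathrm{L}_r$ is inadmissible exactly when $\mathrm{R}_{r+1}$ is. In that case $A_{r,r+1}=\langle\lambda+\rho,\alpha_{s_r}^\vee\rangle=1$, so $A_{p,r+1}=A_{pr}+1$ and $A_{r+1,q}=A_{rq}-1$. Hence $\mathrm{L}_r=\mathrm{R}_{r+1}$. Deleting these equal pairs from the two sides of \eqref{e:S} leaves exactly the admissible contributions, which is the required equality.

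In the doubled case ($s\in J$) there is exactly one candidate history on each side, so two things are needed. The first is the factor comparison. Writing $H_i=h_{is}(\lambda)$, both orders give $\prod_{i<s,\,i\in J}Q_\kappa(H_i)\prod_{i<s,\,i\notin J}Q_\kappa(H_i)Q_\kappa(H_i-1)$ for the factors involving $s$. All other factors agree because $h_{ij}(\lambda+L_s)=h_{ij}(\lambda)$ for $(i,j)\in\Inv(J)$ with $j\neq s$.

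The second is to check that both intermediate weights $\lambda+L_s$ and $\lambda+\sum_{j\in J}L_j$ are dominant; otherwise the term would appear on only one side. This follows from dominance of $\nu$, which gives $a_{s-1}+1\ge a_s+2$, and from dominance of $\lambda$, which gives $a_s+1\ge a_{s+1}+1$.
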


\begin{proof}
  We will show that the coefficients of any dominant weight $[\nu]$ in
  $T_1\circ T_k[\lambda]$ and $T_k\circ T_1[\lambda]$ agree.

Fix a dominant weight
$\nu=\lambda+\sum_{s\in S}L_s$, where
$S=\{s_1<\cdots<s_N\}$ and $N=k+1$, and first suppose that the
$k+1$ added boxes lie in distinct rows. For $r=1,\ldots,N$, put
$I_r=S\setminus\{s_r\}$.

The factors involving a row outside $S$ are the same in the two orders
and for every $r$; their product is
$C_S'=
\prod_{\substack{i\notin S,\ j\in S\\ i<j}}
Q_\kappa(h_{ij}(\lambda))$.
For $p<q$, put
\begin{equation}\label{Apq}
A_{pq}=h_{s_p,s_q}(\lambda)
=\langle\lambda+\rho,\alpha_{s_p,s_q}^{\vee}\rangle.
\end{equation}
Then $A_{pr}=A_{pq}+A_{qr}$ for $p<q<r$.

After removing the common factor $C_S'$, the contribution of
the history in which $T_k$ is applied first and the $T_1$-box is added
in row $s_r$ is
$\mr{L}_r=\prod_{p<r}Q_\kappa(A_{pr}+1)\prod_{q>r}Q_\kappa(A_{rq})$,
whereas the contribution of the reverse history is
$\mr{R}_r=\prod_{p<r}Q_\kappa(A_{pr})\prod_{q>r}Q_\kappa(A_{rq}+1)$.
Theorem~\ref{Qidentity} therefore gives
\begin{equation}\label{formal}
\sum_r\mr{L}_r=\sum_r\mr{R}_r.
\end{equation}

It remains to remove the histories for which the intermediate weight is
not dominant. A subset $I$ is admissible precisely when there is no
$i$ such that
\begin{equation}\label{bad}
a_i=a_{i+1},\qquad i\notin I,\qquad i+1\in I.
\end{equation}
Since the final set $S$ is admissible, the history on the left indexed
by $r$ fails to be admissible precisely when
\begin{equation}\label{badpair}
s_{r+1}=s_r+1,\qquad a_{s_r}=a_{s_{r+1}}.
\end{equation}
In this case the history on the right indexed by $r+1$ is also
non-admissible. Moreover,
$A_{r,r+1}=\langle\lambda+\rho,\alpha_{s_r}^{\vee}\rangle=1$, so that
$A_{p,r+1}=A_{pr}+1$ and $A_{r+1,q}=A_{rq}-1$. It follows that
$\mr{L}_r=\mr{R}_{r+1}$. Thus the forbidden histories occur in equal pairs on the
two sides of \eqref{formal}; deleting them gives the equality of the
admissible contributions.

It remains to consider a final weight for which one row receives both
the $T_k$-box and the $T_1$-box. Let
$J\subset\{1,\ldots,m\}$, $|J|=k$, let $s\in J$, and suppose that
$\nu=\lambda+\sum_{j\in J}L_j+L_s$ is dominant. We compare the sequence
in which $T_k$ is applied first with the sequence in which $T_1$ is
applied first.

If $(i,j)\in\Inv(J)$ and $j\neq s$, then $i\neq s$ as well, and hence
$h_{ij}(\lambda+L_s)=h_{ij}(\lambda)$. Thus these inversions contribute
the same factors in the two orders.

For $i<s$, put
$H_i=\langle\lambda+\rho,\alpha_{is}^{\vee}\rangle$. In the first
order, the $T_k$-move contributes
$\prod_{i<s,\ i\notin J}Q_\kappa(H_i)$ from inversions ending at $s$,
while the subsequent $T_1$-move contributes
$$\prod_{i<s,\ i\in J}Q_\kappa(H_i)
\prod_{i<s,\ i\notin J}Q_\kappa(H_i-1).$$ Thus the total contribution
of the factors involving $s$ is
$$\prod_{i<s,\ i\in J}Q_\kappa(H_i)
\prod_{i<s,\ i\notin J}Q_\kappa(H_i)Q_\kappa(H_i-1).$$

In the reverse order, the initial $T_1$-move contributes
$\prod_{i<s}Q_\kappa(H_i)$, while the subsequent $T_k$-move contributes
$\prod_{i<s,\ i\notin J}Q_\kappa(H_i-1)$ from inversions ending at
$s$. The product is the same. 

Finally, both intermediate weights are dominant: Since $s\in J$,
row $s$ receives two boxes in $\nu$, while every other row receives
at most one. For $\lambda+L_s$, the only possible failure of dominance is between
rows $s-1$ and $s$. If $s>1$, row $s-1$ has length at most
$a_{s-1}+1$ in $\nu$, whereas row $s$ has length $a_s+2$.
Dominance of $\nu$ therefore gives
$a_{s-1}+1\ge a_s+2,$ so 
$a_{s-1}\ge a_s+1$, as required. If $s=1$, there is nothing
to check. For $\lambda+\sum_{j\in J}L_j=\nu-L_s$, removing a box from row $s$
of $\nu$ can violate dominance only between rows $s$ and $s+1$.
If $s<m$, the new length of row $s$ is $a_s+1$, while row $s+1$
has length at most $a_{s+1}+1$. Since $\lambda$ is dominant,
$
a_s+1\ge a_{s+1}+1
$( if $s=m$, there is nothing to check.).

This proves the
switching identity also when one row receives two boxes.
\end{proof}

The Pieri-coefficients appearing in the definition of the $T_k$ (See \eqref{Tkdef}) have the following properties:
\begin{enumerate}
\item If the last row is not in $I$, then no inversion in $\Inv(I)$ involves the last row, 
and the coefficient is the corresponding coefficient obtained by deleting that
row and passing from $\mf{sl}_m$ to $\mf{sl}_{m-1}$. 
\item If the first row
belongs to $I$, then no inversion in $\Inv(I)$ involves the first row,
and deleting that row gives the corresponding coefficient in rank
$m-1$, now with $k$ replaced by $k-1$. 
\item It has  opposite-end symmetry i.e., the
following duality for $\mf{sl}_m$: reverse the rows and replace
$\lambda=(a_1,\ldots,a_m)$ by
$\lambda^*=(-a_m,\ldots,-a_1)$. Under this operation $\vpi_k$ is
replaced by $\vpi_{m-k}$, and the selected rows are replaced by the
complement of the reflected set $\{m+1-i:i\in I\}$. The corresponding
Pieri coefficient is unchanged.
\end{enumerate}

\begin{lemma}\label{unique}
  Let $U_k$ be $\Z[t]$ linear operators $T_k$ on
  $\mf{R}_{\kappa}(\mf{sl}_m)$ such that, for every dominant
  $\lambda$, the terms in $U_k[\lambda]$ with non-zero coefficients
  are exactly ``Pieri-type'' dominant weights
  $\lambda+\sum_{i\in I}L_i$ with $|I|=k$. Assume that $U_1=T_1$, that
  the coefficients are compatible with the standard type $A$ end-rank
  reductions and the opposite-end symmetry (these properties are
  stated above), and that $U_1\circ U_k=U_k\circ U_1$. Then $U_k$ is
  uniquely determined.
\end{lemma}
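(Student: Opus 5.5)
We prove uniqueness by induction on the rank $m$, and for fixed $m$ by comparing coefficients in the commutation relation $U_1\circ U_k=U_k\circ U_1$. Suppose $U_k$ and $U_k'$ both satisfy the hypotheses, and let $D=U_k-U_k'$. Then $D$ commutes with $T_1$. By the support hypothesis, $D[\lambda]$ is a combination of Pieri-type weights $\lambda+\sum_{i\in I}L_i$ with $|I|=k$. The base cases are $k=1$, which holds by assumption, and $m=2$, which is trivial. The induction hypothesis is that the statement holds for $\mf{sl}_{m-1}$ and all $k$.

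The end-rank reductions show that the coefficient of $\lambda+\sum_{i\in I}L_i$ in $U_k[\lambda]$ is already determined when $m\notin I$ or $1\in I$: it equals the corresponding rank $m-1$ coefficient, which is unique by induction. The opposite-end symmetry exchanges the condition ``$m\notin I$'' with ``$1\in I^c$ after reflection'', so these cases are consistent. So $D$ has nonzero coefficients only for the remaining sets $I$ with $1\notin I$ and $m\in I$.

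For these remaining sets, we compare the coefficient of a well-chosen $\nu$ in $T_1D[\lambda]$ and in $DT_1[\lambda]$. Fix such an $I$ and take $\nu=\lambda+\sum_{i\in I}L_i+L_1$. Adding a box in row $1$ never destroys dominance, and the $T_1$-coefficient of the row-$1$ addition is $C_\kappa(\lambda,\{1\})=1$ since $\Inv(\{1\})=\emptyset$. Hence $\nu$ is reached in $T_1D[\lambda]$ with coefficient $D_{\lambda,I}$, plus terms $D_{\lambda,I'}\cdot c$ where $I'=(I\cup\{1\})\setminus\{s\}$ for $s\in I$. Each such $I'$ contains $1$, so these $D$-coefficients vanish. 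On the other side, every path $\lambda\to\lambda+L_s\to\nu$ in $DT_1[\lambda]$ uses a $D$-coefficient $D_{\lambda+L_s,(I\cup\{1\})\setminus\{s\}}$. This set contains row $1$ and so vanishes, except when $s=1$, which is impossible because $1\notin I$. Thus $D_{\lambda,I}\cdot 1=0$ for all $\lambda$, giving $D=0$.

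The main obstacle is bookkeeping for dominance and multiplicity. The row-$1$ path must be admissible (which is why row $1$ is chosen), and $\nu$ must not also be reached with two boxes in one row from other terms. If a path through row $1$ turns out to give a nonunit coefficient, the fallback is to use the last row via opposite-end symmetry, or to induct on $|\Inv(I)|$ using the triangularity of $T_1$.
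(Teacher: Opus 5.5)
\textbf{There is a genuine gap in the key comparison step.} Your reduction to strips with $1\notin I$ and $m\in I$ is fine. The coefficient comparison that is supposed to kill these strips, however, drops a term.

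In $DT_1[\lambda]=\sum_s C_\kappa(\lambda,\{s\})\,D[\lambda+L_s]$, the coefficient of $\nu=\lambda+L_1+\sum_{i\in I}L_i$ receives contributions from every $s\in I\cup\{1\}$, not just from $s\in I$. Consider the history in which $T_1$ first adds the box in row $1$ and $D$ then adds the strip $I$:
\begin{itemize}
\item it is always present, since both $\lambda+L_1$ and $\nu$ are dominant;
\item its $T_1$-coefficient is $C_\kappa(\lambda,\{1\})=1$;
\item its $D$-coefficient is $D_{\lambda+L_1,I}$, whose strip is $I$ itself, so the end-rank reductions do not kill it.
\end{itemize}
Your comparison therefore yields only
\[
D_{\lambda,I}=D_{\lambda+L_1,I}.
\]
This relation points upward in $|\lambda|$ and never forces vanishing.

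What is missing is the downward induction the paper uses. One compares coefficients in $T_1D[\lambda']=DT_1[\lambda']$ for a dominant $\lambda'$ obtained by \emph{removing} a box from $\lambda$, and inducts on $|\lambda|$.
\begin{itemize}
\item When $a_1>a_2$, your computation performed at $\lambda'=\lambda-L_1$ gives $D_{\lambda,I}=D_{\lambda-L_1,I}$. This is to be read as $0$ if $\lambda-L_1+\sum_{i\in I}L_i$ is not dominant, and in either case it vanishes by induction.
\item Row $1$ is not always available, and the opposite-end symmetry does not always rescue you. Take $\lambda=\varpi_j$ with $2\le j\le m-2$ and $I=\{j+1,\dots,m\}$. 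No end-rank reduction applies, $a_1=a_2$ and $a_{m-1}=a_m$, and the symmetry sends this case to one of the same type.
\item In such cases you must remove a box from an interior row $r$. The unknown coefficient then appears multiplied by $C_\kappa(\lambda-L_r,\{r\})$, which can be $t^e$ with $e>0$. It also appears alongside other unknown coefficients, at weights $\lambda-L_r+L_s$ with strips shifted upward (e.g.\ $(I\cup\{r\})\setminus\{s\}$).
\end{itemize}
Handling this requires a secondary induction on the position of the strip. It also requires cancelling $t^e$, which is a nonzerodivisor because $\mf{R}_\kappa(\mf{sl}_m)$ is free over $\Z[t]$. These are exactly the ingredients of the paper's argument that your closing ``fallback'' sentence alludes to but does not carry out.
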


\begin{proof}
We argue by induction on the number of rows, then on $|\lambda|$, and
then on the position of the vertical strip. If an end-rank reduction
applies, the coefficient is determined in smaller rank. Otherwise,
using the opposite-end symmetry if necessary, remove a box from
$\lambda$ so that the resulting weight $\lambda'$ is dominant and the
required term occurs in the coefficient comparison obtained from
$(U_k\circ U_1)[\lambda']=(U_1\circ U_k)[\lambda']$.

In this comparison the desired coefficient occurs multiplied by a
one-box coefficient of $U_1=T_1$, hence by either $1$ or $t^e$ for
some $e>0$. Every other coefficient is known by induction, either in
smaller rank, for a smaller weight, or for an earlier position of the
vertical strip.  Thus the required coefficient is uniquely
determined. Repeating the argument determines all coefficients of
$U_k$.
\end{proof}  

Let $\mathsf M_k$ denote the operator of multiplication by $[\varpi_k]$ in
$\mf{R}_{\kappa}(\mf{sl}_m)$.  For every dominant $\lambda$, the
possible terms of $T_k[\lambda]$ with non-zero coefficients are exactly
the dominant weights $\lambda+\sum_{i\in I}L_i$ with $|I|=k$. The
operators $T_k$ have the correct $k=1$ coefficients and the required
end-rank compatibilities, and by Proposition~\ref{switch} they commute
with $T_1$. By associativity, the same properties hold for
$\mathsf M_k$.  Lemma~\ref{unique} therefore gives $T_k=\mathsf
M_k$. Equation \eqref{Cdef} is precisely \eqref{PieriFormula}, proving
Theorem~\ref{t:rational-pieri}. \qed

\begin{remark}
It would be interesting to know whether the Pieri three-point motive
$M_\kappa(\lambda,I)$ can be related motivically to the one-dimensional
Aomoto motives occurring in \eqref{PieriFormula}. Similarly, it would
be interesting to find a motivic explanation of \eqref{e:S}. In both
cases, the equality of the corresponding Hodge polynomials for all
Galois conjugates is known.
\end{remark}

\section{The first moment weight deformation ring}\label{s:fm}

The main goal of this section is the proof of Theorem \ref{t:G}, which
gives an explicit formula for the product $\diamond_{\kappa}$ of the
ring $\mfrfm_{\kappa}(\mf{sl}_m)$ . The proof depends on comparing the
products in the ring $\mf{R}_{\kappa}(\mf{sl}_m)$ (as well as the
``complex conjugate'' ring $\mf{R}_{\kappa^{\dagger}}(\mf{sl}_m)$
defined below) with that of $\mf{R}(\mf{sl}_m) \otimes \Z[t]$ up to
first order.  A key ingredient is the Pieri formula for weights
\eqref{WeightFormula}.

In the case of $\mf{sl}_2$, multiplicity one implies that computing
$\diamond_{\kappa}$ is equivalent to computing the weight of
$KZ_{\kappa}((\lambda,\mu),\nu^*)$ for all
$\lambda,\mu,\nu$. Proposition \ref{p:A1-floor} gives a simple formula
for these numbers.

At the end of the section we give a conjectural analogue of Theorem
\ref{t:G} for arbitrary simple $\mf{g}$.

\subsection{Associativity of the first moment ring}\label{s:assoc}

We begin by proving the associativity of the product
$\diamond_{\kappa}$ of the ring $\mfrfm_{\kappa}(\mf{g})$ for general $\mf{g}$.

The associativity of the product ${\star}_{\kappa}$ of the ring
$\mf{R}_{\kappa}(\mf{g})$ was proved in \cite[Theorem 1.4]{BFM} using the main
motivic factorisation result  \cite[Theorem 6.1, Section 10.2]{BFM} and the fact that the Hodge polynomial is
preserved by nearby cycles. The weight polynomial itself is not
preserved by nearby cycles, but its first moment is. This follows from
Schmid's  theorem on the limiting mixed Hodge structure \cite[Theorem 6.16 and Lemma 6.4] {schmid} applied to the layers of the weight filtration, together with a simple
computation: the weight polynomial is preserved modulo $(w-1)^2$ under
nearby cycles. Indeed, for every $m$, $(w-1)^2$ divides $w^m+w^{-m}-2$
in $\Z[w,w^{-1}]$. The associativity of
$\diamond_{\kappa}$ for arbitrary $\kappa \in \Q^{\times}$ then
follows using the same argument as in \cite[Section 10.2]{BFM}.

Alternatively, if $\kappa\ge2$ is an integer, let
$\kappa^{\dagger}=\frac{\kappa}{\kappa-1}$. Then
$KZ_{\kappa^{\dagger}}(\vec{\lambda},\nu^*)$ is the complex conjugate
of the VMHS $KZ_{\kappa}(\vec{\lambda},\nu^*)$, so we have
\begin{equation}\label{e:Wdef}
FM^{\mr{W}}_\kappa(\lambda,\mu,\nu^*)
=\left(P_\kappa^{\lambda,\mu,\nu^*}\right)'(1)
 +\left(P_{\kappa^{\dagger}}^{\lambda,\mu,\nu^*}\right)'(1).
\end{equation}
This gives another proof that the product $\diamond_{\kappa}$ is
associative. Indeed, we have two associative deformations (for
$\kappa$ and for $\kappa^{\dagger}$) of the representation ring over
$\mathbb{Z}[t]$, hence over $A=\mathbb{Z}[t]/(t-1)^2$ (with
$\epsilon=t-1$). The product $\diamond_{\kappa}$ defined above is
obtained by summing their first-order deformations over $A$; this
makes sense since these first-order deformations form an abelian
group.

\begin{remark}\label{r:kappa}
  From now on we will mostly assume that $\kappa > 1$ is an
  integer. This does not lead to any loss of generality when
  considering numerical properties of the weight filtration for simply
  laced $\mf{g}$, in particular for $\mf{sl}_m$: by the discussion at
  the end of \cite[Section 1.2]{BFM} and beginning of \cite[Section
  1.3]{BFM}, up to Galois conjugation of the coefficients which
  (preserves the weight filtration) the MHS associated to the stalk at
  any point of $\mc{C}_n$ any $KZ_{\kappa}(\vec{\lambda},\nu^*)$ is
  isomorphic to the stalk at the same point of
  $KZ_{\kappa'}(\vec{\lambda},\nu^*)$ with $\kappa'$ an integer. Also,
  as noted earlier, the case $\kappa = 1$ is already dealt with in
  \cite{BFM}.  We will drop this assumption in Section \ref{ss:arb}
  where we consider arbitrary $\mf{g}$.
\end{remark}

\subsection{The first order basis change identity}

Let $\kappa>1$ be a positive integer. As noted in \cite[Lemma
10.9]{BFM}, every $\mf{R}_{\kappa}(\mf{g})$ is a polynomial ring over
$\mathbb Z[t]$ freely generated by all the fundamental classes
$[\varpi_i]$.  Therefore there is a unique fundamental class
preserving $\Z[t]$-algebra isomorphism
\begin{equation*}%
\Phi_\kappa:\mf{R}(\slg_m)\otimes_\mathbb Z\mathbb Z[t]
\stackrel{\sim}{\longrightarrow}\mf{R}_{\kappa}(\slg_m),
\qquad
\Phi_\kappa([\varpi_i])=[\varpi_i].
\end{equation*}
Write $\epsilon=t-1$ and expand
\[
\Phi_\kappa(f)=f+\epsilon\,b_\kappa(f)+O(\epsilon^2).
\]
Put
\[
  B=b_\kappa+b_{\kappa^{\dagger}} .
\]

Note that $B:\mf{R}(\slg_m)\to \mf{R}(\slg_m)$. Using the isomorphism \eqref{chara}, this induces a linear map
on the character ring, $B_{\op{char}}:\mathbb{Z}[P]^W\to \mathbb{Z}[P]^W$. 

\begin{defi}%
If $Z\in\mathbb{Z}[P]^W$, write it in the irreducible-character basis as
$
Z=\sum_{\eta\in P^+} a_\eta\,\chi_\eta$.
Then let $[\chi_\nu]Z:=a_\nu$. Equivalently, for the standard inner product on the character ring for which
the irreducible characters are orthonormal, $[\chi_\nu]Z=(\chi_\nu,Z)$.
\end{defi}

Using the fact that $\Phi_\kappa$ and $\Phi_{\kappa^{\dagger}}$ preserve products, a simple computation gives
\begin{lemma}
\begin{equation}\label{e:W-B}
FM^{\mr{W}}_\kappa((\lambda,\mu),\nu^*)
=[\chi_\nu]\{B_{\operatorname{char}}(\chi_\lambda\chi_\mu)
-B_{\operatorname{char}}(\chi_\lambda)\chi_\mu
-\chi_\lambda B_{\operatorname{char}}(\chi_\mu)\}.
\end{equation}
\end{lemma}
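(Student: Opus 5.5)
We need to prove \eqref{e:W-B}: the first moment equals the coefficient of $\chi_\nu$ in $B(\chi_\lambda\chi_\mu)-B(\chi_\lambda)\chi_\mu-\chi_\lambda B(\chi_\mu)$.

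The plan is to expand the products in $\mf{R}_{\kappa}(\mf{sl}_m)$ and $\mf{R}_{\kappa^\dagger}(\mf{sl}_m)$ to first order in $\epsilon=t-1$ and then combine them using \eqref{e:Wdef}.

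First, fix $\kappa$ and compute in $\mf{R}_\kappa$ modulo $\epsilon^2$. Write $[\lambda]_\kappa$ for the basis element of $\mf{R}_\kappa$ and $[\lambda]$ for the basis element of $\mf{R}(\mf{sl}_m)\otimes\Z[t]$. Since $\Phi_\kappa$ is a $\Z[t]$-algebra isomorphism reducing to the identity at $t=1$, we have
\[
\Phi_\kappa([\lambda])=[\lambda]_\kappa+\epsilon\, b_\kappa([\lambda])+O(\epsilon^2),
\]
where the right side is read in the basis $[\cdot]_\kappa$. Inverting gives $[\lambda]_\kappa=\Phi_\kappa([\lambda]-\epsilon b_\kappa[\lambda])+O(\epsilon^2)$.

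Second, substitute this into the product \eqref{e:BFM-product}. We have
\[
[\lambda]_\kappa\star_\kappa[\mu]_\kappa=\Phi_\kappa\bigl(([\lambda]-\epsilon b_\kappa[\lambda])([\mu]-\epsilon b_\kappa[\mu])\bigr).
\]
Apply $\Phi_\kappa$ once more to expand the result to first order. Modulo $\epsilon^2$ the product $[\lambda]\star_\kappa[\mu]$ equals
\[
[\lambda][\mu]+\epsilon\bigl(b_\kappa([\lambda][\mu])-b_\kappa[\lambda]\,[\mu]-[\lambda]\,b_\kappa[\mu]\bigr),
\]
with all terms taken in $\mf{R}(\mf{sl}_m)$ and the basis identified.

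Third, compare coefficients of $[\nu]$. By \eqref{e:specialize}, $P^{\lambda,\mu,\nu^*}_\kappa(t)=c^\nu_{\lambda\mu}+\epsilon\,(P_\kappa^{\lambda,\mu,\nu^*})'(1)+O(\epsilon^2)$. So $(P_\kappa^{\lambda,\mu,\nu^*})'(1)$ is the $[\nu]$-coefficient of $b_\kappa([\lambda][\mu])-b_\kappa[\lambda][\mu]-[\lambda]b_\kappa[\mu]$. The same holds for $\kappa^\dagger$.

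Finally, add the two identities and use \eqref{e:Wdef} to get the formula with $B=b_\kappa+b_{\kappa^\dagger}$. Transporting through $\operatorname{Char}$ turns the $[\nu]$-coefficient into $[\chi_\nu]$.

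The only delicate point is the bookkeeping in the second step. One must check that the first-order inverse of $\Phi_\kappa$ is exactly $\mathrm{id}-\epsilon b_\kappa$. One must also check that identifying the basis $[\lambda]_\kappa$ with $[\lambda]$ at first order introduces no extra cross terms. Both follow from linearity over $\Z[t]$.
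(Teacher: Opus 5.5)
Your proposal is correct and is essentially the paper's argument: the paper's ``simple computation'' is the first-order expansion of the multiplicativity of $\Phi_\kappa$ and $\Phi_{\kappa^{\dagger}}$, read off coefficientwise and then summed using \eqref{e:Wdef}. The detour through the first-order inverse $\mathrm{id}-\epsilon b_\kappa$ is harmless but unnecessary, since expanding $\Phi_\kappa(fg)=\Phi_\kappa(f)\star_\kappa\Phi_\kappa(g)$ directly modulo $\epsilon^2$ gives the same identity.
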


\subsection{Identification of the basis-change operator $B_{\operatorname{char}}$}\label{s:id}

For any integral weight $\xi$, define the
generalised Weyl character $\chi(\xi)$ by 
$\chi(\xi)=0$ if $\xi$ is dot-singular, and otherwise $
\chi(\xi)=(-1)^{\ell(w)}\chi_{w\cdot\xi}$,
where $w \in W$ is the unique element such that $w\cdot\xi$ is
dominant.

Using this definition, set
\begin{equation}\label{e:J}
\Jop(\chi_\lambda)
=\sum_{\alpha\in R^+}
 \sum_{\substack{r\ge1\\r\kappa<\langle\lambda+\rho,\alpha^\vee\rangle}}
 \chi(\lambda-r\kappa\alpha).
\end{equation}
This extends linearly to the character ring. Define also 
\[
\Sop(\chi_\lambda)=s(\lambda)\chi_\lambda,
\qquad
\Fop=\Sop+\Jop.
\]
The operator $\Sop$ also extends linearly to the character ring. Note
that $\Sop(\chi_0)=\Jop(\chi_0)=0$.

Since
$
\mathbb{Z}[P]^W
=\mathbb Z[\chi_{\varpi_1},\ldots,\chi_{\varpi_{m-1}}]$,
there is a unique derivation $\partial_s: \mathbb{Z}[P]^W \to \mathbb{Z}[P]^W$ satisfying
\[
  \partial_s(\chi_{\varpi_i})=s(\varpi_i)\chi_{\varpi_i}.
\]

\begin{proposition}\label{p:B}

\begin{equation}\label{e:Bformula}
{\ B_{\operatorname{char}}=-\bigl(\Sop+\Jop-\partial_s\bigr) = -\Fop+\partial_s.\ }
\end{equation}
\end{proposition}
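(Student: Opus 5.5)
The plan is to characterise $B_{\op{char}}$ by a Leibniz-type recursion against the fundamental characters. Then we check that $D:=-\Fop+\partial_s$ satisfies the same recursion with the same initial values, and conclude by a uniqueness argument.

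\textbf{Step 1: properties of $B$.} Since $\Phi_\kappa$ and $\Phi_{\kappa^\dagger}$ are unital and fix every $[\varpi_k]$, we have $b_\kappa(1)=b_\kappa([\varpi_k])=0$, and the same holds for $\kappa^\dagger$. Hence $B_{\op{char}}(\chi_0)=0$ and $B_{\op{char}}(\chi_{\varpi_k})=0$ for $1\le k\le m-1$.

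Applying \eqref{e:W-B} with $\mu=\varpi_k$, and using Theorem \ref{t:rational-pieri} (multiplicity one, with weight given by \eqref{WeightFormula}), we get
\[
B_{\op{char}}(\chi_\lambda\chi_{\varpi_k})-B_{\op{char}}(\chi_\lambda)\chi_{\varpi_k}
=\sum_{I}\bigl(|\Inv(I)|+N_0^{(\kappa)}(\lambda,I)-N_1^{(\kappa)}(\lambda,I)\bigr)\chi_{\lambda+L_I}.
\]
Here $I$ runs over admissible $k$-subsets and $L_I=\sum_{i\in I}L_i$; since $\kappa$ is an integer, $K=\kappa$.

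\textbf{Step 2: uniqueness.} Suppose $D$ is linear with $D(\chi_0)=D(\chi_{\varpi_k})=0$ and satisfies the same identity for all $\lambda,k$. Put $\Delta=B_{\op{char}}-D$. Then $\Delta(f\chi_{\varpi_k})=\Delta(f)\chi_{\varpi_k}$ for $f=\chi_\lambda$, hence by linearity for all $f$. Since $\Delta(1)=0$ and $\mathbb{Z}[P]^W=\mathbb{Z}[\chi_{\varpi_1},\dots,\chi_{\varpi_{m-1}}]$, induction on monomials gives $\Delta=0$.

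\textbf{Step 3: verification for $D=-\Sop-\Jop+\partial_s$.} First, $D(\chi_0)=0$ is clear. For $D(\chi_{\varpi_k})$, the terms $\partial_s$ and $\Sop$ cancel, so we must show $\Jop(\chi_{\varpi_k})=0$. For $\alpha=L_i-L_j$ we have $\langle\varpi_k+\rho,\alpha^\vee\rangle\le m$, so terms with $r\kappa<m$ can occur. I will check directly that each $\varpi_k+\rho-r\kappa\alpha$ either has a repeated coordinate, making $\chi(\varpi_k-r\kappa\alpha)=0$, or cancels against a partner term of opposite sign. This is a small, explicit combinatorial check.

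Next come the Leibniz defects.
\begin{itemize}
\item The derivation $\partial_s$ has zero defect.
\item For $-\Sop$, the coefficient of $\chi_\mu$ with $\mu=\lambda+L_I$ is $-(s(\mu)-s(\lambda)-s(\varpi_k))=s(\lambda+\varpi_k-\mu)=|\Inv(I)|$. This matches the first term of \eqref{WeightFormula}.
\end{itemize}

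\textbf{Step 4: the defect of $\Jop$ (main obstacle).} It remains to show that
\[
\Jop(\chi_\lambda)\chi_{\varpi_k}-\Jop(\chi_\lambda\chi_{\varpi_k})
=\sum_I\bigl(N_0^{(\kappa)}(\lambda,I)-N_1^{(\kappa)}(\lambda,I)\bigr)\chi_{\lambda+L_I}.
\]
I expect this identity to be the main obstacle.

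I would use the Brauer--Klimyk rule $\chi(\xi)\chi_{\varpi_k}=\sum_{|I|=k}\chi(\xi+L_I)$, valid for arbitrary integral $\xi$. This rewrites both sides as sums over all $k$-subsets $I$ and all pairs $(\alpha,r)$. For non-admissible $I$, the weight $\lambda+L_I$ is dot-singular, and those terms are handled by the same reflection-pairing trick as in Proposition \ref{switch}.

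The two sides then differ only because the summation ranges differ. On the left the condition is $r\kappa<h_\alpha(\lambda)$; on the right it is $r\kappa<h_\alpha(\lambda+L_I)$, and $h_\alpha(\lambda+L_I)-h_\alpha(\lambda)\in\{-1,0,1\}$.

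The surviving boundary terms are therefore those with $r\kappa\in\{h_\alpha(\lambda),\,h_\alpha(\lambda)-1\}$ and $\alpha=\alpha_{ij}$ with $(i,j)\in\Inv(I)$ or its reverse. At such a boundary, $\lambda+L_I+\rho-r\kappa\alpha$ equals $s_\alpha(\lambda+L_I+\rho)$ or lies on the $\alpha$-wall. The first gives $-\chi_{\lambda+L_I}$, the second gives $0$. Bookkeeping these cases should give $+1$ exactly when $h_{ij}(\lambda)\equiv0$ and $-1$ exactly when $h_{ij}(\lambda)\equiv1 \pmod\kappa$.

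Careful sign and range tracking in this step, including the pairs with $i\in I$ and $j\notin I$, is where I expect the real work to lie.
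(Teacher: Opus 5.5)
Your architecture matches the paper's proof. $B_{\op{char}}$ is determined by $B_{\op{char}}(1)=0$ together with its one-sided Leibniz defects against $\chi_{\varpi_k}$, which come from \eqref{e:W-B} and \eqref{WeightFormula}. The $\Sop$ and $\partial_s$ parts account for $|\Inv(I)|$ as in \eqref{soppy}, and induction on monomials finishes. Your direct check of $\Jop(\chi_{\varpi_k})=0$ in Step 3 is fine, and in fact every term is singular. The coordinates of $\varpi_k+\rho$ form $\{0,\dots,m\}\setminus\{m-k\}$, and both modified coordinates of $\varpi_k+\rho-r\kappa\alpha_{ij}$ land strictly between $x_j$ and $x_i$, where at most one value is free. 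The paper instead reads this off from the case $\lambda=0$ of \eqref{e:minJ}.

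The gap is Step 4, which is the entire content of the paper's Proposition \ref{p:minJ}, and your sketch of it contains a false assertion. For $\alpha=L_i-L_j$ put $h=\langle\lambda+\rho,\alpha^\vee\rangle$ and $e=\langle L_I,\alpha^\vee\rangle$.

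\emph{Case $e=-1$}, i.e.\ $(i,j)\in\Inv(I)$. The left side has the extra term $r\kappa=h-1$, and there indeed $\lambda+\rho+L_I-(h-1)\alpha=s_\alpha(\lambda+\rho+L_I)$. This gives $-\chi_{\lambda+L_I}$ when $h\equiv1\pmod\kappa$; note that $r\ge1$ is automatic for admissible $I$, since $h_{ij}(\lambda)=1$ with $(i,j)\in\Inv(I)$ forces non-admissibility.

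\emph{Case $e=+1$}, i.e.\ $i\in I$, $j\notin I$. The extra term $r\kappa=h$ lies on the \emph{subtracted} side, and
\[
\lambda+\rho+L_I-h\alpha=s_\alpha(\lambda+\rho)+L_I=s_\alpha\bigl(\lambda+\rho+L_{I'}\bigr),\qquad I'=(I\setminus\{i\})\cup\{j\},
\]
because $s_\alpha L_I=L_I-\alpha$. This point is neither $s_\alpha(\lambda+\rho+L_I)$ nor on the $\alpha$-wall: its $\alpha^\vee$-pairing is $1-h$, which vanishes only when $I'$ is non-admissible. So your dichotomy fails here. Under it, the $e=+1$ terms would be credited to $\chi_{\lambda+L_I}$ and indexed by the pairs with $i\in I$, $j\notin I$, which is not $N_0^{(\kappa)}(\lambda,I)$. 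What actually happens is that the term contributes $+\chi_{\lambda+L_{I'}}$ to a \emph{different} Pieri weight, with $(i,j)\in\Inv(I')$. The swap $I\leftrightarrow I'$ matches such reverse pairs of $I$ bijectively with inversions of $I'$, and this is the source of $+N_0^{(\kappa)}(\lambda,I')$.

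The bookkeeping also needs two further points. First, you must keep the non-admissible $I$ on the left, because their $e=+1$ terms can land on an admissible $I'$; for example, take $\mf{sl}_3$, $\lambda=\varpi_2$, $\kappa=2$, $I=\{2\}$, $I'=\{3\}$. Second, on the right the total contribution of a non-admissible $I$ vanishes. This follows by pairing $\beta$ with $s_p\beta$ for a simple reflection $s_p$ fixing $\lambda+\rho+L_I$ (Lemma \ref{lem:singular}), which is a different mechanism from the history-pairing in Proposition \ref{switch}.
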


\begin{proof}
  Let $B_0$ be the right-hand side of \eqref{e:Bformula}. By
  Proposition~\ref{p:minJ} below, both $B_{\operatorname{char}}$
  and $B_0$ vanish on all fundamental generators.  Let
  $x_i=\chi_{\varpi_i}$. By the Pieri formula \eqref{WeightFormula},
  Proposition~\ref{p:minJ}, and equation~\eqref{soppy},
\[
B_{\operatorname{char}}(fx_i)-B_{\operatorname{char}}(f)x_i
=B_0(fx_i)-B_0(f)x_i.
\]
Hence $C=B_{\operatorname{char}}-B_0$ satisfies
\[
C(fx_i)=C(f)x_i
\]
for every $i$, and $C(1)=0$.  Induction on monomials in the polynomial
generators gives $C=0$.
\end{proof}

\subsection{Some computations in the character ring}

 For an integral weight $x\in P$, put
\[
A_x=\sum_{w\in W}(-1)^{\ell(w)}e^{wx},
\qquad \Delta=A_\rho.
\]
For dominant $\lambda$ let
\[
  \chi_\lambda=A_{\lambda+\rho}/\Delta.
\]

If $\omega=\varpi_k$ is minuscule, then
\begin{equation}\label{e:minuscule-alternant}
\chi_\omega=\sum_{\zeta\in W\omega}e^\zeta,
\qquad
A_x\chi_\omega=\sum_{\zeta\in W\omega}A_{x+\zeta}.
\end{equation}
Note that if $x+\zeta$ above is not dominant then $A_{x+\zeta}=0$.

The following is standard; see \cite{Jantzen} and \cite[Theorem 4.1, Corollary 4.2, and Cor.~5.1]{AK}.  

\begin{lemma}\label{lem:singular} 
For $x$ in the closed dominant chamber, define
\[
\mathbf J_\kappa(A_x)
=\sum_{\alpha\in R^+}
 \sum_{\substack{r\ge1\\0<r\kappa<\langle x,\alpha^\vee\rangle}}
 A_{x-r\kappa\alpha}.
\]
If $x$ is dot-singular, then $\mathbf J_\kappa(A_x)=0$.
\end{lemma}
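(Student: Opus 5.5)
The plan is to exploit the antisymmetry $A_{wy}=(-1)^{\ell(w)}A_y$ of alternants, together with the fact that a reflection fixing $x$ permutes the index set of the sum defining $\mathbf J_\kappa(A_x)$. Here I read ``dot-singular'' for $x$ in the closed dominant chamber as meaning that $x$ lies on a wall, i.e.\ $\langle x,\alpha^\vee\rangle=0$ for some root $\alpha$, so that $A_x=0$. This is the reading under which the statement is applied to $x=\lambda+\rho$ shifted weights.

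First I reduce to a simple reflection. Since $x$ lies in the closed dominant chamber and $\langle x,\alpha^\vee\rangle=0$ for some positive root $\alpha=\sum_i c_i\alpha_i$ with $c_i\ge0$, we have $\sum_i c_i\langle x,\alpha_i^\vee\rangle\cdot(\text{positive scalars})=0$ with every term nonnegative. Hence $\langle x,\alpha_i^\vee\rangle=0$ for some simple root $\beta=\alpha_i$, and so $s_\beta x=x$.

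Next I use two standard facts: $s_\beta$ permutes $R^+\setminus\{\beta\}$, and $A_{s_\beta y}=-A_y$ for every $y$. For $\alpha\in R^+$, the term indexed by $(\alpha,r)$ is $A_{x-r\kappa\alpha}$, and
\[
s_\beta(x-r\kappa\alpha)=x-r\kappa\, s_\beta\alpha .
\]
Moreover $\langle x,(s_\beta\alpha)^\vee\rangle=\langle s_\beta x,\alpha^\vee\rangle=\langle x,\alpha^\vee\rangle$, so the allowed range of $r$ is the same for $\alpha$ and $s_\beta\alpha$. Thus the involution $(\alpha,r)\mapsto(s_\beta\alpha,r)$ preserves the index set of the sum, and corresponding terms satisfy $A_{x-r\kappa s_\beta\alpha}=-A_{x-r\kappa\alpha}$.

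Then I split the index set according to this involution.
\begin{enumerate}
\item For $\alpha=\beta$, the range $0<r\kappa<\langle x,\beta^\vee\rangle=0$ is empty, so there are no terms.
\item For $\alpha\ne\beta$ with $s_\beta\alpha\ne\alpha$, the terms for $\alpha$ and $s_\beta\alpha$ cancel in pairs.
\item For $s_\beta\alpha=\alpha$, the weight $y=x-r\kappa\alpha$ is itself fixed by $s_\beta$. Then $A_y=-A_y$, so $A_y=0$ because $\mathbb Z[P]$ is torsion-free.
\end{enumerate}
Summing over all three cases gives $\mathbf J_\kappa(A_x)=0$.

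I expect no real obstacle here. The only point requiring care is checking that the summation range, defined through $\langle x,\alpha^\vee\rangle$, is invariant under $\alpha\mapsto s_\beta\alpha$, which is exactly the computation in the second step. I would also note that the interpretation of ``dot-singular'' for $x$ (wall versus shifted wall) must match how the lemma is used later; if it is instead meant as $x+\rho$ being singular, the same argument applies verbatim with $s_\beta$ replaced by the corresponding dot-action reflection.
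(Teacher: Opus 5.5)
Your proof is correct and is essentially the paper's argument: pick a simple reflection $s_i$ fixing $x$, pair $\alpha$ with $s_i\alpha$ (these have the same $r$-range and opposite signs by antisymmetry), and note that $s_i$-fixed roots contribute zero while the $\alpha_i$-range is empty. You also justify why some simple reflection fixes $x$, which the paper leaves implicit.

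Your closing aside is not right as stated, since $A_{wy}=(-1)^{\ell(w)}A_y$ holds for the linear action and not the dot action. It is moot, however: for $x$ in the closed dominant chamber, $x+\rho$ is always regular, so that alternative reading would make the hypothesis vacuous.
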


\begin{proof}
Choose a simple reflection $s_i$ fixing $x$.  It permutes
$R^+\setminus\{\alpha_i\}$.  Terms indexed by $\beta$ and $s_i\beta$ have
identical $r$-ranges and 
\[
A_{x-r\kappa s_i\beta}=A_{s_i(x-r\kappa\beta)}
=-A_{x-r\kappa\beta}.
\]
A root fixed by $s_i$ contributes zero, while
the $\alpha_i$-sum is empty because
$\langle x,\alpha_i^\vee\rangle=0$.
\end{proof}

 Recall that in Section \ref{s:defN}, we  defined the quantities
$N_r^{(\kappa)}(\lambda,I)$ for $r\in\Z/\kappa\Z$ (note that in this section $\kappa>1$ is an integer). Here $\lambda$ is dominant and $I\subseteq \{1,\ldots,m\}$ is
such that  $\mu=\lambda+\sum_{a\in I}L_a$ is weakly decreasing
(and hence defines a dominant weight for $\mf{sl}_m$ )which appears in the Pieri product  of $[\lambda]$ and  $[\varpi_k]$ with $k=|I|$.

If $\alpha=L_i-L_j$ is a positive root in $\Inv(I)$ (i.e., $i\not\in I$ and $j\in J$) then it contributes a $1$ to  
$N_r^{(K)}(\lambda,I)$ if 
$\langle\lambda+\rho,\alpha_{ij}^{\vee}\rangle \equiv r\pmod \kappa$ and zero otherwise.

\begin{proposition}\label{p:minJ}
For $\omega=\varpi_k$ and dominant $\lambda$,
\begin{equation}\label{e:minJ}
(\Jop\chi_\lambda)\chi_\omega
-\Jop(\chi_\lambda\chi_\omega)
=\sum_{\substack{|I|=k\\ I\text{ admissible}}}
\bigl(N^{(\kappa)}_0(\lambda,I)-N^{(\kappa)}_1(\lambda,I)\bigr)\chi_{\lambda+\sum_{a\in I}L_a}.
\end{equation}

Moreover $\Jop(\chi_{\varpi_k})=0$.
\end{proposition}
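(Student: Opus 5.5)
The plan is to clear denominators by $\Delta$ and work entirely with alternants $A_x$, using Lemma \ref{lem:singular} to handle singular weights. The second assertion, $\Jop(\chi_{\varpi_k})=0$, will then fall out of \eqref{e:minJ} by taking $\lambda=0$.

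\textbf{Step 1: rewrite both sides via alternants.} Put $x=\lambda+\rho$, which is strictly dominant. Since $\chi(\xi)=A_{\xi+\rho}/\Delta$ for every integral $\xi$ (both sides vanish for dot-singular $\xi$), we get
\[
\Delta\cdot\Jop(\chi_\lambda)=\mathbf J_\kappa(A_x).
\]
By \eqref{e:minuscule-alternant},
\[
\Delta\cdot(\Jop\chi_\lambda)\chi_\omega=\sum_{\alpha\in R^+}\ \sum_{r\ge1,\ r\kappa<\langle x,\alpha^\vee\rangle}\ \sum_{\zeta\in W\omega}A_{x-r\kappa\alpha+\zeta}.
\]
Also $\chi_\lambda\chi_\omega=\sum_{\zeta}A_{x+\zeta}/\Delta$. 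Because $\omega$ is minuscule, $\langle\zeta,\alpha^\vee\rangle\in\{-1,0,1\}$, so every $x+\zeta$ lies in the closed dominant chamber. Those $x+\zeta$ that are singular contribute $0$ to $\chi_\lambda\chi_\omega$, and also $\mathbf J_\kappa(A_{x+\zeta})=0$ for them by Lemma \ref{lem:singular}. Hence
\[
\Delta\cdot\Jop(\chi_\lambda\chi_\omega)=\sum_{\zeta\in W\omega}\mathbf J_\kappa(A_{x+\zeta}),
\]
with the sum now running over all $\zeta\in W\omega$, without any admissibility restriction.

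\textbf{Step 2: compare the two $r$-ranges.} In the difference, each pair $(\zeta,\alpha)$ contributes
\[
\Bigl(\sum_{r\kappa<\langle x,\alpha^\vee\rangle}-\sum_{r\kappa<\langle x,\alpha^\vee\rangle+\langle\zeta,\alpha^\vee\rangle}\Bigr)A_{x+\zeta-r\kappa\alpha}.
\]
This is nonzero only in two boundary cases.
\begin{itemize}
\item If $\langle\zeta,\alpha^\vee\rangle=-1$ and $\langle x,\alpha^\vee\rangle=r\kappa+1$, the first sum has one extra term. Writing $y=x+\zeta-r\kappa\alpha$, one checks $s_\alpha y=x+\zeta$, so the extra term is $+A_y=-A_{x+\zeta}$.
\item If $\langle\zeta,\alpha^\vee\rangle=+1$ and $\langle x,\alpha^\vee\rangle=r\kappa$, the second sum has one extra term. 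Here $s_\alpha y=x+\zeta'$ with $\zeta'=s_\alpha\zeta=\zeta-\alpha$ and $\langle\zeta',\alpha^\vee\rangle=-1$, so the contribution is $-A_y=+A_{x+\zeta'}$.
\end{itemize}

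\textbf{Step 3: identify with the $N$-counts.} Write $\zeta=\sum_{a\in I}L_a$ modulo $\sum L_a$. For $\alpha=L_i-L_j$, the condition $\langle\zeta,\alpha^\vee\rangle=-1$ says exactly that $(i,j)\in\Inv(I)$. So, reindexing the second case by $\zeta'$, the coefficient of $A_{x+\zeta}$ in the difference is
\[
\#\{\alpha\in\Inv(I):\langle x,\alpha^\vee\rangle\equiv0\}-\#\{\alpha\in\Inv(I):\langle x,\alpha^\vee\rangle\equiv1\}=N_0^{(\kappa)}(\lambda,I)-N_1^{(\kappa)}(\lambda,I).
\]
Congruences are mod $\kappa$, and the condition $r\ge1$ is automatic because $\langle x,\alpha^\vee\rangle\ge2$ when $\alpha\in\Inv(I)$. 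For non-admissible $I$, $x+\zeta$ is singular, so $A_{x+\zeta}=0$. Dividing by $\Delta$ then gives \eqref{e:minJ}.

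\textbf{Step 4: the fundamental classes.} Take $\lambda=0$. Then $\Jop(\chi_0)=0$, and the only admissible $I$ is $\{1,\dots,k\}$, for which $\Inv(I)=\emptyset$. So \eqref{e:minJ} reads $-\Jop(\chi_{\varpi_k})=0$.

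The main obstacle is the bookkeeping in Step 2: getting the signs and the reflection identity $s_\alpha y=x+\zeta$ (respectively $x+\zeta-\alpha$) right, and checking that the strict inequalities in the $r$-ranges produce exactly one boundary term in each case.
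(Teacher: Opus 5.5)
Your proposal is correct and is essentially the paper's proof: you clear $\Delta$, use \eqref{e:minuscule-alternant} and Lemma~\ref{lem:singular} to reduce to comparing the two $r$-ranges for each pair $(\alpha,\zeta)$, pick up the single boundary term via $s_\alpha$ when $\langle\zeta,\alpha^\vee\rangle=\mp1$, and specialize to $\lambda=0$. The only slip is that $\langle x,\alpha^\vee\rangle\ge2$ for $\alpha\in\Inv(I)$ holds only when $I$ is admissible (for non-admissible $I$ one can have $\langle x,\alpha^\vee\rangle=1$); this is harmless, since $A_{x+\zeta}=0$ for non-admissible $I$.
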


\begin{proof}
Put $x=\lambda+\rho$.  For  $\zeta\in W\omega$ and $\alpha=L_i-L_j\in R^+$ write 
\[
h=\langle x,\alpha^\vee\rangle,
\qquad e=\langle\zeta,\alpha^\vee\rangle\in\{-1,0,1\}.
\]
Note that  such $\zeta$ is of the form $\sum_{a\in I}L_a$ with $|I|=k$. This gives a one-one correspondence between the set of $\zeta$ and  $I$ with $|I|=k$. 

Then $e=-1$ if $\alpha\in \Inv(I)$, it is $1$ if $i\in I$ and $j\not\in I$ and zero otherwise.
If $i\in I$ and $j\not\in I$ then $\alpha\in \Inv(I\cup\{j\}-\{i\})$.

Using \eqref{e:minuscule-alternant} and Lemma~\ref{lem:singular},
\begin{align*}
&\Delta\bigl[(\Jop\chi_\lambda)\chi_\omega
-\Jop(\chi_\lambda\chi_\omega)\bigr]\\
&\qquad=
\sum_{\alpha>0}\sum_{\zeta\in W\omega}
\left(\sum_{0<r\kappa<h}-\sum_{0<r\kappa<h+e}\right)
A_{x+\zeta-r\kappa\alpha}.
\end{align*}
We will collect the various $A_{x+\zeta-r\kappa\alpha}$ that appear with non-zero coefficient.
We want these to be of the ``Pieri-form'' $A_{x+\sum_{a\in I}L_a}$ with $|I|=k$ and with the 
desired coefficient.  

\smallskip

If $e=0$, the ranges agree.  If $e=-1$, the first range has one extra term
exactly when $h-1=r\kappa$; that term equals
\[
A_{x+\zeta-(h-1)\alpha}=A_{s_\alpha(x+\zeta)}=-A_{x+\zeta}.
\]
The corresponding term is thus of ``Pieri-form'' and  receives a coefficient of $-1$ exactly when
$h\equiv1\pmod\kappa$. Note that in this case $\alpha\in \Inv(I)$ and contributes $1$ to $N^{(\kappa)}_1(\lambda,I)$ where $\zeta=\sum_{a\in I} L_a$.

 If $e=1$, the second range has one extra term
exactly when $h=r\kappa$.  With
$\zeta'=s_\alpha\zeta=\zeta-\alpha$ one gets
\[
-A_{x+\zeta-h\alpha}=+A_{x+\zeta'},
\]
Let $\zeta=\sum_{i\in I}L_i$, then  $\zeta'$ is  $\sum_{a\in I'} L_a$ with $I'=I\cup\{j\}-\{i\}$, 
so the corresponding term is also  of ``Pieri-form'' receives a coefficient $+1$ exactly when
$h\equiv0\pmod\kappa$.  In this case 
$\alpha\in \Inv(I')$ and contributes $1$ to $N^{(\kappa)}_0(\lambda,I')$.

Summing over roots gives $N_0-N_1$, proving
\eqref{e:minJ}.  If $\lambda=0$ there is  only one term in the RHS of \eqref{e:minJ}, and
this gives $\Jop(\chi_{\varpi_k})=0$.
\end{proof}

\subsection{An adjointness statement}
Define
\begin{equation}\label{e:Aop}
\Aop(\chi_\nu)
=\sum_{\alpha\in R^+}
 \sum_{\substack{t\in\mathbb Z\\
 t<0\ \mathrm{or}\ t\kappa>\langle\nu+\rho,\alpha^\vee\rangle}}
 \chi(\nu-t\kappa\alpha).
\end{equation}
Equip the character ring with the orthonormal pairing
$(\chi_\lambda,\chi_\mu)=\delta_{\lambda\mu}$.
See \cite[Prop.~4.9]{AK} for a statement equivalent  to the following.
\begin{proposition}\label{p:adjoint}
\begin{equation*}%
(\chi_\nu,\Jop\chi_\eta)
=(\Aop\chi_\nu,\chi_\eta).
\end{equation*}
\end{proposition}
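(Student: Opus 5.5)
The plan is to compare coefficients directly and construct a sign-preserving bijection between the terms contributing on each side. Write $x=\eta+\rho$ and $y=\nu+\rho$; both are dominant and regular. By the definition of $\chi(\xi)$, the left side $(\chi_\nu,\Jop\chi_\eta)$ equals
\[
\sum (-1)^{\ell(w)}
\]
over pairs $(\alpha,r)$ with $\alpha\in R^+$, $r\ge1$, $r\kappa<\langle x,\alpha^\vee\rangle$, and $w\in W$ satisfying $w(x-r\kappa\alpha)=y$. Here $w$ is unique when it exists, because $y$ is regular, and dot-singular terms contribute nothing. Similarly, $(\Aop\chi_\nu,\chi_\eta)$ is the signed count of triples $(\beta,t,w')$ with $\beta\in R^+$, $t<0$ or $t\kappa>\langle y,\beta^\vee\rangle$, and $w'(y-t\kappa\beta)=x$. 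So it suffices to match these index sets with $w'=w^{-1}$; then the signs agree because $\ell(w)=\ell(w^{-1})$.

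\textbf{Map from the $\Jop$-side to the $\Aop$-side.} Given $(\alpha,r,w)$, rewrite the relation as $y=wx-r\kappa\, w\alpha$, i.e.\ $x=w^{-1}(y+r\kappa\, w\alpha)$. There are two cases.
\begin{itemize}
\item If $w\alpha>0$, set $\beta=w\alpha$ and $t=-r<0$. Then $x=w^{-1}(y-t\kappa\beta)$, and this lands in the "$t<0$" part of the $\Aop$-range.
\item If $w\alpha<0$, set $\beta=-w\alpha$ and $t=r>0$. A direct pairing computation gives
\[
\langle y,\beta^\vee\rangle=-\langle wx-r\kappa w\alpha,\,w\alpha^\vee\rangle=2r\kappa-\langle x,\alpha^\vee\rangle .
\]
Hence the condition $r\kappa<\langle x,\alpha^\vee\rangle$ is equivalent to $t\kappa>\langle y,\beta^\vee\rangle$, which is exactly the second part of the $\Aop$-range.
\end{itemize}

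\textbf{Inverse map.} Given $(\beta,t,w')$, put $w=w'^{-1}$. The value $t=0$ never occurs in the range. For $t<0$, set $r=-t$ and $\alpha=w^{-1}\beta$. For $t>0$, set $r=t$ and $\alpha=-w^{-1}\beta$. Two things must then be checked: that $\alpha$ is positive, and that $r\kappa<\langle x,\alpha^\vee\rangle$.
\begin{itemize}
\item For $t<0$ we get $\langle x,\alpha^\vee\rangle=\langle y,\beta^\vee\rangle+2r\kappa>r\kappa>0$, using $\langle y,\beta^\vee\rangle>0$ from the regularity of $y$. Since $x$ is dominant regular, a root pairing positively with $x$ must be positive, so $\alpha>0$.
\item For $t>0$ we get $\langle x,\alpha^\vee\rangle=2r\kappa-\langle y,\beta^\vee\rangle>r\kappa>0$ by the range condition, and positivity of $\alpha$ follows in the same way.
\end{itemize}
These two maps are visibly mutually inverse, which completes the matching of index sets.

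\textbf{Main obstacle.} The step needing the most care is this positivity and range bookkeeping, in particular making sure that no $\Jop$-term has $\alpha$ landing on the wrong side. The resolution is to use only that $x$ and $y$ are regular dominant, so that the sign of $\langle x,\gamma^\vee\rangle$ (or of $\langle y,\gamma^\vee\rangle$) determines the sign of a root $\gamma$. The rest is the routine reflection identity $\langle wx,w\alpha^\vee\rangle=\langle x,\alpha^\vee\rangle$. This agrees with the form of the statement cited from \cite[Prop.~4.9]{AK}.
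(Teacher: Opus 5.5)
Your proof is correct and is essentially the paper's argument: a sign-preserving bijection between the contributing terms, split according to whether $w\alpha$ is positive or negative (equivalently, whether $t<0$ or $t>0$), using the same pairing identity $\langle y,\beta^\vee\rangle=2r\kappa-\langle x,\alpha^\vee\rangle$. The only difference is that you check the inverse direction explicitly in both cases, namely positivity of $\alpha$ and the $\Jop$-range condition; the paper checks positivity only for $t<0$ and describes the rest of the construction as reversible.
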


\begin{proof}
  Non-zero terms $\chi(\eta-r\kappa\alpha)$ of $\Jop\chi_\eta$
  contributing to the left hand side are such that
  $\eta+\rho-r\kappa\alpha$ and $\nu+\rho$ are in the same orbit of
  the finite Weyl group.  Write the sum in $\Aop\chi_\nu$ as
  parametrised by positive roots $\beta$. Non-zero terms
  $\chi(\nu-t\kappa\beta)$ of $\Aop\chi_\nu$ contributing to the right
  hand side are such that $\nu+\rho -t\kappa\beta$ and $\eta+\rho$ are
  in the same orbit of the finite Weyl group.  The corresponding terms
  $\eta-r\kappa\alpha$ and $\nu-t\kappa\beta$ are dot-regular.

Consider now a non-zero term $\chi(\eta-r\kappa\alpha)$ of $\Jop\chi_\eta$ contributing to the left hand side:
Here $r\geq1$ and $r\kappa<\langle\eta+\rho,\alpha^\vee\rangle$. Now
choose $w\in W$ with
\[
w(\eta+\rho-r\kappa\alpha)=\nu+\rho
\]
If $w\alpha>0$, put $\beta=w\alpha$ and $t=-r$.  
Then
$w^{-1}(\nu+\rho-t\kappa\beta)=\eta+\rho$.

Note that $t<0$ and hence we get an allowed term $\chi(\nu-t\kappa\beta)$ to  of $\Aop\chi_\nu$ contributing to the right hand side.
The finite Weyl signs term are  the same on the two contributions. Also note that the assumption $r\kappa<\langle\eta+\rho,\alpha^\vee\rangle$ is automatic here because $\langle\eta+\rho,\alpha^\vee\rangle= \langle \nu+\rho-t\kappa\beta,\beta^{\vee}\rangle$ and $t<0$.

  If $w\alpha<0$, put $\beta=-w\alpha>0$ and $t=r$.  Then, we have again
  $w^{-1}(\nu+\rho-t\kappa\beta)=\eta+\rho$.
We need to show  $t\kappa>\langle\nu+\rho,\beta^\vee\rangle$. But 
$$\langle\nu+\rho,\beta^\vee\rangle=\langle w(\eta+\rho-r\kappa\alpha),\beta^{\vee}\rangle=-\langle \eta+\rho,\alpha^{\vee}\rangle+2r\kappa> -r\kappa+2r\kappa=r\kappa=t\kappa$$

 The construction is reversible in both cases. Note that if $t<0$, and $w^{-1}(\nu+\rho-t\kappa\beta)=\eta+\rho$, then necessarily $w^{-1}\beta$ is a positive root, since we can evaluate the previous equation on $w^{-1}\beta^{\vee}$.
Thus the the correspondence is sign-preserving and bijective.
\end{proof}

\subsection{Completion of the proof of Theorem \ref{t:G}}\label{s:repeat}
Since $\partial_s$ is a derivation, \eqref{e:W-B} and Proposition~\ref{p:B} give
\begin{equation}\label{e:Fdefect-general}
FM^{\mr{W}}_\kappa((\lambda,\mu),\nu^*)
=[\chi_\nu]\Bigl
\{\Fop(\chi_\lambda)\chi_\mu
+\chi_\lambda\Fop(\chi_\mu)
-\Fop(\chi_\lambda\chi_\mu)\Bigr\}.
\end{equation}

The $\Sop$ part of \eqref{e:Fdefect-general} contributes
\begin{equation}\label{soppy}
\bigl(s(\lambda)+s(\mu)-s(\nu)\bigr)c_{\lambda\mu}^{\nu}
={\ExpM}((\lambda,\mu),\nu^*)c_{\lambda\mu}^{\nu}.
\end{equation}
The term $[\chi_\nu]
\Jop(\chi_\lambda)\chi_\mu$ equals 
\[
\sum_{\alpha>0}
\sum_{\substack{r\ge1\\r\kappa<\langle\lambda+\rho,\alpha^\vee\rangle}}
 {c}_{\lambda-r\kappa\alpha,\mu}^{\nu},
\]
and $[\chi_\nu] \Jop(\chi_\mu)\chi_\lambda$ is the analogous sum with
$\mu$ interchanged with $\lambda$.  For the term with
$\chi_{\lambda}\chi_{\mu}$, expand
\[
\chi_\lambda\chi_\mu
=\sum_\eta c_{\lambda\mu}^{\eta}\chi_\eta.
\]
By Proposition~\ref{p:adjoint},
\begin{align*}
[\chi_\nu]\Jop(\chi_\lambda\chi_\mu)
&=\sum_\eta c_{\lambda\mu}^{\eta}(\chi_\nu,\Jop\chi_\eta)\\
&=\sum_\eta c_{\lambda\mu}^{\eta}(\Aop\chi_\nu,\chi_\eta)\\
&=\sum_{\alpha>0}
 \sum_{\substack{t\in\mathbb Z\\
 t<0\ \mathrm{or}\ t\kappa>\langle\nu+\rho,\alpha^\vee\rangle}}
  {c}_{\lambda,\mu}^{\nu-t\kappa\alpha}.
\end{align*}
Substitution into \eqref{e:Fdefect-general} is exactly \eqref{e:G}.
This proves Theorem~\ref{t:G}.

\subsection{The formula for $\mathfrak{sl}_2$}
\label{sec:A1}

We index the dominant weights of $\mathfrak{sl}_2$ by integers $a\ge0$, writing $[a]$ for
the irreducible representation of highest weight $a\varpi$.  The ordinary
Clebsch--Gordan rule is
\begin{equation}\label{e:A1-CG}
  [a]\cdot [b]=\sum_{i=0}^{\min(a,b)}[a+b-2i].
\end{equation}
Thus
\begin{equation}\label{e:A1-CGcoef}
c_{ab}^{c}=1
\quad\Longleftrightarrow\quad
|a-b|\le c\le a+b,\qquad a+b+c\equiv0\pmod2,
\end{equation}
and otherwise $c_{ab}^{c}=0$.

\subsubsection{The first-moment deformation ring} 
Since every nonzero Clebsch--Gordan multiplicity is one, 
\begin{equation}\label{e:A1-dual-product}
  [a]\diamond_\kappa[b]
  =\sum_{i=0}^{\min(a,b)} \bigl(1+\epsilon
  FM^{\mr{W}}_\kappa(a,b,a+b-2i)\bigr)[a+b-2i].  
\end{equation}

\begin{proposition}\label{p:A1-floor}
Suppose $|a-b|\leq c\leq a+b$. Then

\begin{equation}\label{e:A1-G-abc}
{
\begin{aligned}
FM^{\mr{W}}_\kappa(a,b,c)
={}&\frac{a+b-c}{2}
+\left\lfloor\frac{a+b+c+2}{2\kappa}\right\rfloor
+\left\lfloor\frac{b+c-a}{2\kappa}\right\rfloor
+\left\lfloor\frac{a+c-b}{2\kappa}\right\rfloor\\
&+\left\lfloor\frac{a+b-c}{2\kappa}\right\rfloor
-\left\lfloor\frac a\kappa\right\rfloor
-\left\lfloor\frac b\kappa\right\rfloor
-\left\lfloor\frac{c+1}{\kappa}\right\rfloor.
\end{aligned}}
\end{equation}
\end{proposition}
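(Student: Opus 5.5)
Specialize Theorem \ref{t:G} to $\mf{g}=\mf{sl}_2$, where $R^+=\{\alpha\}$, $\rho=\varpi$, $\alpha=2\varpi$, $\langle x\varpi+\rho,\alpha^\vee\rangle=x+1$, and the dot action of the nontrivial Weyl element is $x\mapsto -x-2$. First, $\ExpM((a,b),c^*)=s((a+b-c)\varpi)=(a+b-c)/2$, since $\varpi=\alpha/2$; this gives the first term. Next, a generalized coefficient with a non-dominant entry $x<0$ is $0$ if $x=-1$, and otherwise equals $-c$ with $x$ replaced by $-x-2\geq 0$. Each of the three sums in \eqref{e:G} thus becomes a signed count of integers $r$ (or $t$) for which a Clebsch--Gordan condition \eqref{e:A1-CGcoef} holds. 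All parity conditions are automatic, because the shifts are by $2r\kappa$ with $\alpha=2\varpi$.

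The first sum is $\sum_{r\ge1,\ r\kappa<a+1} c_{a-2r\kappa,b}^{c}$. I would split it according to whether $a-2r\kappa\ge0$ or $a-2r\kappa\le -2$ (the value $-1$ contributes $0$). In the dominant range, the condition is $|a-2r\kappa-b|\le c\le a-2r\kappa+b$. In the reflected range, the weight is $2r\kappa-a-2$ with sign $-1$. Combining the two, the sum should count $r$ in the intervals determined by $b+c-a$, $a+b-c$, etc., after dividing by $2\kappa$, giving floor expressions. The third sum, over $t<0$ or $t\kappa>c+1$ with weight $c-2t\kappa$, is handled the same way. For $t<0$ the weight $c+2|t|\kappa$ is dominant and the condition is $c+2|t|\kappa\le a+b$. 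For $t\kappa>c+1$ the weight $c-2t\kappa$ is $\le -2$ (or exactly $-1$), it reflects to $2t\kappa-c-2$ with a minus sign, and the condition becomes $|a-b|\le 2t\kappa-c-2\le a+b$.

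The main work, which I expect to be the obstacle, is bookkeeping: turning each of the resulting counts into the form $\lfloor\cdot/2\kappa\rfloor$ or $\lfloor\cdot/\kappa\rfloor$, and checking that boundary cases cancel. These boundary cases are equalities such as $r\kappa=a+1$ or reflected weight exactly $-1$, and the interplay between the upper bound $r\kappa<a+1$ and the LR condition. My plan is to write each count as a difference of counts of $r\ge1$ with $2r\kappa\le N$, i.e.\ $\lfloor N/2\kappa\rfloor$, for $N$ among $a+b-c$, $a+c-b$, $b+c-a$, $a+b+c+2$. The terms $-\lfloor a/\kappa\rfloor$, $-\lfloor b/\kappa\rfloor$ and $-\lfloor (c+1)/\kappa\rfloor$ should arise from the reflected ranges: the number of $r$ with $r\kappa\le a$ is $\lfloor a/\kappa\rfloor$, while in the third sum the count for $t\kappa>c+1$ is cut off by $a+b+c+2$.

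For example, in the first sum the dominant part counts $r$ with $2r\kappa\le a+b-c$ and $2r\kappa\le a+c-b$ (the condition $c\le a+b-2r\kappa$ together with $c\ge |a-b-2r\kappa|$). The reflected part contributes minus the number of $r$ with $a<2r\kappa$, $r\kappa\le a$ and the reflected LR condition. Summing the three sums and simplifying with $\lfloor x\rfloor$ identities gives \eqref{e:A1-G-abc}. Finally, I would sanity-check the formula on small cases, e.g.\ $a,b,c<\kappa-1$, where all floors vanish except possibly $\lfloor (a+b+c+2)/2\kappa\rfloor$, and compare with the known pure-weight predictions.
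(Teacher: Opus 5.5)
Your plan is correct and is essentially the paper's own proof: specialize Theorem~\ref{t:G} to $\mathfrak{sl}_2$, split each sum into its dominant and reflected ranges, and count the surviving $r$ (or $t$) with floor functions. The paper organizes the bookkeeping with $x=(a+b-c)/2$, $y=(a+c-b)/2$, $z=(b+c-a)/2$. It obtains $\lfloor x/\kappa\rfloor+\lfloor y/\kappa\rfloor-\lfloor (x+y)/\kappa\rfloor$ for the first sum (dominant terms for $r\kappa\le\min(x,y)$, reflected terms with sign $-1$ for $\max(x,y)<r\kappa\le x+y$), the analogous expression for the second sum, and $\lfloor x/\kappa\rfloor+\lfloor (y+z+1)/\kappa\rfloor-\lfloor (x+y+z+1)/\kappa\rfloor$ for the subtracted sum, exactly as your outline predicts.
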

\begin{proof}
Set $x=(a+b-c)/2$, $y=(a+c-b)/2$, and $z=(b+c-a)/2$.
Then ${\ExpM}(a,b,c)=x$. We record the signed counts appearing in formula~\eqref{e:G}.  For the
first sum on the right hand side of ~\eqref{e:G}, consider the term indexed by $r\ge1$. 

If $a-2r\kappa\ge0$, it contributes $+1$ to the
coefficient of $[c]$ exactly when
\[
r\kappa\le x,\qquad r\kappa\le y.
\]
If $a-2r\kappa\le-2$, finite-Weyl regularisation changes the sign, and a contribution $-1$
occurs exactly when
\[
\max(x,y)<r\kappa\le x+y.
\]
(The dot-singular value $-1$ contributes zero.)  Hence the whole first sum
is
\begin{equation}\label{e:A1-Sa}
S_a=
\left\lfloor\frac{x}{\kappa}\right\rfloor
+\left\lfloor\frac{y}{\kappa}\right\rfloor
-\left\lfloor\frac{x+y}{\kappa}\right\rfloor.
\end{equation}
By symmetry, the second  sum in ~\eqref{e:G} is
\begin{equation}\label{e:A1-Sb}
S_b=
\left\lfloor\frac{x}{\kappa}\right\rfloor
+\left\lfloor\frac{z}{\kappa}\right\rfloor
-\left\lfloor\frac{x+z}{\kappa}\right\rfloor.
\end{equation}
 
For the third sum in \eqref{e:G}, the negative integers $t=-r<0$ give
ordinary  weights $c+2r\kappa$; these occur precisely for
$r\kappa\le x$, and hence contribute
$\lfloor x/\kappa\rfloor$.  For positive $t$ satisfying
$t\kappa>c+1=y+z+1$, the term $c-2t\kappa$ is on the negative side of the
finite wall and contributes with sign $-1$ precisely while
$t\kappa\le x+y+z+1$.  Thus the entire quantity subtracted in the
last line of formula~\eqref{e:G} is
\begin{equation}\label{e:A1-Sout}
T_{\rm c}
=\left\lfloor\frac{x}{\kappa}\right\rfloor
+\left\lfloor\frac{y+z+1}{\kappa}\right\rfloor
-\left\lfloor\frac{x+y+z+1}{\kappa}\right\rfloor.
\end{equation}
Since ${\ExpM}=x$, formula~\eqref{e:G} reads
$FM^{\mr{W}}_\kappa=x+S_a+S_b-T_{c}$, which gives \eqref{e:A1-G-abc}.
\end{proof}

\subsection{The weight of  $KZ_{\kappa}((\lambda,\lambda^*),0)$}

Let $\mf{g}=\mf{sl}_m$. For a dominant weight $x$ put
\[
  n_\kappa(x)=\#\{\alpha\in R^+:\ \langle
  x,\alpha^\vee\rangle\in\kappa\mathbb Z\}.
\]

\begin{proposition}\label{two_point}
The rank one $KZ_{\kappa}((\lambda,\lambda^*),0)$ has weight 
$\ExpM((\lambda,\lambda^*),0)
 +n_\kappa(\lambda+\rho)-n_\kappa(\rho).$
\end{proposition}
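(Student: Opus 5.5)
Since $c_{\lambda\lambda^*}^{0}=1$, the variation is rank one and its weight equals $FM^{\mr{W}}_\kappa((\lambda,\lambda^*),0)$. I will evaluate the right-hand side of Theorem \ref{t:G} with $\mu=\lambda^*$ and $\nu=0$. The first term gives $\ExpM((\lambda,\lambda^*),0)$, so it remains to show that the three correction sums add up to $n_\kappa(\lambda+\rho)-n_\kappa(\rho)$.

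\textbf{The two $\Jop$-sums.} Here the structure constant is $c^{0}_{\xi,\lambda^*}$, which equals $\pm1$ exactly when $\xi$ is dot-regular and $w\cdot\xi=\lambda$, and is $0$ otherwise. So I need to count $r\ge1$, $\alpha>0$ with $r\kappa<\langle\lambda+\rho,\alpha^\vee\rangle$ and $\lambda+\rho-r\kappa\alpha\in W(\lambda+\rho)$, weighted by signs. Working in $\Delta$-alternant language, the first sum is $[\chi_\lambda]\Jop(\chi_\lambda)$. Since $\lambda+\rho-r\kappa\alpha$ is strictly shorter than $\lambda+\rho$ whenever $0<r\kappa<\langle\lambda+\rho,\alpha^\vee\rangle$, it cannot lie in the orbit $W(\lambda+\rho)$. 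The only escape would be the reflection value $r\kappa=\langle\lambda+\rho,\alpha^\vee\rangle$, which is excluded by the strict inequality. Hence the first sum vanishes, and so does the second by the same argument applied to $\lambda^*$.

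\textbf{The $\Aop$-sum.} The last sum is $(\Aop\chi_0,\chi_\lambda\chi_{\lambda^*})=\sum_\eta c^{\eta}_{\lambda\lambda^*}(\Aop\chi_0,\chi_\eta)$, with $\Aop\chi_0=\sum_{\alpha>0}\sum_{t<0\text{ or }t\kappa>\langle\rho,\alpha^\vee\rangle}\chi(-t\kappa\alpha)$. This does not collapse to a single term, so I will instead use \eqref{e:Fdefect-general} symmetrically. Write $FM=[\chi_0]\{\Fop(\chi_\lambda)\chi_{\lambda^*}+\chi_\lambda\Fop(\chi_{\lambda^*})-\Fop(\chi_\lambda\chi_{\lambda^*})\}$. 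By the orthonormal pairing and $[\chi_0](f\chi_{\lambda^*})=(f,\chi_\lambda)$, the first two terms reduce to the vanishing diagonal coefficients above. So the answer is $-[\chi_0]\Jop(\chi_\lambda\chi_{\lambda^*})$, and I will compute this directly in alternants. Set $\Delta\chi_\lambda\chi_{\lambda^*}=A_{\lambda+\rho}\chi_{\lambda^*}$ and apply the extended operator $\mathbf J_\kappa$ of Lemma \ref{lem:singular} to each summand $A_{x}$ with $x=\lambda+\rho+\zeta$, $\zeta$ a weight of $V_{\lambda^*}$. Then extract the coefficient of $A_\rho$.

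\textbf{Main obstacle.} The hard step is this extraction. I expect a telescoping, analogous to the proof of Proposition \ref{p:minJ}: contributions to $A_\rho$ come from $x-r\kappa\alpha\in W\rho$ with $0<r\kappa<\langle x,\alpha^\vee\rangle$. Pairing the weights $\zeta$ and $s_\alpha\zeta$ along $\alpha$-strings should cancel everything except boundary terms where $r\kappa$ equals $\langle\lambda+\rho,\alpha^\vee\rangle$ or $\langle\rho,\alpha^\vee\rangle$. These boundary terms produce $+1$ for each $\alpha$ with $\kappa\mid\langle\lambda+\rho,\alpha^\vee\rangle$ and $-1$ for each $\alpha$ with $\kappa\mid\langle\rho,\alpha^\vee\rangle$.

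If the direct telescoping becomes unwieldy, my fallback is induction on $\lambda$ via the Pieri rule. Write $\chi_\lambda$ as a polynomial in the $\chi_{\varpi_k}$, use the derivation identity from Proposition \ref{p:minJ}, namely $(\Jop f)\chi_\omega-\Jop(f\chi_\omega)=\sum(N_0-N_1)\chi$, and track how $n_\kappa(\lambda+\rho)$ changes when a vertical strip is added. For $\mf{sl}_2$, I will check the result against Proposition \ref{p:A1-floor} with $a=b$, $c=0$. That gives $a+\lfloor (2a+2)/2\kappa\rfloor-2\lfloor a/\kappa\rfloor-\lfloor 1/\kappa\rfloor+\dots$, which should match $a+[\kappa\mid a+1]$ when $\kappa\ge2$, and this fixes the signs.
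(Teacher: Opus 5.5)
Your reduction is correct and is the paper's. With $\mu=\lambda^*$ and $\nu=0$, the two diagonal terms $[\chi_\lambda]\Jop(\chi_\lambda)$ and $[\chi_{\lambda^*}]\Jop(\chi_{\lambda^*})$ vanish; your observation that $|x-r\kappa\alpha|^2<|x|^2$ for $0<r\kappa<\langle x,\alpha^\vee\rangle$ is a clean way of saying what the paper says. Everything then reduces to proving
\[
[\chi_0]\Jop(\chi_\lambda\chi_{\lambda^*})=n_\kappa(\rho)-n_\kappa(\lambda+\rho).
\]
This identity is the entire content of the proposition, and your proposal does not prove it. The cancellation ``along $\alpha$-strings'' is only stated as an expectation, and the Pieri-induction fallback is not carried out. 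The signs are to be fixed by the $\mf{sl}_2$ check, which is not a proof. It also cannot detect the $n_\kappa(\rho)$ term, since $n_\kappa(\rho)=0$ for $\mf{sl}_2$ and $\kappa\ge2$. There is also a technical snag in the direct route as written. For non-minuscule $\lambda^*$ the points $\lambda+\rho+\zeta$ leave the closed dominant chamber, where the operator $\mathbf J_\kappa$ of Lemma \ref{lem:singular} is defined. You would first need its $W$-anti-equivariant extension, summing over all $\beta\in R$ with $0<r\kappa<\langle x,\beta^\vee\rangle$.

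The missing idea is the route you set aside: $\Aop\chi_0$ does not need to collapse to a single term, it only needs a closed form. The paper obtains
\[
\Aop\chi_0=n_\kappa(\rho)+\sum_{r\ge1}\sum_{\beta\in R}e^{r\kappa\beta}
\]
from three ingredients, with $h_\alpha=\langle\rho,\alpha^\vee\rangle$:
\begin{enumerate}
\item the reflection $\chi(-r\kappa\alpha)=-\chi((r\kappa-h_\alpha)\alpha)$;
\item the identity $\sum_{\alpha>0}\bigl(\chi(N\alpha)-\chi((N-h_\alpha)\alpha)\bigr)=\sum_{\beta\in R}e^{N\beta}$, proved by multiplying by $A_\rho$;
\item $\Jop(\chi_0)=0$, which disposes of the leftover terms with $r\kappa<h_\alpha$.
\end{enumerate}
The constant $n_\kappa(\rho)$ comes from the boundary terms $r\kappa=h_\alpha$. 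By Proposition \ref{p:adjoint},
\[
[\chi_0]\Jop(\chi_\lambda\chi_{\lambda^*})=(\Aop\chi_0,\chi_\lambda\chi_{\lambda^*})=[\chi_\lambda]\bigl(\Aop(\chi_0)\chi_\lambda\bigr),
\]
which is the coefficient of $e^{x}$, $x=\lambda+\rho$, in $\Aop(\chi_0)A_x$. A term $e^{r\kappa\beta}$ contributes only if $wx=x-r\kappa\beta$. Your own length argument then forces $r\kappa=\langle x,\beta^\vee\rangle$, and regularity of $x$ forces $w=s_\beta$. This gives $-1$ for each $\beta>0$ with $\kappa\mid\langle x,\beta^\vee\rangle$, hence $n_\kappa(\rho)-n_\kappa(\lambda+\rho)$.

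Your direct alternant computation can also be completed, but the cancellation comes from the same source. Summing over $W$ collapses the weight multiplicities of $V_{\lambda^*}$ via the Weyl character formula, which yields $-n_\kappa(\lambda+\rho)$. The remaining small-$r$ terms reassemble into $(\Jop\chi_0,\chi_\lambda\chi_{\lambda^*})=0$ plus the $\rho$-boundary terms. It is not an evident pairing of $\zeta$ with $s_\alpha\zeta$.
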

\begin{proof}
 
Formula \eqref{e:Fdefect-general} gives
\[
 FM^{\mr{W}}_\kappa((\lambda,\lambda^*),0)
 =\ExpM((\lambda,\lambda^*),0)
 -[\ch_0]\Jop(\ch_\lambda\ch_{\lambda^*}),
\tag{1}
\]
because the quantities $[\ch_0](\Jop\ch_\lambda)\ch_{\lambda^*}$ and $[\ch_0](\Jop\ch_\lambda^*)\ch_{\lambda}$ vanish (see Section \ref{s:id}):
 
By symmetry it suffices to show that the first quantity vanishes. Now,
$[\ch_0](\Jop\ch_\lambda)\ch_{\lambda^*}=[\ch_\lambda]\Jop\ch_\lambda$,
and a term $\ch(\lambda-r\kappa\alpha)$ could contribute $\ch_\lambda$
only if $r\kappa=\langle\lambda+\rho,\alpha^\vee\rangle$, whereas the
sum defining $\Jop\ch_\lambda$ has the strict inequality
$r\kappa<\langle\lambda+\rho,\alpha^\vee\rangle$. This shows the
desired vanishing. 

By Proposition \ref{p:adjoint} we have
\[
[\chi_0]\Jop(\chi_\lambda\chi_{\lambda^*})
=
(\Aop\chi_0,\chi_\lambda\chi_{\lambda^*}),
\]
where
\[
\Aop(\chi_\nu)
=
\sum_{\alpha>0}
\sum_{\substack{r\in\mathbb Z\\
r<0\ \mathrm{or}\ r\kappa>\langle\nu+\rho,\alpha^\vee\rangle}}
\chi(\nu-r\kappa\alpha).
\]
Set $ h_\alpha=\langle\rho,\alpha^\vee\rangle $. Then
\[
  \Aop(\chi_0) = \sum_{\alpha>0} \left(
    \sum_{r\ge1}\chi(r\kappa\alpha) +
    \sum_{r\kappa>h_\alpha}\chi(-r\kappa\alpha) \right).
\]
Finite-Weyl antisymmetry gives
$
\chi(-r\kappa\alpha)
=
-\chi((r\kappa-h_\alpha)\alpha),
$
because $
s_\alpha\cdot(-r\kappa\alpha)
=
(r\kappa-h_\alpha)\alpha
$. 
Therefore
\begin{align*}
\Aop(\chi_0)
={}&
\sum_{r\ge1}\sum_{\alpha>0}
\left(
\chi(r\kappa\alpha)
-\chi((r\kappa-h_\alpha)\alpha)
\right)\\
&+
\sum_{\alpha>0}
\sum_{r\kappa\le h_\alpha}
\chi((r\kappa-h_\alpha)\alpha).
\end{align*}

For every positive integer \(N\),
\[
\sum_{\alpha>0}
\left(
\chi(N\alpha)-\chi((N-h_\alpha)\alpha)
\right)
=
\sum_{\beta\in R}e^{N\beta}.
\]
Indeed, multiplying by \(A_\rho\), the left side becomes
$
\sum_{\alpha>0}
\left(
A_{\rho+N\alpha}
-
A_{\rho+(N-h_\alpha)\alpha}
\right).
$
Since
$
s_\alpha(\rho-N\alpha)
=
\rho+(N-h_\alpha)\alpha,
$,
we have
$
A_{\rho+(N-h_\alpha)\alpha}
=
-A_{\rho-N\alpha},
$
and hence
\[
\sum_{\alpha>0}
\left(
A_{\rho+N\alpha}+A_{\rho-N\alpha}
\right)
=
\sum_{\beta\in R}A_{\rho+N\beta}
=
A_\rho\sum_{\beta\in R}e^{N\beta}.
\]

Now consider the second line above.  The terms with \(r\kappa<h_\alpha\)
sum to 
$0=-\Jop(\chi_0)=
-\sum_{\alpha>0}
\sum_{0<r\kappa<h_\alpha}
\chi(-r\kappa\alpha)$.

The remaining boundary terms are those with \(r\kappa=h_\alpha\), and each
contributes \(\chi(0)=1\).  Thus
\begin{equation}\label{formulaAOP}
\Aop(\chi_0)
=
n_\kappa(\rho)
+
\sum_{r\ge1}\sum_{\beta\in R}e^{r\kappa\beta}.
\end{equation}

Put \(x=\lambda+\rho\), and set
$
F=\Aop(\chi_0).
$
First,
$
(F,\chi_\lambda\chi_{\lambda^*})
=
(F\chi_\lambda,\chi_\lambda),
$
because multiplication by \(\chi_{\lambda^*}\) is adjoint to multiplication
by \(\chi_\lambda\) for the standard orthonormal character pairing.  Thus we get 
$
(F,\chi_\lambda\chi_{\lambda^*})
=
[\chi_\lambda](F\chi_\lambda).
$
Now
$
A_\rho\chi_\lambda=A_x.
$
If
$
F\chi_\lambda=\sum_{\mu\in P^+}c_\mu\chi_\mu,
$
then multiplication by \(A_\rho\) gives
$
FA_x=\sum_{\mu\in P^+}c_\mu A_{\mu+\rho}.
$
Hence the coefficient of \(\chi_\lambda\) in \(F\chi_\lambda\) is exactly
the coefficient of \(A_x\) in \(FA_x\).  Therefore
$
(\Aop\chi_0,\chi_\lambda\chi_{\lambda^*})
=
[A_x]\bigl(\Aop(\chi_0)A_x\bigr).
$

Using \eqref{formulaAOP}, the constant term contributes
\(n_\kappa(\rho)\).  A nonconstant term contributes only if, for some
\(w\in W\), \(\beta\in R\), and \(r\ge1\), $ wx+r\kappa\beta=x.  $
Equivalently, $ wx=x-r\kappa\beta.  $ Since the Weyl group preserves
the invariant inner product, $ |x-r\kappa\beta|^2=|x|^2 $. Expanding
and simplifying yields
$ wx = x-\langle x,\beta^\vee\rangle\beta = s_\beta x.  $

Because \(x=\lambda+\rho\) is dot-regular, its Weyl stabiliser is trivial, so
\(w=s_\beta\).  Since \(w=s_\beta\), the coefficient of \(e^{wx}\) in
$
A_x=\sum_{u\in W}(-1)^{\ell(u)}e^{ux}
$
is
  $
(-1)^{\ell(s_\beta)}=-1.
$
Hence each affine wall through \(x=\lambda+\rho\) contributes \(-1\) to the
coefficient of \(e^x\) in
$
\mathcal A_\kappa(\chi_0)A_x.
$
Conversely, every positive root \(\beta\) satisfying
$
\langle\lambda+\rho,\beta^\vee\rangle\in\kappa\mathbb Z
$
gives exactly one such contribution.  Hence
\[
[\chi_0]\Jop(\chi_\lambda\chi_{\lambda^*})
=
n_\kappa(\rho)-n_\kappa(\lambda+\rho).
\]
Substitution into the first formula proves the claim.
\end{proof}
We expect that the analogue of Proposition \ref{two_point} holds for
all $\mf{g}$.

\begin{remark}\label{r:ll0}
  Conjecture \ref{c:simple} implies that the weight of
  $KZ_{\kappa}((\lambda,\lambda^*),0)$ is determined by the
  (generalised) Andersen filtration of
  $\Hom_{\uq}(\lq(\lambda) \otimes \lq(\lambda)^*, \nq(0))$ (which is a
  one-dimensional vector space). It would be interesting to give a
  representation theoretic computation of this filtration (which is
  essentially a single integer). Is it equal to the
  degree of a suitable (parabolic) Kazhdan--Lusztig polynomial?
\end{remark}

\subsection{An extension to arbitrary root systems}\label{ss:arb}

Let $\g$ be arbitrary.  Assume $\kappa>0$ and write
\[
  \kappa=\frac{K}{d},
  \qquad K,d\in\Z_{>0},\qquad \gcd(K,d)=1.
\] 

Recall (see Section \ref{s:conj}) the quantum parameter
$q=\exp(-{\pi i}/{m_{\g}\kappa})$ and assume
$l=\ord(q^2)>1$. Since $l = m_{\mf{g}}K/(m_{\mf{g}},d)$, this implies
$K>1$ in the simply laced cases, $K>1$ or $d$ is odd in types $B$, $C$
and $F_4$, and $K>1$ or $3 \nmid d$ in type $G_2$.

Define
\[
 \mathcal J^{\g}_\kappa(\chi_\lambda)
 =\sum_{\substack{\alpha\in R^+\\\ell_\alpha>1}}
   \ \sum_{\substack{r\ge1\\
       r l_\alpha<\langle\lambda+\rho,\alpha^\vee\rangle}}
   \chi(\lambda-r\ell_\alpha\alpha),
\]
where we recall that the $l_{\alpha}$ were defined in Section
\ref{s:mn}.

Set
\[
 \mathcal S(\chi_\lambda)=s(\lambda)\chi_\lambda,
 \qquad
 \mathcal F^{\g}_\kappa=\mathcal S+\mathcal J^{\g}_\kappa.
\]
For dominant $\lambda,\mu,\nu$, define
\begin{equation}\label{e:W-general-type-def}
 \FM^{\mathrm{alg}}_\kappa((\lambda,\mu),\nu^*)
 = [\chi_\nu]\bigl(
 \mathcal F^{\g}_\kappa(\chi_\lambda)\chi_\mu
 +\chi_\lambda\mathcal F^{\g}_\kappa(\chi_\mu) 
 -\mathcal F^{\g}_\kappa(\chi_\lambda\chi_\mu)
 \bigr). 
\end{equation}  

Expanding the coefficient gives the root-length version of
Theorem~\ref{t:G}.
\begin{align}
\FM^{\mathrm{alg}}_\kappa((\lambda,\mu),\nu^*)
={}&{\ExpM}((\lambda,\mu),\nu^*)c_{\lambda\mu}^{\nu}\notag\\
&+\sum_{\alpha>0,l_{\alpha}>1} 
 \sum_{\substack{r\ge1\\rl_\alpha<\langle\lambda+\rho,\alpha^\vee\rangle}}
  {c}_{\lambda-r l_\alpha\alpha,\mu}^{\nu}\notag\\
&+\sum_{\alpha>0,l_{\alpha}>1}
 \sum_{\substack{r\ge1, r l_\alpha<\langle\mu+\rho,\alpha^\vee\rangle}}
  {c}_{\lambda,\mu-r l_\alpha\alpha}^{\nu}\notag\\
&-\sum_{\alpha>0, l_{\alpha}>1}
 \sum_{\substack{t\in\mathbb Z\\
 t<0\ \mathrm{or}\ t l_\alpha>\langle\nu+\rho,\alpha^\vee\rangle}}
  {c}_{\lambda,\mu}^{\nu-t l_\alpha\alpha}.
\label{e:G-general-type}
\end{align}

It is easy to see that the deformation of $\mf{R}(\mathfrak{g})$ given
by \eqref{e:W-general-type-def} is associative because, as in the case
of $\slg_m$, we are reparametrizing the representation ring by
$T=1+\epsilon \mathcal F^{\mathfrak g}_\kappa$:
\begin{proposition}\label{p:general-type-assoc}
The product
\[
 [\lambda]\widetilde\diamond_\kappa[\mu]
 =\sum_\nu\left(c_{\lambda\mu}^{\nu}
 +\epsilon\FM^{\mathrm{alg}}_\kappa
            ((\lambda,\mu),\nu^*)\right)[\nu]
\]
is associative and commutative.
\end{proposition}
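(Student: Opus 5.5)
The plan is to realise $\widetilde\diamond_\kappa$ as a transport of structure of the undeformed product, exactly as in the $\mf{sl}_m$ case. Set $A=\Z[\epsilon]/(\epsilon^2)$ and let $R_A=\mf{R}(\g)\otimes_\Z A$ carry the ordinary $A$-bilinear product. Using the character isomorphism \eqref{chara}, regard $\mathcal F^{\g}_\kappa$ as a $\Z$-linear endomorphism of $\mf{R}(\g)$ and extend it $A$-linearly. Define
\[
T=1+\epsilon\,\mathcal F^{\g}_\kappa : R_A\to R_A .
\]
Since $\epsilon^2=0$, $T$ is an $A$-linear automorphism with inverse $1-\epsilon\mathcal F^{\g}_\kappa$. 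We then define a new product on $R_A$ by
\[
x\ast y = T^{-1}\bigl(T(x)\cdot T(y)\bigr).
\]
Because $\ast$ is obtained from the commutative associative ordinary product by conjugating with a linear bijection, it is automatically commutative and associative, and it is $A$-bilinear.

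It remains to check that $\ast$ agrees with $\widetilde\diamond_\kappa$ on basis elements. Expanding to first order in $\epsilon$ gives
\begin{align*}
[\lambda]\ast[\mu] &= (1-\epsilon\mathcal F^{\g}_\kappa)\bigl(([\lambda]+\epsilon\mathcal F^{\g}_\kappa[\lambda])([\mu]+\epsilon\mathcal F^{\g}_\kappa[\mu])\bigr)\\
&= [\lambda][\mu]+\epsilon\bigl(\mathcal F^{\g}_\kappa[\lambda]\cdot[\mu]+[\lambda]\cdot\mathcal F^{\g}_\kappa[\mu]-\mathcal F^{\g}_\kappa([\lambda][\mu])\bigr).
\end{align*}
Taking the coefficient of $[\nu]$ and using \eqref{e:W-general-type-def}, this is $c_{\lambda\mu}^{\nu}+\epsilon\,\FM^{\mathrm{alg}}_\kappa((\lambda,\mu),\nu^*)$, which is precisely the structure constant of $\widetilde\diamond_\kappa$. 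Hence $\widetilde\diamond_\kappa=\ast$, and associativity and commutativity follow.

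The only point needing care is well-definedness: $\mathcal F^{\g}_\kappa$ must be an honest $\Z$-linear map $\mf{R}(\g)\to\mf{R}(\g)$. This requires $\mathcal J^{\g}_\kappa(\chi_\lambda)$ to be a finite sum of (signed) irreducible characters. Finiteness holds because $r$ is bounded by $\langle\lambda+\rho,\alpha^\vee\rangle/l_\alpha$ and $R^+$ is finite. Each generalised character $\chi(\xi)$ is either $0$ or $\pm\chi_{w\cdot\xi}$, so it lies in the character ring. No further geometric or representation-theoretic input is needed, so no real obstacle is expected.
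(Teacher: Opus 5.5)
Your proposal is correct and is essentially the paper's own argument: the paper simply observes that $\widetilde\diamond_\kappa$ arises by reparametrizing the representation ring via $T=1+\epsilon\mathcal F^{\g}_\kappa$. Your explicit first-order expansion of $T^{-1}\bigl(T(x)\cdot T(y)\bigr)$ spells out that identification, together with the finiteness check showing that $\mathcal F^{\g}_\kappa$ is well defined.
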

We expect that  $\FM^{\mathrm{alg}}_\kappa$ is the true weight moment $\FM^{\mr{W}}_\kappa$ for general $\mathfrak{g}$ (as in Definition~\ref{d:fmring}), so that ${\diamond}_\kappa=\tilde{\diamond}_\kappa$, as is the case for $\slg_m$.

\section{The weight-moment implication of Conjecture \ref{c:tilting}}
\label{s:wmi}

Suppose $\mathfrak{g}=\slg_m$ and that $\vec{\lambda}$ is such that
$\dq(\lambda_i)$ is a simple (equivalently, tilting) module for each
$i$. Then Conjecture \ref{c:tilting} asserts that the weight
filtration on $KZ_{\kappa}(\vec{\lambda},\nu^*)$ coincides, up to a shift by
${\ExpM}(\vec{\lambda},\nu^*)$, with the dual of the Andersen
filtration on $\operatorname{Hom}_{\uq}(\dq(\vec{\lambda}),\nq(\nu))$.

This implies that
\begin{equation}\label{e:multipoly}
\mr{Wt}\bigl(KZ_{\kappa}(\vec{\lambda},\nu^*)\bigr)
=w^{{\ExpM}(\vec{\lambda},\nu^*)}f_\nu(\dq(\vec{\lambda}))\in\mathbb{Z}[w].
\end{equation}
This equation holds at $w=1$, since
\[
f_\nu(\dq(\vec{\lambda}))(1)
=\dim\operatorname{Hom}_{\uq}(\dq(\vec{\lambda}),\nq(\nu))
=c_{\vec{\lambda}}^{\nu}.
\]
We show here that the equation also holds after taking derivatives at $w=1$. Recall that
\[
\mr{Wt}\bigl(KZ_{\kappa}(\vec{\lambda},\nu^*)\bigr)'(1)
=FM^{\mr{W}}_\kappa\bigl(KZ_{\kappa}(\vec{\lambda},\nu^*)\bigr).
\]

\begin{theorem}
\label{t:many}
Let $\mathfrak g=\mathfrak{sl}_m$, let $\kappa\ge2$, and let $n>1$.
Suppose that every $\dq(\lambda_i)$ is simple. Then
\begin{equation}\label{e:conclusion}
FM^{\mr{W}}_\kappa\bigl(KZ_{\kappa}(\vec{\lambda},\nu^*)\bigr)
={\ExpM}(\vec{\lambda},\nu^*)c_{\vec{\lambda}}^{\nu}
+f_\nu(\dq(\vec{\lambda}))'(1).
\end{equation}
\end{theorem}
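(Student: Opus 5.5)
We compute both sides of \eqref{e:conclusion} explicitly and compare them. Each side will be expressed in the character ring using the operators $\Sop$, $\Jop$, $\Aop$ of Section~\ref{s:fm}, and the two expressions will be shown to coincide.

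\emph{The Hodge side.} We first extend \eqref{e:Fdefect-general} to $n$ points. This is Lemma~\ref{l:fmwn}, which we would prove by induction on $n$ using factorisation, i.e.\ associativity of $\diamond_\kappa$. The first moment is preserved under nearby cycles (Section~\ref{s:assoc}), so the first moment for $\vec\lambda$ is the coefficient of $\epsilon[\nu]$ in the iterated product $[\lambda_1]\diamond_\kappa\cdots\diamond_\kappa[\lambda_n]$. Since $\diamond_\kappa$ is the reparametrisation of $\mf{R}(\mf{g})$ by $1-\epsilon(\Fop-\partial_s)$, telescoping gives
\[
FM^{\mr W}_\kappa(\vec\lambda,\nu^*)=[\chi_\nu]\Bigl\{\sum_i \chi_{\lambda_1}\cdots\Fop(\chi_{\lambda_i})\cdots\chi_{\lambda_n}-\Fop\bigl(\textstyle\prod_i\chi_{\lambda_i}\bigr)\Bigr\}.
\]
The $\Sop$ part gives $\ExpM(\vec\lambda,\nu^*)c^\nu_{\vec\lambda}$. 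Each $\dq(\lambda_i)$ is simple, so each $\lambda_i$ lies in the closed lowest $l$-alcove or in $N_0$. We will check that $\Jop(\chi_{\lambda_i})=0$ in both cases. In the lowest alcove the summation range is empty. In $N_0$ the weight $\lambda_i+\rho$ is divisible by $\kappa$, and the terms cancel in pairs by the argument of Lemma~\ref{lem:singular}, applied in the affine sense to $\kappa\mid\lambda_i+\rho$. Using Proposition~\ref{p:adjoint} on the remaining term, the Hodge side therefore reduces to
\[
\ExpM\, c^\nu_{\vec\lambda}-\bigl(\Aop\chi_\nu,\ \textstyle\prod_i\chi_{\lambda_i}\bigr).
\]

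\emph{The quantum side.} By Lemma~\ref{l:func} the Andersen filtration is additive, so $f_\nu(\dq(\vec\lambda))'(1)=\sum_\mu m_{\vec\lambda,\mu}\,f_\nu(\tq(\mu))'(1)$. The derivative $f_\nu(Q)'(1)=\sum_j\dim \bar F_\nu(Q)^j$ for $j\ge1$ is computed by Andersen's sum formula, in the quantum form with the Andersen--Kulkarni refinement \cite{AK}, Cor.~5.1. This formula states that
\[
\sum_{j\ge1}\dim\Hom_{\uq}(\dq(\nu),Q)^j
\]
equals the pairing of $\ch Q$ against $\sum_{\alpha>0}\sum_{t}\pm\chi(\nu-tl\alpha)$, with exactly the range conditions of $\Aop$. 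Here $l=\kappa$ in type $A$, up to the sign convention comparing $q$ with $\kappa$. Since $\ch\dq(\vec\lambda)=\prod\chi_{\lambda_i}$, we obtain $f'_\nu(1)=-(\Aop\chi_\nu,\prod\chi_{\lambda_i})$, possibly after accounting for a dual/contravariant normalisation. This matches the Hodge side.

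\emph{Main obstacle.} The hardest step is pinning down the precise form of the sum formula for $\Hom(\dq(\nu),Q)$ with $Q$ tilting. We need the exact sign, which side carries the $t<0$ versus $t\kappa>\langle\nu+\rho,\alpha^\vee\rangle$ terms, and the translation between $l$ (the order of $q^2$) and $\kappa$. We would first try to quote \cite[Prop.~4.9, Cor.~5.1]{AK} directly, and then identify their operator with $-\Aop$ via Proposition~\ref{p:adjoint}. As a sanity check, we would test the $\mf{sl}_2$ case against Proposition~\ref{p:A1-floor}, where $f_\nu(\tq(a))=w$ exactly when $\nu=d(a)$. A secondary point is verifying that $\Jop$ vanishes on $N_0$ weights, which is a routine cancellation.
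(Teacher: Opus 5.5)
Your route is the paper's. You reduce the Hodge side via Lemma~\ref{l:fmwn} to $\ExpM\,c^{\nu}_{\vec\lambda}-[\chi_\nu]\Jop(\prod_i\chi_{\lambda_i})$ and rewrite this with Proposition~\ref{p:adjoint} as a pairing with $\Aop\chi_\nu$. You then match it against the Andersen--Kulkarni tilting sum formula. The paper quotes that formula (\cite[Theorem 6.4]{AK}) in exactly the form \eqref{e:Andersen-many}, applied directly to the tilting module $Q=\dq(\vec\lambda)$, so the decomposition into $\tq(\mu)$'s and the extra sign and duality bookkeeping are unnecessary.

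There is, however, a genuine gap in your proof that $\Jop(\chi_{\lambda_i})=0$. You assert that if $\dq(\lambda)$ is simple then $\lambda$ lies in the closed lowest alcove or in $N_0$. This holds for $\mf{sl}_2$ but is false for $\mf{sl}_m$, $m\ge 3$.

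\begin{itemize}
\item Minuscule Weyl modules are always simple. For $2\le\kappa<m$, however, $\langle\varpi_1+\rho,\theta^\vee\rangle=m>\kappa$. Also $\varpi_1\notin N_0$, since $\langle\varpi_1+\rho,\alpha_2^\vee\rangle=1$ is not divisible by $\kappa$.
\item For $\mf{sl}_3$ take $\lambda=(l-1)\varpi_1+c\varpi_2$ with $0\le c\le l-2$. The only term of $\Jop(\chi_\lambda)$ is $\chi(\lambda-l\theta)$, which is dot-singular, so $\dq(\lambda)$ is simple. Yet $\lambda$ lies above the lowest alcove and outside $N_0$.
\end{itemize}

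For such weights the summation range in \eqref{e:J} is nonempty, so neither of your two mechanisms (empty range, or the $r\leftrightarrow h-r$ pairing for $\kappa\mid\lambda+\rho$) applies. Your case analysis therefore does not establish the step in the generality of the theorem.

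The fix is the one the paper uses: the quantum Jantzen sum formula \eqref{AKFormula}, i.e.\ \cite[Theorem 6.3]{AK}. Since $\dq(\lambda_i)^1$ is the radical of $\dq(\lambda_i)$, simplicity forces $\dq(\lambda_i)^c=0$ for all $c>0$. Hence $\Jop(\chi_{\lambda_i})=-\sum_{c>0}\operatorname{ch}\dq(\lambda_i)^c=0$ for every simple Weyl module, with no classification needed. Your two special-case arguments are correct, but they are instances of this. With this replacement the rest of your argument goes through as in the paper.
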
 

The proof is obtained by computing the two sides separately and showing that they are equal.

\subsection{The Andersen filtration moment}

Let $Q=\dq(\vec{\lambda})$. Since $Q$ is tilting, the
Andersen--Kulkarni tilting sum formula \cite[Theorem 6.4]{AK} applies
directly\footnote{In \cite{AK} it is assumed that $l$ is odd, but
  using the results of \cite[Section 7]{LTV} (instead of those from
  \cite{APW} as in \cite{AK}),the results of \cite{AK} concerning
  quantum groups extend to all $l$.} to
$Q$. Together with
\[
f_\nu(Q)'(1)=\sum_{j>0}\dim F_\nu(Q)^j,
\]
it gives (see Section~\ref{s:id})
\begin{equation}\label{e:Andersen-many}
f_\nu(Q)'(1)
=-\sum_{\alpha>0}
\sum_{\substack{t\in\mathbb Z\\
t<0\ \mathrm{or}\ t\kappa>\langle\nu+\rho,\alpha^\vee\rangle}}
(\operatorname{ch}Q,\chi(\nu-t\kappa\alpha)).
\end{equation}
Note that $\operatorname{ch}Q=\prod_{i=1}^n\chi_{\lambda_i}$.

\subsection{The KZ first moment}

We now compute $FM^{\mr{W}}_\kappa(KZ_{\kappa}(\vec{\lambda},\nu^*))$, generalizing the $n=2$ case from
\eqref{e:W-B}. Recall the operators and notation of Section~\ref{s:id}.

For $n>1$ and $f_1,\dots,f_n\in\mathbb{Z}[P]^W$, define
\begin{equation}\label{e:higher-Leibniz-defect1}
\DD_n(f_1,\ldots,f_n)
=\sum_{i=1}^n\Fop(f_i)\prod_{j\ne i}f_j
-\Fop\!\left(\prod_{i=1}^n f_i\right).
\end{equation}

\begin{lemma}\label{l:fmwn}
Let $\mu_i$ be arbitrary dominant weights. Then
\begin{equation}\label{e:multif}
FM^{\mr{W}}_\kappa\bigl(KZ_{\kappa}(\vec{\mu},\nu^*)\bigr)
=[\chi_\nu]\DD_n(\chi_{\mu_1},\ldots,\chi_{\mu_n}).
\end{equation}
\end{lemma}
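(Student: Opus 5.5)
We argue by induction on $n$, with $n=2$ the base case. For $n=2$, \eqref{e:multif} is exactly \eqref{e:Fdefect-general}, since $\DD_2(f_1,f_2)=\Fop(f_1)f_2+f_1\Fop(f_2)-\Fop(f_1f_2)$. For the case $n=1$, which we also need, we take the convention that $KZ_\kappa((\mu),\nu^*)$ is nonzero only for $\nu=\mu$, where it is pure of weight $0$. This matches $\DD_1\equiv 0$.

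The inductive step uses the factorisation/associativity mechanism already used for $\diamond_\kappa$ in Section \ref{s:assoc}. Let $z_{n-1}$ and $z_n$ coalesce. By the motivic factorisation theorem of \cite{BFM} (Theorem 6.1 there), the nearby-cycles limit of $KZ_\kappa(\vec\mu,\nu^*)$ along this degeneration carries a filtration with graded pieces
\[
\bigoplus_{\eta} KZ_\kappa((\mu_1,\dots,\mu_{n-2},\eta),\nu^*)\otimes KZ_\kappa((\mu_{n-1},\mu_n),\eta^*),
\]
possibly up to Tate twists that have to be tracked. By Schmid's theorem the first moment of the weight filtration is preserved by nearby cycles, because the weight polynomial is preserved modulo $(w-1)^2$. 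The first moment is additive in extensions. For a tensor product we have $FM^{\mr W}(A\otimes B)=\rank(A)FM^{\mr W}(B)+\rank(B)FM^{\mr W}(A)$, and the ranks are $c$-coefficients by \eqref{e:specialize}. This gives
\[
FM^{\mr W}_\kappa(\vec\mu,\nu^*)=\sum_\eta\Bigl(c^{\nu}_{\mu_1,\dots,\mu_{n-2},\eta}\,FM^{\mr W}_\kappa((\mu_{n-1},\mu_n),\eta^*)+c^{\eta}_{\mu_{n-1}\mu_n}\,FM^{\mr W}_\kappa((\mu_1,\dots,\eta),\nu^*)\Bigr).
\]

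It remains to check the identical cocycle relation for $\DD$. Let $g=\chi_{\mu_{n-1}}\chi_{\mu_n}=\sum_\eta c^\eta_{\mu_{n-1}\mu_n}\chi_\eta$. A direct computation from \eqref{e:higher-Leibniz-defect1} gives
\[
\DD_n(f_1,\dots,f_n)=\DD_{n-1}(f_1,\dots,f_{n-2},f_{n-1}f_n)+\Bigl(\prod_{j\le n-2}f_j\Bigr)\DD_2(f_{n-1},f_n):
\]
the $\Fop(f_{n-1}f_n)$ terms cancel, and the remaining terms match. Take $[\chi_\nu]$, expand $\DD_{n-1}$ linearly in its last slot (it is multilinear), and expand $\DD_2(\chi_{\mu_{n-1}},\chi_{\mu_n})=\sum_\eta a_\eta\chi_\eta$. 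Then $[\chi_\nu]\bigl(\prod_{j\le n-2}\chi_{\mu_j}\bigr)\chi_\eta=c^\nu_{\mu_1,\dots,\mu_{n-2},\eta}$, and by the $n=2$ case $a_\eta=FM^{\mr W}_\kappa((\mu_{n-1},\mu_n),\eta^*)$. By induction the two recursions agree term by term, which proves \eqref{e:multif}.

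The main obstacle is the geometric step: pinning down the precise form of the factorisation in \cite{BFM} for coalescing points, and in particular showing that no Tate twist or weight shift appears in the $\eta$-summands. Any such shift would change first moments by multiples of the ranks. I would first test the normalisation by checking that $\ExpM$ is additive, $\ExpM(\vec\mu,\nu^*)=\ExpM((\mu_1,\dots,\eta),\nu^*)+\ExpM((\mu_{n-1},\mu_n),\eta^*)$, which is consistent with the $\Sop$-part of $\DD$ satisfying the same recursion. If a twist did appear, I would fall back on the alternative route of Section \ref{s:assoc}: write $FM^{\mr W}$ as the sum of the Hodge-polynomial derivatives at $\kappa$ and $\kappa^\dagger$, and use the fact that $\mf R_\kappa$ controls all $n$-point Hodge polynomials via iterated $\star_\kappa$-products. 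That would reduce \eqref{e:multif} to the first-order expansion of $\Phi_\kappa$ and Proposition \ref{p:B}.
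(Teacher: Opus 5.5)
Your proposal is correct and is essentially the paper's argument: the paper also inducts from the $n=2$ case \eqref{e:Fdefect-general}, identifying $FM^{\mr{W}}_\kappa(\vec{\mu},\nu^*)$ with the $\epsilon$-coefficient of the $n$-fold $\diamond_\kappa$-product (this rests on the factorisation-plus-Schmid mechanism of Section \ref{s:assoc}, which you spell out and the paper leaves implicit), and it verifies that the character of this coefficient is $\DD_n$ via a cocycle identity of the same kind as yours. Your worry about Tate twists is unfounded: the graded pieces of the nearby cycles are untwisted tensor products, exactly as used in the proof of Lemma \ref{PieriMonodromy}, which is consistent with the additivity of $\ExpM$ that you check.
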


\begin{proof}
  We claim that the character of the coefficient of $\epsilon$ in the
  $n$-fold product of the $[\mu_i]$ with respect to the product
  $\diamond_\kappa$ is $\DD_n(\chi_{\mu_1},\ldots,\chi_{\mu_n})$.  This
  follows from the case $n=2$, given by \eqref{e:W-B} and
  \eqref{e:Bformula}, by an easy induction.
\end{proof}

Recall that $\Fop=\Sop+\Jop$ and
$\Sop(\chi_{\lambda_i})=s(\lambda_i)\chi_{\lambda_i}$. For every $i$, the simplicity of
$\dq(\lambda_i)$ implies that its positive Jantzen filtration is zero. The quantum Jantzen sum
formula of Andersen--Kulkarni \cite[Theorem 6.3]{AK} therefore gives
$\Jop(\chi_{\lambda_i})=0$ for all $i$. Thus \eqref{e:higher-Leibniz-defect1} becomes
\begin{align}
\DD_n(\chi_{\lambda_1},\ldots,\chi_{\lambda_n})
={}&\left(\sum_{i=1}^n s(\lambda_i)\right)
\prod_{i=1}^n\chi_{\lambda_i}
-\Sop\!\left(\prod_{i=1}^n\chi_{\lambda_i}\right)\notag\\
&-\Jop\!\left(\prod_{i=1}^n\chi_{\lambda_i}\right).
\label{e:multi-defect-simple}
\end{align}

Taking the coefficient of $\chi_\nu$ gives
\begin{equation}\label{e:MSB}
FM^{\mr{W}}_\kappa\bigl(KZ_{\kappa}(\vec{\lambda},\nu^*)\bigr)
={\ExpM}(\vec{\lambda},\nu^*)c_{\vec{\lambda}}^{\nu}
-[\chi_\nu]\Jop\!\left(\prod_{i=1}^n\chi_{\lambda_i}\right).
\end{equation}

\subsection{Conclusion of the proof of Theorem~\ref{t:many}}

Comparing the formulas, it remains to show that
\[
\sum_{\alpha>0}
\sum_{\substack{t\in\mathbb Z\\
t<0\ \mathrm{or}\ t\kappa>\langle\nu+\rho,\alpha^\vee\rangle}}
\left(\prod_{i=1}^n\chi_{\lambda_i},\chi(\nu-t\kappa\alpha)\right)
=[\chi_\nu]\Jop\!\left(\prod_{i=1}^n\chi_{\lambda_i}\right).
\]

This is an immediate generalization of the argument for the term in $\chi_{\lambda}\chi_{\mu}$ in Section~\ref{s:repeat}.
Apply Proposition~\ref{p:adjoint} to
$Z=\prod_{i=1}^n\chi_{\lambda_i}$. We obtain
\begin{align}
[\chi_\nu]\Jop(Z)
={}&(\chi_\nu,\Jop Z)=(\Aop\chi_\nu,Z)\notag\\
={}&\sum_{\alpha>0}
\sum_{\substack{t\in\mathbb Z\\
t<0\ \mathrm{or}\ t\kappa>\langle\nu+\rho,\alpha^\vee\rangle}}
{c}_{\vec{\lambda}}^{\,\nu-t\kappa\alpha}.
\label{e:multi-output-sum}
\end{align}
Since
\[
\left(\prod_{i=1}^n\chi_{\lambda_i},\chi(\nu-t\kappa\alpha)\right)
={c}_{\vec{\lambda}}^{\,\nu-t\kappa\alpha},
\]
the desired equality follows.

\begin{remark}
  Assuming that $\FM^{\mathrm{alg}}_\kappa$ is indeed equal to
  $\FM^{\mr{W}}_\kappa$ for general $\mathfrak{g}$, arguments
  analogous to the ones above would imply that the weight moment
  implication of Conjecture \ref{c:tilting} also holds for $\mf{g}$.
\end{remark}

\subsection{Example \ref{e: GenQGN0} at the level of
  moments}\label{s:jantzen}

In this section we prove that the prediction for
$FM_{\kappa}^W(\vec{\lambda},\nu^*)$ implied by Conjecture
\ref{c:simple} agrees with its actual value when one of the
$\lambda_i$ is in $N_0$ or $\nu \in N_0$. As explained in Example
\ref{e: GenQGN0}, these cases are easier to deal with than the general
case since there are no higher $\Ext$ terms in the spectral sequence.

\begin{proposition}\label{p:N_0}
  Let $\mf{g} = \mf{sl}_m$ and consider
  $KZ_{\kappa}(\vec{\lambda},\nu^*)$. If we choose the filtrations on
  the Weyl modules to be the Jantzen filtration, then the prediction
  of the first moment implied by Conjecture \ref{c:simple} agrees with
  $FM_{\kappa}^W(\vec{\lambda},\nu^*)$ if
  \begin{enumerate}
  \item $\lambda_i \in N_0$ for some $i$, or
  \item $\nu \in N_0$.
  \end{enumerate}
\end{proposition}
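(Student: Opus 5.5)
The plan is to make the conjectural prediction completely explicit and to compare it term by term with the formula for $FM^{\mr{W}}_\kappa$ from Lemma \ref{l:fmwn}. Put $\ExpM=\ExpM(\vec{\lambda},\nu^*)$ and $G^i=\dq(\vec{\lambda})^i/\dq(\vec{\lambda})^{i+1}$, where $\dq(\vec{\lambda})^{\bullet}$ is the tensor product of the Jantzen filtrations.

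\emph{The predicted moment.} In both cases the $\Ext$ spectral sequence degenerates at $E_1$: in case (1) every $G^i$ is projective (Example \ref{e: GenQGN0}), and in case (2) $\nq(\nu)$ is injective. Hence $\Hom^0_{\uq}(G^i,\nq(\nu))=\Hom_{\uq}(G^i,\nq(\nu))$. By the reindexing defining $\mr{W}'$, a vector in Andersen degree $j$ of the $i$-th layer has predicted weight $\ExpM-i+j$. The prediction is therefore
\[
\sum_i(\ExpM-i)\dim\Hom_{\uq}(G^i,\nq(\nu))+\sum_i f_\nu(G^i)'(1),
\]
where $f_\nu$ denotes the generalised Andersen polynomial of Definition \ref{d:filt}.

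\emph{The actual moment.} By Lemma \ref{l:fmwn}, $FM^{\mr{W}}_\kappa=[\chi_\nu]\DD_n(\chi_{\lambda_1},\dots,\chi_{\lambda_n})$. As in \eqref{e:multi-defect-simple}, the $\Sop$-part contributes $\ExpM\, c^{\nu}_{\vec{\lambda}}$. For the $\Jop$-part I use the quantum Jantzen sum formula \cite[Theorem 6.3]{AK}, which identifies $\Jop(\chi_{\lambda})$ with $\sum_{j>0}\operatorname{ch}\dq(\lambda)^j$; the paper already uses this identification in the simple case, where it gives $\Jop(\chi_{\lambda_i})=0$. Since the layers of a tensor product filtration multiply, this yields
\[
\sum_i\Jop(\chi_{\lambda_i})\prod_{k\ne i}\chi_{\lambda_k}=\sum_i i\,\operatorname{ch}G^i .
\]
Pairing with $\chi_\nu$ matches the $-i$ terms of the prediction, because in case (1) $G^i$ is tilting and in case (2) $\Ext^{>0}(-,\nq(\nu))=0$, so in both cases $\dim\Hom_{\uq}(G^i,\nq(\nu))=(\operatorname{ch}G^i,\chi_\nu)$. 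Summing over $i$ also matches the $\ExpM$ terms.

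\emph{The remaining term.} It remains to show
\[
-[\chi_\nu]\Jop\Bigl(\prod_i\chi_{\lambda_i}\Bigr)=\sum_i f_\nu(G^i)'(1).
\]
By Proposition \ref{p:adjoint} the left side equals $-(\Aop\chi_\nu,\sum_i\operatorname{ch}G^i)$.
\begin{enumerate}
\item In case (1) each $G^i$ is tilting, so the generalised filtration is Andersen's original one. The tilting sum formula \eqref{e:Andersen-many}, applied to each $G^i$ separately and summed using linearity in the character, gives exactly the right side.
\item In case (2) I will show that both sides vanish. For the right side, $\dq(\nu)$ is projective and tilting, so by Example \ref{ex:proj} every $f_\nu(T)$ with $T$ tilting is constant. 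In a tilting complex $C_{\mr{min}}(M)$ the filtration on $\Hom(\dq(\nu),C_{\mr{min}}(M))$ is therefore trivial, so $f_\nu(G^i)'(1)=0$. For the left side, $\nu\in N_0$ means that $\langle\nu+\rho,\alpha^\vee\rangle=h\kappa$ with $h\in\Z$ for every positive root $\alpha$. The reflection $s_\alpha$ sends the term of $\Aop\chi_\nu$ indexed by $t<0$ to the term indexed by $h-t>h$, with the opposite sign, and this is a bijection between the two ranges. Hence $\Aop\chi_\nu=0$.
\end{enumerate}

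The main obstacle is the precise normalisation in the Jantzen sum formula. I must check that \cite[Theorem 6.3]{AK} gives $\Jop(\chi_\lambda)=\sum_{j>0}\operatorname{ch}\dq(\lambda)^j$ with exactly the $r\kappa$-range and sign used in \eqref{e:J}, including at singular weights and for even $l$ via \cite{LTV}. The case (2) cancellation should also be double-checked at boundary values of $t$.
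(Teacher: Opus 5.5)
Your plan is sound, and in case (1) it is the paper's argument. The ingredients match: $E_1$-degeneration, the $\Sop$-part giving the $\ExpM$ terms, Lemma \ref{l:sum} plus the Jantzen sum formula turning the middle term of $[\chi_\nu]\DD_n$ into the $-i$ shifts, and Proposition \ref{p:adjoint} with the tilting sum formula \eqref{e:Andersen-many} for the last term. Since \eqref{e:Andersen-many} is linear in $\operatorname{ch}Q$, applying it to each $G^i$ separately, or to $\dq(\vec{\lambda})\cong\bigoplus_i G^i$ as the paper does, gives the same result.

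There is, however, a sign error at exactly the point you flagged. In the normalisation of \eqref{e:J}, the Jantzen sum formula reads $\sum_{j>0}\operatorname{ch}\dq(\lambda)^j=-\Jop(\chi_\lambda)$; compare \eqref{AKFormula}. You can see this already for $\mf{sl}_2$ with $l\le a\le 2l-2$. There the only term in $\Jop(\chi_a)$ is $\chi(a-2l)=-\chi_{2l-2-a}=-\chi_{d(a)}$, while the radical of $\dq(a)$ is $\lq(d(a))$. With the sign as you wrote it, $\sum_i\Jop(\chi_{\lambda_i})\prod_{k\neq i}\chi_{\lambda_k}$ would equal $+\sum_i i\operatorname{ch}G^i$. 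It would then contribute $+\sum_i i\dim\Hom_{\uq}(G^i,\nq(\nu))$, which does not match the $-i$ shifts. With the correct sign it equals $-\sum_i i\operatorname{ch}G^i$, and the comparison goes through as you intended.

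In case (2) your route differs from the paper's. The paper derives both $[\chi_\nu]\Jop(\chi)=0$ and $[\chi_\nu]\operatorname{ch}M=\dim\Hom_{\uq}(M,\nq(\nu))$ from the structure of the block of $\dq(\nu)$, using \eqref{AKFormula}. It also uses tacitly that the generalised Andersen filtration contributes nothing. You instead do three things:
\begin{enumerate}
\item You prove $\Aop\chi_\nu=0$ directly. Since $\nu+\rho\in lP$, one has $\langle\nu+\rho,\alpha^\vee\rangle=h\kappa$ with $h\in\Z_{>0}$. Then $t\mapsto h-t$ is a sign-reversing involution exchanging the ranges $t<0$ and $t>h$, with no fixed points there, so there is no boundary issue. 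The cancellation is valid coefficientwise, since each coefficient is a finite sum.
\item You obtain the $\Hom$-dimension identity from exactness of $\Hom_{\uq}(-,\nq(\nu))$ and its values on Weyl modules.
\item You make explicit, via Example \ref{ex:proj} and the definition through $C_{\mr{min}}$, why $f_\nu$ is constant on tilting modules and hence for every $M$.
\end{enumerate}
This is more elementary and self-contained, and purely character-theoretic for the $\Jop$-term. The paper's block argument is shorter and explains conceptually why weights in $N_0$ decouple.
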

The advantage of the Jantzen filtration over the radical filtration is
the Jantzen sum formula. Any Weyl module $\dq(\lambda)$ for
$\lambda \in P^+$ has a finite decreasing Jantzen filtration
$\dq(\lambda)^{\bullet}$ with $\dq(\lambda)^{0}=\dq(\lambda)$ and
$\dq(\lambda)^{1}$ the radical of $\dq(\lambda)$. Then the Jantzen
sum formula in this context \cite[Theorem 6.3 (a)]{AK} says
that\footnote{As noted earlier, the results of \cite[Section 7]{LTV}
  can be used to show that the results of \cite{AK} extend to all
  $l$.}
\begin{equation}\label{AKFormula}
  \sum_{c>0}\operatorname{ch}(\dq(\lambda)^{c})= - \Jop(\chi_{\lambda}) .
\end{equation}
We note that the semisimplicity of the layers of the Jantzen
filtration is expected, but not known (as far as we are aware), but this
will not matter for the proof.
\begin{proof}
  We first prove (1). Without loss of generality we can assume
  $\lambda_1 \in N_0$ and then by Lemma
  \ref{e:multif}, we have
\begin{equation}\label{e:FDG}
FM^{\mr{W}}_\kappa\bigl(\vec{\lambda},\nu^*\bigr)
={\ExpM}(\vec{\lambda},\nu^*)c_{\vec{\lambda}}^{\nu}
+[\chi_\nu]
\Bigl \{\sum_{i=2}^n\Jop(\chi_{\lambda_i})\prod_{j\neq
  i,j\in[n]}\chi_{\lambda_j} \Bigr \}
-[\chi_\nu]\Jop\!\left(\prod_{i=1}^n\chi_{\lambda_i}\right) .
\end{equation} 
Here we have used $\Jop(\chi_{\lambda_1})=0$ since $\dq(\lambda_1)$ is
simple.

Now we compute the first moment coming from Conjecture \ref{c:simple}
(using the Jantzen filtration). We first note that since
$\lambda_1 \in N_0$, each
$Q_c= \dq(\vec{\lambda})^c/\dq(\vec{\lambda})^{c+1}$ is projective, so
there are no higher $\Ext$ terms in the spectral sequence and it
degenerates at $E_1$.  The space
$\Hom_{\uq}(\dq(\vec{\lambda}),\nq (\nu))$ receives a
filtration whose graded quotients are the $E^{s,-s}_{\infty}$. By
degeneration of the spectral sequence at $E_1$ and $\Ext$ vanishings, letting
$c=-s$,

$$E^{s,-s}_{\infty}=\Hom_{\uq}(\dq(\vec{\lambda})^c/\dq(\vec{\lambda})^{c+1},\nq (\nu)).$$ 
 
We have the increasing dual Andersen filtration
$\Hom_{\uq}(\dq(\vec{\lambda})^c/\dq(\vec{\lambda})^{c+1},\nq
(\nu))_{\bullet}$ on $E^{s,-s}$. In the definition of $W_r'$
in Conjecture \ref{c:simple} this filtration is shifted by $D-c$. Since all the $Q_c$ are projective we have
$\dq(\vec{\lambda}) \cong \oplus_c Q_c$, so without the $-c$ shifts (but
keeping the $\ExpM$ shift) in the definition of the filtration on
$E^{s,-s}$, we would get the same answer for the first moment as the
first moment of the Andersen filtration on $\Hom(Q,\nq (\nu))$, i.e.,
the sum of the first and the third terms in the RHS of
\eqref{e:FDG} (as in \eqref{e:MSB}).

By a simple computation, using that if $P$ is a tilting module then
$[\chi_{\nu}]\op{ch}(P)$ is the rank of $\Hom_{\dq}(P,\nq(\nu))$, it
follows that in order to compensate for the $-c$ terms that we have
ignored above, we need to subtract
$[\chi_{\nu}]\sum_{c\geq 1}\operatorname{ch} \dq(\vec{\lambda})^c$ from
the sum. By \eqref{AKFormula} and Lemma \ref{l:sum} below we have
$$[\chi_{\nu}]\sum_{c\geq 1}\operatorname{ch} \dq(\vec{\lambda})^c= - [\chi_\nu]\Bigl
\{\sum_{i=2}^n\Jop(\chi_{\lambda_i})\prod_{j\neq
  i,j\in[n]}\chi_{\lambda_j} \Bigr\} $$ which shows the desired
agreement for (1) at the level of first moments.

We now prove (2), so we need to show that Conjecture
\ref{c:simple} implies that
\begin{equation}\label{e:nu}
FM^{\mr{W}}_\kappa\bigl(\vec{\lambda},\nu^*\bigr)
={\ExpM}(\vec{\lambda},\nu^*)c_{\vec{\lambda}}^{\nu}
+[\chi_\nu]
\Bigl \{\sum_{i=1}^n\Jop(\chi_{\lambda_i})\prod_{j\neq
  i,j\in[n]}\chi_{\lambda_j} \Bigr \}
-[\chi_\nu]\Jop\!\left(\prod_{i=1}^n\chi_{\lambda_i}\right) .
\end{equation} 
The argument is similar to that of (1)
except we must deal with the fact that the $Q_c$ need not be
projective.

The main observation is that if $\nu \in N_0$ then
$[\chi_{\nu}]\Jop(\chi) = 0$ for any character $\chi$, equivalently,
$[\chi_{\nu}]\Jop(\chi_{\mu}) = 0$ for each dominant weight $\mu$.  This
follows from the Jantzen sum formula \eqref{AKFormula} since the block
of $\dq(\nu)$ consists only of modules which are sums of
$\dq(\nu)$. Thus, the third term in the RHS of \eqref{e:nu} is $0$
and the first term gives the value of
$FM^{\mr{W}}_\kappa\bigl(\vec{\lambda},\nu^*\bigr)$ predicted by
Conjecture \ref{c:simple} if we ignore the shifts by $-c$ as in (1).

To complete the proof we need to show that adding the second term
compensates for these shifts. Using Lemma \ref{l:sum} as in (1), this
follows from the observation that for any $\uq$-module $M$ and
$\nu \in N_0$ we have
$[\chi_{\nu}]\op{ch(M)} = \dim(\Hom_{\uq}(M,\nabla(\nu))$, a
consequence, as in the previous paragraph, of the structure of the
block of $\dq(\nu)$.

\end{proof}

\begin{lemma}\label{l:sum}
  Let $T$ be an algebraic torus and $M_1,\dots,M_n$ be
  finite-dimensional $T$-modules with finite separated decreasing
  filtrations $M_j^\bullet$, with $M_j^0=M_j$. Give
\[
M=M_1\tensor\cdots\tensor M_n
\]
the induced tensor product filtration. Then
\begin{equation}\label{e:tfc}
\sum_{c\geq 1}\operatorname{ch}(M^c)
=
\sum_{i=1}^n
\left(
\sum_{s\geq 1}\operatorname{ch}(M_i^s)
\right)
\prod_{j\neq i}\operatorname{ch}(M_j).
\end{equation}
\end{lemma}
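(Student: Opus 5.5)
The approach is to reduce Lemma \ref{l:sum} to a statement about graded characters. For a single $T$-module $N$ with a finite separated decreasing filtration $N^\bullet$, $N^0=N$, consider the generating polynomial
\[
  \operatorname{ch}_u(N^\bullet)=\sum_{c\ge 0}\operatorname{ch}(N^c/N^{c+1})\,u^c \in \Z[P][u].
\]
Setting $u=1$ gives $\operatorname{ch}(N)$. Writing $\operatorname{ch}(N^c)=\sum_{c'\ge c}\operatorname{ch}(N^{c'}/N^{c'+1})$, we get
\[
  \sum_{c\ge1}\operatorname{ch}(N^c)=\sum_{c'\ge0}c'\operatorname{ch}(N^{c'}/N^{c'+1}),
\]
and the right side is the derivative $\frac{d}{du}\operatorname{ch}_u(N^\bullet)$ at $u=1$. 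So both sides of \eqref{e:tfc} are derivatives at $u=1$ of graded characters.

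The key step is the multiplicativity
\[
  \operatorname{ch}_u(M^\bullet)=\prod_j \operatorname{ch}_u(M_j^\bullet).
\]
To prove it, use that $T$-modules are semisimple, so each filtration $M_j^\bullet$ splits $T$-equivariantly. Concretely, choose $T$-stable complements $G_j^s$ with $M_j^s=G_j^s\oplus M_j^{s+1}$. Then $M_j=\bigoplus_s G_j^s$ and $M_j^c=\bigoplus_{s\ge c}G_j^s$.

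Next, expand the tensor product filtration \eqref{e:filt}. One gets
\[
  M^c=\bigoplus_{s_1+\cdots+s_n\ge c} G_1^{s_1}\otimes\cdots\otimes G_n^{s_n}.
\]
The inclusion $\supseteq$ holds because each summand lies in $M_1^{s_1}\otimes\cdots\otimes M_n^{s_n}$ with $\sum s_i\ge c$, which is contained in $M^c$. The inclusion $\subseteq$ holds because each $M_1^{i_1}\otimes\cdots\otimes M_n^{i_n}$ with $\sum i_j=c$ is a sum of such summands with $s_j\ge i_j$.

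It follows that $M^c/M^{c+1}\cong\bigoplus_{\sum s_j=c}\bigotimes_j G_j^{s_j}$, and $\operatorname{ch}(G_j^s)=\operatorname{ch}(M_j^s/M_j^{s+1})$. This gives the product formula.

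Finally, differentiate the product at $u=1$ using the Leibniz rule. This yields
\[
  \sum_i\Bigl(\frac{d}{du}\operatorname{ch}_u(M_i^\bullet)\Bigr)\Big|_{u=1}\prod_{j\ne i}\operatorname{ch}(M_j),
\]
which by the first paragraph is exactly \eqref{e:tfc}.

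The main point needing care is the direct-sum description of $M^c$, in particular that the splitting identifies the sum of subspaces in \eqref{e:filt} with a direct sum. Since everything is a vector space computation with $T$-stable complements, it is routine. Alternatively, one could argue by induction on $n$ from the case $n=2$, where only the identity $\operatorname{gr}(M_1\otimes M_2)\cong \operatorname{gr}M_1\otimes\operatorname{gr}M_2$ is needed.
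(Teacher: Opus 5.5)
Your proof is correct and is essentially the paper's argument: the paper also rewrites $\sum_{c\ge1}\operatorname{ch}(M^c)$ as $\sum_{r}r\operatorname{ch}(M^r/M^{r+1})$, uses $M^r/M^{r+1}\simeq\bigoplus_{a_1+\cdots+a_n=r}\bigotimes_j M_j^{a_j}/M_j^{a_j+1}$, and expands $(a_1+\cdots+a_n)\prod_j\operatorname{ch}(M_j^{a_j}/M_j^{a_j+1})$ directly, which is your Leibniz rule at $u=1$ written out. Your generating polynomial is just a repackaging of that expansion, while your $T$-stable complements give a justification of the associated-graded isomorphism that the paper only asserts.
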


\begin{proof}
  Since the filtration is finite,
  $\operatorname{ch}(M^c) = \sum_{r\geq c}
  \operatorname{ch}(M^r/M^{r+1})$.  Consequently,
  $ \sum_{c\geq 1}\operatorname{ch}(M^c) = \sum_{r\geq 1}
  r\,\operatorname{ch}(M^r/M^{r+1})$.

For the tensor product filtration we have
\[
M^r/M^{r+1}
\simeq
\bigoplus_{a_1+\cdots+a_n=r}
(M_1^{a_1}/M_1^{a_1+1})\tensor\cdots\tensor
(M_n^{a_n}/M_n^{a_n+1}).
\]
It follows that
\begin{align*}
\sum_{r\geq 1}
r\,\operatorname{ch}(M^r/M^{r+1})
&=
\sum_{a_1,\dots,a_n\geq0}
(a_1+\cdots+a_n)
\prod_{j=1}^n
\operatorname{ch}(M_j^{a_j}/M_j^{a_j+1})\\
&=
\sum_{i=1}^n
\left(
\sum_{a_i\geq1}
a_i\operatorname{ch}(M_i^{a_i}/M_i^{a_i+1})
\right)
\prod_{j\neq i}
\left(
\sum_{a_j\geq0}
\operatorname{ch}(M_j^{a_j}/M_j^{a_j+1})
\right).
\end{align*}
For each $j$, we have
$\sum_{a_j\geq0} \operatorname{ch}(M_j^{a_j}/M_j^{a_j+1}) =
\operatorname{ch}(M_j), $ while
$ \sum_{a_i\geq1} a_i\operatorname{ch}(M_i^{a_i}/M_i^{a_i+1}) =
\sum_{s\geq1}\operatorname{ch}(M_i^s)$.  Substituting these identities
proves \eqref{e:tfc}.
\end{proof}

\subsection{The case of $\mf{sl}_2$}
In this section we will show that for $\mf{g}=\mf{sl}_2$ and all
$\vec{\lambda}$ and $\nu$, the prediction of the first moment of the
weight filtration derived from Conjecture \ref{c:simple} agrees with
$FM_{\kappa}^W(\vec{\lambda},\nu^*)$.

The argument is similar to that of Proposition \ref{p:N_0}, but is
complicated by the fact that in general there can be higher $\Ext$
terms in the spectral sequence, so it need not degenerate at $E_1$. In
order to deal with this we will need the following:
\begin{lemma}\label{l:js}
  Let $\mf{g} = \mf{sl}_2$.  For any dominant weights $\mu$ and
  $\nu$ we have
  \begin{enumerate}
  \item $[\chi_{\nu}]\op{ch}(L(\mu)) = (-1)^i$ if
    $\Ext^i(L(\mu),\nabla(\nu)) \neq 0$ for some $i \geq 0$ (which is
    then unique) and is $0$ otherwise,
  \item $[\chi_{\nu}]\op{ch}(T(\mu)) = 1$ if $\nu = \mu$ or $\nu =
    d(\mu)$ and is $0$ otherwise.
  \item $-[\chi_{\nu}]\Jop(\op{ch}(L(\mu))) = (-1)^i\,i$ if
    $\Ext^i(L(\mu),\nabla(\nu)) \neq 0$ for some $i > 0$ (which is
    then unique) and is $0$ otherwise,
  \item $-[\chi_{\nu}]\Jop(\op{ch}(T(\mu))) = 1$ if $\nu = d(\mu)$ and
    $\mu \neq d(\mu)$ and is $0$ otherwise.
 \end{enumerate}
\end{lemma}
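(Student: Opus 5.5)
We prove the four statements by explicit computation with $\mf{sl}_2$ characters. Write $\chi_a$ for the character of $\dq(a)$. We take $\kappa$ an integer with $l=\kappa$; for $\mf{sl}_2$ the only positive root is $\alpha$, and $\lambda-r\kappa\alpha$ corresponds to $a-2r\kappa$. Then
\[
\Jop(\chi_a)=\sum_{r\ge1,\ r\kappa<a+1}\chi(a-2r\kappa),
\]
and $\chi(-b-2)=-\chi_b$ for $b\ge 0$, while $\chi(-1)=0$. The proof proceeds in three steps.

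\textbf{Step 1: characters of simples and tiltings.} Iterating \eqref{e:weyl}, together with $d\circ u=\mathrm{id}$ off $N_0$, gives an alternating expression
\[
\op{ch}\lq(\mu)=\chi_\mu-\chi_{d(\mu)}+\chi_{d^2(\mu)}-\cdots,
\]
truncated once we reach a weight $\le l-2$ (for which $\dq$ is simple) or a weight in $N_0$. One should double-check that $d$ applied to a weight $\le l-1$ stabilises in the right way. Comparing with Lemma~\ref{l:exts}, the weights $\nu=d^i(\mu)$ are exactly those with $u^i(\nu)=\mu$, i.e. with $\Ext^i(\lq(\mu),\nq(\nu))\neq0$. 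This gives (1). Part (2) is immediate from \eqref{e:tilt}: $\op{ch}\tq(\mu)=\chi_\mu+\chi_{d(\mu)}$ when $\tq(\mu)$ is not simple, and $\op{ch}\tq(\mu)=\chi_\mu$ otherwise.

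\textbf{Step 2: the operator $\Jop$ on a single $\chi_a$.} For $a=a'l+a''$ not in $N_0$ and with $a\ge l-1$, we compute $\Jop(\chi_a)$ explicitly using the alcove geometry. The terms $\chi(a-2r\kappa)$ alternate between the reflections of $a$ through the walls $-1+jl$ and translates of $a$. The resulting sum should telescope to
\[
-\Jop(\chi_a)=\chi_{d(a)}-\chi_{d^2(a)}+\cdots
\]
Equivalently, and more cleanly, we can invoke the Jantzen sum formula \eqref{AKFormula}. For $\mf{sl}_2$ the Jantzen filtration of $\dq(a)$ is $\dq(a)\supset\lq(d(a))\supset0$, so $-\Jop(\chi_a)=\op{ch}\lq(d(a))$ for $a\notin N_0$ with $a\ge l-1$, and $\Jop(\chi_a)=0$ when $\dq(a)$ is simple. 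This is the route we plan to use, since it avoids the floor-function bookkeeping.

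\textbf{Step 3: linearity.} For (4), we apply Step 2 to $\chi_\mu+\chi_{d(\mu)}$. The weight $d(\mu)$ lies in the same block but in the lower alcove, and we need $\dq(d(\mu))$ to be simple. This is where we must be careful: $d(\mu)$ can itself have $\dq(d(\mu))$ non-simple. So instead we use Proposition~\ref{p:adjoint} in the form $[\chi_\nu]\Jop(X)=(\Aop\chi_\nu,X)$ and compute $\Aop\chi_\nu$ for $\mf{sl}_2$ directly. A short computation, parallel to the proof of Proposition~\ref{p:A1-floor}, should give
\[
\Aop\chi_\nu=\sum_{k\ge1}(-1)^{k-1}\,\chi_{u^k(\nu)}
\]
up to sign conventions. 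Pairing this with $\op{ch}\lq(\mu)$ from Step 1 gives
\[
\sum_{k\ge1}(-1)^{k-1}\,\bigl[\chi_{u^k(\nu)}\bigr]\op{ch}\lq(\mu),
\]
which is nonzero only when $\mu=u^i(\nu)$ and then equals $\pm i$ after summing the alternating contributions. Pairing instead with $\op{ch}\tq(\mu)=\chi_\mu+\chi_{d(\mu)}$ leaves only the boundary term at $\nu=d(\mu)$.

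The main obstacle is Step 3, specifically the sign and multiplicity bookkeeping needed to obtain exactly $(-1)^i\,i$ in (3). We would first verify the proposed formula for $\Aop\chi_\nu$ on small cases ($l=3$ and $\nu\le 2l$) before committing to the general telescoping argument.
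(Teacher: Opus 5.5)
Your Steps 1 and 2 are the paper's argument. The alternating formula for $\op{ch}L(\mu)$, read against Lemma~\ref{l:exts}, gives (1). \eqref{e:tilt} gives (2). \eqref{AKFormula} gives $-\Jop(\chi_a)=\op{ch}L(d(a))$ if $a\neq d(a)$, and $\Jop(\chi_a)=0$ otherwise.

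The trouble starts in Step 3, with an obstacle that does not exist. Your Step 2 already covers $d(\mu)$ whether or not $\dq(d(\mu))$ is simple. If $d(\mu)\neq d^2(\mu)$, then $-\Jop(\chi_\mu+\chi_{d(\mu)})=\op{ch}L(d(\mu))+\op{ch}L(d^2(\mu))=\chi_{d(\mu)}$ by \eqref{e:weyl} applied to $d(\mu)$. Otherwise it equals $\op{ch}L(d(\mu))=\chi_{d(\mu)}$. This is (4). Likewise $-\Jop(\op{ch}L(\mu))=\sum_{j\ge1}(-1)^{j-1}\op{ch}L(d^j(\mu))$, and re-expanding each term by Step 1 gives $\sum_{i\ge1}(-1)^{i-1}\,i\,\chi_{d^i(\mu)}$. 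All sums run over the strictly decreasing part of $\mu>d(\mu)>d^2(\mu)>\cdots$. This direct substitution is how the paper proves (3) and (4); no adjointness is needed.

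The route through $\Aop$ that you switch to is legitimate in principle. $\Aop\chi_\nu$ is an infinite sum, but only finitely many terms pair nontrivially with a finite character. As written, however, it leaves a genuine gap. Once (1) is known, the signs are the entire content of (3) and (4), and ``up to sign conventions'' and ``$\pm i$'' leave them undetermined.

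Your formula is in fact off by a sign. Take \eqref{e:Aop} with $\kappa=l$ and $\nu=\nu'l+\nu''\notin N_0$.
- The terms with $t=-r<0$ are $+\chi_{\nu+2rl}=+\chi_{u^{2r}(\nu)}$.
- The terms with $tl>\nu+1$, i.e.\ $t=\nu'+1+j$ with $j\ge0$, are $-\chi_{2tl-\nu-2}=-\chi_{u^{2j+1}(\nu)}$.

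Hence $\Aop\chi_\nu=\sum_{k\ge1}(-1)^k\chi_{u^k(\nu)}$. For $\nu\in N_0$ the two families cancel and $\Aop\chi_\nu=0$, a case your formula cannot express since $u(\nu)=\nu$ there.

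Pairing with Step 1 gives $[\chi_{d(\mu)}]\Jop(\op{ch}T(\mu))=-1$, which confirms (4). It also gives $[\chi_\nu]\Jop(\op{ch}L(u^i(\nu)))=\sum_{j=0}^{i-1}(-1)^{i-j}(-1)^j=(-1)^i\,i$. So $-[\chi_\nu]\Jop(\op{ch}L(\mu))=(-1)^{i+1}\,i$, and the ``exactly $(-1)^i\,i$'' you are aiming for is unattainable: the printed sign in (3) is wrong.

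Your planned small-case check would have shown this. Take $l=3$, $\mu=3$, $\nu=1$. Then $\op{ch}L(3)=\chi_3-\chi_1$, $\Jop(\chi_3)=\chi(-3)=-\chi_1$ and $\Jop(\chi_1)=0$. So the value is $+1$, although $\Ext^1(L(3),\nabla(1))\neq0$ makes the printed (3) predict $-1$.

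The paper's own proof arrives at $\sum_i(-1)^{i+1}i\,\chi_{d^i(\mu)}$, i.e.\ the sign $(-1)^{i+1}$, so (3) as stated carries a sign typo. Note also that your $(-1)^{k-1}$ version would reproduce the printed (3) but give $-1$ instead of $+1$ in (4). No sign convention makes both printed statements true.
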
   
\begin{proof}
  For $\mu$ dominant, we have
  $\chi_{\mu} = \op{ch}(L(\mu)) + \op{ch}(L(d(\mu)))$ if
  $\mu \neq d(\mu)$ and $\chi_{\mu} = \op{ch}(D(\mu))$
  otherwise. This implies that
  \begin{equation}\label{e:charsimple}
    \op{ch}(L(\mu)) = \chi_{\mu} + \sum_{\substack{i \geq 1, \\ d^i(\mu)\, \neq \,
      d^{i+1}(\mu)}} (-1)^i \chi_{d^i(\mu)} .
  \end{equation}
  Clearly (1) follows immediately from this, the orthogonality of
  characters of Weyl modules and Lemma \ref{l:exts}.

  Part (2) follows immediately from \eqref{e:tilt}.
  
  Since $-\Jop(\chi_{\mu})$ is equal to
  $\op{ch}(L(d(\mu)))$ if $\mu \neq d(\mu)$ and is $0$
  otherwise, it follows that
  \[
    -\Jop(\op{ch}(L(\mu))) = \sum_{\substack{i \geq 1, \\ d^i(\mu) \neq
        d^{i+1}(\mu)}} (-1)^{i+1} \op{ch}(L(d^i(\mu)) =
    \sum_{\substack{i \geq 1, \\ d^i(\mu) \neq d^{i+1}(\mu)}}
    (-1)^{i+1} i\chi_{d^i(\mu)} .
  \]
  If $\nu$ is any dominant weight, it therefore follows that
  \begin{equation*}
    -[\chi_{\nu}]\Jop(\op{ch}(L(\mu))) = \begin{cases}
      (-1)^{i+1} & \mbox{if $\nu = d^i(\mu)$ for some $i>0$ and
        $d^{i-1}(\mu) \neq d^i(\mu)$}, \\
      0 & \mbox{otherwise.}
    \end{cases}
  \end{equation*}
  Part (3) follows from this by applying Lemma \ref{l:exts}.

  If $\mu \neq d(\mu)$ then
  $\op{ch}(T(\mu)) = \chi_{\mu} + \chi_{d(\mu)}$ so
  $-\Jop(\op{ch}(T(\mu))) = -\Jop(\chi_{\mu} + \chi_{d(\mu)})
  = -\chi_{d(\mu)}$. Part (4) follows immediately from this.
\end{proof}

\begin{theorem}\label{t:sl2fm}
  For $\mf{g} = \mf{sl}_2$ and all $\vec{\lambda}$ and $\nu$, the
  prediction for the first moment of the weight filtration implied by
  Conjecture \ref{c:simple} agrees with
  $FM_{\kappa}^W(\vec{\lambda},\nu^*)$.
\end{theorem}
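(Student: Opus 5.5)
We compare the two sides as in the proof of Proposition \ref{p:N_0}. By Lemma \ref{l:fmwn} and Lemma \ref{l:sum}, together with the Jantzen sum formula \eqref{AKFormula}, the actual first moment is
\[
FM^{\mr{W}}_\kappa(\vec{\lambda},\nu^*)=\ExpM\, c_{\vec{\lambda}}^{\nu}
-[\chi_\nu]\sum_{c\ge1}\op{ch}\,\dq(\vec{\lambda})^c-[\chi_\nu]\Jop\Bigl(\prod_i\chi_{\lambda_i}\Bigr).
\]
Here we use the radical filtration, which for $\mf{sl}_2$ is two-step and coincides with the Jantzen filtration. Writing $\op{ch}\,\dq(\vec{\lambda})=\sum_c\op{ch}(Q_c)$ with $Q_c=\dq(\vec{\lambda})^c/\dq(\vec{\lambda})^{c+1}$, this becomes
\[
\sum_c\Bigl([\chi_\nu](\ExpM-c)\op{ch}Q_c-[\chi_\nu]\Jop(\op{ch}Q_c)\Bigr).
\]
So it suffices to match this with the conjectural prediction layer by layer in $c$, up to corrections that cancel in the spectral sequence.

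For the prediction, Conjecture \ref{c:simple} gives the first moment as
\[
\sum_c\sum_r r\,\dim\mr{Gr}^{\mr W'}_r\Hom^0_{\uq}(Q_c,\nq(\nu)),
\]
where the weights on the $c$-th layer are shifted by $\ExpM-c$ plus the dual Andersen index. By Lemma \ref{l:ab}, each $Q_c$ is a direct sum of projective tilting modules $T(\mu)$ and non-projective simples $L(\mu)$. Lemma \ref{l:js} gives, summand by summand, the Euler characteristic identity
\[
[\chi_\nu]\op{ch}(M)=\sum_i(-1)^i\dim\Ext^i(M,\nq(\nu)),
\]
and a weighted version: $-[\chi_\nu]\Jop\op{ch}(M)$ equals the Andersen contribution (part (4) for $T(\mu)$, where $\nu=d(\mu)$ sits in Andersen degree one) plus a correction $\sum_i(-1)^i i\dim\Ext^i$ (part (3) for simples, whose Hom part has Andersen degree zero by Example \ref{e:simple}). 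The key step is therefore the identity
\[
\text{actual}=\sum_{c,i}(-1)^i(\ExpM-c+i)\dim E_1^{-c,c+i}+\sum_c(\text{Andersen degree-one dimension on }E_1^{-c,c}),
\]
i.e. an ``Euler characteristic with weight $\ExpM-c+i$'' computed on the $E_1$ page.

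Next we pass from $E_1$ to $E_\infty$. The differential $d_1$ maps $E_1^{-c,c+i}\to E_1^{-c+1,c+i}$, i.e. it raises total degree by one and lowers $c$ by one. Hence the weight $\ExpM-c+i$ is constant along $d_1$, and the total-degree sign alternates. Cancelling along $d_1$ therefore preserves the weighted Euler sum, which can then be computed on $E_2$. By Theorem \ref{t:sl2deg}, $E_2=E_\infty$ is concentrated in total degree $0$ ($i=0$), where it equals $\sum_c(\ExpM-c)\dim\Hom^0(Q_c,\nq(\nu))$.

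The main obstacle is the Andersen-degree-one term, which must also restrict correctly to $\Hom^0$. The degree-one parts come only from projective summands $T(\mu)\subset Q_c$. As explained in Section \ref{s:compute}, these split off (Lemma \ref{l:fil2}) and $d_1$ vanishes on $\Hom(P_c,\nq(\nu))$; moreover no differential hits them, since their higher $\Ext$ groups vanish and, being split summands, they lie outside the image. Thus their degree-one dimension survives intact into $\Hom^0$, while the degree-zero parts are exactly what the cancellation argument tracks. Checking this compatibility of the Andersen filtration with the splitting is where I expect care is needed; I would verify it using Lemma \ref{l:veca2}, layer by layer in $\fil^\bullet$. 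This is the same structure that proves Theorem \ref{t:sl2deg}.
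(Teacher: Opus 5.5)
Your outline is the paper's proof in different packaging. The actual side comes from Lemma \ref{l:fmwn}, Lemma \ref{l:sum} and \eqref{AKFormula}, and $-[\chi_\nu]\Jop$ is rewritten via Lemma \ref{l:js}. The identity \eqref{e:degform} comes from Theorem \ref{t:sl2deg}. Your ``weight constant along $d_1$'' is the paper's reindexing $\sum_c c\dim\Ext^i(Q_{c-i},\nq(\nu))=\sum_c(c+i)\dim\Ext^i(Q_c,\nq(\nu))$.

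\textbf{The invariance step fails as written.} Since $d_1\colon E_1^{-c,c+i}\to E_1^{-c+1,c+i}$ sends $(c,i)\mapsto(c-1,i+1)$, the weight $\ExpM-c+i$ increases by $2$. What is constant along $d_1$ is the second index $c+i$, so cancelling along $d_1$ does not preserve your weighted sum.

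\textbf{The source is a sign.} You took the sign of the $\Ext$-correction from the printed statement of Lemma \ref{l:js}(3). The correct value is $-[\chi_\nu]\Jop(\op{ch}\lq(\mu))=(-1)^{i+1}i$ when $\Ext^i(\lq(\mu),\nq(\nu))\neq0$ with $i>0$; this is the sign in the displayed computation inside the proof of that lemma. For instance, with $\kappa=l=3$ one has $\op{ch}\lq(3)=\chi_3-\chi_1$, $\Jop(\chi_3)=-\chi_1$ and $\Jop(\chi_1)=0$. Hence $-[\chi_1]\Jop(\op{ch}\lq(3))=+1$, while $\Ext^1(\lq(3),\nq(1))\neq0$ with $i=1$. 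So $-[\chi_\nu]\Jop(\op{ch}Q_c)$ is the dual Andersen first moment of $\Hom_{\uq}(Q_c,\nq(\nu))$ \emph{minus} $\sum_i(-1)^i i\dim\Ext^i(Q_c,\nq(\nu))$. The correct key identity therefore carries weight $\ExpM-c-i$, not $\ExpM-c+i$. With your sign, both the key identity and the invariance claim are false, and only their composite happens to be true, so the argument as written does not close.

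\textbf{After the correction your argument goes through.} The weight is $\ExpM-q$ on the whole row $q=c+i$, namely on $\Hom_{\uq}(Q_q,\nq(\nu))\to\Ext^1(Q_{q-1},\nq(\nu))\to\cdots\to\Ext^q(Q_0,\nq(\nu))$. By Theorem \ref{t:sl2deg} the Euler characteristic of this row is $\dim\Hom^0_{\uq}(Q_q,\nq(\nu))$, which is \eqref{e:degform}. This yields $\sum_q(\ExpM-q)\dim\Hom^0_{\uq}(Q_q,\nq(\nu))$ plus the Andersen degree-one count.

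For the degree-one count you do not need Lemma \ref{l:veca2}. After splitting off $P_c$ with Lemma \ref{l:fil2}, $d_1$ vanishes on $\Hom_{\uq}(P_c,\nq(\nu))$, so this space lies in $\Hom^0$. Meanwhile $\Hom_{\uq}(S_c,\nq(\nu))$ sits in dual Andersen degree $0$ by Example \ref{e:simple}. Thus $\Hom^0$ and the degree-$0$ part together span $\Hom_{\uq}(Q_c,\nq(\nu))$, and the degree-one count is unchanged. Your remark that nothing hits these classes is automatic, since $E_1^{-c-1,c}=0$.
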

We note that for $\mf{sl}_2$ the radical filtration is equal to the
Jantzen filtration.

\begin{proof}
  By Lemma \ref{e:multif}, we have
  \begin{equation}\label{e:dsl2}
    FM^{\mr{W}}_\kappa\bigl(\vec{\lambda},\nu^*\bigr)
    ={\ExpM}(\vec{\lambda},\nu^*)c_{\vec{\lambda}}^{\nu}
    +[\chi_\nu]
    \Bigl \{\sum_{i=1}^n\Jop(\chi_{\lambda_i})\prod_{j\neq
      i,j\in[n]}\chi_{\lambda_j} \Bigr \}
    -[\chi_\nu]\Jop\!\left(\prod_{i=1}^n\chi_{\lambda_i}\right) .
\end{equation} 

Recall that the $\Ext$ spectral sequence of Conjecture \ref{c:simple}
need not degenerate at $E_1$, but by Theorem \ref{t:sl2deg} it
degenerates at $E_2$.  This implies that for each $c \geq 0$ we have
\begin{multline}\label{e:degform}
  \dim(\Hom^0_{\dq}(\dq(\vec{\lambda})^c/\dq(\vec{\lambda})^{c+1},\nq
  (\nu))) \\
  = \sum_{i \geq 0} (-1)^i \dim
  (\Ext^i_{\uq}(\dq(\vec{\lambda})^{c-i}/\dq(\vec{\lambda})^{c+1-i},\nq
  (\nu))).
\end{multline}

Also recall from Lemma \ref{l:ab} that each
$\Delta(\vec{\lambda})^c/\Delta(\vec{\lambda})^{c+1}$ is a direct sum
of simple and indecomposable tilting modules. Since
$\prod_{i=1}^n\chi_{\lambda_i} = \sum_{c \geq
  0}\op{ch}(\dq(\vec{\lambda})^c/\dq(\vec{\lambda})^{c+1})$, Lemma
\ref{l:js} and the formulas for the Jantzen filtration for simple and
tilting modules for $\mf{sl}_2$ imply that
$-[\chi_\nu]\Jop\!\left(\prod_{i=1}^n\chi_{\lambda_i}\right )$ is equal
to the first moment of the dual Andersen filtration (without any
shifts) on
\[
  \bigoplus_{c \geq 0}
  \Hom_{\dq}(\dq(\vec{\lambda})^c/\dq(\vec{\lambda})^{c+1},\nq
  (\nu)))\] plus the term
\[
  \sum_{c \geq 0} \sum_{i\geq 0} (-1)^i\, i \cdot \dim
  (\Ext^i_{\uq}(\dq(\vec{\lambda})^c/\dq(\vec{\lambda})^{c+1},\nq
  (\nu))).
\]
We have explained in Section \ref{s:compute} that for any $c \geq 0$
the dual Andersen filtration on
$\Hom_{\uq}^0(\dq(\vec{\lambda})^{c}/\dq(\vec{\lambda})^{c+1}, \nq
(\nu))$ differs from that on
$\Hom_{\uq}(\dq(\vec{\lambda})^{c}/\dq(\vec{\lambda})^{c+1}, \nq
(\nu))$ only in degree $0$, so the first moment for
$\Hom_{\dq}(\dq(\vec{\lambda})^c/\dq(\vec{\lambda})^{c+1},\nq (\nu)))$
without any shifts equals that for
$\Hom^0_{\dq}(\dq(\vec{\lambda})^c/\dq(\vec{\lambda})^{c+1},\nq
(\nu)))$. It then follows from the above and \eqref{e:degform} that
the first moment prediction from Conjecture \ref{c:simple} after
incorporating the $D$ shift is
\begin{multline}\label{e:m1}
  {\ExpM}(\vec{\lambda},\nu^*)c_{\vec{\lambda}}^{\nu}
  -[\chi_\nu]\Jop\!\left(\prod_{i=1}^n\chi_{\lambda_i}\right)
  \\
  +
      \sum_{c \geq 0} \sum_{i>0} (-1)^i\, i \cdot \dim
    (\Ext^i_{\uq}(\dq(\vec{\lambda})^c/\dq(\vec{\lambda})^{c+1},\nq
    (\nu))) .
 \end{multline}
 In order to incorporate the $-c$ shifts as well, we must add the term
 \[
   -\sum_{c \geq 0}c\cdot
   \dim(\Hom^0_{\dq}(\dq(\vec{\lambda})^c/\dq(\vec{\lambda})^{c+1},\nq
   (\nu)))
\]
to \eqref{e:m1}, so by \eqref{e:degform} the predicted first moment is
equal to
\begin{multline}\label{e:m2}
  {\ExpM}(\vec{\lambda},\nu^*)c_{\vec{\lambda}}^{\nu}
  -[\chi_\nu]\Jop\!\left(\prod_{i=1}^n\chi_{\lambda_i}\right)
  \\
  +
      \sum_{c \geq 0} \sum_{i>0} (-1)^i\, i \cdot \dim
    (\Ext^i_{\uq}(\dq(\vec{\lambda})^c/\dq(\vec{\lambda})^{c+1},\nq
    (\nu))) \\
    + \sum_{c \geq 0}\sum_{i > 0} (-1)^i \dim
  (\Ext^i_{\uq}(\dq(\vec{\lambda})^{c-i}/\dq(\vec{\lambda})^{c+1-i},\nq
  (\nu))) \\
  -  \sum_{c \geq 0} \sum_{i \geq 0} (-1)^ic\cdot \dim
  (\Ext^i_{\uq}(\dq(\vec{\lambda})^{c-i}/\dq(\vec{\lambda})^{c+1-i},\nq
  (\nu))).
 \end{multline}

By Lemma \ref{l:js} the sum
$[\chi_{\nu}]\sum_{c\geq 1}\operatorname{ch} \dq(\vec{\lambda})^c$,
which by Lemma \ref{l:sum} and \eqref{AKFormula} equals the middle
term in the RHS of \eqref{e:dsl2}, is equal to
\begin{equation*}
  \sum_{c \geq 0} \sum_{i \geq 0} (-1)^i\, c \cdot \dim
  (\Ext^i_{\uq}(\dq(\vec{\lambda})^c/\dq(\vec{\lambda})^{c+1},\nq
  (\nu))).
\end{equation*}

Since
\begin{multline*}
\sum_{c \geq 0} \sum_{i \geq 0} (-1)^ic\cdot \dim
  (\Ext^i_{\uq}(\dq(\vec{\lambda})^{c-i}/\dq(\vec{\lambda})^{c+1-i},\nq
  (\nu))) \\
  = \sum_{c \geq 0} \sum_{i \geq 0} (-1)^i\, c \cdot \dim
  (\Ext^i_{\uq}(\dq(\vec{\lambda})^c/\dq(\vec{\lambda})^{c+1},\nq
  (\nu))) \\
  + \sum_{c \geq 0} \sum_{i>0} (-1)^i\, i \cdot \dim
  (\Ext^i_{\uq}(\dq(\vec{\lambda})^c/\dq(\vec{\lambda})^{c+1},\nq
  (\nu))) ,
\end{multline*}
it follows that the RHS of \eqref{e:dsl2} is equal to \eqref{e:m2}, as
was to be proved.
\end{proof}

\begin{corollary}\label{c:sl22}
  Conjecture \ref{c:simple} holds for $\mf{sl}_2$ when $n=2$.
\end{corollary}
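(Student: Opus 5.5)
For $\mf{g}=\mf{sl}_2$ and $n=2$ the space $\Hom_{\uq}(\dq(a_1)\otimes\dq(a_2),\nq(b))$ has dimension $c_{a_1a_2}^{b}\le 1$. So both the Hodge-theoretic weight filtration $\mr{W}_\bullet$ and the candidate filtration $\mr{W}'_\bullet$ of Conjecture \ref{c:simple} live on a space of rank at most one. The plan is to show that both filtrations are then determined by a single integer, namely the weight at which the filtration jumps, and that this integer is recovered from the first moment. Theorem \ref{t:sl2fm} then identifies the two first moments.

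The steps, in order, are as follows.

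\begin{enumerate}
\item If $c_{a_1a_2}^{b}=0$ there is nothing to prove, so assume rank one.
\item By the $E_2$-degeneration of Theorem \ref{t:sl2deg}, together with $E_\infty^{p,q}=0$ for $p+q\ne 0$, the graded pieces $\Hom^0_{\uq}(\dq(\vec{a})^c/\dq(\vec{a})^{c+1},\nq(b))$ have total dimension one. Hence exactly one index $c_0$ has nonzero $E_\infty$-term, and the filtration induced by $\mr{W}_\bullet$ on the subquotient for $c\neq c_0$ is trivially equal to $\mr{W}'_\bullet$ there, since both are filtrations of the zero space.
\item On the unique nonzero one-dimensional term, both $\mr{W}'_\bullet$ and the induced $\mr{W}_\bullet$ are increasing exhaustive separated filtrations of a line. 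Each is therefore a step function jumping at one integer: $r_0$ for $\mr{W}$ and $r_0'$ for $\mr{W}'$.
\item The first moment of $\mr{W}$ on the whole rank-one space $KZ_\kappa(\vec{a},b)$ is $r_0$, because the induced filtration on the unique nonzero subquotient carries the same jump. The predicted first moment computed in the proof of Theorem \ref{t:sl2fm} is exactly $r_0'$ under the conventions of Conjecture \ref{c:simple}, including the $\ExpM$ shift and the $-c$ shifts.
\item Theorem \ref{t:sl2fm} gives $r_0=r_0'$, and therefore \eqref{e:conj} holds for all $i$ and $r$.
\end{enumerate}

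The main thing to check carefully is the bookkeeping in step 4. The "prediction for the first moment implied by Conjecture \ref{c:simple}" in Theorem \ref{t:sl2fm} must be exactly $\sum_r r\cdot\rank\,\mr{Gr}^{\mr{W}'}_r$ summed over the $E_\infty$-subquotients. There must be no correction coming from the passage from $\Hom$ to $\Hom^0$; the discussion in Section \ref{s:compute} shows this discrepancy occurs only in the lowest dual Andersen degree. I would verify this by rereading the derivation of \eqref{e:m2}, confirming that each term there is literally the weighted count of the layers of $\mr{W}'$ on the $E_\infty$-terms. With that in place, rank one makes the first moment a complete invariant and the corollary follows.
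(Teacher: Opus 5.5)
Your proposal is correct and is the paper's argument. Since $KZ_\kappa(\vec{a},b)$ has rank at most one for $\mf{sl}_2$ and $n=2$, its first moment is its weight, so Theorem \ref{t:sl2fm} pins down the filtration; your steps 2--4 spell out the bookkeeping the paper leaves implicit. (Small point: step 2 does not need Theorem \ref{t:sl2deg}, since convergence of the spectral sequence alone makes the $E_\infty^{-c,c}$ the graded pieces of a filtration of the one-dimensional Hom space; the degeneration is only used inside the proof of Theorem \ref{t:sl2fm}.)
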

\begin{proof}
  When $n=2$, $FM_{\kappa}^W(\vec{\lambda},\nu^*)$ equals the weight
  of $KZ_{\kappa}(\vec{\lambda},\nu^*)$ since the latter has rank one whenever
  it is nonzero.
\end{proof}

\section{Proofs of some  more consequences of Conjecture \ref{c:tilting}}\label{s:exp}

In this section we prove some more consequences of Conjecture
\ref{c:tilting} in the case of $\mf{sl}_m$. In Theorem \ref{purity},
we prove a basic lower bound for weights of KZ motives
$KZ_{\kappa}(\vec{\lambda},\nu^*)$ when all $\lambda_i$ are of level
$\ell$ , using Hodge theory, the construction of the KZ motives (which
we recall in Section \ref{DProof}), and results of \cite{BFM}. Using
the first-moment verification, Theorem \ref{t:many}, of Conjecture
\ref{c:tilting} we prove purity of the KZ motives in the case all
$\lambda_i$ are of level $\ell$ and $\nu\in N_0$ as predicted by
Conjecture \ref{c:tilting}. In these results we have to assume that
the level is large enough in comparison with $m$.  The main result of
Section \ref{reflexions} is Theorem \ref{E22} which implies Conjecture
\ref{c:tilting} at the level of ranks for $\mf{sl}_2$.

The methods used in this section are completely different from the
earlier sections. We use the motivic factorisation theorem
\cite[Theorem 6.1]{BFM} for nearby cycles of the motivic VMHS
$KZ_{\kappa}(\vec{\lambda},\nu^*)$ on the configuration spaces
$\mathcal{C}_n$ as a certain number $m$ of the marked points coalesce
at a finite point (which we can think of as the remaining points
coalescing at infinity) and the relation between local monodromy and
nearby cycles.  The formulation of \cite[Theorem 6.1]{BFM} is in terms
of the blow up $W$ of $\Bbb{A}^n$ on the locus where $m$ of the
coordinates coincide and a function $f$ on $W$ such that the the zero
locus $Z$ of $f$ is an open subset of the exceptional divisor. The KZ
VMHS are defined on $W\setminus Z$, and this theorem expresses the
nearby cycles $\Psi_f$ of the KZ VMHS over an open subset of $Z$ in
terms of factorisation data: Here we give a version on stalks (note
that the box product in \cite[Theorem 6.1]{BFM} turns into a tensor
product since we are taking stalks.)

General points of $Z$ parametrize reducible curves with two components
both isomorphic to $\Bbb{P}^1$.  These curves have $n$ marked points
and correspond to a $\Bbb{P}^1$ with the $m$ marked points and the
point $\infty$ on this curve is glued to a finite point on the other
$\Bbb{P}^1$ which has the remaining marked points. The stalks of the
nearby cycles sheaf are filtered, with associated graded a tensor
product of the $KZ$ MHS of these two marked curves (and the data at
the gluing point runs through all possible dominant weights). The
filtration is a direct sum if suitable differences of Casimir values
(divided by $2\kappa$) are not integers.

\subsection{Some weight computations for $\mf{sl}_m$}\label{SomeW}

Let $\nu$ be a dominant integral weight for $\mf{sl}_m$, and let $\kappa\in \mathbb{Z}_{\geq 0}$.
We examine the weights of the VMHS
$KZ_{\kappa}(\varpi_1^n,\nu^*)$. Here $\varpi_1^n$ is the $n$ tuple of weights all equal to $\varpi_1$. By abuse of notation, we  let it also denote $\varpi_1$ repeated $n$ times when it occurs inside a larger tuple as in the statement of  Lemma \ref{PieriMonodromy} below.

\begin{lemma}\label{l:D}
  Suppose $\kappa>(\rho+\varpi_1,\rho+\varpi_1)/2$. Then the weights
  of $KZ_{\kappa}(\varpi_1^n,\nu^*)$ are bounded below by
  ${\ExpM}(\varpi_1^n,\nu^*)$.
 \end{lemma}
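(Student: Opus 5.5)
The plan is to work directly with the geometric construction of $KZ_{\kappa}(\varpi_1^n,\nu^*)$ recalled in Section \ref{DProof}: it is the image of compactly supported cohomology in ordinary cohomology (or cohomology with supports) of a cyclic cover of the complement of a hyperplane arrangement in $\A^{D}$, $D=\ExpM(\varpi_1^n,\nu^*)$, cut out by a $\kappa$-dependent eigenspace.

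\textbf{Step 1: weights are at least $D$ via cohomology of a smooth affine variety.} The fibre over a point of $\mc{C}_n$ is the $\chi$-isotypic part of $H^{D}$ of the cyclic cover $\widetilde{U}$ of the arrangement complement $U\subset \A^D$, for a character $\chi$ determined by the exponents of the Schechtman--Varchenko master function. By Deligne, $H^D$ of the smooth variety $\widetilde U$ has weights $\ge D$. The difficulty is that the motive is actually defined as a relative or with-supports cohomology mixing a compactly supported part, whose weights are $\le D$. So I would show that, under the hypothesis on $\kappa$, the natural map from the relevant isotypic piece of $H^D(\widetilde U)$ to the KZ motive is \emph{surjective}. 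Equivalently, the boundary contributions, namely strata along which the local monodromy of the rank one local system $\mathcal{L}_\chi$ is trivial, do not contribute in degree $D$.

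\textbf{Step 2: the monodromy condition.} For $\varpi_1$ the master function has exponents $-(\alpha_i,\alpha_j)/\kappa$ on the $t_a=t_b$ diagonals, $(\varpi_1,\alpha)/\kappa$ on the $t_a=z_i$ hyperplanes, and exponents at infinity and at dense edges given by Casimir-type quantities. The local monodromy around an edge of the arrangement is $\exp(2\pi i\,c/\kappa)$, where $c$ is a half-difference of Casimir values of weights appearing in partial tensor products of copies of $\varpi_1$ and $\nu$. I would bound all such $c$ in absolute value by $(\rho+\varpi_1,\rho+\varpi_1)/2$ in the relevant range, or more precisely show that the nonresonant edges are exactly those for which $c\notin\kappa\Z$. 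The hypothesis $\kappa>(\rho+\varpi_1,\rho+\varpi_1)/2$ then forces $c\notin\kappa\Z\setminus\{0\}$. By the standard vanishing results for nonresonant arrangements (Esnault--Schechtman--Viehweg, Looijenga), $H^*_c\to H^*$ for $\mathcal{L}_\chi$ is then an isomorphism after localising away from the edges with $c=0$. For edges with $c=0$, the factorisation theorem \cite[Theorem 6.1]{BFM} reduces the analysis to smaller $n$.

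\textbf{Step 3: induction.} Rather than analysing every edge, I would induct on $n$. Letting points coalesce, \cite[Theorem 6.1]{BFM} and Schmid's theorem say that $\mr{W}$ of the nearby cycles is controlled by the monodromy weight filtration centred at the weights of $KZ_{\kappa}(\varpi_1^n,\nu^*)$. The graded pieces are tensor products $KZ_\kappa(\varpi_1^{n-1},\mu^*)\otimes KZ_\kappa((\mu,\varpi_1),\nu^*)$. By induction and the explicit Pieri weight \eqref{WeightFormula}, these have weights $\ge \ExpM$. The Pieri weight equals $1+N_0-N_1\ge 0$, and the $N_1$ terms correspond to $h_{ij}(\mu)\equiv 1\pmod\kappa$. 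The hypothesis on $\kappa$ should force $h_{ij}(\mu)<\kappa+1$ for the weights $\mu$ that occur, so $N_1$ only counts $h=1$, which are non-inversions. That gives weight exactly $\ge$ the expected weight.

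\textbf{Main obstacle.} The hard part is passing from lower bounds on the limit MHS back to the generic fibre. The monodromy filtration can lower weights, so the lower bound does not descend automatically. I would attempt to show that the relevant local monodromy $T$ is semisimple, which happens when the Casimir differences divided by $2\kappa$ are non-integral, again guaranteed by the size hypothesis on $\kappa$. Then the limit MHS has the same weight graded ranks as the generic fibre, and the bound transfers.
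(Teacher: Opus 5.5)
There is a genuine gap, and it is in the step that should carry the proof. In Step 1 the direction is reversed. The maps in the construction run $H^{\ExpM}_c(U,\mathcal{L}(\eta))\to H^{\ExpM}(V',q_{!}\mathcal{L}(\eta))\to H^{\ExpM}(V,j_{!}\mathcal{L}(\eta))\to H^{\ExpM}(U,\mathcal{L}(\eta))$, and the KZ motive is the image of the middle map \eqref{Sho}. So there is no map from $H^{\ExpM}(U,\mathcal{L}(\eta))^{\chi}$ onto it, and surjectivity is neither available nor needed. What you need is that the target of \eqref{Sho} already equals $H^{\ExpM}(U,\mathcal{L}(\eta))^{\chi}$, i.e.\ $V=U$. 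Then the KZ motive is a sub-MHS of an eigenpart of the cohomology of a smooth $\ExpM$-dimensional variety, and Deligne's bound applies.

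Since $V$ keeps exactly the boundary divisors whose residue is a strictly positive integer, this is a condition on the real residues, not on monodromy. By \eqref{eq1} and \eqref{eq2}, the residues at finite dense edges are at most $(\rho,\rho)/2\kappa$, resp.\ $(\rho+\varpi_1,\rho+\varpi_1)/2\kappa$, hence $<1$ under the hypothesis. By \eqref{eq3}, the residues at infinity are negative. That is the whole proof.

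Your Step 2 replaces this by nonresonance of the local monodromies, which discards exactly the sign information. It also fails: the residues at infinity depend on $\nu$, are unbounded, and are often negative integers, and the bound on $\kappa$ says nothing about them. Worse, the conclusion you aim for, $H^{\ExpM}_c\to H^{\ExpM}$ an isomorphism on the $\chi$-part, would force purity, which is false. For $\mf{sl}_2$ and $\kappa=3$ (allowed, since the bound is $1$), $KZ_3((1,1,1),1)$ has residue $-1$ at infinity and weights $1$ and $2$ by Theorem \ref{E22}(c).

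Step 3 does not close either. As $z_n\to\infty$ the Pieri factors are $KZ_\kappa((\mu,\varpi_1),\nu^*)$ with $\nu=\mu+L_j$, so $\mu$ is as large as the arbitrary weight $\nu$. Hence $h_{ij}(\mu)$ is unbounded, and nothing forces $h_{ij}(\mu)<\kappa+1$ (note also that $|\Inv(I)|=j-1$ for $I=\{j\}$, not $1$).

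For instance, take $\mf{sl}_2$ and $\kappa=3$. Then $KZ_3((1,1,1,1),2)$ is pure of weight $\ExpM=1$ by Theorem \ref{E22}(a). Yet its nearby cycles have the graded piece $KZ_3((1,1,1),3)\otimes KZ_3((3,1),2)$ of weight $0$, because $h_{12}(3)=4\equiv 1 \pmod 3$ in \eqref{WeightFormula}. Here $(c(3)-c(1))/2\kappa=1$, so the monodromy is not semisimple, contrary to your last paragraph.

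You have also misplaced the difficulty. A lower bound on the weights of the limit MHS does pass to the general fibre automatically: by Schmid, the limit of the lowest nonzero $\mr{W}_k$ already has a weight $\le k$. What fails is the lower bound on the graded pieces themselves. In the paper the nearby-cycles argument runs in the opposite direction (Lemma \ref{PieriMonodromy}). There, the present lemma, proved by the residue check above, is the base case.
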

 We will prove this later in Section \ref{DProof}. The key point is
 that under the hypothesis of the lemma,
 $KZ_{\kappa}(\varpi_1^n,\nu^*)$ injects into the cohomology of a
 smooth variety of dimension ${\ExpM}(\varpi_1^n,\nu^*)$, hence has
 weights bounded below by ${\ExpM}(\varpi_1^n,\nu^*)$ by
 \cite{hodge2}.

 \begin{remark}
For $\mf{sl}_m$, $\frac{(\rho+\varpi_1,\rho+\varpi_1)}{2} =\frac{(m^2-1)(m^2+12)}{24m}$. This has values $1$, $7/3$ and $35/8$ for $m=2,3,4$ respectively. 

\end{remark}
\begin{lemma} \label{PieriMonodromy}

  Suppose $\kappa=\ell+m$ with $\ell,a\in\mathbb{Z}_{>0}$, and assume
  that $KZ_{\kappa}((\varpi_1^a,\lambda_1,\dots,\lambda_n),\nu^*)$ has
  the property that it is mixed with weights bounded below by
  $\ExpM((\varpi_1^a,\lambda_1,\dots,\lambda_n),\nu^*)$.
  
 Then, if $\lambda_1$ is of level $\ell$, the property remains true for
  $KZ_{\kappa}((\varpi_1^{a-1},\mu,\lambda_2,\dots,\lambda_n),\nu^*)$ where $\mu$ is any term that appears in the Pieri product
  $\lambda_1\star_{\kappa} \varpi_1$.

\end{lemma}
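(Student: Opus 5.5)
The plan is to degenerate $KZ_{\kappa}((\varpi_1^a,\lambda_1,\dots,\lambda_n),\nu^*)$ by letting the last copy of $\varpi_1$ and $\lambda_1$ coalesce. We then use the motivic factorisation theorem \cite[Theorem 6.1]{BFM}, together with Schmid's theorem, to move the lower bound on weights from the big variation to the factor indexed by $\mu$.

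First, choose the two-point collision: the marked point carrying $\lambda_1$ and one marked point carrying $\varpi_1$. Pass to the blow-up $W$ and the function $f$ as in the discussion above. By \cite[Theorem 6.1]{BFM}, the stalk of the nearby cycles $\Psi_f$ at a general point of $Z$ carries a filtration. Its associated graded is
\[
\bigoplus_{\mu}KZ_{\kappa}((\lambda_1,\varpi_1),\mu^*)\otimes KZ_{\kappa}((\varpi_1^{a-1},\mu,\lambda_2,\dots,\lambda_n),\nu^*),
\]
where $\mu$ runs over the Pieri terms.

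Second, check that this filtration splits and that it is compatible with the monodromy weight filtration. The splitting is a direct sum exactly when the relevant differences of Casimir values divided by $2\kappa$ are not integers. Here those differences are $(c(\mu)-c(\mu'))/2\kappa$ for distinct Pieri terms $\mu=\lambda_1+L_i$, $\mu'=\lambda_1+L_j$, and this equals $(h_{ij}(\lambda_1)-\text{const})/\kappa$. Since $\lambda_1$ has level $\ell$ and $\kappa=\ell+m$, these quantities lie strictly between $0$ and $1$ in absolute value. Hence the summands are distinguished by generalized eigenvalues of the local monodromy. This is the step where the level hypothesis enters, and I expect it to be the main obstacle: one has to verify the bound $|h_{ij}(\lambda_1)\pm 1|<\kappa$ carefully, including the boundary cases.

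Third, compute the weights. Each summand's two-point factor $KZ_{\kappa}((\lambda_1,\varpi_1),\mu^*)$ is rank one, with weight $|\Inv(I)|+N_0-N_1$ by \eqref{WeightFormula}. Because $h_{ij}(\lambda_1)<\kappa$ for level-$\ell$ weights, $N_0=0$. Note that $h\equiv 1$ can only occur with $h=1$, and then $\mu$ is non-dominant unless the inversion is allowed. So this weight equals $\ExpM((\lambda_1,\varpi_1),\mu^*)$ exactly; if $N_1>0$, then by Pieri dominance it is at most this, and a case check is needed.

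Fourth, apply Schmid's theorem. The limit MHS on $\Psi_f$ has weight filtration equal to the relative monodromy filtration of the weight filtration of the original variation. Because the logarithm of unipotent monodromy acts within the graded pieces of $W$ and can only lower the graded weights symmetrically about each $\mr{Gr}^W_k$, the lowest weight of $\Psi_f$ is at least $\ExpM((\varpi_1^a,\vec\lambda),\nu^*)$. Actually, the symmetric spreading can lower weights, so the correct route is the following. Use that the monodromy around the collision is semisimple on each eigen-summand, which holds because the eigenvalues are distinct and the two-point local system is rank one with scalar local monodromy. Then $N=0$ and the limit weights equal the original weights. Since $\ExpM$ is additive,
\[
\ExpM((\varpi_1^a,\vec\lambda),\nu^*)=\ExpM((\lambda_1,\varpi_1),\mu^*)+\ExpM((\varpi_1^{a-1},\mu,\dots),\nu^*).
\]
Subtracting the exact weight of the rank-one factor then gives the desired lower bound for the $\mu$ factor.
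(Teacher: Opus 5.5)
Your proposal is correct and is essentially the paper's argument: collide $\lambda_1$ with one copy of $\varpi_1$ and apply \cite[Theorem 6.1]{BFM}. Then observe that $c(\lambda_1+L_i)-c(\lambda_1+L_j)=2h_{ij}(\lambda_1)$ with $1\le h_{ij}(\lambda_1)\le \ell+m-1<\kappa$; there is no additional constant, so your $\pm 1$ boundary worry does not arise. Hence the local monodromy is semisimple, Schmid's theorem preserves the lower bound on weights (the first attempt in your fourth step, before ``Actually'', should simply be deleted), and you strip off the rank-one Pieri factor using additivity of $\ExpM$. The only difference is in how you see that $KZ_{\kappa}((\lambda_1,\varpi_1),\mu^*)$ has exactly its expected weight: you use \eqref{WeightFormula}, where $N_0=0$ since $h_{ij}(\lambda_1)<\kappa$ and $N_1=0$ since $h_{ij}(\lambda_1)=1$ would contradict admissibility, whereas the paper argues that these factors are conformal blocks when $\mu$ has level $\ell$ and treats the single remaining term $\mu=\lambda_1+\varpi_1$, which is $\mathbb{Q}(0)$, separately.
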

\begin{proof}
Recall that the terms $\mu$ in the Pieri product correspond
to
\[
\lambda_1+\varpi_1-\mu=\gamma=\alpha_1+\dots+\alpha_M
\]
for some $M<m$. These terms give rise to a filtration of the nearby
cycles sheaf of
$KZ_{\kappa}((\varpi_1^a,\lambda_1,\dots,\lambda_n),\nu^*)$ as the
finite points carrying $\lambda_1$ and $\varpi_1$ coalesce. The graded
terms of this filtration are of the form
$KZ_{\kappa}((\varpi_1^{a-1},\mu,\lambda_2,\dots,\lambda_n),\nu^*)\tensor
KZ_{\kappa}((\varpi_1,\lambda_1),\mu^*)$.

It follows from \cite[Theorem 6.1]{BFM}  and Schmid's  theorem on the limiting mixed Hodge structure  \cite[Theorem 6.16 and Lemma 6.4] {schmid}, that if the monodromy residues corresponding to this filtration do not have eigenvalues that differ by a non-zero integer, then each graded term is mixed with weights $\geq$ the expected weight of
$KZ_{\kappa}((\varpi_1^a,\lambda_1,\dots,\lambda_n),\nu^*)$.

For a $\mu$ occurring in the Pieri product, the Casimir value
corresponding to the term
$KZ_{\kappa}((\varpi_1^{a-1},\mu,\lambda_2,\dots,\lambda_n),\nu^*)\tensor
KZ_{\kappa}((\varpi_1,\lambda_1),\mu^*)$ is
$(\mu,\mu+2\rho)/(2\kappa)$ and, up to a uniform shift, this is equal
to $1/\kappa$ times
\[
(\lambda_1+\varpi_1,\gamma)-\sum_{1\leq i<j\leq M}(\alpha_i,\alpha_j)
\]
with $\gamma$ as above.

Note that $\gamma$ is a positive root, $(\varpi_1,\gamma)\leq 1$,
$(\lambda_1,\gamma)\leq \ell$, and
$\sum_{1\leq i<j\leq M}(\alpha_i,\alpha_j)=-(M-1)$. Therefore the
Casimir values are, up to a uniform shift, bounded above by
$(\ell+1+M-1)/\kappa<1$.

Also note that the Pieri factors
$KZ_{\kappa}((\lambda_1,\varpi_1),\mu^*)$ are conformal blocks if
$\mu$ is of level $\ell$, hence are pure of weight
$\ExpM((\lambda_1,\varpi_1),\mu^*)$. But there may be terms that do
not have this property:
$(\mu,\theta)= (\lambda_1,\theta)+1-(\theta,\gamma)$ but
$(\theta,\gamma)=1$ or $2$ unless $\gamma=0$. Therefore there is
exactly one non-conformal-block term if $\lambda_1$ is of exact level
$\ell$, but the corresponding space
$KZ_{\kappa}((\lambda_1,\varpi_1),\mu^*)$ with
$\mu=\lambda_1+\varpi_1$ is $\mathbb{Q}(0)$. Therefore all Pieri
factors $KZ_{\kappa}((\lambda_1,\varpi_1),\mu^*)$ have the expected
weight, and hence the terms
$KZ_{\kappa}((\varpi_1^{a-1},\mu,\lambda_2,\dots,\lambda_n),\nu^*)$
have weights $\geq$ their expected weights. This completes the proof.
\end{proof}
\begin{theorem}\label{purity}
  Suppose all $\lambda_i$ are of level $\ell$ and
  $\ell+m>(\rho+\varpi_1,\rho+\varpi_1)/2$. Then
\begin{enumerate}
\item[(a)] $KZ_{\kappa}(\vec{\lambda},\nu^*)$ is mixed of weights
  bounded below by $\ExpM(\vec{\lambda},\nu^*)$.
\item[(b)] If $\nu\in N_0$ then
  $KZ_{\kappa}(\vec{\lambda},\nu^*)$ is pure of
  weight $\ExpM(\vec{\lambda},\nu^*)$. 
\end{enumerate}
\end{theorem}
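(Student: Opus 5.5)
For part (a), the plan is to start from Lemma \ref{l:D}, which applies because $\kappa=\ell+m>(\rho+\varpi_1,\rho+\varpi_1)/2$. It gives that $KZ_{\kappa}(\varpi_1^N,\nu^*)$ has weights at least $\ExpM(\varpi_1^N,\nu^*)$ for every $N$ and every $\nu$. From there we build the $\lambda_i$ one at a time with Lemma \ref{PieriMonodromy}.

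A weight $\lambda$ of level $\ell$ can be reached from $0$ by a chain $0=\mu_0,\mu_1,\dots,\mu_k=\lambda$. Here each $\mu_{j+1}$ occurs in $\mu_j\star_\kappa\varpi_1$, and each $\mu_j$ has level $\le\ell$, so all intermediate weights are also of level $\ell$. Concretely, we add the boxes of the Young diagram one at a time. Since $\mf{R}_{\kappa}$ specialises to the representation ring, and the Pieri products for $\mf{sl}_m$ contain every dominant $\mu_j+L_i$, each such term does occur.

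Start with $\vec{\lambda}^{(0)}=(\varpi_1^{N})$ with $N=\sum_i s'(\lambda_i)$, the total number of boxes needed. We apply Lemma \ref{PieriMonodromy} repeatedly. At the first step we take $\lambda_1=\varpi_1$ (level $1\le\ell$) and a term $\mu$ of $\varpi_1\star_\kappa\varpi_1$ lying on the chain. At each later step we merge one more $\varpi_1$ into the current partially built weight, which is of level $\le\ell$, and we keep the lower bound by expected weight throughout. After building $\lambda_1$ we build $\lambda_2$ from the remaining $\varpi_1$'s in the same way, and so on.

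One bookkeeping point needs care: Lemma \ref{PieriMonodromy} as stated always merges the first finite weight $\lambda_1$ with a $\varpi_1$. Since the KZ variations are symmetric under permuting the marked points (by pulling back along the $S_n$-action on $\mc{C}_n$), we may reorder $\vec{\lambda}$ freely. There is also an edge case: if some $\lambda_i=0$, the corresponding point can be forgotten.

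For part (b), suppose $\nu\in N_0$. Part (a) gives that all weights are $\ge \ExpM$. Theorem \ref{t:many} applies because all $\lambda_i$ have level $\ell=l-m$ and so $\dq(\lambda_i)$ is simple. By Example \ref{ex:proj}(1), $f_\nu(\dq(\vec{\lambda}))$ is constant because $\dq(\nu)$ is projective, so $f_\nu(\dq(\vec{\lambda}))'(1)=0$. Therefore $FM^{\mr{W}}_\kappa=\ExpM\cdot c^{\nu}_{\vec{\lambda}}=\ExpM\cdot\rank$. If all weights are $\ge\ExpM$ and the first moment equals $\ExpM$ times the rank, then every weight equals $\ExpM$, which is purity.

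The main obstacle is verifying that Lemma \ref{PieriMonodromy} really applies at each intermediate stage. Two hypotheses have to be checked: that the partially built weight is of level $\le\ell$ (the lemma says ``level $\ell$'', which we read as at most $\ell$), and that the requisite $\mu$ genuinely appears in the nearby-cycle filtration. We also need the relation $\kappa=\ell+m$, i.e. $q=\exp(-i\pi/\kappa)$ with $l=\kappa$, to be consistent between the hypotheses of Lemma \ref{l:D} and Lemma \ref{PieriMonodromy}. I would first check the base case $N=2$ by hand using Theorem \ref{t:rational-pieri}.
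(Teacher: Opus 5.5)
Your proposal is correct and is essentially the paper's argument. For (a), the paper says only that it follows from Lemma \ref{PieriMonodromy}; you make explicit the base case $(\varpi_1^N)$ from Lemma \ref{l:D} and the box-by-box chain through weights of level at most $\ell$. For (b), as in the paper, you combine (a) with the first-moment identity of Theorem \ref{t:many} and the constancy of $f_\nu(\dq(\vec{\lambda}))$ for $\nu\in N_0$ from Example \ref{ex:proj}(1).
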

\begin{proof}
  Part (a) of the theorem follows from Lemma \ref{PieriMonodromy}. For
  part (b), assuming (a), it suffices to prove that the first moment
  of the weight filtration on $KZ_{\kappa}(\vec{\lambda},\nu^*)$
  equals its rank times $\ExpM(\vec{\lambda},\nu^*)$. This follows
  from the first-moment verification, Theorem \ref{t:many}, of the
  Conjecture \ref{c:tilting} (see also Example \ref{ex:proj}).
\end{proof}

\subsection{The case of $\mf{sl}_2$}\label{reflexions} 

Using the definition of the maps $u$ and $d$  from Section \ref{s:sl2}, we have the following.
 \begin{theorem}\label{E22}
 Let $\kappa\geq 3$,  $a_1,\dots,a_n\in N_{0}\cup\{0,\dots,\kappa-2\}$, $x\geq 0$, and ${\ExpM}=(\sum a_i-x)/2$ is the expected weight of $KZ_{\kappa}(\vec{a},x)$.
 \begin{enumerate}
 \item[(a)] If $x\in N_0$, then $KZ_{\kappa}(\vec{a},x)$ is pure of weight $\ExpM$.
 \item[(b)] The weights of
 $KZ_{\kappa}(\vec{a},x)$ are in $\{{\ExpM},{\ExpM}+1\}$. Denote the ranks of its $\ExpM$ and $\ExpM+1$ weight graded quotients by $w(\vec{a},x)$ and $w^+(\vec{a},x)$, respectively.
\item[(c)]  For $x\not\in N_0$
 \begin{equation}\label{e:recursion1}
w^+(\vec{a},x)=w(\vec{a},u(x))
\end{equation}
\item[(d)]  If $a_1\in N_0$ and $x\in [0,\kappa-2]$ then $KZ_{\kappa}(\vec{a},x)$ is pure of weight $\ExpM+1$.

\end{enumerate}
\end{theorem}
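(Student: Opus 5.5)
The plan is to combine three earlier inputs: the factorisation/nearby-cycles argument of Lemma \ref{PieriMonodromy}, the first-moment identity of Theorem \ref{t:many}, and the explicit structure of tilting modules for $\mf{sl}_2$ from Section \ref{s:sl2}. Throughout, $l=\kappa$ and the $a_i$ are such that $\dq(a_i)$ is simple (hence tilting). We write
\[
\dq(\vec a)\cong\bigoplus_b \tq(b)^{m_{\vec a,b}} .
\]

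\emph{Step 1: the weight range in (b).} We argue by induction on $n$ (the case $n=2$ is immediate). We let the points carrying $a_{n-1}$ and $a_n$ coalesce and use \cite[Theorem 6.1]{BFM}. The nearby-cycles stalk is filtered with graded pieces
\[
KZ_\kappa((a_1,\dots,a_{n-2},c),x)\otimes KZ_\kappa((a_{n-1},a_n),c).
\]
Only $c$ with $\dq(c)$ simple or projective occur in $\dq(a_{n-1})\otimes\dq(a_n)$ (Lemma \ref{l:ab}), so the induction hypothesis applies to the first factor. For the two-point factors, Proposition \ref{p:A1-floor} with $a,b\le\kappa-2$ or in $N_0$ shows that the weight is $\ExpM$ or $\ExpM+1$. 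Extensions do not enlarge the weight range, so the limit MHS has weights in $[\ExpM,\ExpM+1]$.

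Schmid's theorem \cite{schmid} then does the rest. On each $\mr{Gr}^{\mr W}_k$ of the variation, the limit weight filtration is the monodromy filtration centred at $k$. Hence every $k$ with $\mr{Gr}^{\mr W}_k\neq0$ contributes limit weights both $\le k$ and $\ge k$, which forces $k\in\{\ExpM,\ExpM+1\}$.

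I expect the main obstacle to be exactly this step. One must check carefully that the limiting weights of a filtered nearby cycle are controlled piece by piece even when residues differ by integers. If that fails, I would instead derive the lower bound as in Theorem \ref{purity}(a) (for $\mf{sl}_2$ the hypothesis is $\kappa>1$). For the upper bound I would apply the same argument to the Verdier dual $KZ_{-\kappa}$ and use Remark \ref{signkappa}.

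\emph{Step 2: the first moment.} Given (b), we have $FM^{\mr W}=\ExpM\cdot c^{x}_{\vec a}+w^+(\vec a,x)$. Theorem \ref{t:many} gives $w^+(\vec a,x)=f_x(\dq(\vec a))'(1)$. By the description of the Andersen filtration for $\mf{sl}_2$ in Section \ref{s:sl2}, this equals $m_{\vec a,u(x)}$ when $x\notin N_0$ and equals $0$ when $x\in N_0$.

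Part (a) follows immediately: when $x\in N_0$, the first moment is $\ExpM\cdot\rank$, so combined with (b) the variation is pure of weight $\ExpM$.

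For (c), take $y\notin N_0$. Since $d(u(y))=y$, the sequence \eqref{e:tilt} gives $\mr{ch}\,\tq(u(y))=\chi_{u(y)}+\chi_y$, and therefore $c^{y}_{\vec a}=m_{\vec a,y}+m_{\vec a,u(y)}$. Now consider $u(x)$.
\begin{itemize}
\item If $u(x)\notin N_0$, then applying this identity with $y=u(x)$ gives
\[
w(\vec a,u(x))=c^{u(x)}_{\vec a}-w^+(\vec a,u(x))=c^{u(x)}_{\vec a}-m_{\vec a,u^2(x)}=m_{\vec a,u(x)}=w^+(\vec a,x).
\]
\item If $u(x)\in N_0$, then $KZ_\kappa(\vec a,u(x))$ is pure by (a). Moreover $\tq(u(x))=\dq(u(x))$ is the only tilting summand with a nonzero map to $\nq(u(x))$, so $c^{u(x)}_{\vec a}=m_{\vec a,u(x)}$, and the same equality holds.
\end{itemize}

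\emph{Step 3: part (d).} Now $\dq(\vec a)$ is projective and $x\le \kappa-2$. The module $\tq(x)=\dq(x)$ is not projective, so it cannot be a summand of $\dq(\vec a)$; hence $m_{\vec a,x}=0$. The identity $c^x_{\vec a}=m_{\vec a,x}+m_{\vec a,u(x)}$ then gives $m_{\vec a,u(x)}=c^x_{\vec a}$. By Step 2, $w^+(\vec a,x)=c^x_{\vec a}$ equals the full rank, which by (b) means $KZ_\kappa(\vec a,x)$ is pure of weight $\ExpM+1$.
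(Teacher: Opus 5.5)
\emph{Verdict: Step~1 has a genuine gap, and (a), (c), (d) all rest on it.} Granting (b), your Steps~2 and~3 are a correct reduction: Theorem \ref{t:many} and the $\mf{sl}_2$ tilting combinatorics give (a), (c) and (d) as you say. Your bullet ``$u(x)\in N_0$'' never occurs, since $u$ maps the complement of $N_0$ to itself.

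\textbf{Where Step 1 fails.} When the points carrying $a_{n-1}$ and $a_n$ coalesce, the graded pieces of the nearby cycles are indexed by \emph{all} classical constituents $c$ of $V_{a_{n-1}}\otimes V_{a_n}$, not by tilting summands. Moreover, $\tq(c)$ being projective for $c\ge\kappa-1$ does not make $\dq(c)$ simple unless $c\in N_0$. So the factor $KZ_\kappa((a_1,\dots,a_{n-2},c),x)$ typically lies outside your induction hypothesis, and such factors can have weight below the expected one.

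\textbf{A concrete counterexample to your limit range.} Take $\kappa=4$, $\vec a=(2,2,2)$, $x=4$, so $\ExpM=1$.
\begin{itemize}
\item The $c=4$ piece is $KZ_\kappa((2,4),4)\otimes KZ_\kappa((2,2),4)$. Both factors have rank one, and Proposition \ref{p:A1-floor} gives weights $0$ and $0$. So this piece has weight $0=\ExpM-1$; the first factor is below its expected weight $1$.
\item The $c=2$ piece has weight $0+2=2$.
\end{itemize}
The limit MHS therefore has weights $\{0,2\}$, not in $[\ExpM,\ExpM+1]$. Your Schmid argument then yields only $[\ExpM-1,\ExpM+1]$. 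In fact here the variation is pure of weight $1$ with $N\neq0$. The obstacle is the class of boundary weights, not residues differing by integers.

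\textbf{Your fallback does not close the gap.}
\begin{itemize}
\item Theorem \ref{purity}(a) needs all $a_i$ of level $\ell$. It says nothing when some $a_i\in N_0$ is $\ge 2\kappa-1$.
\item Passing to $KZ_{-\kappa}$ via Remark \ref{signkappa} turns the needed upper bound into a lower bound $\ge\ExpM-1$ for $-\kappa$. For $-\kappa$ none of the residue or Casimir estimates are available, since they use $\kappa>0$.
\end{itemize}

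\textbf{The missing idea} is a degeneration whose boundary pieces stay inside the class of simple Weyl inputs.
\begin{itemize}
\item The paper adds an auxiliary point of weight $c$ and sends it to infinity in $KZ_\kappa((\vec a,c),b)$ with $b\in N_0$. This variation is pure of weight $\ExpM'$ by (a), which comes from Lemma \ref{PieriMonodromy} and Theorem \ref{t:many}.
\item The graded pieces are $KZ_\kappa(\vec a,y)\otimes KZ_\kappa((y,c),b)$. They keep $\vec a$ fixed and only vary the weight $y$ at infinity, so induction on $\ExpM$ applies to every piece.
\item The limit weight filtration of a pure variation is symmetric about $\ExpM'$. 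The extreme-Casimir piece is a quotient (resp.\ subobject) of the nearby cycles. Together these give (b) and (c) at once.
\item Entries $a_i\in N_0$ beyond $\kappa-1$ are handled by induction on their number. This uses scalar-monodromy swaps, e.g.\ degenerating $KZ_\kappa(\vec a,0)$ to $KZ_\kappa((a_2,\dots,a_n),a_1)\otimes KZ_\kappa((a_1,a_1),0)$.
\end{itemize}
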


\begin{corollary}\label{corollary_sl2}
Conjecture \ref{c:tilting} holds at the level of ranks for $\mf{sl}_2$.
\end{corollary}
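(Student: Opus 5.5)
The plan is to compare both sides numerically, using Theorem \ref{E22} for the Hodge side and the explicit description of tilting modules for $\mf{sl}_2$ from Section \ref{s:sl2} for the quantum-group side. Under Remark \ref{r:kappa} we take $\kappa$ to be an integer, so $l=\kappa$. The hypothesis that every $\dq(a_i)$ is simple then means exactly that $a_i\in N_0\cup\{0,\dots,\kappa-2\}$, which is the hypothesis of Theorem \ref{E22}. Write
\[
  \dq(\vec a)\cong\bigoplus_b \tq(b)^{m_b}.
\]

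\emph{Step 1: the prediction.} Recall that $f_x(\tq(b))$ equals $1$ if $b=x$, equals $w$ if $x=d(b)\neq b$, and is $0$ otherwise. Suppose $x\notin N_0$. Then one checks directly from the definitions of $u$ and $d$ that $d(b)=x\neq b$ holds exactly when $b=u(x)$. (For example, if $x\le l-2$ then $u(x)=2l-2-x$ and $d(u(x))=x$.) Hence Conjecture \ref{c:tilting} predicts the weight polynomial
\[
  w^{\ExpM}\bigl(m_x+w\,m_{u(x)}\bigr)\quad (x\notin N_0),\qquad w^{\ExpM}m_x\quad (x\in N_0).
\]
So we must show two things: the weights lie in $\{\ExpM,\ExpM+1\}$, which is Theorem \ref{E22}(b), and $w(\vec a,x)=m_x$ for all $x$. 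Once this is known, Theorem \ref{E22}(c) gives $w^+(\vec a,x)=w(\vec a,u(x))=m_{u(x)}$ for $x\notin N_0$, and Theorem \ref{E22}(a) handles $x\in N_0$.

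\emph{Step 2: total ranks.} The rank of $KZ_\kappa(\vec a,x)$ equals $\dim\Hom_{\uq}(\dq(\vec a),\nq(x))=\sum_b m_b(\tq(b):\dq(x))$. By \eqref{e:tilt}, $(\tq(b):\dq(x))$ is nonzero only for $b=x$ and, when $x\notin N_0$, for $b=u(x)$, and it equals $1$ in each case. Thus
\[
  w(\vec a,x)+w^+(\vec a,x)=m_x+m_{u(x)}\quad (x\notin N_0),\qquad w(\vec a,x)=m_x\quad (x\in N_0).
\]

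\emph{Step 3: descending induction on $x$.} For $x>\sum a_i$ everything vanishes. Suppose $x\notin N_0$; since $u(x)>x$, the inductive hypothesis gives $w(\vec a,u(x))=m_{u(x)}$. Theorem \ref{E22}(c) then gives $w^+(\vec a,x)=m_{u(x)}$, and Step 2 forces $w(\vec a,x)=m_x$. The case $x\in N_0$ is already done in Step 2.

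\emph{Expected obstacle.} The only delicate point is the small case $\kappa=2$, which Theorem \ref{E22} excludes. There $l=2$, and each $a_i$ lies in $N_0\cup\{0\}$. Trivial factors $a_i=0$ can be dropped, so either $\vec a$ is empty or some $\dq(a_i)$ is projective. I would treat this case separately via Example \ref{ex:proj}, combined with Theorem \ref{E22}(a) and (d) or a direct rank-one check. Apart from this, the argument is bookkeeping once Theorem \ref{E22} is available.
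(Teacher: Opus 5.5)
Your argument is correct for $\kappa\ge 3$, but it takes a different route from the paper.

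\textbf{The paper's proof.} It has two ingredients:
\begin{itemize}
\item By Theorem \ref{E22}(b), the weights of $KZ_{\kappa}(\vec a,x)$ lie in $\{\ExpM,\ExpM+1\}$.
\item The predicted polynomial $w^{\ExpM}f_x(\dq(\vec a))$ is supported in the same two degrees, because $f_x(\tq(b))$ has degree at most one for $\mf{sl}_2$.
\end{itemize}
Two such distributions with the same total rank $c^x_{\vec a}$ and the same first moment (Theorem \ref{t:many}) must coincide.

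\textbf{Your proof.} You never use the first moment directly. Instead you use the explicit tilting decomposition of $\dq(\vec a)$, the check that $d(b)=x\neq b$ iff $b=u(x)$, the reflection identity of Theorem \ref{E22}(c), and descending induction along $x\mapsto u(x)$.

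\textbf{Comparison.} Your version makes the match structural and layer by layer. It mirrors the extension $0\to\dq(u(x))\to\tq(u(x))\to\dq(x)\to 0$, which is exactly the ``reflection'' property announced at the start of Section \ref{s:nsl2}. The paper's version is more economical: it needs only part (b) of Theorem \ref{E22} and no knowledge of the multiplicities $m_b$.

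Your independence from Theorem \ref{t:many} is only apparent, however. In the paper, parts (a) and (c) of Theorem \ref{E22} rest on purity at weights in $N_0$. That purity is obtained from the first-moment identity, via Theorem \ref{purity}(b) and the step ``its first moment equals its rank times $\ExpM$''.

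\textbf{The case $\kappa=2$.} Your proposed patch invokes Theorem \ref{E22}(a),(d), which are stated only for $\kappa\ge 3$, so as written it does not settle that case. The paper's proof also relies on Theorem \ref{E22}, so it has exactly the same scope.
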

\begin{proof} 
  This is a consequence of the following: (a) The conjecture holds at
  the level of moments by Theorem \ref{t:many}, and (b) the possible
  weights of the KZ motives are in a two element set by Theorem
  \ref{E22}.
\end{proof}

For arbitrary $\mf{g}$, and representations
$\lambda_1,\dots,\lambda_n, \nu$ at level $\ell$, Conjecture
\ref{c:tilting} and \cite{Finkelberg} imply that the rank of the
weight $\ExpM(\vec{\lambda},\nu)$ subspace of $KZ_{\kappa}(\vec{\lambda},\nu)$
equals the rank of conformal blocks for the same data at level $\ell$.
Since the space of conformal blocks sits inside the weight
$\ExpM(\vec{\lambda},\nu)$ subspace of $KZ_{\kappa}(\vec{\lambda},\nu)$ (see
Remark \ref{r:fink}) it would therefore be equal to the latter.

Since Conjecture \ref{c:tilting} holds for $\mf{sl}_2$ at the level of
ranks by Corollary \ref{corollary_sl2}, the Kazhdan--Lusztig
equivalence and \cite{Finkelberg}) imply that we get unconditionally
(using the notation $CB_{\kappa}$ for the Hodge realization of
$\mathcal{CB}_{\kappa}$ of \cite{BFM} for conformal blocks) the
following:
\begin{corollary}\label{c:fink}
  In the notation of Theorem \ref{E22}, suppose $\kappa=\ell+2$ with
  $\ell>0$ and all $a_i$ and $x$ are in $[0,\ell]$. Then Conjecture
  \ref{c:tilting} holds, and
  $\mr{W}_{\ExpM}(KZ_{\kappa}(\vec{a},x))=CB_{\kappa}(\vec{a},x)$.
\end{corollary}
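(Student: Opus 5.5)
The plan is to deduce the corollary from Corollary~\ref{corollary_sl2} (the rank version of Conjecture~\ref{c:tilting} for $\mf{sl}_2$), Finkelberg's theorem \cite{Finkelberg}, and the inclusion of conformal blocks into the lowest weight piece \cite[Proposition 4.17]{BF-survey}. We have $q=\exp(-i\pi/\kappa)$, so $q^2$ has order $l=\kappa=\ell+2$. The hypothesis $a_i\le \ell=l-2$ then says that every $\dq(a_i)$ is simple, hence tilting, so Theorem~\ref{E22} and Corollary~\ref{corollary_sl2} apply to $KZ_{\kappa}(\vec{a},x)$.

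First step: compare the two filtrations. By Theorem~\ref{E22}(b) the weight filtration has at most two steps, $0\subset \mr{W}_{\ExpM}\subset \mr{W}_{\ExpM+1}=KZ_{\kappa}(\vec{a},x)$. The dual Andersen filtration on $\Hom_{\uq}(\dq(\vec{a}),\nq(x))$ is also at most two-step, by the explicit description of $f_b(\tq(a))$ for $\mf{sl}_2$ in Section~\ref{s:sl2}. Write $\dq(\vec{a})\cong\bigoplus_b \tq(b)^{m_{\vec a,b}}$. Then:
\begin{itemize}
\item the summands $\tq(x)$ contribute to the degree-$0$ layer;
\item the summands $\tq(u(x))$ contribute to the degree-$1$ layer, since they are non-simple when $x\le \ell$, so that $d(u(x))=x$.
\end{itemize}
Hence the smallest step of the dual Andersen filtration is a subspace of dimension $m_{\vec a,x}$. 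It is the space of maps $\dq(\vec a)\to\nq(x)$ that vanish on the summands $\tq(u(x))$ of non-zero quantum dimension zero, that is, on the negligible summands. Equivalently, it is the space of maps that factor through the semisimplified (fusion) quotient at level $\ell$. Since both filtrations have at most two steps, the conjecture is equivalent to the single equality $\mr{W}_{\ExpM}=$ (this subspace) under the Kazhdan--Lusztig equivalence.

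Second step: invoke Finkelberg. His theorem, combined with the Kazhdan--Lusztig equivalence, identifies the conformal block subsheaf $CB_{\kappa}(\vec{a},x)$ (all weights at level $\ell$) with precisely this fusion-category $\Hom$ space inside the KZ local system. Thus the transported smallest Andersen step equals $CB_{\kappa}(\vec{a},x)$. On the Hodge side, \cite[Proposition 4.17]{BF-survey} gives $CB_{\kappa}(\vec{a},x)\subseteq \mr{W}_{\ExpM}(KZ_{\kappa}(\vec{a},x))$. Corollary~\ref{corollary_sl2} gives $\rank \mr{W}_{\ExpM}=w(\vec a,x)=m_{\vec a,x}$, and the latter is the Verlinde rank of conformal blocks. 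An inclusion of local systems of equal rank is an equality, so $\mr{W}_{\ExpM}=CB_{\kappa}(\vec{a},x)$. This equals the smallest dual Andersen step, and since both filtrations are two-step, the conjecture follows.

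The main obstacle is the precise bookkeeping in the first step. One must check that Finkelberg's identification matches the bottom step of the \emph{dual} Andersen filtration, with the indexing conventions of the footnote on dual filtrations, rather than its complement. One must also check that the duality/sign conventions relating $KZ_\kappa$, the dual of the KZ solutions, and $\Hom_{\uq}(\dq(\vec a),\nq(x))$ (Remark~\ref{signkappa}) send the conformal block subsheaf to the subspace of maps killing the negligible summands. I would verify this first in the case $n=1$ and in the case where $\dq(\vec a)$ has a single $\tq(u(x))$ summand.
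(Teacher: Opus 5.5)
Your argument is correct and follows essentially the same route as the paper. Corollary~\ref{corollary_sl2} (via Theorems~\ref{E22} and \ref{t:many}) gives $\rank \mr{W}_{\ExpM}=m_{\vec a,x}=\rank CB_{\kappa}(\vec a,x)$, the inclusion of \cite[Proposition 4.17]{BF-survey} turns this into $\mr{W}_{\ExpM}=CB_{\kappa}$, and Kazhdan--Lusztig together with \cite{Finkelberg} match $CB_{\kappa}$ with the bottom step of the two-step dual Andersen filtration. One correction to your first step: the space of maps $\dq(\vec a)\to\nq(x)$ vanishing on the $\tq(u(x))$ summands is not intrinsic, since the automorphism $(v,w)\mapsto(v+b(w),w)$ of $\lq(x)\oplus\tq(u(x))$, with $b$ the projection onto the head, moves it, and the fusion $\Hom$ space is a quotient rather than a subspace; intrinsically, $\Hom_{\uq}(\dq(x),D\dq(\vec a))^1$ is the space of negligible morphisms, and the bottom step of the dual filtration is its annihilator, i.e.\ the dual of the fusion $\Hom$ space, which is what Finkelberg's theorem identifies with conformal blocks.
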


 \subsubsection{Proof of Theorem \ref{E22} when $a_i\in [0,\ell+1]$ for all $i$}\label{specialcase}

 By Lemma \ref{PieriMonodromy}, the weights of
 $KZ_{\kappa}(\vec{a},x)$ are bounded below by $\ExpM$. We also know
 from Theorem \ref{purity} that it is pure of weight $\ExpM$ if
 $x\in N_0$.
 
 Let $c\in[1,\ell+1]$ and $b\in N_0$. We examine the monodromy of
 $KZ_{\kappa}((a_1,\dots,a_n,c),b)$ as the point carrying the weight
 $c$ heads to infinity. Let $\ExpM'$ be the expected weight of the
 pure motive $KZ_{\kappa}((a_1,\dots,a_n,c),b)$.  There is potentially
 nonsemisimple monodromy in this degeneration. The graded quotients
 are $KZ_{\kappa}(\vec{a},y)\tensor KZ_{\kappa}((y,c),b)$. For
 $y=b-c+2j$ with $j=0,\dots,c$, the weight of $KZ_{\kappa}((y,c),b)$
 is $j-1$ if $2j>c$ and $j$ otherwise (so that for $y>b$ the weight is
 $j-1$, and for $y\leq b$ it is $j$) and the expected weight of
 $KZ_{\kappa}(\vec{a},y)$ is $\ExpM'-j$.
 
 We will prove the statement in (a) by induction on $\ExpM$ (the case
 $\ExpM=0$ is trivial). There is nothing to prove for $x\in N_0$. Let
 $b$ be the smallest element of $N_0$ that is greater than $x$.  Let
 $c=b-x$, so that the term $y=b-c+2j$ with $j=0$ is $y=x$. The
 induction hypothesis applies for all $j>0$, and the formulas follow
 from induction and  Schmid's theorem on the limiting mixed Hodge structure \cite[Theorem 6.16 and Lemma 6.4]{schmid} that the rank of the
 $\ExpM'-m$ graded quotient of the weight filtration of the nearby
 cycles of a pure Hodge structure of weight $\ExpM'$ equals the rank
 of its $\ExpM'+m$ graded quotient for any $m>0$.  We also use the
 fact that the graded quotient corresponding to $j=0$ is actually a
 quotient of the nearby cycles (since it has the smallest Casimir
 value $c(b-c+2j)$), so that the weights that appear are necessarily
 bounded below by $\ExpM'$.
 
 \subsubsection{The general case of Theorem \ref{E22}}

  We begin with the following special case. Suppose $a_2,\dots,a_n$ are of level $\ell$ and $a_1$ is in $N_0$. 

 The motive $KZ_{\kappa}(\vec{a}, 0)$ degenerates into $KZ_{\kappa}((a_2,\dots,a_n),a_1)\tensor KZ_{\kappa}((a_1,a_1),0)$. The monodromy is a scalar in this degeneration and hence weights are preserved.
 Suppose ${\ExpM}$ is the expected weight of $KZ_{\kappa}(\vec{a},0)$. Then by the case of the theorem already proved,  $KZ_{\kappa}((a_2,\dots,a_n),a_1)$ is pure of the expected weight ${\ExpM}-a_1$. Also by an easy application of \eqref{e:A1-G-abc}, $KZ_{\kappa}((a_1,a_1), 0)$ is pure of weight $a_1+1$. Therefore $KZ_{\kappa}(\vec{a},0)$ is pure of weight ${\ExpM}+1$. Using a similar switch, we also get that $KZ_{\kappa}(\vec{a},x)$ is pure of the expected weight plus one whenever $x$ is of level $\ell$. 

We prove the following equivalent form of item (c) in the theorem
\begin{enumerate}
\item[(c')] If $x>\kappa-1$, and $x\not\in N_0$ then
  $w(\vec{a},x)=w^+(\vec{a},x')$ where $x'=d(x)=u^{-1}(x)<x$
 \end{enumerate}
 
 We induct on the number $k$ of $a_i$ that are in $N_0$. If $k=1$, suppose $a_1\in N_0$. We know the statement for $b$ of level $\ell$ by our earlier discussion (and item (c') does not apply here). We then induct on $b$.
The induction step is similar to the case of $k-1$ and, using a swap as above, gives the base step in the next layer of induction.

For the induction step, we first consider the case $x\in N_0$. Consider the degeneration of $KZ_{\kappa}((a_1,\dots,a_n, 1), x-1)$ as the point carrying the weight $1$ goes to infinity, or as the remaining variables coalesce at a finite point. This is a semisimple local monodromy situation (since the difference of the Casimirs $c(x)-c(x-2)=2x$ is not a multiple of $2\kappa$)  for a variation of a mixed motive with weights $\{{\ExpM}+1,{\ExpM}+2\}$.
One of the two terms is $KZ_{\kappa}(\vec{a},x)$ tensored with $KZ_{\kappa}(x,1,x-1)$ (of weight $2$).  The other term is $KZ_{\kappa}(\vec{a},x-2)$ which is mixed of weight $\{{\ExpM}+1,{\ExpM}+2\}$.

Thus $KZ_{\kappa}(\vec{a},x)$ has weights in $\{{\ExpM}-1,{\ExpM}\}$. Since its first moment equals its rank times $\ExpM$, we conclude that $KZ_{\kappa}(\vec{a}, x)$
is pure of weight ${\ExpM}$.

We next consider the case $x\not\in N_0$. Let $b$ be the smallest element of $N_0$ that is less than $x$. Let $c=x-b$. 
We consider a degeneration of $KZ_{\kappa}((a_1,\dots,a_n,c),b)$ as the point carrying the weight $c$ heads to infinity as in Section \ref{specialcase}. The term $j=c$ is now the term of interest.  We also use the fact that the graded quotient corresponding to $j=c$ is actually a subobject of the nearby cycles (since it has the largest Casimir value $c(b-c+2j)$),
 so that the weights that appear are necessarily $\leq \ExpM'$. The same argument now gives (a), (b), and (c) because other elements in the degeneration have a smaller weight at infinity. Part (d) follows from the others by a switch with the element $a_1$ of $N_0$ as before.

This completes the proof of Theorem \ref{E22}.

\subsection{Proof of Lemma \ref{l:D}}\label{DProof}

We first recall the construction of the VMHS $KZ_{\kappa}(\vec{\lambda},\nu^*)$
for arbitrary $\mf{g}$.
\subsubsection{The Schechtman--Varchenko arrangements and master
  functions}\label{s:sv}
Fix a split simple Lie algebra $\mf{g}$, let
$\vv{\lambda} \in (P^+)^n$, $\nu \in P^+$, and assume
 that $\sum\lambda_i-\nu$ is a nonnegative integral sum of simple roots (possibly
repeated).

We then write
\begin{equation*}%
  \sum\lambda_i-\nu=\sum_{\alpha\in\Delta}n_{\alpha} \alpha  =
  \sum_{a=1}^{\ExpM}\beta(a) .
\end{equation*}
Here $\ExpM=\sum_{\alpha\in \Delta}n_{\alpha}$, and we have made a choice
of $\beta:[\ExpM]=\{1,\dots,\ExpM\} \to \Delta$ so that the second equality
holds.

Fix $\vec{z}=(z_1,\dots,z_n)$ in $\mc{C}_n$ and let
$W=\A^{\ExpM}$. The coordinates of $W$ will be denoted by
$t_1,\dots,t_{\ExpM}$. We regard the variable $t_b$ to be coloured by
the simple root $\beta(b)$. Let $\kappa \in \Bbb{C}^{\times}$.

Consider the weighted hyperplane arrangement
$\ms{A} = (W,\mathscr{C},a)$ given by the following collection $\mathscr{C}$ of hyperplanes and their associated weights $a$:
\begin{enumerate}
\item For $i\in [1,n]$ and $b\in [1,{\ExpM}]$, the hyperplane
  $t_b-z_i=0$, with weight $ {(\lambda_i,\beta(b))}/{\kappa}$.
  \smallskip
\item For $b,c\in [1,{\ExpM}]$ with $b<c$,  the hyperplane $t_b-t_c=0$, with weight $-{(\beta(b),\beta(c))}/{\kappa}$.
\end{enumerate} 

Let $U$ be the complement of this hyperplane arrangement in $W$.
Consider the multivalued master function
\begin{equation} \label{e:master} %
  \ms{R} = \prod_{1 \leq i < j \leq n} (z_i - z_j)^{-\frac{(\lambda_i,
      \lambda_j)}{\kappa}} \prod_{b=1}^{\ExpM} \prod_{j=1}^n (t_b -
  z_j)^{\frac{(\lambda_j, \beta(b))}{\kappa}} \prod_{1 \leq b < c \leq
    {\ExpM}} (t_b - t_c)^{-\frac{(\beta(b), \beta(c))}{\kappa}}
\end{equation}
The weighted hyperplane arrangement $(W,\mathscr{C},a)$ determines the form
$\eta=d\ms{R}/\ms{R}$ in $\Omega^1(U)$.  The differentials $d(z_i-z_j)$ are zero
because the $z_j$ are fixed, but we will consider this multivalued
function later in contexts where the $z$'s are allowed to vary. Thus
\begin{equation}\label{deta}
  \eta=\frac{1}{\kappa}\biggl(\sum_{b=1}^{\ExpM}\sum_{j=1}^n (\lambda_j,\beta(b))\frac{d(t_b-z_j)}{t_b-z_j} - \sum_{1 \leq b < c \leq {\ExpM}}(\beta(b),\beta(c))\frac{d(t_b-t_c)}{t_b-t_c}\biggr) .
\end{equation}

Let $\Sigma = \Sigma_{\beta}$ be the subgroup of the symmetric group
$S_{\ExpM}$ on $[{\ExpM}]=\{1,\dots,{\ExpM}\}$ consisting of permutations $\sigma$ that
preserve colour, i.e. $\beta(\sigma(b))=\beta(b)$ for all $b\in[{\ExpM}]$;
$\Sigma_{\beta}$ acts on $U$, the complex
$(A^{\starr}(U),\eta\wedge)$, as well as the local system $\ml(\eta)$
(over the action on $U$).

\subsubsection{}\label{s:top}

Let $P$ be any smooth projective $\Sigma_{\beta}$-equivariant
compactification of $U$, with $P \setminus U=\cup_{\alpha} E_{\alpha}$
a simple normal crossings divisor.  Let
$V=P \setminus \cup'_{\alpha}E_{\alpha}$, where the union is over those $\alpha$ for which
$a_{\alpha}=\op{Res}_{E_{\alpha}}\eta$ is not a strictly positive integer. Let
$j:U\to V$ be the inclusion.

Let
$V'=P \setminus \cup'_{\alpha}E_{\alpha}$, where the union is over those $\alpha$ for which
$a_{\alpha}=\op{Res}_{E_{\alpha}}\eta$ is not a nonnegative integer,
and let $q:U\to V'$ be the inclusion.  Let $\chi$ be the sign
character on $S_{\ExpM}$ restricted to $\Sigma_{\beta}$.

By \cite{SV,L2,BBM} (also see \cite{BF-survey}), we have the following.
\begin{proposition} \label{p:kz} The space  $\mb{A}(\vv{\lambda},\nu^*)^*$ is identified, compatibly with the KZ
  connection, with the image of the map
  \begin{equation}\label{Sho}
    H^{\ExpM}(V',q_{!}\mathcal{L}(\eta))^{\chi}\to H^{{\ExpM}}(V,j_{!}\mathcal{L}(\eta))^{\chi}.
  \end{equation}
  \end{proposition}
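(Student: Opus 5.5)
The plan is to assemble the identification from the three cited sources rather than prove anything new. We work fibrewise at a fixed $\vec{z}$ and then check compatibility in families. First, following Schechtman--Varchenko, we use the flag-form construction. The weight-$(\sum\lambda_i-\nu)$ space of $\bigotimes_i M(\lambda_i)^*$ (duals of Verma modules), taken in the $\mathfrak{n}_-$-coinvariant/singular-vector version, maps isomorphically onto the $\chi$-isotypic part of the top cohomology of the Aomoto complex $(A^{\bullet}(U),\eta\wedge)$, equivalently $H^{\ExpM}(U,\mathcal{L}(\eta))^{\chi}$ for generic $\kappa$. Under this map the Gauss--Manin connection in the $z$-variables corresponds to the KZ connection \eqref{e:KZ}; this is the hypergeometric-integral statement of \cite{SV}, with $\Sigma_\beta$ and the sign character $\chi$ accounting for the symmetrisation over equally coloured variables.

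Second, we pass from Verma modules to the irreducibles $V_{\lambda_i}$ and from $U$ to the partial compactifications $V,V'$. The key input is the Schechtman--Varchenko contravariant form. It is identified with the Shapovalov form on $\bigotimes_i M(\lambda_i)$, whose radical is exactly the kernel of $\bigotimes_i M(\lambda_i)\to \bigotimes_i V_{\lambda_i}$. On the topological side, it is identified with the natural pairing between cohomology with the two extension conditions.

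Following Looijenga \cite{L2} and \cite{BBM}, the residues $a_\alpha=\op{Res}_{E_\alpha}\eta$ along the exceptional and boundary divisors of a $\Sigma_\beta$-equivariant normal crossings compactification $P$ are computed. These are linear expressions in the pairings $(\lambda_i,\beta)$ and $(\beta,\beta')$ divided by $\kappa$. The divisors with positive integral residue are exactly those along which the Shapovalov-type degeneration occurs, namely the Serre-relation/singular-vector directions $f_{\alpha}^{\langle\lambda_i,\alpha^\vee\rangle+1}$. Removing or keeping these divisors, i.e.\ choosing $j_!$ on $V$ versus $q_!$ on $V'$, then realises the image of \eqref{Sho} as the quotient by the radical of the contravariant form. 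That quotient is $\mb{A}(\vec{\lambda},\nu^*)^*$, the dual of the coinvariants, since $(V(\vec{\lambda})\otimes V_\nu^*)_{\mf g}$ is the $\nu$-weight part of $\bigotimes V_{\lambda_i}$ modulo the image of $\mf{n}_-$ paired appropriately.

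Finally, we check that the construction is independent of the choice of $P$. Any two such compactifications are dominated by a third, and extra divisors created by blowing up have residues that are sums of existing ones. We then observe that the morphism \eqref{Sho} varies in a local system over $\mc{C}_n$ (using the relative version of the arrangement with $z$ varying), so the fibrewise identification is flat for the KZ connection.

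The main obstacle is the second step for non-generic rational $\kappa$. There, resonances make $H^{\ExpM}(U,\mathcal{L}(\eta))$ differ from the Aomoto cohomology, and one must show that passing to the image between the $q_!$ and $j_!$ extensions exactly cuts out the irreducible quotient. I would first try Looijenga's argument that this image is the ``interior'' cohomology, whose nondegenerate pairing identifies it with the image of the Shapovalov form. I would then reduce the residue bookkeeping to rank-one (single colour, $\mathfrak{sl}_2$) strata by the local product structure of the arrangement near each stratum.
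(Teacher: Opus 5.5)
The paper gives no argument for this proposition beyond citing \cite{SV,L2,BBM}, so at the top level your plan matches it. But the one substantive step you sketch, the passage from Verma modules to the $V_{\lambda_i}$, misidentifies the relevant divisors, and as written it would fail.

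The Shapovalov and Serre degeneracy sits on divisors of residue \emph{zero}, not of positive integral residue. By \eqref{eq2}, if $t_1,\dots,t_s$ of colour a simple root $\alpha$ collide with $z_i$, the residue is $\frac{s}{\kappa}(\langle\lambda_i,\alpha^\vee\rangle+1-s)$. This vanishes exactly at the singular-vector direction $s=\langle\lambda_i,\alpha^\vee\rangle+1$, for every $\kappa$. Likewise a Serre stratum, such as colours $\alpha_1,\alpha_1,\alpha_2$ in $\mf{sl}_3$, has residue $(-2+1+1)/\kappa=0$. This agrees with the Schechtman--Varchenko determinant formula, which writes $\det S$ as a product of powers of the edge weights $a_X$.

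By the definitions, divisors with residue in $\Z_{>0}$ lie in both $V$ and $V'$ and carry the $!$-extension on both sides of \eqref{Sho}. The two sides differ only along the residue-zero divisors, which make up $V'\setminus V$. The quotient by the radical of the contravariant form is produced by taking the image between $!$- and $*$-extension along those divisors.

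A minimal example shows that your description gives the wrong answer. Take $\mf g=\mf{sl}_2$, $\vec\lambda=(0,2)$, $\nu=0$, and $\kappa$ with $2/\kappa\notin\Z$ (e.g.\ $\kappa=3$). Then $\ExpM=1$, and the residues at $z_1,z_2,\infty$ are $0$, $2/\kappa$, $-2/\kappa$. No residue is a positive integer, so $V=U$, while $V'=\C\setminus\{z_2\}$.

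Here $H^1(U,\mathcal L)$ is one-dimensional. This is the Verma answer: the singular vectors of weight $0$ in $M(0)\otimes M(2)$ are spanned by $fv_0\otimes v_2$. But $\mb A((0,2),0)=(V_2)_{\mf g}=0$. The image of \eqref{Sho} vanishes only because $H^1(V',q_!\mathcal L)\to H^1(U,\mathcal L)$ factors through the cohomology of the extension of $\mathcal L$ across $z_1$ to $V'\cong\C^*$. That cohomology is zero, since the monodromy $e^{2\pi i\cdot 2/\kappa}$ around $z_2$ is nontrivial. In your account there would be nothing to remove, and you would land on the Verma space.

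Positive integral residues occur only at special rational $\kappa$, as resonances unrelated to the Shapovalov form. What \cite{L2,BBM} actually supply is that $!$-extending across them preserves the identification with the KZ local system, even though the generic comparison with the Aomoto complex fails. For the same reason, your plan to reduce the residue bookkeeping to single-colour strata would miss the multi-colour Serre strata.
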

  The image of \eqref{Sho} is $KZ_{\kappa}(\vec{\lambda},\nu^*)$
  (considered as a VMHS) \cite{L2,BBM,BF-survey}.

\begin{remark}\label{signkappa}
  Let $\kappa$ be rational.  As explained in Remark 4.24 in
  \cite{BFM}, $KZ_{-\kappa}(\vec{\lambda},\nu^*)$ is isomorphic to the
  MHS $KZ_{\kappa}(\vec{\lambda},\nu^*)^*(-\ExpM)$ (a Tate twist).
  This is seen as follows: The two MHS
  $KZ_{\pm\kappa}(\vec{\lambda},\nu^*)$ can be computed using the same
  spaces. The local systems over $U$ are dual, and residues in the two
  situations are negatives of each other. Now, Proposition \ref{p:kz}
  gives us the isomorphism.

We also note that the sign in the KZ equation \eqref{e:KZ} is the one
in \cite[p.~140]{Ueno} (the equation given there is on the
dual). This differs in sign from the form of the KZ equation given in
\cite[Equation 7.2.8.1]{SV} and the corresponding master function in
\cite[Equation 7.1.2]{SV} is the inverse of the master function
$\ms{R}$ above.
\end{remark}

\subsubsection{Residue computations}\label{residue}

We first recall the expressions for the residues of the differential $\eta$ at various strata given in \cite{L2} (the computation at infinity is an easy generalization).
\begin{enumerate}

\item The residue as $t_1,\dots,t_s$ get together at a finite point, with $s\geq 1$. Let $\gamma=\sum_{i=1}^s\beta(i)$. The residue is
\begin{equation}\label{eq1}
-\frac{1}{2\kappa}\bigl((\rho-\gamma,\rho-\gamma)-(\rho,\rho)\bigr)
\end{equation}
\item $t_1,\dots,t_s$ get together at the finite point $z_1$. Let $\gamma=\sum_{i=1}^s\beta(i)$.

The residue is
\begin{equation}\label{eq2}
-\frac{1}{2\kappa}\bigl((\rho+\lambda-\gamma,\rho+\lambda-\gamma)-(\rho+\lambda,\rho+\lambda)\bigr)
\end{equation}

\item $t_1,\dots,t_s$ get together at $\infty$. Let $\gamma=\sum_{i=1}^s\beta(i)$
The residue is
\begin{equation}\label{eq3}
-\frac{1}{2\kappa}\bigl((\rho+\nu+\gamma,\rho+\nu+\gamma)-(\rho+\nu,\rho+\nu)\bigr)
\end{equation}
This equals
$$-\frac{1}{2\kappa}\bigl((2\rho,\gamma) + (\gamma,\gamma)+ (2\nu,\gamma)\bigr)
= -\frac{1}{2\kappa}\bigl((\gamma+2(\nu+\rho),\gamma)\bigr)<0$$

Note that for $\lambda-\gamma=\nu$ (when dominant), the expression \eqref{eq2} is the negative of \eqref{eq3}.

\end{enumerate}

\subsubsection{Conclusion of the proof of Lemma \ref{l:D}}

 Let $\nu$ be a dominant integral weight for $\mf{sl}_m$, and let $\kappa\in \mathbb{Z}_{\geq 0}$.
We examine the weights of the VMHS $KZ_{\kappa}(\varpi_1^n,\nu^*)$. Let ${\ExpM}={\ExpM}(\varpi_1^n,\nu^*)$, and let $U$, $V$, and $V'$ be as in Section \ref{s:top}.

We will show that none of the residues at the dense strata is a
strictly positive integer, so that $V=U$. Hence the image of the map
\eqref{Sho} is a subspace of $H^{{\ExpM}}(U,\mathcal{L}(\eta))^{\chi}$
and is mixed of weights bounded below by ${\ExpM}$. This will prove
the lemma.  Using the residue formulas in Section \ref{residue} (in
the formulas below, $\gamma=\sum_{i=1}^s\beta(i)$),
\begin{enumerate}
\item The residue as $t_1=\dots=t_s$ is
  $1/(2\kappa)$ times $(\rho,\rho)-(\rho-\gamma,\rho-\gamma)$.  Thus, as long as $(\rho,\rho)/2\kappa<1$, we cannot get positive integer values. Note also that $(\rho,\rho)<(\rho+\varpi_1,\rho+\varpi_1)$.
\item The residue as $t_1=\dots=t_s=z_1$
  is $1/(2\kappa)$ times
  $(\rho+\varpi_1,\rho+\varpi_1)-(\rho+\varpi_1-\gamma,\rho
  +\varpi_1-\gamma)$. Thus, as long as
  $(\rho+\varpi_1,\rho+\varpi_1)/2\kappa<1$, we cannot get positive integer residues.
\item The residues at infinity are always negative.
\end{enumerate}

\begin{remark}
  Note that the local system $\mathcal{L}(\eta)$ on $U$ extends to a
  motivic local system $\mathcal{L}'$ on $V$ . Now
  $H^{{\ExpM}}(V,\mathcal{L}')^{\chi}$ has weights bounded below by
  $\ExpM$.  Are there more general conditions which ensure that the
  map
  $KZ_{\kappa}(\vec{\lambda},\nu^*)\to
  H^{{\ExpM}}(V,\mathcal{L}')^{\chi}$ is injective?
\end{remark}

\appendix

\section{}\label{a:A}

This appendix is devoted to the proof of Lemma \ref{l:u}.

\smallskip

Fix integers $a,b \geq 0$ and $\kappa \geq 2$. Let
\[
  E(a,b) = \{(a'+b')\kappa, (a'+b' -2)\kappa,(a'+b' -4)\kappa,\dots, |a' -
  b'|\kappa\} 
\]
and let
\[
  E'(a,b) = \{a'' + b'', a'' + b'' -2, a'' + b'' -4,\dots, |a'' -
  b''|\} .
\]
We now split into two cases.
\begin{itemize}
\item If $a'' + b'' < \kappa$, let
\[
  E_1(a,b) = E'(a,b)
\]
\item If $a'' + b'' \geq \kappa$, let
  \[
    E_1(a,b) = \{2\kappa -a'' -b'' -4, 2\kappa -a'' - b'' -6, 2\kappa
    -a'' - b'' -8,\dots, |a'' -b''| \} .
  \]
\end{itemize}
Let
 \[
  S(a,b) = \{d + e: d \in E(a,b)\mbox{ and } e \in E_1(a,b)) \} .
\]
It follows from the computations in Section \ref{s:lab}, that $S(a,b)$
is the set of highest weights of the simple summands of $\lq (a) \otimes
\lq (b)$. Thus, Lemma \ref{l:u} is equivalent to the following
\begin{lemma}\label{l:u2}
  If $c \in S(a,b)$ and $a,b,c \notin N_0 $ then
  $u(c) \notin S(u(a),u(b))$.
\end{lemma}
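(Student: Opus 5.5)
The plan is a direct arithmetic argument: compare the ``$l$-adic digits'' of $c$ and of $u(c)$ with the index sets $E(a,b)$ and $E(u(a),u(b))$, and derive a parity contradiction. Throughout, $l=\kappa$, since $q=\exp(-i\pi/\kappa)$ and $l$ is the order of $q^2$. For any $x\geq 0$ write $x=x'l+x''$ with $0\le x''\le l-1$, as in Section \ref{s:sl2}.

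\textbf{Step 1: digits of $u$.} For $x\notin N_0$ we have $x''\neq l-1$, so $0\le x''\le l-2$. Hence
\[
u(x)=(x'+2)l-(2+x'')=(x'+1)l+(l-2-x''),
\]
with $0\le l-2-x''\le l-2$. In particular $u(x)'=x'+1$ and $u(x)''=l-2-x''$, and again $u(x)\notin N_0$. Applying this to $a$ and $b$ gives $u(a)'+u(b)'=a'+b'+2$ and $|u(a)'-u(b)'|=|a'-b'|$.

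\textbf{Step 2: decompositions in $S$ are unique.} Every element of $E_1(\cdot,\cdot)$ lies in $[0,l-1]$. In the first case $a''+b''<l$, and the elements are at most $a''+b''\le l-1$. In the second case they are at most $2l-a''-b''-4\le l-4$. Every element of $E(\cdot,\cdot)$ is a multiple of $l$. Hence for $s=d+e\in S(a,b)$ we must have $d=s'l$ and $e=s''$. So:
\begin{itemize}
\item $s\in S(a,b)$ implies $s'\in\{a'+b',a'+b'-2,\dots,|a'-b'|\}$, and in particular $s'\equiv a'+b' \pmod 2$;
\item likewise, $s\in S(u(a),u(b))$ implies $s'\equiv u(a)'+u(b)'=a'+b'+2 \pmod 2$.
\end{itemize}

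\textbf{Step 3: the parity contradiction.} Let $c\in S(a,b)$ with $c\notin N_0$. By Step 2, $c'\equiv a'+b'\pmod 2$. By Step 1, $u(c)'=c'+1\equiv a'+b'+1\pmod 2$. If $u(c)$ were in $S(u(a),u(b))$, Step 2 would force $u(c)'\equiv a'+b'\pmod 2$, which is impossible. Therefore $u(c)\notin S(u(a),u(b))$, which proves Lemma \ref{l:u2} and hence Lemma \ref{l:u}.

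The main obstacle is bookkeeping rather than any idea. One must confirm that the description of $S(a,b)$ read off from \eqref{l:tl}, using Lemma \ref{l:st} and Lusztig's tensor product theorem, really gives highest weights of the form $d+e$ with $d\in E(a,b)$ a multiple of $l$ and $e\in E_1(a,b)$ a residue in $[0,l-1]$, so that the digit extraction in Step 2 is legitimate. One must also confirm that the hypothesis $a,b,c\notin N_0$ is exactly what makes $u$ act on digits as in Step 1, i.e.\ that no residue equal to $l-1$ spoils it. Finally, the boundary cases need a quick check: $a''+b''$ equal to $l-1$ or $l$, and $a'=0$ or $b'=0$. In each of these the sets $E$ and $E_1$ keep the stated ranges, so the parity argument goes through unchanged.
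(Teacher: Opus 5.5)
Your argument is correct and is essentially the paper's proof. Both read off the $\kappa$-quotient of an element of $S(\cdot,\cdot)$ from the bound $0\le e\le\kappa-1$ on $E_1$, use $c''\le\kappa-2$ (from $c\notin N_0$) to get $u(c)'=c'+1$, note that every element of $S(u(a),u(b))$ has quotient $\equiv a'+b'\pmod 2$, and conclude by parity. The check against \eqref{l:tl} that you list as an obstacle is needed only for the equivalence with Lemma \ref{l:u}, not for Lemma \ref{l:u2} itself.
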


\begin{proof}
Write, as usual, $ a=a'\kappa+a''$, $b=b'\kappa+b''$,
$0\le a'',b''\le \kappa-1$.  The assumption on $a,b$ implies that
$ 0\le a'',b''\le \kappa-2$.  Moreover,
$ u(a)=(a'+1)\kappa+(\kappa-2-a'')$,
$ u(b)=(b'+1)\kappa+(\kappa-2-b'')$, so the $\kappa$-quotients of
$u(a)$ and $u(b)$ are $a'+1$ and $b'+1$, respectively.

We also note that $e\in E_1(a,b)$ satisfies $0\le e\le \kappa-1$.
This is clear if $a'' + b'' < \kappa$. If $a'' + b'' \geq \kappa$,
then the largest element of $E_1(a,b)$ is $l = 2\kappa-a''-b''-4$.
Because $a'',b''\le \kappa-2$, we have $a''+b''\le 2\kappa-4$, and
therefore $l\ge 0$ and $a''+b''\ge\kappa$ gives
$l \le \kappa-4<\kappa-1$.

The definition of the set $E(a,b)$ implies:
\begin{itemize}
\item If $d\in E(a,b)$ and $d=m\kappa$, then
  $m\equiv a'+b' \mod 2$.
\item If $\widetilde d\in E(u(a),u(b))$ and
  $\widetilde d=n\kappa$, then
  $ n\equiv a'+b'\mod 2$.
\end{itemize}

After these preliminary observations, we prove the desired
disjointness. Let $c\in S(a,b)$. By definition, there exist
$d\in E(a,b)$ and $e\in E_1(a,b)$ such that $c=d+e$. Write
$d=m\kappa$, so $c = m\kappa + e$. The above observations imply that
$c' = m$, $c'' = e$ and $ m\equiv a'+b'\mod 2$.

We shall prove that $ u(c)\notin S(u(a),u(b))$. Since $c \notin N_0$,
$u(c)=(m+1)\kappa+(\kappa-2-e)$.  Since $0\le \kappa-2-e\le \kappa-2$,
this is the canonical $\kappa$-expansion, i.e., $u(c)' = m+1$ and
$u(c)'' = (\kappa -2-e)$. Note that $ m+1\not\equiv a'+b'\mod 2$.

On the other hand, consider any element of $S(u(a),u(b))$. It has the
form $ \widetilde d+\widetilde e=n\kappa+\widetilde e$, with
$\widetilde d\in E(u(a),u(b))$, $\widetilde e\in E_1(u(a),u(b))$.  Now
$u(a), u(b) \notin N_0$, so $ 0\le \widetilde e\le \kappa-1$.  Thus
$n\kappa+\widetilde e$ is its canonical $\kappa$-expansion, so
$ n\equiv a'+b'\mod 2$. Thus, $m+1 \neq n$, hence
$u(c) \notin S(u(a),u(b))$.
\end{proof}

\bibliographystyle{siam}
\bibliography{biblio.bib}

\end{document}